\documentclass[11pt,a4paper,reqno]{amsart}

\calclayout
\usepackage{thmtools}
\usepackage[hidelinks]{hyperref}
\usepackage{etoolbox}
\usepackage[capitalize,noabbrev]{cleveref}
\usepackage{amsmath}
\usepackage{amsfonts}
\usepackage{amsthm}
\usepackage{amssymb}
\usepackage{mathtools}
\usepackage{tikz-cd}
\usepackage{stmaryrd}
\usepackage{mathabx}
\usepackage[shortlabels]{enumitem}
\usepackage{mathrsfs}
\usepackage{url}
\usepackage{tkz-graph}
\usepackage{standalone}

\usepackage[alphabetic]{amsrefs}

\newtheorem{Thm}{Theorem}[section]
\newtheorem{Lm}[Thm]{Lemma}
\newtheorem{Cor}[Thm]{Corollary}

\newtheorem{Prop}[Thm]{Proposition}

\newtheorem{IntroThm}{Theorem}

\theoremstyle{definition}
\newtheorem{Ex}[Thm]{Example}

\newtheorem{Qst}[Thm]{Question}

\newtheorem{Df}[Thm]{Definition}

\theoremstyle{remark}
\newtheorem{Rmk}[Thm]{Remark}

\newcommand{\Fc}{\ensuremath{\mathcal{F}}}

\newcommand{\abs}[1]{\ensuremath{\left\lvert #1 \right\rvert}}

\newcommand{\inner}[1]{\ensuremath{\left\langle #1 \right\rangle}}
\newcommand{\ext}[1]{\ensuremath{{\bigwedge}_{\QQ}\inner{#1}}}

\DeclareMathOperator{\del}{\backslash}

\DeclareMathOperator{\ann}{ann}
\DeclareMathOperator{\deq}{\vcentcolon=}
\DeclareMathOperator{\cl}{cl}
\DeclareMathOperator{\rk}{rk}

\DeclareMathOperator{\OS}{\mathsf{OS}}
\DeclareMathOperator{\rOS}{\overline{\mathsf{OS}}}
\DeclareMathOperator{\OT}{\mathsf{OT}}
\DeclareMathOperator{\AOT}{\mathsf{AOT}}
\newcommand{\GMA}[1]{\mathsf{GMA}(#1)}

\DeclareMathOperator{\A}{\mathcal{A}}
\newcommand{\CC}{\mathbb{C}}
\newcommand{\QQ}{\mathbb{Q}}
\newcommand{\ZZ}{\mathbb{Z}}
\newcommand{\RR}{\mathbb{R}}
\newcommand{\KK}{\mathbb{K}}
\DeclareMathOperator{\HS}{\mathrm{HS}}

\newcommand{\Cc}{\mathcal{C}}
\DeclareMathOperator{\si}{si}
\DeclareMathOperator{\codim}{codim}
\DeclareMathOperator{\tr}{Tr}

\title{Koszul Orlik--Solomon Algebras from Non-supersolvable Arrangements}

\author{Tuong Le}
\address{Princeton University, Department of Mathematics, Princeton, NJ, USA}
\email{tuongle@princeton.edu}

\author{Chayim Lowen}
\address{Princeton University, Department of Mathematics, Princeton, NJ, USA}
\email{chayiml@princeton.edu}

\author{Jason McCullough}
\address{Iowa State University, Department of Mathematics, Ames, IA, USA}
\email{jmccullo@iastate.edu}

\date{\today}

\begin{document}

\begin{abstract}
The cohomology ring of the complement of a  complex hyperplane arrangement is given by its Orlik--Solomon algebra.  It is known that the defining ideal of the Orlik--Solomon algebra has a quadratic Gr\"obner basis in the standard presentation if and only if the intersection lattice is supersolvable; such algebras are automatically Koszul.  In 1997, Shelton and Yuzvinsky posed the question as to whether all Koszul Orlik--Solomon algebras arise from supersolvable arrangements.  We answer this question negatively using three related constructions that produce non-supersolvable arrangements whose Orlik--Solomon algebras are Koszul.  Moreover, these arrangements may be chosen to be irreducible, realizable over $\mathbb{Q}$, and of any rank $\geq 3$. 
Our constructions rely on a result of Falk and Proudfoot which we strengthen and generalize.
In two of the three cases, we show non-supersolvability using a corrected form of a result of Ziegler regarding supersolvability of parallel connections. We also construct Koszul Orlik--Terao algebras coming from non-supersolvable arrangements.
\end{abstract}

\subjclass[2020]{Primary  05B35, 06C10, 16S37, 52C35, Secondary 13D02, 13P10, 16E05, 55P20, 55R10}

\keywords{hyperplane arrangement, matroid, Orlik--Solomon algebra, Koszul algebra, supersolvable lattice, Orlik--Terao algebra, parallel connection, complete principal truncation}

\maketitle

\section{Introduction}

The study of the topology and combinatorics of complex hyperplane arrangements has a rich history.  
One highlight is the presentation of the singular cohomology ring of the complement of a complex hyperplane arrangement $\mathcal{A}$ by Orlik and Solomon in \cite{OrlikSolomon}, now called the \textit{Orlik--Solomon algebra of $\A$} and denoted by $\OS(\mathcal{A})$. 
Of particular note is that $\OS(\mathcal{A})$ depends only on the intersection lattice $\mathcal{L}(\A)$ of $\A$.
There have since been considerable attempts to identify properties of the intersection lattice of a hyperplane arrangement that imply, or are equivalent to, ``good'' algebraic consequences for the arrangement. One such property is supersolvability.
Supersolvable lattices are those with a full flag of modular elements; see \Cref{subsec:matroids} for a definition in our setting.
These were introduced by Stanley \cite{supersolvable}, who showed that the characteristic polynomial of a supersolvable lattice splits into linear factors over $\mathbb{Z}$.

It was later shown by Bj\"orner--Ziegler \cite{BZ91}*{Theorem 2.8} and Peeva \cite{Peeva}*{Theorem 4.3} that if $\A$ is a central and essential complex hyperplane arrangement, then $\mathcal{L}(\A)$ is supersolvable if and only if the defining ideal $I(\A)$ of the Orlik--Solomon algebra of $\A$  has a quadratic Gr\"obner basis with respect to the standard generating set and some monomial order.
By a result of Terao \cite{Terao86}*{Corollary 2.17}, this is equivalent to the arrangement being fiber-type. 
It follows that the Orlik--Solomon algebra of a supersolvable arrangement is Koszul in the sense of Priddy \cite{Priddy}.
Whether the converse holds was first raised by Shelton and Yuzvinsky in 1997.
\begin{Qst}[{\cite{SY97}*{p.\ 487}}]\label{qst:mainquestion}
    If the Orlik--Solomon algebra $\OS(\A)$ of an arrangement $\A$ is a Koszul algebra, must the intersection lattice $\mathcal{L}(\A)$ be supersolvable?\footnote{An earlier paper \cite{FR86} claimed that the equivalent statement ``rational $K(\pi,1)$ complement implies supersolvability of the intersection lattice'' was false, but this was based on an incorrect claim by Kohno at the time \cite{FR00}*{p.\ 102}.  The equivalence between the Koszul property for $\OS(\A)$ and the rational $K(\pi,1)$ property for $\mathcal{C}(\A)$ was not known until later  \cite{PY99}.}
\end{Qst}
\noindent This question was raised---sometimes as a conjecture in the affirmative---several times in the literature, including \cite{PY99}*{p.\ 165}, \cite{FR00}*{Problem 2.2}, \cite{Yuzvinskii01}*{Problem 6.7}, \cite{Schenck12}*{Problem 82}, \cite{LR13}*{Conjecture 1.3}, \cite{HW16}*{p.\ 187}, \cite{Coron25}*{Conjecture 6.4}, and \cite{PS09}*{Problem 16.7}---where it was called ``a very difficult open question.''  An affirmative answer is known  for several special classes of arrangements:  Jambu and Papadima \cite{JP02} proved this for hypersolvable arrangements, Schenck and Suciu  \cite{SS02} for graphic arrangements,  Van Le and R\"omer  \cite{LR13} for arrangements with pairwise disjoint minimal broken circuits, Hultman \cite{Hultman16} for root ideal arrangements, and Lutz \cite{Lutz19}  for Dirichlet arrangements.
Our main result is a negative answer to \Cref{qst:mainquestion}.

\begin{IntroThm}\label{thm:full_counterexample}
    For any $r \ge 3$, there is an irreducible, central hyperplane arrangement of rank $r$ defined over $\QQ$ with Koszul Orlik--Solomon algebra but non-supersolvable intersection lattice.
    Moreover, the arrangement can be chosen round (see \Cref{subsec:matroids} for a definition).
\end{IntroThm}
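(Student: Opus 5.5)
The plan is to build the counterexample from a supersolvable arrangement by an operation that provably preserves Koszulness but can destroy supersolvability. The abstract points to three such operations: the result of Falk and Proudfoot (in a strengthened and generalized form), parallel connections, and complete principal truncations.

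\emph{Key algebraic input.} I will establish a generalized Falk--Proudfoot statement of the following shape. Let $\A$ be a central arrangement and let $X\in\mathcal{L}(\A)$ be a modular flat. Suppose the localization $\A_X$ and the projection (restriction/contraction data) are Koszul. Then $\OS(\A)$ is Koszul. I expect to prove it along the following lines.
\begin{itemize}
\item The modular flat gives a fibration of complements.
\item Algebraically, it gives a decomposition $\OS(\A)\cong \OS(\A_X)\otimes B$ as graded vector spaces, with $\OS(\A_X)\to\OS(\A)$ a split subalgebra inclusion.
\item Koszulness then lifts via a change-of-rings spectral sequence for $\operatorname{Tor}$, or via the rational $K(\pi,1)$ criterion of \cite{PY99}.
\end{itemize}
A more combinatorial variant uses parallel connections. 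If $M_1,M_2$ have Koszul $\OS$ algebras, then the parallel connection $P(M_1,M_2)$ along a common element should have $\OS(P)\cong(\OS(M_1)\otimes\OS(M_2))/(e_1-e_2)$, up to the standard identification. The difference $e_1-e_2$ acts as a regular-type element of degree one, which preserves Koszulness. Complete principal truncation along a modular line should be handled similarly.

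\emph{Construction and non-supersolvability.} Next I take concrete building blocks.
\begin{itemize}
\item One block is a rank-$3$ supersolvable arrangement, for example a braid or $\mathcal{B}_3$-type arrangement over $\QQ$.
\item The other is an arrangement whose $\OS$ algebra is Koszul but which is not supersolvable with respect to the glued element. Alternatively, glue two supersolvable pieces along elements that are not modular in either piece.
\end{itemize}
Koszulness comes from the key input. For non-supersolvability I will use a corrected form of Ziegler's criterion: $P(M_1,M_2)$ is supersolvable iff each $M_i$ is supersolvable with a maximal chain of modular flats passing through the connecting point. It then suffices to pick pieces in which the gluing point lies on no modular line. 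Checking this is a finite computation in rank $3$: modular lines are those meeting every other line.

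To reach rank $r\ge3$, I will iterate parallel connections, or take a complete principal truncation of a larger example down to rank $r$. Irreducibility follows since parallel connections of connected matroids are connected. Realizability over $\QQ$ follows because parallel connection and principal truncation of $\QQ$-representable matroids along a point are again $\QQ$-representable, possibly after generic choices. Roundness I would check directly: the complement of any hyperplane in $\mathcal{L}$ should not be a flat.

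\emph{Main obstacle.} I expect the hard step to be the non-supersolvability argument. Ziegler's original statement is incorrect as stated, so I must prove the corrected criterion myself. The natural attempt is to show that a modular coatom of the connection $P(M_1,M_2)$ restricts to a modular flat in each $M_i$ containing the base point, using the modular-pair rank identity. Making this work in every rank will need care. A second difficulty is that truncation may create new modular flats, so for higher rank I would prefer iterated connections, where the same criterion applies inductively.
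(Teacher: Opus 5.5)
There is a genuine gap. Your construction cannot reach ranks $3$ and $4$, and it cannot produce round examples. The tool that would fix this is exactly the one your key input leaves out.

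\textbf{Rank.} Suppose $P_f(M_1,M_2)$ has supersolvable pieces. By the correct criterion (\Cref{cor:pc_ss}), it is non-supersolvable exactly when $f$ lies on no full modular flag of \emph{either} piece. This is impossible in rank $\le 2$, so both pieces have rank $\ge 3$ and $\rk P_f(M_1,M_2)=r_1+r_2-1\ge 5$. Iterating only raises the rank.

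\textbf{Roundness.} A non-trivial parallel connection is never round, because $E(M_1)$ and $E(M_2)$ are proper flats whose union is the whole ground set.

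\textbf{Your fallback.} Truncating a larger example requires ``$M$ OS-Koszul $\Rightarrow$ $\overline{T}_F(M)$ OS-Koszul''. That is the \emph{converse} of the Falk--Proudfoot-type lifting you plan to prove, and ``handled similarly'' does not supply it. The lifting direction alone can never give rank $3$: there the only informative modular flat is a modular coatom, which already forces supersolvability. You also leave open whether truncation preserves non-supersolvability.

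\textbf{How the paper closes the gap.} It proves \Cref{thm:main} as an equivalence. By \Cref{thm:terao}, $\OS(M)$ is free over $\OS(M|_F)$. By \Cref{lm:quotient_compute}, $\OS(M)/(\OS(M|_F)_+)\cong\rOS(\overline{T}_F(M))$. Tensoring a linear resolution of $\QQ$ over $\OS(M|_F)$ gives a linear resolution of this quotient over $\OS(M)$ (\Cref{lm:Koszul_lemma}), so Koszulness passes in both directions. Your split-inclusion outline would give this too if carried out in that form.

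The paper then takes the round supersolvable spanning submatroid $M$ of $MQ_{r+1}(2)$ from \Cref{lm:Dow_construction}. It truncates along a modular rank-$2$ Dowling subgeometry $F$ that lies in no rank-$3$ modular flat. By \Cref{lm:modular_in_truncation}, for round $M$ the modular flats of rank $\ge 2$ in $\overline{T}_F(M)$ are exactly the modular flats of $M$ containing $F$. So this rank-$r$ truncation has no modular line. It is also round and $\QQ$-realizable (\Cref{prop:truncation_round} and \Cref{prop:truncation_realizable}). This also settles your worry that truncation might create new modular flats.

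\textbf{Two errors in your rank $\ge 5$ part.} First, the criterion you call ``corrected'' demands a modular chain through the base point in \emph{each} piece. That is Ziegler's erroneous version (\Cref{rmk:error}); only one piece needs such a chain. Second, your suggested blocks fail. Every point of $M(K_4)$ and of $MQ_3(2)$ lies on a modular line, so gluing either to a supersolvable matroid gives a supersolvable matroid. You need both base points to avoid all modular lines, for example two copies of $\mathrm{O}_7$ glued as in \Cref{ex:pcnonss}.
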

\noindent For a rank 3 arrangement with these properties, see \Cref{ex:rank_3_Koszul_nonSS}.  

\begin{Rmk}
    The answer to \Cref{qst:mainquestion} was expected to be negative by many experts in the field. Various potential counterexamples had been proposed; see e.g.\ \cite{SY97}*{Example 5.4} and \cite{FR86}*{p.\ 108}.
    The difficulty however is that the Koszul property is known \emph{not} to be checkable on any finite portion of the free resolution of the coefficient field \cite{Roos93}.
    Any proof of this property must therefore proceed by finding a ``good reason'' for Koszulness. This is precisely the role of our Theorems \ref{thm:main} and \ref{thm:GPC_Koszul} below.
\end{Rmk}

For a complex arrangement $\A$, Papadima and Yuzvinsky \cite{PY99} showed that $\OS(\A)$ is Koszul if and only if the complement $\mathcal{C}(\A)$ is a rational $K(\pi,1)$ space.
The latter means that the $\QQ$-completion of $\mathcal{C}(\A)$ in the sense of Bousfield and Kan \cite{BK72} is aspherical.
Using topological methods, Falk and Proudfoot \cite{falk_proudfoot} showed that this property for $\A$ can be inferred from the same property for two simpler arrangements: 
\begin{Thm}[{\cite{falk_proudfoot}*{Corollary 3.6}}]\label{thm:FP}
    Let $\A$ be a central arrangement in $\mathbb{C}^{n}$ and let $X$ be a modular flat of $\mathcal{L}(\A)$. The arrangement $\A$ has a rational 
    $K(\pi, 1)$ complement provided the same is true of the localization $\mathcal{A}_X$ and of the arrangement $\mathcal{A}_{\pi}$, where $\A_{\pi}$ is the arrangement induced by $\A$ on a generic fiber of the quotient map $\pi: \mathbb{C}^r \to \mathbb{C}^r/X$.
\end{Thm}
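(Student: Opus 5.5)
The plan is to realize the complement $\mathcal{C}(\A)$ as the total space of a fibration whose base and fiber are (up to homotopy) the complements $\mathcal{C}(\A_X)$ and $\mathcal{C}(\A_\pi)$. I would then transport asphericity of $\QQ$-completions through this fibration using the Bousfield--Kan fiber lemma.

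\emph{Step 1 (the fibration).} The hyperplanes of $\A_X$ all contain $X$, so they are pulled back from hyperplanes in $\CC^r/X$. Hence $\pi$ restricts to a map $p\colon \mathcal{C}(\A)\to \mathcal{C}(\A_X)$, where we now view $\mathcal{C}(\A_X)$ as an arrangement complement in $\CC^r/X$. The fiber of $p$ over a point $b$ is $\pi^{-1}(b)$ minus the hyperplanes of $\A\setminus\A_X$, i.e.\ the complement of the affine arrangement induced on that fiber. The key input is modularity of $X$: for $H,H'\notin\A_X$, the flat $H\cap H'\cap X$ is joined to $X$ by a flat of $\A_X$. This should imply that as $b$ moves in $\mathcal{C}(\A_X)$, no two of the induced affine hyperplanes in $\pi^{-1}(b)$ collide or change their intersection pattern. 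So the combinatorial type of the fiber arrangement is constant, and equal to that of the generic fiber $\A_\pi$. By the standard argument of Terao and Paris (local triviality via Thom's isotopy lemma, or an explicit trivialization), $p$ is then a locally trivial fiber bundle with fiber $\mathcal{C}(\A_\pi)$. I would also exhibit a section, for instance by taking points of the fiber far out along a fixed direction so that they avoid all fiber hyperplanes.

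\emph{Step 2 (nilpotent monodromy).} To apply Bousfield--Kan, one needs $\pi_1(\mathcal{C}(\A_X))$ to act nilpotently on $H_*(\mathcal{C}(\A_\pi);\QQ)$. The rational cohomology of an affine arrangement complement is generated in degree $1$, by the Orlik--Solomon theorem. It therefore suffices to show that the monodromy acts trivially, or at least unipotently, on $H_1$. The group $H_1$ is freely generated by meridians of the fiber hyperplanes. A loop in the base moves each fiber hyperplane continuously, so a meridian is carried to a meridian of the same hyperplane. Monodromy is thus trivial on $H_1$, hence trivial on all of $H^*$ by multiplicativity.

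\emph{Step 3 (completion and homotopy groups).} With Steps 1 and 2 in hand, the fiber lemma says that $\QQ$-completion of $F\to E\to B$ is again a fibration up to homotopy, $F_\QQ\to E_\QQ\to B_\QQ$; the section helps with connectivity and $\pi_1$ issues. The hypotheses give $\pi_i(F_\QQ)=0=\pi_i(B_\QQ)$ for $i\ge 2$. The long exact sequence of homotopy groups then yields $\pi_i(E_\QQ)=0$ for $i\ge 2$, so $\mathcal{C}(\A)$ is a rational $K(\pi,1)$.

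I expect the main obstacle to be Step 3's appeal to the fiber lemma. Bousfield--Kan completion behaves well only for nilpotent actions, and the spaces involved are not nilpotent. One must check carefully that the hypothesis needed is exactly nilpotence on rational homology of the fiber, as verified in Step 2. Alternatively, one could bypass this by passing to the algebraic side via Papadima--Yuzvinsky, using a split short exact sequence of Malcev Lie algebras coming from the section together with the trivial $H_1$-action.
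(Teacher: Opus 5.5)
Your argument is correct. It is essentially Falk and Proudfoot's own topological proof, built from three ingredients:
\begin{enumerate}
\item Terao's bundle $\mathcal{C}(\A)\to\mathcal{C}(\A_X)$ for a modular flat. State the use of modularity precisely: $X+Y$ is a flat for every flat $Y$, so $\pi(Y)$ is either all of $\CC^r/X$ or lies in a hyperplane of $\A_X$.
\item Trivial monodromy. The ring $H^*(\mathcal{C}(\A_\pi);\QQ)$ is generated by $H^1$, and $H^1$ is the image of $H^1(\mathcal{C}(\A);\QQ)$.
\item The Bousfield--Kan mod-$\QQ$ fibre lemma. Its hypothesis is only nilpotence of the $\pi_1$-action on the rational homology of the fibre, so your worry in Step 3 does not arise, and the section is not needed.
\end{enumerate}

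For this statement the paper does not argue topologically at all. It obtains the result as the realizable case of the ``if'' direction of \Cref{thm:main}, which is proved algebraically:
\begin{enumerate}
\item Terao's decomposition (\Cref{thm:terao}) is the algebraic shadow of your bundle with trivial monodromy, i.e.\ a Leray--Hirsch splitting.
\item \Cref{lm:Koszul_lemma} replaces the fibre lemma. It produces a linear resolution of $\OS(M)/(\OS(M|_F)_+)$ over $\OS(M)$.
\item \Cref{lm:quotient_compute} identifies that quotient with $\rOS(\overline{T}_F(M))$, which is the cohomology of your fibre.
\end{enumerate}

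The algebraic route works for arbitrary matroids, not only $\CC$-realizable ones. It gives both directions of the equivalence and avoids completions. However, to reach the statement as phrased it must invoke two further results: Papadima--Yuzvinsky (Koszul $\Leftrightarrow$ rational $K(\pi,1)$), and Falk--Proudfoot's identification of $\mathcal{L}(\A_\pi)$ with the complete principal truncation.

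Your route gives the homotopy-theoretic conclusion directly. In fact it gives a fibration of $\QQ$-completions, and the same long exact sequence also shows that $\A_\pi$ inherits the property from $\A$ and $\A_X$.
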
 
\noindent
The intersection lattice $\mathcal{L}(\A_{\pi})$ depends only on $\mathcal{L}(\A)$. By \cite{falk_proudfoot}*{Theorem 2.1}, it is the \emph{complete principal truncation} of $\mathcal{L}(\A)$ along $X$;  see \Cref{df:complete_trunc}.
The importance of this construction in this context was recognized earlier by Brylawski \cite{constructions} in relation to the aforementioned theorem of Stanley on the splitting of the characteristic polynomial.
We prove a result which generalizes and strengthens \Cref{thm:FP}.
We state our result in the language of matroids.
Given a matroid $M$ and a modular flat $F$ of $M$, we write $\overline{T}_F(M)$ for the complete principal truncation $M$ along $F$. 
\begin{IntroThm}\label{thm:main}
    Let $M$ be a matroid and $F$ a modular flat of $M$ of positive rank.
    The Orlik--Solomon algebra of $M$ is Koszul if and only if the same is true of both $M|_F$ and $\overline{T}_F(M)$.
\end{IntroThm}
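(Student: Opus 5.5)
The plan is to realize $\OS(M)$ as a tensor-product-like extension of $\OS(M|_F)$ by the algebra of the truncation, and then transfer Koszulness in both directions by standard change-of-rings arguments for Koszul algebras.

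\emph{Step 1: the inclusion.} Since $F$ is a flat, the inclusion $F\subseteq E$ induces an algebra map $\iota\colon \OS(M|_F)\to \OS(M)$. Because $F$ is modular, I expect $\iota$ to be injective and to make $\OS(M)$ a free module over $A\deq\OS(M|_F)$. This is the algebraic counterpart of the Falk--Proudfoot fibration. Concretely, I would build an $\mathbf{nbc}$-type basis of $\OS(M)$ adapted to $F$, ordering $F$ first. Modularity of $F$ should give the factorization
\[
\chi_M(t)=\chi_{M|_F}(t)\cdot \frac{\chi_{\overline{T}_F(M)}(t)}{t-1}\cdot (\text{suitable normalization}),
\]
in the spirit of Brylawski's result cited above. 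The underlying claim is that every $\mathbf{nbc}$ set of $M$ splits uniquely as an $\mathbf{nbc}$ set of $M|_F$ together with a set corresponding to an $\mathbf{nbc}$ set of $\overline{T}_F(M)$, where the added element coming from the truncation plays the role of the identified point.

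\emph{Step 2: the quotient.} I would then identify the quotient
\[
B\deq \OS(M)\otimes_A \QQ=\OS(M)/\OS(M)A_+
\]
with a suitable algebra attached to $\overline{T}_F(M)$. The natural candidate is the projectivized algebra $\rOS(\overline{T}_F(M))$, or equivalently $\OS(\overline{T}_F(M))$ modulo the class of the new element. This is a presentation-level check. Modulo $A_+$, all generators $e_f$ with $f\in F$ vanish. On the truncation side, $\overline{T}_F(M)$ collapses $F$ to a single point $p$, so its OS relations for circuits through $p$ should match the OS relations of $M$ modulo $A_+$. Circuits of $\overline{T}_F(M)$ avoiding $p$ come from circuits of $M$, or from sets whose closure meets $F$; modularity ensures that such a set $S$ meets $F$ in a flat of rank $r(S)+r(F)-r(S\cup F)$, and this is what lets the relations line up. Since $\OS(\overline{T}_F(M))\cong \rOS(\overline{T}_F(M))\otimes \ext{e}$, and exterior algebras on one generator are Koszul, the Koszulness of $B$ is equivalent to that of $\OS(\overline{T}_F(M))$.

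\emph{Step 3: transferring Koszulness.} With $A\to \OS(M)\to B$ an extension in which $\OS(M)$ is free over $A$ and $B=\OS(M)\otimes_A\QQ$, I would invoke the standard result on Koszul algebras (e.g.\ Positselski--Polishchuk, or the Backelin--Fröberg-style statement): if $C$ is a graded algebra that is free over a subalgebra $A$ generated in degree one, with quotient $B=C\otimes_A\QQ$, then $C$ is Koszul iff $A$ and $B$ are. One implementation is via a Cartan--Eilenberg/change-of-rings spectral sequence
\[
\operatorname{Tor}^{B}(\QQ,\operatorname{Tor}^{A}(\QQ,\QQ))\Rightarrow \operatorname{Tor}^{C}(\QQ,\QQ),
\]
checking diagonality of the bigraded pieces. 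In graded-commutative settings, freeness typically lets this degenerate. The converse direction additionally uses that $A$ is an algebra retract of $C$, and retracts of Koszul algebras are Koszul. For that I need a retraction $\OS(M)\to \OS(M|_F)$. Modularity should supply one: send $e_x$ for $x\notin F$ to a suitable $e_f$, using the projection from $F$ provided by modularity (every line through $x$ and a point of $F$... ).

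\emph{Main obstacle.} I expect the hard part to be Step 3, specifically the spectral sequence argument in both directions. A plain "free extension" does not suffice for the general Koszul transfer; one needs something like the normality/central-type hypothesis used by Shelton--Yuzvinsky for fiber-type arrangements. My first attempt would be to show that the $A$-module structure is compatible enough (the ideal $\OS(M)A_+$ equals $A_+\OS(M)$, which holds by graded commutativity) to apply the Backelin--Fröberg theorem on Koszulness under normal/regular-type extensions. The second delicate point is the presentation matching in Step 2 for circuits meeting $F$ in a single element.
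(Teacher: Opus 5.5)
Your architecture is the paper's: Terao's freeness of $\OS(M)$ over $A=\OS(M|_F)$, the identification of $\OS(M)/(A_+)$ with $\rOS(\overline{T}_F(M))$, a change-of-rings transfer, and a retraction for the converse. However, the two steps that actually transfer Koszulness are left open, and one of them rests on an unspecified construction.

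First, Step 3, which you call the main obstacle, is short once you use flat base change. Since $\OS(M)$ is free over $A$, the functor $\OS(M)\otimes_A(-)$ is exact. Applying it to a linear free resolution of $\QQ$ over $A$ (when $A$ is Koszul) gives a linear free resolution of $B=\OS(M)\otimes_A\QQ$ over $\OS(M)$. Now use the standard criterion: if a quotient $S$ of $R$ has a linear free resolution over $R$, then $R$ is Koszul if and only if $S$ is. This yields $\OS(M)$ Koszul $\iff$ $B$ Koszul in both directions at once. You need no degeneration, regular-element or central-element hypothesis beyond $\OS(M)A_+=A_+\OS(M)$, and graded commutativity supplies that. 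Your spectral sequence also works, and it does not need to degenerate. When $A$ is Koszul, $\operatorname{Tor}^A_q(\QQ,\QQ)$ lives in internal degree $q$, so $E^2$ is diagonal exactly when $\operatorname{Tor}^B(\QQ,\QQ)$ is. Conversely, a lowest off-diagonal class in the row $q=0$ can neither support nor receive a nonzero differential, for internal-degree reasons, so it survives to $\operatorname{Tor}^{\OS(M)}(\QQ,\QQ)$. Either way, everything hinges on $A$ being Koszul, which is the second issue.

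Second, the retraction. Your ``send $e_x$ to a suitable $e_f$'' is neither specified nor checked, and modularity is not what is needed. Instead use $e_x\mapsto 0$ for $x\notin F$. Because $F$ is a flat, no circuit $C$ has $|C-F|=1$. So each $\partial e_C$ either lies in the exterior algebra on $F$ or has every monomial killed. This gives a well-defined algebra retraction of $\OS(M|_F)\hookrightarrow\OS(M)$ for every flat, and it also proves injectivity of $\iota$ without modularity. Hence $M$ OS-Koszul forces $M|_F$ OS-Koszul, and both directions of the theorem reduce to the case $A$ Koszul, handled above.

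Finally, in Step 2 the delicate case is not a circuit meeting $F$ in one element $f$. There $C-f$ spans some $f'\in F$ in $M$, and the $M$-circuit through $f'$ inside $(C-f)\cup\{f'\}$ kills $e_{C-f}$ modulo $I(M)+(e_{f'})$. The delicate case is a circuit $C$ of $\overline{T}_F(M)$ disjoint from $F$ but independent in $M$. There modularity gives $\rk_M(\cl_M(C)\cap F)=2$ and forces every $\cl_M(C-c)$ to meet $F$, so every term of $\partial e_C$ dies. You can bypass this entirely, as the paper does: prove only the easy surjection $\OS(M)/(e_f : f\in F)\twoheadrightarrow\rOS(\overline{T}_F(M))$ and compare dimensions using Terao's decomposition, which you need anyway.
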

\noindent
Since we cannot avail ourselves of the topological techniques in \cite{falk_proudfoot}, we instead rely crucially on a result of Terao \cite{Terao86}*{Theorem 3.8} on the decomposition of the Orlik--Solomon algebra induced by a modular flat.
Remarkably, if we replace the Koszul property for the Orlik--Solomon algebra with the supersolvability of the matroid then \emph{neither} implication in \Cref{thm:main} holds.
This gives two \emph{distinct} mechanisms to produce examples of non-supersolvable matroids with Koszul Orlik--Solomon algebras. We also provide a third mechanism that combines both failures.

Exploiting the failure of the forward implication, we show in Sections \ref{sec:first_counterexample} and \ref{sec:modular_no_flag} how to construct non-supersolvable complete principle truncations of supersolvable matroids, giving a first mechanism.
We note that, incidentally, complete principal truncations were likewise used by Whittle \cite{Whittle} to disprove a conjecture of Welsh \cite{Welsh80}*{Conjecture 8.4}.

Exploiting the failure of the backward implication, we show in Sections \ref{sec:modular_joins} and \ref{sec:parallel_connections} how to construct non-supersolvable parallel connections of supersolvable matroids, which we show always have Koszul Orlik--Solomon algebras. This is our second mechanism.

Combining the ideas of both, in \Cref{sec:GPC_Koszul} we show:
\begin{IntroThm}\label{thm:GPC_Koszul}
    A generalized parallel connection of two matroids with Koszul Orlik--Solomon algebras has a Koszul Orlik--Solomon algebra.
\end{IntroThm}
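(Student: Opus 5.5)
We deduce \Cref{thm:GPC_Koszul} from \Cref{thm:main}. Let $M_1,M_2$ be matroids with Koszul Orlik--Solomon algebras, and let $P=P_N(M_1,M_2)$ be their generalized parallel connection along a common restriction $N=M_1|_T=M_2|_T$. By definition, $T$ is a modular flat of $M_1$. We may assume $T$ has positive rank; otherwise $P$ is the direct sum $M_1\oplus M_2$, and $\OS(M_1\oplus M_2)\cong \OS(M_1)\otimes\OS(M_2)$ is Koszul because tensor products of Koszul algebras are Koszul. The plan is to apply \Cref{thm:main} twice: once to pass from $M_1$ down to smaller pieces, and once to pass from those pieces up to $P$. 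Both applications use the modular flat $E(M_2)$ of $P$, or a related flat.

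\textbf{Step 1.} Recall the standard facts about generalized parallel connections (Brylawski). In $P$, the ground set $E_2=E(M_2)$ is a flat, and $P|_{E_2}=M_2$. Moreover, $E_2$ is a modular flat of $P$ whenever $T$ is modular in $M_1$. I would check this modularity from the rank formula for $P$, namely that for flats $X$ of $P$,
\[
r_P(X)=r_{M_1}(\cl_1(X\cap E_1)\cap T \cup (X\cap E_1)) + \dots .
\]
Concretely, I would show $r(X)+r(E_2)=r(X\cup E_2)+r(X\cap E_2)$ by reducing to the modularity of $T$ in $M_1$.

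\textbf{Step 2.} Identify the complete principal truncation $\overline{T}_{E_2}(P)$ with $\overline{T}_T(M_1)$. Intuitively, truncating along $E_2$ collapses $E_2$ to a single point, and the part of $P$ outside $E_2$ sees $E_2$ only through $T$. So $\overline{T}_{E_2}(P)$ should equal $M_1$ with $T$ collapsed, which is $\overline{T}_T(M_1)$, up to parallel elements. This holds up to deletion of parallel copies, and adding or removing parallel elements does not affect Koszulness of $\OS$: it is a tensor-type modification, or it follows from \Cref{thm:main} applied to a rank-one flat.

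\textbf{Step 3.} Conclude. By \Cref{thm:main}, forward direction, applied to $M_1$ with the modular flat $T$, the algebra $\OS(\overline{T}_T(M_1))$ is Koszul. By hypothesis, $\OS(P|_{E_2})=\OS(M_2)$ is Koszul. By \Cref{thm:main}, backward direction, applied to $P$ with the modular flat $E_2$, the algebra $\OS(P)$ is Koszul.

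The main obstacle is Step 2: the precise matroid identification of $\overline{T}_{E_2}(P)$ with $\overline{T}_T(M_1)$. This requires unwinding \Cref{df:complete_trunc} together with Brylawski's rank function for $P$, checking flats rank by rank, and handling loops and parallel classes carefully. Step 1 is the secondary difficulty.
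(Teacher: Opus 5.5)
Your proposal is correct and is essentially the paper's proof. Apply \Cref{thm:main} backward to $P$ along the modular flat $E(M_2)$ (\Cref{prop:side_modular}) and forward to $M_1$ along $T$, and link the two via Brylawski's identification $\si(\overline{T}_{E(M_2)}(P))\cong\si(\overline{T}_T(M_1))$ (\Cref{lm:truncation_computation}); that identification is less of an obstacle than you expect, since it follows immediately by comparing flats in \Cref{df:complete_trunc} and \Cref{df:GPC}. The only slip is your justification for discarding parallel copies: the right reason is simply $\OS(M)\cong\OS(\si(M))$, whereas applying \Cref{thm:main} to a rank-one flat is vacuous because truncating along an atom changes nothing. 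Your separate treatment of $T=\varnothing$ is a welcome detail that the paper leaves implicit.
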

\noindent
As our third mechanism, we construct non-supersolvable generalized parallel connections with Koszul Orlik--Solomon algebras.
In particular, such examples can be found of any rank $\geq 5$ in the class of irreducible signed-graphic arrangements; see Examples \ref{ex:pcnonss} and \ref{ex:non_ss_modular_join}, as well as the discussion preceding the latter.

So far, we have explained how \Cref{thm:main} can be used to construct arrangements with Koszul Orlik--Solomon algebras.
However, we would also like to be sure that our arrangements are not supersolvable. 
As a practical matter, this property can be checked by a finite computation in a reasonable time for any given arrangement (unlike the Koszul property).
However, we explain general reasons why supersolvability fails in our constructions.
In the case of the first mechanism, the reason is given in \Cref{cor:truncation_not_ss}.
In the two other cases, non-supersolvability is guaranteed by the following \emph{corrected form} of a theorem of Ziegler.
\begin{IntroThm}\label{thm:ss:modular:join}
Let $M$ be a matroid and $F$ an inclusion-minimal modular flat in $M$ such that 
$M$ is a non-trivial modular join over $F$.
Write $M = P_{F}(M|_{F'}, M|_{F''})$ for some flats $F', F''$ of $M$ containing $F$.
Then $M$ is supersolvable if and only if $M|_{F'}$ and $M|_{F''}$ are both supersolvable and one of them has a full modular flag passing through $F$.
\end{IntroThm}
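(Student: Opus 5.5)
We prove the two implications separately, using the standard facts about modular flats in geometric lattices. If $G$ is modular in $M$ and $E$ is any flat, then $G\wedge E$ is modular in $M|_E$. Conversely, if $G$ is modular in $M|_{E}$ and $E$ is modular in $M$, then $G$ is modular in $M$. In a parallel connection $M=P_F(M|_{F'},M|_{F''})$ we have $F'\vee F''=E(M)$, $F'\wedge F''=F$, and both $F'$ and $F''$ are modular in $M$. The modularity of $F$ in $M|_{F'}$ and in $M|_{F''}$ is part of the hypothesis that $M$ is a modular join over $F$.

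\textbf{Backward direction.} Suppose $M|_{F'}$ is supersolvable with a full modular flag $\emptyset=G_0<\dots<G_k=F<\dots<G_m=F'$ passing through $F$, and $M|_{F''}$ is supersolvable with a full modular flag $H_\bullet$. We build a flag in $M$ by taking $G_0<\dots<G_m=F'$ and then extending by $F'\vee H_j$ for those $j$ where the rank jumps. Each $G_i$ is modular in $M|_{F'}$, and $F'$ is modular in $M$, so each $G_i$ is modular in $M$ by transitivity. For $F'\vee H_j$, the work is to show modularity in $M$. We do this by viewing $M$ over $F'$: the contraction-type structure of a parallel connection makes flats above $F'$ correspond to flats of $M|_{F''}$ above $F$. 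Concretely, we check the modular rank identity $r(X)+r(F'\vee H_j)=r(X\vee(F'\vee H_j))+r(X\wedge(F'\vee H_j))$ by computing ranks with the parallel connection formula $r_M(A)=r(A\cap F')+r(A\cap F'')-r(F)$ for suitable closed $A$. Here we use that $H_j\vee F$ is modular in $M|_{F''}$, being a join of two modular flats, one of which is contained in... This is where care is needed, since modular flats are not closed under joins in general. Instead we should use $H_j\vee F$ in $M|_{F''}$ via the flag $H_j\wedge F$. A cleaner route is the characterization of supersolvability by a chain of modular coatoms: $F'$ is reached from $E(M)$ by successively deleting the elements of $F''\setminus F$, level by level along $H_\bullet$. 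At each step, the flat $F'\vee H_j$ is a modular coatom in $M|_{F'\vee H_{j+1}}$, because $H_j\vee F$ is (after reindexing) a modular coatom in $H_{j+1}\vee F$ in $M|_{F''}$.

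\textbf{Forward direction.} Suppose $M$ is supersolvable with full modular flag $X_\bullet$. Intersecting with $F'$ gives a chain $X_i\cap F'$ of modular flats of $M|_{F'}$, and after deleting repeats it is a full flag, since ranks increase by at most one. Hence $M|_{F'}$ is supersolvable, and likewise $M|_{F''}$. The key claim is that one of the two restrictions has a full modular flag through $F$. Let $i$ be minimal with $X_i\cap F'\ne X_i\cap F$ or $X_i\cap F''\ne X_i\cap F$, i.e. the first flat leaving $F$. Consider the flat $X_{i-1}\wedge F\subseteq F$, which is modular in $M$ and nonempty if proper. Here minimality of $F$ enters: we show that $X_j\wedge F$ for $j<i$ gives a modular flag inside $F$, and that the flat $X_{i-1}\vee F$ … If $X_i\cap F'\supsetneq X_i\cap F$ while $X_i\cap F\subsetneq F$, then $(X_i\cap F')\vee F$ and related meets produce a smaller flat $F_0=F\wedge(\text{something modular})$ over which $M$ is still a nontrivial modular join, contradicting the minimality of $F$. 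So the flag reaches all of $F$ before leaving it on at least one side. Then the chain $X_i\cap F'$ passes through $F$, which is the flag we need.

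\textbf{Main obstacle.} The hardest step is this contradiction: producing from a supersolvable flag that ``splits'' $F$ a strictly smaller modular flat $F_0\subsetneq F$ with $M=P_{F_0}(\cdot,\cdot)$ nontrivial. We would try $F_0=X_k\wedge F$ for the last $k$ with $X_k\subseteq F'\cup F''$ differently on both sides. We would then verify the parallel-connection rank formula for $F_0$ using the modular rank identity applied to $X_k$ and to $F'$, $F''$.
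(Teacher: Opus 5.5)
Your backward direction uses the two flags in the wrong roles, and as written it fails. The hypothesis that $G_\bullet$ passes through $F$ is never used: any full modular flag of $M|_{F'}$ reaches $F'$, which is already modular in $M$. Meanwhile, the upper segment needs $F'\vee H_j=F'\cup(H_j\vee F)$ to be modular in $M$. Intersecting with $F''$, this forces $H_j\vee F$ to be modular in $M|_{F''}$ for all $j$, i.e.\ a modular chain from $F$ up to $F''$ inside $M|_{F''}$, which is exactly what is \emph{not} assumed on that side.

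Your claim that $H_j\vee F$ is a modular coatom of $H_{j+1}\vee F$ is false. Take $M=P_1(U_{2,3},\mathrm{O}_7)$ from \Cref{rmk:error}, where the flag through $F=\{1\}$ lives in $F'=E(U_{2,3})$. Since $\{4,5,6,7\}$ is the unique modular line of $\mathrm{O}_7$, every full modular flag of $\mathrm{O}_7$ starts at some $h\in\{4,5,6,7\}$, and $\cl\{1,h\}$ is never modular. So your chain $\varnothing\subsetneq\{1\}\subsetneq F'\subsetneq F'\cup\cl\{1,h\}\subsetneq E(M)$ is not a modular flag, although $M$ is supersolvable.

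The fix is to swap roles, as in \Cref{lm:sufficiency}: run an arbitrary full modular flag of $M|_{F''}$ up to $F''$, then append $G_i\cup F''$ for $i\ge k$. A flat $K$ disjoint from $G_i\cup F''$ lies in $F'$. Submodularity and the modularity of $G_i$ in $M|_{F'}$ then give $\rk_M(G_i\cup F'')+\rk K\le \rk G_i+\rk F''-\rk F+\rk K\le\rk F'+\rk F''-\rk F=\rk M$. This is modularity by criterion (ii); it is exactly \Cref{lm:modular_in_gpc}.

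The forward direction has a more serious gap: its key claim is false as stated, and the promised contradiction with minimality is never constructed. The same $M$ is a minimal non-trivial modular join (a parallel connection of connected matroids). Its full modular flag $\varnothing\subsetneq\{4\}\subsetneq\{4,5,6,7\}\subsetneq E(\mathrm{O}_7)\subsetneq E(M)$ leaves $F$ at the first step, on the $\mathrm{O}_7$ side, while $X_1\cap F=\varnothing\subsetneq F$. Its trace on $E(\mathrm{O}_7)$ never passes through $F$; only the trace on $E(U_{2,3})$ does. So no contradiction can be derived from the first flat that leaves $F$. The correct statement is only that the traces on $F'$ and on $F''$ cannot \emph{both} leave $F$ before exhausting it. Your phrase \emph{related meets produce a smaller flat $F_0$} gives no way to prove even that.

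The missing idea is to argue from the top.
\begin{enumerate}
\item Assume $M$ simple and let $H$ be the coatom of a full modular flag.
\item Points $x'\in F'-(H\cup F)$ and $x''\in F''-(H\cup F)$ cannot both exist, since $\{x',x''\}$ would be a line of $M$ disjoint from $H$. So say $F'-F\subseteq H$.
\item Then $M$ is the non-trivial modular join of the modular flats $F'\cap H$ and $F''$ over $H\cap F$ (\Cref{prop:ziegler}), and minimality forces $F\subseteq H$.
\item If $F''\cap H=F$, then $F$ is a modular coatom of $M|_{F''}$ and the flag through $F$ is immediate.
\item Otherwise $M|_H$ is a non-trivial modular join of $F'$ and $F''\cap H$ over $F$. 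One shows this join is still minimal, via \Cref{prop:magic} and the fact that modular restrictions of connected matroids are connected.
\item Induction on rank then concludes: a flag through $F$ in $M|_{F''\cap H}$ extends to $M|_{F''}$, because $F''\cap H$ is a modular coatom there.
\end{enumerate}
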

\noindent
In \cite{Z91}, the statement mistakenly requires that $F$ belong to modular flag in \textit{both} $M|_{F'}$ and $M|_{F''}$.  \Cref{rmk:error} shows this not to be correct.

For any arrangement $\A$ there are commutative analogues of the Orlik--Solomon algebra known as the Orlik--Terao algebra $\OT(\A)$ and the Artinian Orlik--Terao algebra $\AOT(\A)$.
Orlik and Terao \cite{OrlikTeraoAlgebras} used the Artinian Orlik--Terao algebra to count chambers in real hyperplane arrangements, settling a conjecture of Aomoto \cite{Aomoto96}.
Schenck and Tohaneanu used the quadratic component of the defining ideal of the (non-Artinian) Orlik--Terao algebra to characterize $2$-formal arrangements \cite{SchenckTohaneanuOTAlgebras}.
It was known that if $\mathcal{L}(\A)$ is supersolvable, then both $\OT(\A)$ and $\AOT(\A)$ have quadratic Gr\"obner bases and thus are Koszul.
Whether the converse holds was raised in \cite{Denham14}*{Question 4.7}.  
Inspired by our use of parallel connections (our second mechanism) to answer \Cref{qst:mainquestion}, we show  that the same construction answers this question negatively as well.
\begin{IntroThm}\label{thm:koszul:nonss:OT}
    For any rank $r \ge 5$, there is an irreducible, central complex hyperplane arrangement $\A$ such that both $\OT(\A)$ and $\AOT(\A)$ are Koszul but $\mathcal{L}(\A)$ is not supersolvable.
\end{IntroThm}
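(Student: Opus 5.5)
The plan is to realize $\A$ as a parallel connection of two supersolvable arrangements, chosen so that the result is not supersolvable, and then to prove Koszulness of $\OT(\A)$ and $\AOT(\A)$ directly from the parallel connection structure.

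\textbf{Choice of arrangement.} Take $\A$ to be (a realization of) a parallel connection $P_F(M_1,M_2)$ of two supersolvable, $\QQ$-realizable matroids along a common modular flat $F$, arranged as in the second mechanism. We pick $F$ so that neither $M_1$ nor $M_2$ has a full modular flag passing through $F$. \Cref{thm:ss:modular:join} then shows that $\mathcal{L}(\A)$ is not supersolvable. Rank $5$ is the smallest rank where such signed-graphic examples are available, and irreducibility is inherited when $M_1$ and $M_2$ are irreducible. Coning, or enlarging one factor while keeping the same gluing, pushes the rank to any $r\ge 5$. We also need the result to be realizable over $\CC$. This holds because the gluing flat $F$ is modular in each factor, so realizations can be glued after a projective change of coordinates making the realizations of $F$ agree.

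\textbf{Algebraic core.} The main step is to show that $\OT$ and $\AOT$ behave well under parallel connection. Write the ground set as $E = E_1\cup E_2$ with $E_1\cap E_2 = F$. We aim to show
\[
\OT(P_F(M_1,M_2)) \cong \OT(M_1)\otimes_{\OT(M_1|_F)} \OT(M_2),
\]
and similarly for $\AOT$. The argument uses the fact that circuits of a parallel connection along a modular flat are either circuits of one side, or are obtained by "crossing" through $F$. The crossing relations are generated modulo the ideal by the relations from each side. This uses modularity: a circuit meeting both sides minus $F$ is a combination of circuits of $M_1$ and $M_2$ sharing an element of $\cl(F)$.

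We then show that each $\OT(M_i)$ is flat, indeed free, over $\OT(M_i|_F)$. For supersolvable matroids, a quadratic Gröbner basis compatible with the modular flag gives a monomial basis adapted to $F$. Finally, Koszulness passes to the tensor product: a quadratic Gröbner basis for the fiber product can be assembled from those of the factors, using an elimination order in which the elements of $F$ are smallest. The union of the two Gröbner bases should remain Gröbner, since S-pairs between the sides reduce through $F$. Alternatively, base change for Koszul algebras along flat maps of Koszul algebras gives the same conclusion.

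\textbf{Main obstacle.} The hardest step is the Gröbner (or flatness) argument across $F$. The natural monomial orders for $M_1$ and $M_2$ must agree on $F$, but neither factor has a modular flag through $F$, which is exactly the source of non-supersolvability. The first thing to try is to use only that $F$ itself is modular, so that $M_i|_F$ splits off with a basis adapted to $F$, and to check the cross S-pairs by hand using the broken-circuit description of the monomial basis. If that fails, the fallback is a Hilbert series comparison. We would show that the Hilbert series of the tensor product equals that of $\OT(\A)$, using the fact that the characteristic polynomial of a parallel connection is $\chi_{M_1}\chi_{M_2}/\chi_{M_1|_F}$. We would then deduce the isomorphism and obtain Koszulness from the Koszulness of the factors.
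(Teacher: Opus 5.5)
There is a genuine gap in the Koszulness step, which is the whole content of the theorem. Your architecture matches the paper: a parallel connection of supersolvable pieces, with non-supersolvability from the corrected Ziegler theorem. But your primary route cannot work. That route assembles a quadratic Gr\"obner basis for the glued ideal from those of the factors. Once you eliminate the linear relation identifying the glued variables, you are in the standard presentation of $\OT(\A)$. There $I_{\OT(\A)}$ has a quadratic Gr\"obner basis for some monomial order if and only if $\mathcal{L}(\A)$ is supersolvable, which you deliberately arranged to fail. Likewise, freeness of $\OT(M_i)$ over $\OT(M_i|_F)$ ``from a Gr\"obner basis compatible with the modular flag'' contradicts your choice that no full modular flag passes through $F$. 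So your ``main obstacle'' is an impossibility, and Koszulness must be proved without any Gr\"obner basis.

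Your fallback (Hilbert series plus ``flat base change'') is too vague to close this. For a general modular $F$, several of its inputs are also unproved:
\begin{itemize}
\item the fiber-product description of $\OT$, which depends on the realization and not just the matroid;
\item the gluing of realizations, since modular joins of realizable matroids need not be realizable;
\item the minimality hypothesis in \Cref{thm:supersolvable_GPC}.
\end{itemize}

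The paper avoids all of this by gluing along a single element. The chordless-circuit relations generate $I_{\OT}$, and chordless circuits of a parallel connection lie on one side, so $\OT(\mathcal{B}) \cong \OT(\A\times\A')/(y_1-z_1)$. The product algebra is Koszul as a tensor product. Since $\OT(\A\times\A')$ is a domain, $y_1-z_1$ is a regular linear form, so the quotient is Koszul by Backelin--Fr\"oberg. Torsion-freeness, not a Gr\"obner basis, supplies the flatness you wanted.

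Your ``similarly for $\AOT$'' hides the real difficulty. $\AOT(\A\times\A')$ is Artinian, so $y_1-z_1$ is a zero divisor, and no regularity or evident flatness argument applies. The missing idea is that $y_1-z_1$ and $y_1+z_1$ form an exact pair of zero divisors. The sign change $z_i\mapsto -z_i$ gives both quotients the same Hilbert series. By \Cref{prop:tensorreducedos} this series is $\pi(\mathcal{B},t)=(1+t)g(t)h(t)$. Since $(y_1+z_1)\subseteq\ann(y_1-z_1)$, the short exact sequence for multiplication by $y_1-z_1$ then forces $\ann(y_1-z_1)=(y_1+z_1)$, and symmetrically. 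The Henriques--\c{S}ega result on exact pairs then gives Koszulness of the quotient.

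For the rank, enlarge a factor rather than cone. Coning a central arrangement just takes a product with a coordinate hyperplane, which is reducible. The paper uses \Cref{lm:Dow_construction} with $k=1$ to glue a rank-$(r-2)$ piece and a rank-$3$ piece from $MQ_3(2)$ along $x_1$, chosen so $x_1$ lies on no modular line of either piece.
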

\noindent

\subsection*{Roadmap}
The rest of this paper is organized as follows.
\Cref{sec:background} contains the necessary background material on matroids, arrangements, and Koszul algebras.
In \Cref{sec:first_counterexample}, we prove \Cref{thm:main} and use it to construct a first counterexample to the Shelton-Yuzvinsky problem.
In \Cref{sec:modular_no_flag} we explain how to generalize the example in the previous section using Dowling geometries and prove \Cref{thm:full_counterexample}.
\Cref{sec:modular_joins} studies when the modular join of two matroids is supersolvable, leading to a correct form of Ziegler's result in \Cref{thm:ss:modular:join}.
In \Cref{sec:parallel_connections}, we apply this knowledge to produce non-supersolvable parallel connections with Koszul Orlik--Solomon algebras.
We also answer a question of Falk and Randell regarding the existence of combinatorially distinct arrangements with homotopy equivalent complements.
In \Cref{sec:koszul:OT}, we construct Koszul Orlik--Terao algebras from non-supersolvable arrangements, proving \Cref{thm:koszul:nonss:OT}.  In \Cref{sec:GPC_Koszul}, we show that generalized parallel connections preserve Koszulness of the Orlik--Solomon algebra, proving \Cref{thm:GPC_Koszul}.
In the final \Cref{sec:questions}, we ask a number of natural questions about the relationship between the OS-Koszul property and other well-known algebraic properties of hyperplane arrangements.

\section{Background}\label{sec:background}
\subsection{Matroids}\label{subsec:matroids}
We briefly review some aspects of the theory of matroids and lay out conventions that will be used in this text. 
For a thorough treatment of matroids, see \cite{Oxley} or \cite{white}.

Given a matroid $M$, we write $E(M)$ for its \textbf{ground set}, $\rk M$ for its rank, and $\rk_M S$ for the rank of any subset $S \subseteq E$. We say that $M$ is \textbf{trivial} if $E(M) = \varnothing$.
We write $\mathcal{L}(M)$ for the \textbf{lattice of flats} of $M$ and $\cl_M$ for the \textbf{closure} operator of $M$.
The closure of a subset of $E(M)$ is its \textbf{span}.
A subset $J \subseteq E(M)$ is \textbf{spanning} if its span is all of $E(M)$.
It is \textbf{dependent} if $J$ is contained in the span of some proper subset $J' \subsetneq J$ and \textbf{independent} otherwise.
A \textbf{circuit} is an inclusion-minimal dependent set. A \textbf{basis} is a maximal independent set; equivalently, a minimal spanning set.
Given flats $F$ and $G$ of $M$, we write $F \vee G$ for their \textbf{join}, i.e.\  the flat $\cl_M(F \cup G)$. For a subset $S$ of $M$, we write
$M/S$ for the \textbf{contraction} of $M$ by $S$ and $M\del S$ for the \textbf{deletion} of $S$ from $M$. We write $M|_S$ for the \textbf{restriction}\footnote{A warning: this nomenclature clashes with the notion of {restriction} for hyperplane arrangements, which is instead related to matroid contraction.} of $M$ to $S$, which coincides with the deletion $M\del (E - S)$.
A matroid obtained from $M$ by restriction is a \textbf{submatroid} of $M$. If this restriction is to a spanning subset, we call it a \textbf{spanning submatroid} of $M$.

\textit{In this paper, matroids will always be assumed \emph{\textbf{loopless}}:} 
the rank of any singleton must be 1. 
They will sometimes additionally be assumed \textbf{simple}: the rank of each pair is 2. 
Any loopless matroid $M$ can be made simple by identifying all elements in each atom.
The matroid thus obtained from $M$ is its \textbf{simplification}, denoted $\si(M)$. 
The map $M \mapsto \mathcal{L}(M)$ induces a bijection between simple matroids and \textbf{geometric lattices} (each considered up to isomorphism).
The rank 1 flats of a matroid are called \textbf{atoms} or \textbf{points}.
In simple matroids, these may be identified with the elements of the ground set.
The rank 2 flats are called \textbf{lines}.
The maximal proper flats of a matroid are its \textbf{coatoms}.\footnote{These are sometimes called \emph{hyperplanes}, a convention we avoid here for obvious reasons.} 
A \textbf{coloop} of a matroid $M$ is an element $x \in E(M)$ whose \emph{complement} $E(M) - \{x\}$ is a coatom.

The \textbf{direct sum} of two matroids $M$ and $N$ is the matroid $M \oplus N$ whose ground set is the disjoint union $E(M) \sqcup E(N)$ and whose flats are the unions $F \cup G$ for $F \in \mathcal{L}(M)$, $G \in \mathcal{L}(N)$. 
A matroid is \textbf{disconnected} if can be expressed as a direct sum of non-trivial matroids;
equivalently, if its ground set may be written as the (necessarily disjoint) union of two proper flats of complementary rank.
A matroid is \textbf{connected} if it is not disconnected. 
Any matroid may be expressed in an essentially unique way as a direct sum of (possibly many) connected matroids.
A property stronger than connectedness (for loopless matroids) is \textbf{roundness}.
A matroid is round if its ground set cannot be written as the union of two proper flats whatsoever.
From the definition, we get:
\begin{Prop}\label{prop:round_contract}
    Let $M$ be a round matroid and $F$ a flat of $M$. Then $M/F$ is round.
\end{Prop}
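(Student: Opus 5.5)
The plan is to use the standard description of the flats of a contraction: the flats of $M/F$ are exactly the sets $G - F$ where $G$ ranges over the flats of $M$ containing $F$. Moreover, $G \mapsto G - F$ is an order-isomorphism from the interval $[F, E(M)]$ of $\mathcal{L}(M)$ onto $\mathcal{L}(M/F)$. In particular, $G - F$ is a proper flat of $M/F$ if and only if $G$ is a proper flat of $M$ containing $F$.

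Before the main argument, I would address the looplessness convention. Contracting a flat $F$ creates no loops: an element $x \notin F$ is a loop of $M/F$ only if $x \in \cl_M(F) = F$. So $M/F$ is again a loopless matroid, and roundness makes sense for it.

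For the main step, suppose for contradiction that $E(M/F) = E(M) - F$ is the union of two proper flats $G_1 - F$ and $G_2 - F$ of $M/F$. Here $G_1, G_2$ are proper flats of $M$ containing $F$. Then
\[
E(M) = F \cup (G_1 - F) \cup (G_2 - F) \subseteq G_1 \cup G_2,
\]
since $F \subseteq G_1$. So $E(M)$ is a union of two proper flats of $M$, contradicting roundness of $M$.

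One degenerate case needs to be noted. If $F = E(M)$, then $M/F$ is trivial, and it is round vacuously: the empty ground set has no proper flats, so it cannot be written as a union of two of them. I expect no real obstacle here; the only point requiring care is recalling the correspondence between flats of $M/F$ and flats of $M$ above $F$, which is standard (e.g.\ \cite{Oxley}).
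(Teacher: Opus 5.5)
Your proof is correct and is the argument the paper leaves implicit when it says the proposition follows ``from the definition'': flats of $M/F$ are the sets $G - F$ for flats $G \supseteq F$ of $M$, so a cover of $E(M/F)$ by two proper flats lifts to a cover of $E(M)$ by two proper flats. Your remarks on looplessness of $M/F$ and on the degenerate case $F = E(M)$ are accurate additions the paper does not spell out.
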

As an example, if $G$ is a finite simple connected graph, its graphic matroid $M(G)$ (see \Cref{subsec:graphic}) is round if and only if $G$ is a complete graph \cite{Oxley}*{p. 326}.
The following is a useful sufficient criterion for roundness.
\begin{Prop}\label{prop:round_criterion}
    Let $M$ be a simple matroid.
    Suppose we can find a basis $B$ of $M$ such that each two-element subset of $B$ lies in a 3-element circuit of $M$. 
    Then $M$ is round.
\end{Prop}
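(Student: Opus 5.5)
We argue by contradiction. Suppose $E(M) = F_1 \cup F_2$ with $F_1, F_2$ proper flats of $M$. The plan is to show that one of the two flats already contains all of $B$, and hence is spanning, so that it equals $E(M)$.

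\emph{Step 1: each element of $B$ is placed in a flat.} Every element of $B$ lies in $F_1$ or in $F_2$. Neither flat can contain all of $B$: a flat containing $B$ contains $\cl_M(B) = E(M)$, contradicting properness. So both $B \setminus F_1$ and $B \setminus F_2$ are nonempty. Choose $b_1 \in B \setminus F_2$ and $b_2 \in B \setminus F_1$. Then $b_1 \in F_1$ and $b_2 \in F_2$, and $b_1 \neq b_2$.

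\emph{Step 2: use the hypothesis.} By assumption the pair $\{b_1, b_2\}$ lies in a $3$-element circuit $\{b_1, b_2, c\}$ of $M$. Because the set is a circuit, each of its elements lies in the closure of the other two. In particular
\[ b_2 \in \cl_M\{b_1, c\} \quad\text{and}\quad b_1 \in \cl_M\{b_2, c\}. \]
The element $c$ lies in $F_1$ or in $F_2$.
\begin{itemize}
\item If $c \in F_1$, then $\{b_1, c\} \subseteq F_1$. Since $F_1$ is a flat, $b_2 \in \cl_M\{b_1,c\} \subseteq F_1$, contradicting $b_2 \notin F_1$.
\item If $c \in F_2$, the symmetric argument gives $b_1 \in \cl_M\{b_2,c\} \subseteq F_2$, contradicting $b_1 \notin F_2$.
\end{itemize}
Either way we reach a contradiction, so $M$ is round.

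Simplicity is used to guarantee that the three points $b_1, b_2, c$ are genuinely distinct elements in a rank-$2$ circuit. The circuit has exactly three elements, so $c \notin \{b_1, b_2\}$.

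There is no real obstacle here. The only point needing care is Step 1: the witnesses $b_1 \in F_1 \setminus F_2$ and $b_2 \in F_2 \setminus F_1$ must both come from $B$ itself, so that the hypothesis applies to the pair $\{b_1, b_2\}$. This works because a proper flat cannot contain a basis.
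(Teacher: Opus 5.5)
Your proof is correct and follows essentially the same route as the paper's: choose $b_1 \in B - F_2$ and $b_2 \in B - F_1$ (both exist because a proper flat cannot contain a basis), then use the 3-element circuit $\{b_1, b_2, c\}$. Whichever flat contains $c$ would then miss exactly one element of that circuit, which is impossible for a flat. One small point: simplicity is not actually needed to get $c \notin \{b_1, b_2\}$. That already follows from the circuit having exactly three elements, as you yourself note.
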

    \begin{proof}
        Suppose instead that the ground set of $M$ were the union of two proper flats $F$ and $G$. 
        Neither $F$ nor $G$ is spanning in $M$, so neither contains all of $B$. Choose (necessarily distinct)  $b \in B - F$ and $b' \in B - G$. Let $x$ be any element of $M$ such that $C = \{b,b',x\}$ is a 3-element circuit. 
        We have $b \in G - F$ and $b' \in F - G$. Without loss of generality, $x \in F$.
        Then $b$ is the unique element of the circuit $C$ not contained in the flat $F$.
        This is impossible.
    \end{proof}
\noindent
We now recall the basic properties of modularity and supersolvability in matroids.
\begin{Df}[{\cite{constructions}*{Theorem 3.3}}]
\label{df:modular}
    The following are equivalent for a flat $F$ in a matroid $M$:
    \begin{enumerate}[(i)]
    \item For any flat $G$ in $L$, we have
    \(
        \rk_M F + \rk_M G = \rk_M (F \vee G) + \rk_M (F \cap G).
    \)\label{it:eq_modular}
    \item For any flat $G$ of $M$ with $\rk_M F + \rk_M G > \rk M$, we have $\rk_M (F \cap G) > 0$.\label{it:complement}
    \end{enumerate}
    We say $F$ is \textbf{{modular}} in $M$ if it satisfies these equivalent conditions.
\end{Df}
\noindent
Flats of $M$ of rank $0$, $1$ or $\rk M$ are always modular.
Modular flats have a number of desirable properties. Here are some that we will need.
\begin{Prop}\label{prop:basic}
    Let $M$ be a matroid and $F, G \in \mathcal{L}(M)$.
    \begin{enumerate}
        \item Suppose $F \subseteq G$,
        $F$ is modular in $M|_G$, and $G$ is modular in $M$. Then $F$ is modular in $M$.\label{it:transitivity}
        \item If $F$ is modular in $M$, then $F \cap G$ is modular in $M|_G$.\label{it:restriction}
        \item If $F, G$ are modular in $M$, then $F \cap G$ is modular in $M$.\label{it:intersection}
        \item If $F$ is modular in $M$, then $(F \vee G)-G$ is modular in $M/G$.\label{it:quotient}
        \item If $F$ is modular in $M$ and $S \subseteq E(M)$ contains $F$, then $F$ is modular in $M|_S$.\label{it:deletion}
        \item If $F$ is modular in $M$ and $M$ is connected, then $M|_F$ is connected.\label{it:connected}
    \end{enumerate}
\end{Prop}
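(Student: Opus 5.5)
Each part follows from the two characterizations in \Cref{df:modular}: the rank identity \ref{it:eq_modular}, and the complement criterion \ref{it:complement}. Throughout I use submodularity, $\rk(A\vee B)+\rk(A\cap B)\le \rk A+\rk B$ for flats $A,B$. Because of it, each modular equality only needs the inequality ``$\ge$''.

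For (1), take a flat $H$ of $M$. Modularity of $G$ gives $\rk(G\vee H)=\rk G+\rk H-\rk(G\cap H)$. The set $G\cap H$ is a flat of $M|_G$, so modularity of $F$ in $M|_G$ gives $\rk(F\vee(G\cap H))=\rk F+\rk(G\cap H)-\rk(F\cap H)$, using $F\cap G\cap H=F\cap H$. Now $F\vee H\supseteq F\vee (G\cap H)$ and $G\supseteq F$, so combining the two identities should give $\rk(F\vee H)\ge \rk F+\rk H-\rk(F\cap H)$.
\begin{itemize}
\item[] Concretely: $\rk(F\vee H)+\rk(G\cap H)\ge \rk\bigl((F\vee H)\cap G\bigr)+\rk(G\vee H)$ by submodularity, since $(F\vee H)\vee G=G\vee H$.
\item[] Also $(F\vee H)\cap G\supseteq F\vee(G\cap H)$.
\item[] Substituting both modular identities yields the claim.
\end{itemize}
For (2), a flat of $M|_G$ is a flat $H\subseteq G$ of $M$. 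Then $F\cap H=(F\cap G)\cap H$, so $\rk(F\vee H)=\rk F+\rk H-\rk(F\cap H)$. We need the same identity with $F$ replaced by $F\cap G$, that is, $\rk((F\cap G)\vee H)\ge \rk(F\cap G)+\rk H-\rk(F\cap H)$. To get it, apply modularity of $F$ to both $G$ and $H$ together with submodularity for the pair $(F\vee H)\cap G$ and $F$.

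Part (3) then follows from (2) and (1): $F\cap G$ is modular in $M|_G$, and $G$ is modular in $M$. Part (5) is immediate from \ref{it:complement}. Flats of $M|_S$ are intersections $H\cap S$ with $H$ a flat of $M$, and taking $H=\cl_M(H\cap S)$ preserves rank. So if $\rk F+\rk H>\rk M|_S$, which is at most $\rk M$, then $F\cap H$ contains an element of $F\subseteq S$, and that element lies in $H\cap S$.

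For (4), flats of $M/G$ are flats $H\supseteq G$ with $\rk_{M/G}H=\rk H-\rk G$. I check identity \ref{it:eq_modular} for $F\vee G$ against $H$. Write $(F\vee G)\vee H=F\vee H$. By modularity of $F$, $\rk(F\vee H)=\rk F+\rk H-\rk(F\cap H)$ and $\rk(F\vee G)=\rk F+\rk G-\rk(F\cap G)$. The remaining step, and the main obstacle, is computing $\rk((F\vee G)\cap H)$. I expect $\rk((F\vee G)\cap H)= \rk G+\rk(F\cap H)-\rk(F\cap G)$. This should follow by applying modularity of $F$ to the flat $(F\vee G)\cap H\supseteq G$, noting $F\cap((F\vee G)\cap H)=F\cap H$ and that its join with $F$ is $F\vee G$. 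Then subtracting $\rk G$ throughout gives the identity in $M/G$.

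For (6), suppose $M|_F=M|_{A}\oplus M|_{B}$ with $A,B$ nonempty flats. Write $F'=\cl_M(A\cup (E-F))$. I would instead use the complement criterion: choose a flat $H$ with $H\cap F=A$ and $\rk H+\rk B=\rk M$. Modularity gives $H\vee F=E$ with $\rk(H\cap F)=\rk A$, and then $E=H\cup\cdots$. The cleanest route I would try is to show $E=\cl(H)\cup\cl(H'\cup B)$ with complementary ranks, where $H$ is a modular complement. I expect (6) to need the most care.
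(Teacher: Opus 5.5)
\textbf{Gap in (6), with local errors in (1), (2) and (5).} The paper gives no argument of its own for this proposition; each item is quoted from the literature, mostly Brylawski. Your direct derivation from the rank characterization is therefore a different route. Parts (3) and (4) go through as you wrote them; in (4), the step you call the main obstacle is settled exactly by applying modularity of $F$ to $(F\vee G)\cap H$, as you say. However, (6) is not proved, and the steps you give for (1), (2) and (5) fail as written.

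\emph{Part (6).} You never exhibit a separation of $E(M)$, and the one you point toward does not exist in general. A flat $H$ with $H\cap F=A$ and $\rk H+\rk B=\rk M$ does exist: take $H$ maximal with $H\cap F=A$, and modularity then gives $H\vee F=E$. But such an $H$ need not be a side of any separation, even when $M$ is disconnected. Take $M=U_{2,3}(\{a,x,y\})\oplus U_{2,3}(\{b,z,w\})$, a direct sum of two three-point lines, with the modular flat $F=\{a,b\}$, $A=\{a\}$, $B=\{b\}$. The only such $H$ are $\{a,x,y,z\}$ and $\{a,x,y,w\}$. Their complements have rank $2$, so $\rk H+\rk(E-H)=5>4$. (Also, $H'$ in your last sentence is never defined.)

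The missing idea is a way to carry the splitting of $F$ out to the rest of $M$. One route:
\begin{itemize}
\item Prove the modular law $(L\vee K)\cap F=L\vee(K\cap F)$ for flats $L\subseteq F$ and arbitrary flats $K$. The right side is contained in the left. Submodularity for the pair $L\vee(K\cap F)$, $K$, together with modularity of $F$ against $K$ and against $L\vee K$, shows the ranks agree.
\item Set $\delta(X,Y)=\rk X+\rk Y-\rk(X\vee Y)$. The modular law gives $\delta(L,K)=\delta(L,K\cap F)$ whenever $L\subseteq F$.
\item Suppose $M$ is connected and $M|_F=M|_A\oplus M|_B$ with $A,B\neq\varnothing$. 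Take a circuit $C$ through some $a\in A$ and $b\in B$. Then $D=C-F\neq\varnothing$, since a circuit of $M|_F$ lies in one summand.
\item Put $L_A=\cl(C\cap A)$, $L_B=\cl(C\cap B)$ and $W=\cl(D)\cap F$. Because $C$ is a circuit,
$\delta(L_A\vee L_B,\cl D)=\delta(L_A,L_B\vee\cl D)=\delta(L_B,L_A\vee\cl D)=1$.
\item Transfer these into $F$, noting $(L_B\vee\cl D)\cap F=L_B\vee W$ by the modular law. Since $\delta$ is additive over the summands of $M|_A\oplus M|_B$, the three equalities become $\delta(L_A,W\cap A)+\delta(L_B,W\cap B)=1$ and $\delta(L_A,W\cap A)=\delta(L_B,W\cap B)=1$. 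This is a contradiction.
\end{itemize}

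\emph{Parts (1), (2), (5).} The repairs here are local.
\begin{itemize}
\item In (1), the displayed inequality is submodularity for the pair $F\vee H$, $G$, so its left side must carry $\rk G$, not $\rk(G\cap H)$. As written it is false: take $M$ free on $\{1,2\}$, $F=G=\{1\}$, $H=\{2\}$. With $\rk G$ in place, your substitution does yield the claim.
\item In (2), submodularity for $(F\vee H)\cap G$ and $F$ bounds from below the rank of $(F\vee H)\cap G$. That flat \emph{contains} $(F\cap G)\vee H$, so nothing transfers to the flat you need. Use the pair $(F\cap G)\vee H$, $F$ instead. Its join is $F\vee H$, and because $(F\cap G)\vee H\subseteq G$ its meet is $F\cap G$. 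Modularity of $F$ against $H$ alone then gives $\rk((F\cap G)\vee H)\ge\rk(F\cap G)+\rk H-\rk(F\cap H)$.
\item In (5), the inequality runs the wrong way: $\rk F+\rk H>\rk(M|_S)$ does not imply $\rk F+\rk H>\rk M$ when $S$ is not spanning. Use the rank identity (i) instead. For a flat $K$ of $M|_S$ and $H=\cl_M K$, one has $F\cap K=F\cap H$, $\rk_{M|_S}K=\rk H$ and $\rk_{M|_S}(F\vee K)=\rk(F\vee H)$. So (i) for $F,K$ in $M|_S$ is literally (i) for $F,H$ in $M$. Alternatively, reduce to spanning $S$ using (2) with $G=\cl_M S$.
\end{itemize}
Since (3) rests on (1) and (2), it stands once these are fixed.
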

\begin{proof}
    \eqref{it:transitivity} is \cite{constructions}*{Proposition 3.5}.
    \eqref{it:restriction} is \cite{modular}*{Lemma 2}.
    \eqref{it:intersection} is \cite{constructions}*{Proposition 3.6}.
    \eqref{it:quotient} is a specialization of \cite{constructions}*{Corollary 3.9}.
    \eqref{it:deletion} is \cite{constructions}*{Proposition 3.8}.
    \eqref{it:connected} is the second half of \cite{constructions}*{Corollary 3.16}.
\end{proof}
\noindent
The following result, due to Brylawski, relates modular flats and components of a matroids.
\begin{Prop}[{\cite{constructions}*{Proposition 3.19}}]\label{prop:modular_descent}
    Let $M$ be a matroid and $F \in \mathcal{L}(M)$.
    Suppose $M/F$ is disconnected.
    Choosing $G, H \in \mathcal{L}(M)$ such that $G \cap H = F$ and $G \cup H = E(M)$, we have that $G$ is modular in $M$ if and only if $F$ is modular in $M|_H$.
\end{Prop}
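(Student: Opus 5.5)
For the forward direction I would use \Cref{prop:basic}\eqref{it:restriction} directly. If $G$ is modular in $M$, then $G \cap H$ is modular in $M|_H$. Since $G \cap H = F$, this is the claim, and the hypothesis on $M/F$ is not even needed here.

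For the converse, assume $F$ is modular in $M|_H$, and verify condition \eqref{it:eq_modular} of \Cref{df:modular} for $G$ against an arbitrary flat $K$ of $M$. Submodularity of the rank function always gives $\rk G + \rk K \ge \rk(G \vee K) + \rk(G \cap K)$. So it suffices to prove
\[
\rk K - \rk(G \cap K) \le \rk(G \vee K) - \rk G .
\]

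The first step is to compute the right-hand side using the decomposition of $M/F$. Because $M/F$ is the direct sum of its restrictions to $G - F$ and $H - F$, every flat of $M/F$ is the union of its parts in $G-F$ and in $H-F$, and ranks add. Since $F \subseteq G$, we have $G \vee K = G \vee (K \vee F)$. The flat $(K\vee F) - F$ of $M/F$ splits as a part inside $G-F$ and the part $((K\vee F)\cap H) - F$. Joining with $G - F$ in $M/F$ and using additivity of rank over the direct sum, we get
\[
\rk(G \vee K) - \rk G = \rk\bigl((K \vee F) \cap H\bigr) - \rk F .
\]
This bookkeeping of ranks in $M$ versus $M/F$ is the step I expect to need the most care. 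One has to check that the closure of $G \cup (K\vee F)$ really corresponds to $(G-F) \cup \bigl(((K\vee F)\cap H)-F\bigr)$ in $M/F$. This uses that flats of $M/F$ are exactly the sets $L - F$ for flats $L \supseteq F$ of $M$, with $\rk_{M/F}(L-F) = \rk L - \rk F$.

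The second step bounds the left-hand side. Since $K = (K\cap G)\cup(K\cap H)$ and $(K \cap G) \cap (K\cap H) = K \cap F$, submodularity gives
\[
\rk K \le \rk(K\cap G) + \rk(K \cap H) - \rk(K\cap F).
\]
Now $K \cap H$ is a flat of $M|_H$, and $F$ is modular there. Hence
\[
\rk(K\cap H) - \rk(K \cap F) = \rk\bigl((K\cap H)\vee F\bigr) - \rk F .
\]
Finally, $(K \cap H) \vee F \subseteq (K\vee F)\cap H$, because $H$ is a flat containing $F$. By monotonicity of rank,
\[
\rk K - \rk(K\cap G) \le \rk\bigl((K\vee F)\cap H\bigr) - \rk F .
\]
Combining this with the first step yields the desired inequality, so $G$ is modular in $M$.
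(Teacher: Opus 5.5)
Your proof is correct. The paper gives no argument of its own here (it only cites Brylawski), so the useful comparison is with the shortest route available from the paper's tools, namely criterion \ref{it:complement} of \Cref{df:modular}. Since $E(M)-G=H-F$ and $H$ is a flat, the flats of $M$ disjoint from $G$ are exactly the flats of $M|_H$ disjoint from $F$. The splitting of $M/F$ gives $\rk M-\rk G=\rk H-\rk F$. Hence the condition ``$\rk K+\rk G\le\rk M$ for every flat $K$ with $K\cap G=\varnothing$'' is literally the same as ``$\rk K+\rk F\le\rk H$ for every flat $K$ of $M|_H$ with $K\cap F=\varnothing$'', and both directions follow at once.

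Your route through the rank identity \ref{it:eq_modular} reaches the same conclusion, but it has to transport $\rk(G\vee K)$ through the contraction $M/F$. In exchange, it makes explicit, via \Cref{prop:basic}\eqref{it:restriction}, that the forward direction needs no hypothesis on $M/F$ at all.

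Both converses use that $G-F$ and $H-F$ are the two sides of a direct-sum decomposition of $M/F$, i.e.\ $\rk G+\rk H=\rk M+\rk F$. You rightly take this as the meaning of the statement, and it is genuinely needed. Suppose one only asks for flats with $G\cap H=F$ and $G\cup H=E(M)$ while $M/F$ happens to be disconnected. Then the converse fails: take $M=U_{3,4}\oplus U_{1,1}$ on $\{a,b,c,d\}\sqcup\{e\}$, with $F=\varnothing$, $G=\{a,b\}$ and $H=\{c,d,e\}$. Here $F$ is modular in $M|_H$, but $H$ is a flat disjoint from $G$ with $\rk G+\rk H=5>4=\rk M$, so $G$ is not modular.
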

\noindent
We now define supersolvability of matroids.
\begin{Df}
    Let $M$ be a matroid of rank $r$.
    A \textbf{full modular flag} in $M$ is a sequence of modular flats $F_0 \subsetneq F_1 \subsetneq \dots \subsetneq F_r$ of $M$ such that $\rk F_i = i$ for all $i$.
    If such a sequence exists, $M$ is said to be \textbf{supersolvable}.
\end{Df}
\noindent
Importantly for us, supersolvability holds for a matroid $M$ if and only if it holds for its simplification, since it is a property of the lattice of flats.
We will say that a full modular flag \textbf{passes through} a flat $F$ if this flat is part of the sequence defining it.
The following is immediate from the definition of supersolvability and the transitivity of modularity (\Cref{prop:basic}\eqref{it:transitivity}).
\begin{Lm}\label{lm:coatom_supersolvable}
    Let $M$ be a matroid. Then $M$ is supersolvable if and only if it admits a coatom $H$ such that $H$ is modular in $M$ and $M|_H$ is supersolvable.
\end{Lm}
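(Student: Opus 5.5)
Both directions come straight from the definition of a full modular flag, combined with the transitivity of modularity in \Cref{prop:basic}\eqref{it:transitivity}. Let $r = \rk M$. If $r = 0$ the statement is vacuous, since there are no coatoms and $M$ is trivially supersolvable, so assume $r \ge 1$.

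\emph{Forward direction.} Suppose $M$ is supersolvable, with full modular flag $F_0 \subsetneq F_1 \subsetneq \dots \subsetneq F_r = E(M)$. Take $H = F_{r-1}$. This is a coatom of rank $r-1$, and it is modular in $M$ by hypothesis. Each $F_i$ with $i \le r-1$ is modular in $M$ and contained in $H$. By \Cref{prop:basic}\eqref{it:restriction}, $F_i = F_i \cap H$ is therefore modular in $M|_H$. Flats of $M|_H$ are exactly the flats of $M$ contained in $H$, with the same ranks. Hence $F_0 \subsetneq \dots \subsetneq F_{r-1}$ is a full modular flag of $M|_H$, so $M|_H$ is supersolvable.

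\emph{Backward direction.} Suppose $H$ is a modular coatom and $M|_H$ has a full modular flag $G_0 \subsetneq \dots \subsetneq G_{r-1} = H$. Each $G_i$ is a flat of $M$, because a flat of a restriction to a flat is a flat of the ambient matroid. Since $G_i$ is modular in $M|_H$ and $H$ is modular in $M$, \Cref{prop:basic}\eqref{it:transitivity} shows that $G_i$ is modular in $M$. Appending $G_r = E(M)$, which is always modular, gives a full modular flag of $M$.

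The only point needing care is the identification of flats and ranks in $M|_H$ with those of $M$ contained in $H$. This is standard and holds because $H$ is a flat. No step is a genuine obstacle.
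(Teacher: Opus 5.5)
Your proof is correct and follows the paper's approach: the paper says the lemma is immediate from the definition of a full modular flag and the transitivity of modularity (\Cref{prop:basic}\eqref{it:transitivity}), and your forward direction correctly fills in the step the paper leaves implicit, namely \Cref{prop:basic}\eqref{it:restriction}. One correction: the rank-$0$ case is not vacuous but a genuine exception, since the trivial matroid is supersolvable yet has no coatom, so the lemma must be read with $\rk M \ge 1$---which is what you then assume.
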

\noindent
Supersolvability is heritable in the following sense.
\begin{Prop}[{\cite{supersolvable}*{Proposition 3.2(i)}}]\label{prop:inherit_ss}
    Let $M$ be a supersolvable matroid and let $F \subseteq G$ be flats of $M$. Then $M|_G/F = M/F|_{G-F}$ is supersolvable.
\end{Prop}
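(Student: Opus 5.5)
The statement to prove is Proposition~\ref{prop:inherit_ss}: if $M$ is supersolvable and $F \subseteq G$ are flats, then $M|_G/F$ is supersolvable. I would split it into two steps, restriction to $G$ and then contraction by $F$. Since $(M|_G)/F$ is a contraction of the supersolvable matroid $M|_G$, the general case follows once both steps are proved.

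\emph{Restriction.} Let $F_0 \subsetneq \dots \subsetneq F_r$ be a full modular flag of $M$. By \Cref{prop:basic}\eqref{it:restriction}, each $F_i \cap G$ is modular in $M|_G$, so the sets $F_i \cap G$ form a chain of modular flats of $M|_G$ running from rank $0$ up to $\rk G$. I claim consecutive ranks differ by at most one. Modularity of $F_{i}$ in $M$, applied with the flat $F_{i+1}\cap G$, gives
\[
\rk(F_{i+1}\cap G) - \rk(F_i \cap G) = \rk(F_{i+1}\cap G) - \rk(F_i\cap F_{i+1}\cap G) = \rk\bigl(F_i \vee (F_{i+1}\cap G)\bigr) - \rk F_i.
\]
The right-hand side is at most $\rk F_{i+1} - \rk F_i = 1$. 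After discarding repetitions, the chain therefore contains exactly one flat of each rank, so it is a full modular flag of $M|_G$.

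\emph{Contraction.} For a flat $F$ of $M$, consider the flats $(F_i \vee F) - F$ of $M/F$. By \Cref{prop:basic}\eqref{it:quotient} each is modular in $M/F$. Their ranks in $M/F$ are $\rk(F_i\vee F) - \rk F$, which increase from $0$ to $\rk M - \rk F$. Each step is at most one, since
\[
\rk(F_{i+1}\vee F) \le \rk(F_i \vee F) + \rk F_{i+1} - \rk F_i.
\]
This inequality is submodularity applied to $F_i \vee F$ and $F_{i+1}$, whose meet contains $F_i$ and whose join is $F_{i+1}\vee F$. Discarding repetitions again gives a full modular flag of $M/F$.

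The main point to get right is the rank-jump estimate in the restriction step. That is where modularity of $F_i$ in $M$ is genuinely used, not merely modularity in the restriction. The contraction step needs only submodularity. Combining the two steps proves the proposition.
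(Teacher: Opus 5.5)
Your proof is correct. The paper gives no argument of its own here; it only cites Stanley's Proposition 3.2(i). So the useful comparison is with Stanley's setting. There, supersolvability means having an M-chain $\Delta$, a maximal chain that generates a distributive sublattice together with any other chain. An interval $[x,y]$ then inherits the property by projecting $\Delta$ to $\{(z\vee x)\wedge y : z\in\Delta\}$ in a single step.

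You instead work directly with the paper's definition via full modular flags and split the problem into restriction and contraction. Modularity of the projected flats comes from \Cref{prop:basic}\eqref{it:restriction} and \eqref{it:quotient}. The one extra ingredient you supply yourself is that rank jumps along the projected chains are at most one. For restriction you use modularity of $F_i$ in $M$; for contraction, plain submodularity suffices.

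Your route is self-contained relative to the paper. It never needs the identification, for geometric lattices, of Stanley's M-chains with full modular flags, which the citation tacitly relies on. Stanley's route is more general (it works in arbitrary finite lattices) and takes one step.

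The two produce the same flag. Since $F\subseteq G$ and each $F_i$ is modular, your composite chain $((F_i\cap G)\vee F)-F$ coincides with $((F_i\vee F)\cap G)-F$, which is Stanley's projected chain for the interval $[F,G]$.

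The identity $M|_G/F = M/F|_{G-F}$ in the statement is just the commutation of deletion and contraction on disjoint sets. You leave it implicit, which does no harm.
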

\noindent
We end this subsection with the all-important notion of complete principal truncation of modular flats.
We remark that complete principal truncations may be derived from the simpler concept of principal truncations and that both of these make sense for any flat, not necessarily modular.
We refer the reader to \cite{Brylawski86}*{\S~7.4} for a discussion of these constructions.
Since we will only need the special case of modular flats, we will use the following theorem of Brylawski (which requires modularity) as our \emph{definition} of the complete principal truncation.
\begin{Df}[{\cite{constructions}*{Proposition 5.14(3)}}]\label{df:complete_trunc}
    Let $M$ be a matroid and $F$ a modular flat of $M$ of positive rank.
    The \textbf{complete principal truncation} $\overline{T}_F(M)$ is the matroid on $E(M)$ whose flats are precisely those flats of $M$ which either contain $F$ or are disjoint from it.
\end{Df}
\noindent
Note that $\overline{T}_F(M)$ will in general not be simple, even when $M$ is simple.
It is however loopless provided $M$ is loopless.
The following describes the rank function of a complete principal truncation (in our restricted sense).
    \begin{Lm}\label{lm:rank_function}
        Let $M$ be a matroid and let $F$ be a modular flat of $M$ of positive rank.
        The rank of a flat $G$ of $\overline{T}_F(M)$ is given by
        \[
            \rk_{\overline{T}_F(M)} G = \begin{cases}
                \rk_M G & \text{if $F \cap G = \varnothing$}\\
                \rk_M G - \rk F +1& \text{if $F \subseteq G$}
            \end{cases}
        \]
        In particular, $\rk \overline{T}_F(M) = \rk M - \rk_M F + 1$.
    \end{Lm}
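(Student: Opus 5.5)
The strategy is to build a rank function on $E(M)$ from the definition of $\overline{T}_F(M)$ and then identify its values on flats. Write $N = \overline{T}_F(M)$. The flats of $N$ are the flats of $M$ that either contain $F$ or are disjoint from it, and the rank of a flat $G$ of $N$ is the length of a longest chain of $N$-flats ending at $G$. I will therefore compute the length of maximal chains in the lattice $\mathcal{L}(N)$ below $G$. Since $\mathcal{L}(N)$ is geometric, all such maximal chains have the same length, so it suffices to exhibit one maximal chain of the right length.

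The case $F \cap G = \varnothing$ is straightforward. Every flat of $M$ contained in $G$ is disjoint from $F$, so it is a flat of $N$. The interval below $G$ in $\mathcal{L}(N)$ therefore coincides with the interval below $G$ in $\mathcal{L}(M)$, and $\rk_N G = \rk_M G$.

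For the case $F \subseteq G$, I build an explicit chain.
\begin{enumerate}[(i)]
\item Choose any atom $p \subseteq F$ of $M$. It is disjoint from $F$ in no case, but it is a rank-one flat of $M$. The useful observation is which $N$-flats can sit between $\varnothing$ and $F$: they must be $M$-flats disjoint from $F$, and none of these lies inside $F$ except $\varnothing$. So $F$ is an atom of $N$ lying below $G$, and $\rk_N F = 1$.
\item Above $F$, the $N$-flats containing $F$ are exactly the $M$-flats containing $F$. Hence the interval $[F, G]$ in $\mathcal{L}(N)$ equals $[F, G]$ in $\mathcal{L}(M)$, which has length $\rk_M G - \rk_M F$.
\item Concatenating the chain $\varnothing \subsetneq F$ with a maximal chain in $[F,G]$ gives a maximal chain of $\mathcal{L}(N)$ from $\varnothing$ to $G$ of length $1 + \rk_M G - \rk_M F$. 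This is the claimed formula.
\end{enumerate}
Setting $G = E(M)$, which contains $F$ because $F$ has positive rank, yields $\rk N = \rk M - \rk_M F + 1$.

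The main subtlety is that the argument relies on $N$ being a matroid at all, so that $\mathcal{L}(N)$ is a geometric lattice and satisfies the Jordan–Dedekind chain condition. This is guaranteed by Brylawski's result cited in \Cref{df:complete_trunc}, which requires the modularity of $F$, so I take it as given. No further use of modularity is needed.

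One should also check that the concatenated chain is genuinely maximal, that is, no $N$-flat lies strictly between $\varnothing$ and $F$. This is exactly the disjoint-or-containing dichotomy used in (i): a nonempty proper subset of $F$ neither contains $F$ nor is disjoint from it.
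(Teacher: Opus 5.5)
Your proposal is correct and follows essentially the same route as the paper: when $G \cap F = \varnothing$ the chains below $G$ are unchanged, and when $F \subseteq G$ you prepend $F$, which is an atom of $\overline{T}_F(M)$, to a saturated chain of $M$-flats from $F$ to $G$, which stays saturated in $\overline{T}_F(M)$. The opening sentence of step (i) about choosing an atom $p \subseteq F$ plays no role and is muddled ($p$ is not even a flat of $\overline{T}_F(M)$ unless $p = F$), so you can simply delete it.
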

\begin{proof}
    In the first case, let $r = \rk_M G$ and let $\varnothing = G_0 \subsetneq \dots \subsetneq G_r = G$ be a saturated chain of flats in $M$.
    This same chain is saturated in $\overline{T}_F(M)$, showing that $G$ has rank $r$ in this matroid too.

    In the second case, let $r = \rk_M G$.
    Let $s = \rk_M F$.
    Let $F = G_0 \subsetneq G_1 \subsetneq \dots \subsetneq G_{r-s} = G$ be a saturated flag of flats in $M$ linking $F$ to $G$.
    Then $\varnothing \subsetneq F \subsetneq G_1  \subsetneq \dots \subsetneq G_{r-s} = G$ is a saturated chain of flats in $\overline{T}_F(M)$, showing that $G$ has rank $r-s+1$ in this matroid.
\end{proof}
\noindent
Consideration of the flats of $\overline{T}_F(M)$ and their ranks shows:
\begin{Prop}\label{prop:truncation_round}
    Let $M$ be a matroid and $F$ a modular flat of $M$. If $M$ is connected (resp.\ round), then $\overline{T}_F(M)$ is connected (resp.\ round).
\end{Prop}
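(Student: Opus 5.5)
We must show: if $M$ is connected (resp.\ round) then $\overline{T}_F(M)$ is connected (resp.\ round). We argue by contraposition in both cases, using only the description of the flats of $\overline{T}_F(M)$ in \Cref{df:complete_trunc} and the rank formula of \Cref{lm:rank_function}. We may assume $\rk_M F \ge 2$: if $\rk_M F = 1$, every flat of $M$ either contains $F$ or is disjoint from it, so $\overline{T}_F(M)=M$.

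\emph{Roundness.} Suppose $E(M) = G \cup H$ with $G,H$ proper flats of $\overline{T}_F(M)$. By \Cref{df:complete_trunc}, $G$ and $H$ are flats of $M$, and they are proper in $M$ as well. Hence $M$ is not round, and the round case is immediate.

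\emph{Connectedness.} Suppose $E(M)=G\sqcup H$ with $G,H$ proper flats of $\overline{T}_F(M)$ satisfying $\rk_{\overline T} G+\rk_{\overline T} H=\rk \overline{T}_F(M)$. Since $G\cup H\supseteq F$ and $F\neq\varnothing$, at least one of them, say $G$, meets $F$; by \Cref{df:complete_trunc} it then contains $F$. As $G$ and $H$ are disjoint, $H$ is disjoint from $F$. \Cref{lm:rank_function} now gives
\[
\rk_M G-\rk_M F+1+\rk_M H=\rk M-\rk_M F+1,
\]
so $\rk_M G+\rk_M H=\rk M$. Both $G$ and $H$ are flats of $M$, so $M$ is disconnected, and connectedness of $M$ forces connectedness of $\overline{T}_F(M)$.

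The one step that needs care is the rank bookkeeping in the connected case: the two summands transform differently under truncation, and one must check that they still sum to $\rk \overline{T}_F(M)$. This works precisely because exactly one summand contains $F$, so the two $-\rk_M F+1$ corrections agree on both sides of the equation. Modularity of $F$ enters only through \Cref{df:complete_trunc} and \Cref{lm:rank_function}.
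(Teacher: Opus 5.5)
Your proof is correct and follows the paper's argument essentially verbatim. Flats of $\overline{T}_F(M)$ are flats of $M$, which settles roundness at once; in the connected case exactly one of the two complementary flats contains $F$, so \Cref{lm:rank_function} converts the rank equation in $\overline{T}_F(M)$ into $\rk_M G+\rk_M H=\rk M$. The only cosmetic differences are these: the paper derives $G\cap H=\varnothing$ from the complementary-rank condition rather than assuming it (it follows by submodularity and looplessness), and your reduction to $\rk_M F\ge 2$ is never actually used.
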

\begin{proof}
    Suppose $G, H \in \mathcal{L}(\overline{T}_F(M))$ satisfy $G \cup H = E(\overline{T}_F(M)) = E(M)$.
    Then $G, H \in \mathcal{L}(M)$, so if $M$ is round then $G = E(M)$ or $F = E(M)$.
    Suppose $M$ is merely connected but that $\rk_{\overline{T}_F(M)}G + \rk_{\overline{T}_F(M)}H = \rk {\overline{T}_F(M)}$.
    This implies $G \cap H = \varnothing$.
    So exactly one of $G$ or $H$, say $G$, contains $F$. By \Cref{lm:rank_function},
        \[
            \rk_M G + \rk_M H = 
            (\rk_{\overline{T}_F(M)}G + \rk F - 1) + \rk_{\overline{T}_F(M)} H = \rk \overline{T}_F(M) + \rk F - 1 = \rk M.
        \]
        The connectedness of $M$ then implies that $G = E(M)$ or $H = E(M)$.
\end{proof}
    
\subsection{Hyperplane arrangements}
A complex\footnote{One may consider arrangements over other fields, but we will not do so here.} hyperplane arrangement $\mathcal{A}$ is a finite collection of complex affine hyperplanes in $\CC^r$. See \cite{ArrangementsOT} for an introduction to the subject.
Two arrangements $\A$ and $\mathcal{B}$ in $\CC^r$ are \textbf{isomorphic} if the hyperplanes of $\A$ are carried bijectively onto those of $\mathcal{B}$ by a bijective affine-linear map $\CC^r \to \CC^r$.
An arrangement is \textbf{trivial} if it contains no hyperplanes.
The \textbf{center} of an arrangement $\A$ is the intersection $\bigcap_{H \in \A} H$.
An arrangement $\A$ is \textbf{central} if it contains $0$ in its center (i.e.\ if its hyperplanes are linear subspaces). It is \textbf{essential} if the intersection of some subcollection of its hyperplanes consists of a single point.  
Thus $\A$ is both central and essential precisely when its center is $\{0\}$.
Write $x_1, \dots, x_r$ for the dual standard basis of $\CC^r$, and let $S = \CC[x_1,\ldots,x_r]$ be the coordinate ring.
Then we can write each $H \in \A$ as the \textbf{vanishing set} $V(\ell_H)$ of some linear polynomial $\ell_H \in S$. For central arrangements, these are homogeneous linear forms.
By abuse of notation, we will often identify $H$ with $\ell_H$ and $\A$ with the set 
$\{\ell_H \mid H \in \A\}$.
We say that $\A$ is \textbf{defined over} $\QQ$ (resp.\ $\RR$) if the polynomials $\ell_H$ may be chosen to have coefficients in $\QQ$ (resp.\ $\RR$).
The union $\bigcup_{H\in \A} H$
is the \textbf{variety} of $\A$.
The \textbf{complement} of $\A$ is $\mathcal{C}(\A) = \CC^r - \bigcup_{H \in \A} H$, the complement of its variety in $\CC^r$. 

Suppose $\A$ is a central arrangement in $\CC^r$ and $H_0 \in \A$ is a hyperplane, where $H_0 = V(\ell_0)$. The \textbf{decone} of $\A$ with respect to $H_0$ is the arrangement $\mathbf{d}_{H_0} \A = \{H \cap H_0' \mid H \in \A, \; H \neq H_0\}$, where $H_0'$ is the \emph{translated} hyperplane $\{\ell_0 = 1\}$.
This arrangement naturally lives in $H_0'$, which one identifies with $\CC^{r-1}$ via an affine-linear isomorphism.
Every point in $\Cc(\A)$ may be expressed uniquely as a scalar multiple of a point in $H_0'$.
This gives a diffeomorphism
$\Cc(\A) \simeq \Cc(\mathbf{d}_{H_0}\A) \times \mathbb{C}^{\times}$.

A \textbf{flat} of $\mathcal{A}$ is an intersection of some set of its hyperplanes. We allow also the empty intersection, which is identified with the ambient space $\CC^r$. 
The flats of a central arrangement $\A$, ordered by reverse inclusion and ranked by codimension, form a \textbf{geometric lattice} $\mathcal{L}(\A)$, called the \textbf{intersection lattice} of $\A$.\footnote{When $\A$ is not central, $\mathcal{L}(\A)$ is merely a \textit{geometric semilattice}, as defined in \cite{Wachs}.}  The bottom element is $\hat{0} = \CC^r$, the atoms are the hyperplanes in $\A$, and the top element is $\hat{1} = \bigcap_{H \in \A} H$ (which is $\{0\}$ if $\A$ is essential). Equivalently, the hyperplanes in $\A$ form the ground set of a simple matroid $M(\A)$. A flat $X$ of $\A$ is then identified\footnote{This identification \emph{reverses} containment! We will rely on context to disambiguate the meaning of $\subseteq$.} with the flat $\{H \in \A \mid H \supseteq X\}$ of $M(\A)$. This is the point of view we will adopt for most of this paper.  The \textbf{rank} of $\A$, denoted $\rk \A$, is the maximum number of linearly independent hyperplanes in $\A$; when $\A$ is central, $\rk \A = \rk M(\A)$.  
We say that a flat of $\A$ is \textbf{modular} if it is modular in $M(\A)$.
We say that $\A$ is \textbf{supersolvable} if 
$M(\A)$ is. For any arrangement $\A$, we say that $\A$ \textbf{realizes} $M(\A)$.
A matroid isomorphic to $M(\A)$ for some central arrangement $\A$ is said to be \textbf{realizable over $\CC$}.
If the arrangement may be chosen to be defined over $\QQ$ (resp.\ $\RR$), then the matroid is \textbf{realizable over $\QQ$} (resp.\ \textbf{realizable over $\RR$}). 
Most matroids are not realizable over $\CC$
\cite{Nelson}.
Given a central arrangement $\A$ and a hyperplane $H_0 \in \A$, the flats of $\mathbf{d}_{H_0}\A$ are identified with the flats of $\A$ that are not contained in $H_0$.

The \textbf{M\"obius function} of a central arrangement $\mathcal{L}(\A)$ is the function $\mu: \mathcal{L}(\A) \to \mathbb{Z}$ defined recursively by
\[
\mu(F) = \begin{cases} 1 & \text{if $F = \hat{0}$}\\
-\sum_{G \supsetneq F} \mu(G) & \text{otherwise}.
\end{cases}
\]
The \textbf{Poincar\'e polynomial of $\A$} is the polynomial $\pi(\A,t) = \sum_{F \in \mathcal{L}(\A)} \mu(F) \cdot (-t)^{\rk F}$.

Let $\A$ be an arrangement in $\CC^r$.  If $\mathcal{B} \subseteq \A$ is a subset, then $\mathcal{B}$ is a \textbf{subarrangement}.
In this case, $M(\mathcal{B})$ is a submatroid of $M(\A)$.
For a flat $X \in \mathcal{L}(\A)$, the \textbf{localization} $\mathcal{A}_X$ of $\A$ at $X$, is
\[
    \A_X = \{H \in \A \mid H \supseteq X\}.
\]
The matroid $M(\A_X)$ is the \emph{restriction} 
$M(\A)|_X$ of $M(\A)$ to $X$.
Confusingly, \textbf{restriction} of $\A$ to $X$ is the name for the arrangement $\A^X$ in $X$ given by
\[
    \A^X = \{X \cap H \mid H \in \A - \A_X \text{ and } X \cap H \neq \varnothing\}.
\]
The matroid $M(\A^X)$ is the \emph{contraction}
$M(\A)/X$ of $M(\A)$ by $X$.

The \textbf{product} of hyperplane arrangements $\mathcal{A}$ in $\mathbb{C}^{r}$ and $\mathcal{B}$ in $\mathbb{C}^{s}$ is the arrangement $\mathcal{A} \times \mathcal{B}$ in $\mathbb{C}^{r+s}$ consisting of hyperplanes $H \times \mathbb{C}^{s}$ for $H \in \mathcal{A}$ and $\mathbb{C}^{r} \times H$ for $H \in \mathcal{B}$.
An arrangement which cannot be expressed as a product of two nontrivial arrangements is \textbf{irreducible}.\footnote{In \cite{Arrangements}, this term is reserved for a slightly different notion; our notion is there called \emph{indecomposability}.} The arrangement $\A$ is irreducible if and only if the its matroid $M(\A)$ is connected. Similarly, the matroid $M(\A \times \mathcal{B})$ is the direct sum of the matroids $M(\A)$ and $M(\mathcal{B})$.
Any arrangement can be expressed in an essentially unique way as a product of irreducible ones.

\subsection{Signed-graphic arrangements}\label{subsec:graphic}
We will make extensive use of \textbf{signed-graphic} arrangements and of the underlying signed-graphic matroids. The latter were introduced by Zaslavsky in \cite{Zaslavsky82}.
A \textbf{multigraph} is a graph with possibly both loops and parallel edges.
A \textbf{signed graph} is a multigraph in which each non-loop edge has been labeled with one of the symbols $+$ or $-$.

Suppose we are given a signed graph $\Gamma$ on vertices $v_1, \dots, v_r$. The signed-graphic arrangement associated to this graph 
is the central arrangement in $\CC^r$ with the following hyperplanes:
\begin{itemize}
    \item The hyperplane $\{x_i = 0\}$ whenever $\Gamma$ has a loop at the vertex $v_i$,
    \item The hyperplane $\{x_i = x_j\}$ whenever $\Gamma$ has an edge labeled $+$ joining $v_i$ and $v_j$ with $i < j$,
    \item The hyperplane $\{x_i = -x_j\}$
    whenever $\Gamma$ has an edge labeled $-$ joining $v_i$ and $v_j$ with $i < j$.
\end{itemize}
The signed-graphic matroid associated to a signed graph is the underlying matroid of the arrangement, except that each element of the matroid occurs with the same multiplicity as the corresponding edge type (i.e.\ this is the size of its parallel class).
By construction, signed-graphic matroids are $\QQ$-realizable.  We call the special case where there are no loops and no edges labeled $-$ the \textbf{graphic arrangement} of the associated (unsigned) graph $G$; the corresponding matroid $M(G)$ is the \textbf{graphic matroid} of $G$.

The \textbf{complete signed-graphic} arrangement (resp.\ matroid) of order $r$ is the signed-graphic arrangement (resp.\ matroid) associated to a signed graph on $r$ vertices in which all possible edge types are present exactly once. 
Signed graphic arrangements are precisely the subarrangements of complete signed graphic arrangements.
The latter form a subclass of the class of \textbf{Dowling geometries}, which will be discussed in \Cref{sec:modular_no_flag}.
In a signed-graphic matroid associated to a signed graph $G$, the edges corresponding to any vertex-induced submultigraph of $G$ form a flat. 

\subsection{Free resolutions and Koszul algebras}
Let $\KK$ be a field, and let $A$ be a standard graded $\KK$-algebra.\footnote{Algebras are always assumed unital and associative; they need not be commutative.}
Thus we have a $\KK$-vector space decomposition
$A = \bigoplus_{i = 0}^\infty A_i$ and $A_i \cdot A_j \subseteq A_{i+j}$ for all $i,j \ge 0$.
The adjective \textbf{standard}
here means that $A_0 = \mathbb{K}$ (i.e.\ $A$ is \textbf{connected}) and $A$ is generated as a $\mathbb{K}$-algebra by a finite number of elements from $A_1$.
This implies that each graded piece $A_i$ is finite-dimensional.
Also $A_+ \deq \bigoplus_{i = 1}^\infty A_i$ is a two-sided ideal of $A$, thereby equipping $\KK = A/A_+$ with the structure of an $(A,A)$-bimodule. 
Linear maps between graded $A$-modules are always assumed to \emph{preserve} the grading.
Given a graded left $A$-module $M$ and an integer $j \in \ZZ$, the \textbf{$j$-shifted} module $M(-j)$ is the graded $\mathbb{K}$-vector space with components given by $M(-j)_i = M_{i-j}$, equipped with its natural structure as a graded left $A$-module.
A graded left $A$-module is \textbf{free} if it is isomorphic to a direct sum of shifted copies of $A$.
Let $M = \bigoplus_{j \in \ZZ} M_i$ be a finitely-generated graded left $A$-module. 
Then $M_i$ is finite-dimensional for all $i \in \ZZ$ and $M_i = 0$ for $i \ll 0$.  The $A$-module $M$ admits a \textbf{minimal free resolution} over $A$, i.e.\ there is a long exact sequence of graded left $A$-modules
\[
\begin{tikzcd}
0 & M\ar[l] & F_0\ar[l] & F_1\ar[l] & F_2\ar[l] & \cdots\ar[l]
\end{tikzcd}
\]
in which the $F_i$ are free and
the induced maps $\KK \otimes_A F_i \leftarrow \KK \otimes_A F_{i+1}$ are zero for all $i \ge 0$.  Minimal free resolutions are unique up to isomorphism.  
A resolution as above is \textbf{linear} if $F_i = A(-i)^{\oplus \beta_i}$ for all $i \ge 0$ and some nonnegative integers $\beta_i$.

A standard graded $\KK$-algebra $A$ is \textbf{Koszul} if the graded left module $A/A_+ \cong \KK$ has a linear free resolution over $A$.
If $A$ is Koszul then it is \textbf{quadratic}: it can be defined as the quotient of a tensor algebra (or exterior algebra, or polynomial ring) by an ideal of quadratic forms.  If its defining ideal has a quadratic Gr\"obner basis, then $A$ must be Koszul.
Neither of the above implications is reversible in general.
The following criteria will allow us to transfer Koszulness between algebras. 
\begin{Thm}[{\cite{QAlg05}*{\S~3.2 Corollary 5.4}}]\label[Thm]{thm:regularity_Koszul}
    Suppose $R$ and $S$ are  standard graded $\mathbb{K}$-algebras and we are given a surjective graded $\mathbb{K}$-algebra homomorphism $R \twoheadrightarrow S$. Assume that $S$ has a linear free resolution as a left $R$-module.
    Then $R$ is Koszul if and only if $S$ is Koszul.
\end{Thm}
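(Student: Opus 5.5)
The plan is to compare $\operatorname{Tor}^R(\KK,\KK)$ and $\operatorname{Tor}^S(\KK,\KK)$ through the change-of-rings (Cartan--Eilenberg) spectral sequence of the surjection $R \twoheadrightarrow S$. We use the standard characterization that a standard graded algebra $A$ is Koszul if and only if $\operatorname{Tor}^A_i(\KK,\KK)$ is concentrated in internal degree $i$ for every $i$; this is just a reformulation of the linearity of the minimal resolution of $\KK$. For any standard graded $A$, $\operatorname{Tor}^A_i(\KK,\KK)$ vanishes in internal degrees $<i$, since the $i$-th module of a minimal resolution is generated in degrees $\ge i$. So in each direction only degrees $>i$ need to be ruled out.

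The spectral sequence is
\[
E^2_{p,q} = \operatorname{Tor}^S_p\bigl(\operatorname{Tor}^R_q(\KK,S),\KK\bigr) \Longrightarrow \operatorname{Tor}^R_{p+q}(\KK,\KK).
\]
It is compatible with the internal grading, and all differentials preserve internal degree. Here $\operatorname{Tor}^R_q(\KK,S)$ is a graded right $S$-module via the right action of $S$ on itself. By the hypothesis that $S$ has a linear free resolution over $R$, this module is concentrated in the single internal degree $q$, with dimension $\beta_q$. Since $S_+$ raises degree, it must act by zero, so as an $S$-module $\operatorname{Tor}^R_q(\KK,S)\cong \KK(-q)^{\oplus\beta_q}$. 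Hence
\[
E^2_{p,q} \cong \operatorname{Tor}^S_p(\KK,\KK)(-q)^{\oplus \beta_q}.
\]
(If one insists on left modules throughout, apply the same argument to opposite algebras; Koszulness is left--right symmetric, so this costs nothing. We will note this but not belabor it.)

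Suppose first that $S$ is Koszul. Then $E^2_{p,q}$ is concentrated in internal degree $p+q$. The same holds for every later page, since these are subquotients, and therefore for the abutment. Thus $\operatorname{Tor}^R_n(\KK,\KK)$ lives in degree $n$, and $R$ is Koszul.

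For the converse, assume $R$ is Koszul and show by induction on $P$ that $\operatorname{Tor}^S_P(\KK,\KK)$ is concentrated in degree $P$. Work with the row $q=0$, where $\beta_0=1$ and $E^2_{P,0}=\operatorname{Tor}^S_P(\KK,\KK)$.
\begin{itemize}
\item No differential enters $E^r_{P,0}$, since it would have to come from a position with $q<0$.
\item The outgoing differential $d_r$ lands in $E^r_{P-r,r-1}$. By the induction hypothesis this term is a subquotient of something concentrated in internal degree $(P-r)+(r-1)=P-1$.
\item Differentials preserve internal degree, so every component of $E^2_{P,0}$ of internal degree $j>P$ survives unchanged to $E^\infty_{P,0}$.
\item $E^\infty_{P,0}$ is a subquotient of $\operatorname{Tor}^R_P(\KK,\KK)$, which is concentrated in degree $P$. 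So these components vanish, which completes the induction.
\end{itemize}

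The main point requiring care is setting up the spectral sequence with the correct module structures and gradings. In particular one must check that $\operatorname{Tor}^R_q(\KK,S)$ really is a trivial $S$-module, and that the spectral sequence is one of graded vector spaces with degree-preserving differentials. Once that is in place, both implications are the degree bookkeeping above.
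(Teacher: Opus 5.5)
Your argument is correct. The paper gives no proof of its own here; it simply quotes the result from Polishchuk--Positselski. The Cartan--Eilenberg change-of-rings spectral sequence, combined with the internal-degree bookkeeping you carry out, is the standard way this fact is proved.

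One small caveat concerns your parenthetical about opposite algebras: it is unnecessary, and following it would actually be counterproductive. The term $\operatorname{Tor}^R_q(\KK,S)$ in your $E^2$-page already uses $S$ as a left $R$-module, which is exactly the hypothesis. Passing to $R^{op}\to S^{op}$ would instead require the linear-resolution hypothesis for $S$ as a right $R$-module.
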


\begin{Thm}[{\cite{HHO18}*{Theorem 2.31}}]\label[Thm]{thm:Koszul_retracts}
    Let $R, S$ be  standard graded $\mathbb{K}$-algebras. Suppose that
    $S$ is a graded algebra retract of $B$, i.e.\ there are graded algebra homomorphisms 
    $S \hookrightarrow R$ and $R \twoheadrightarrow S$ which compose to the identity map $S \to S$. 
    If $R$ is Koszul then $S$ is Koszul.
\end{Thm}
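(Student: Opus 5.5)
The plan is to use the homological characterization of Koszulness in terms of bigraded $\operatorname{Tor}$ and to observe that a retraction makes $\operatorname{Tor}^S(\KK,\KK)$ a direct summand of $\operatorname{Tor}^R(\KK,\KK)$. (In the statement, ``retract of $B$'' should read ``retract of $R$''.)

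First recall the reformulation. A standard graded $\KK$-algebra $A$ is Koszul if and only if $\operatorname{Tor}^A_{i}(\KK,\KK)_j = 0$ for all $j \neq i$. This holds because the minimal free resolution $F_\bullet$ of $\KK$ has $\KK \otimes_A F_\bullet$ with zero differentials. Hence $\operatorname{Tor}^A_i(\KK,\KK)_j$ is the degree-$j$ part of the generators of $F_i$, and linearity of $F_\bullet$ is exactly the stated vanishing. So it suffices to show that each $\operatorname{Tor}^S_i(\KK,\KK)_j$ embeds in $\operatorname{Tor}^R_i(\KK,\KK)_j$.

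Next set up functoriality of $\operatorname{Tor}$ in the ring. Let $\varphi\colon A \to B$ be a graded homomorphism of standard graded algebras. It preserves the irrelevant ideals, since these are the positive-degree parts, so it is compatible with the augmentations to $\KK$. Take a graded free resolution $P_\bullet \to \KK$ over $A$ and a graded free resolution $Q_\bullet \to \KK$ over $B$. The complex $B \otimes_A P_\bullet$ is a complex of graded free $B$-modules over $\KK$, so by the comparison theorem there is a graded chain map $B\otimes_A P_\bullet \to Q_\bullet$ lifting the identity of $\KK$, unique up to homotopy. Applying $\KK \otimes_B -$ and using $\KK \otimes_B (B \otimes_A P_\bullet) = \KK \otimes_A P_\bullet$ yields bigraded maps
\[
\varphi_*\colon \operatorname{Tor}^A_i(\KK,\KK)_j \to \operatorname{Tor}^B_i(\KK,\KK)_j .
\]
Uniqueness up to homotopy gives $(\psi\circ\varphi)_* = \psi_*\circ\varphi_*$ and $(\mathrm{id})_* = \mathrm{id}$; this is routine.

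Now let $\iota\colon S \hookrightarrow R$ and $\rho\colon R \twoheadrightarrow S$ be the maps with $\rho\iota = \mathrm{id}_S$. Functoriality gives $\rho_*\iota_* = \mathrm{id}$ on $\operatorname{Tor}^S_i(\KK,\KK)_j$. Therefore $\iota_*$ is injective in every bidegree. If $R$ is Koszul, then $\operatorname{Tor}^R_i(\KK,\KK)_j=0$ for $j\ne i$, hence $\operatorname{Tor}^S_i(\KK,\KK)_j=0$ for $j \ne i$, and $S$ is Koszul.

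The only step needing genuine care is the functoriality of the change-of-rings map on $\operatorname{Tor}$, namely compatibility with composition and preservation of the internal grading. It follows from the graded comparison theorem as sketched, since all maps involved preserve degree. Noncommutativity causes no trouble, because we work throughout with left modules and right-module structures on $B$ and $\KK$.
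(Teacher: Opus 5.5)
Your proof is correct. You make $\operatorname{Tor}(\KK,\KK)$ functorial in the augmented graded ring by applying the comparison theorem to $B\otimes_A P_\bullet \to Q_\bullet$. From $\rho\iota=\mathrm{id}_S$ you then get $\rho_*\iota_*=\mathrm{id}$, so $\operatorname{Tor}^S_i(\KK,\KK)_j$ embeds in $\operatorname{Tor}^R_i(\KK,\KK)_j$ in every bidegree, and the off-diagonal vanishing passes from $R$ to $S$. You are also right that ``retract of $B$'' in the statement is a typo for ``retract of $R$''.

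The paper gives no proof of this statement; it only quotes \cite{HHO18}. So there is no argument in the paper to compare yours against step by step, and your Tor-retraction argument is the standard proof of this fact.

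Your write-up does cover something the citation does not. \cite{HHO18} is written for commutative algebras. The paper, however, applies the statement in \Cref{lm:Koszul_inherit} to the retraction $\OS(M|_F)\hookrightarrow \OS(M)\twoheadrightarrow \OS(M|_F)$, and Orlik--Solomon algebras are only graded-commutative. Your argument uses nothing beyond left and right module structures, the augmentations, and the graded comparison theorem. It therefore applies verbatim in the generality the paper actually needs.
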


\begin{Thm}[{\cite{BF85}*{Theorem 4(c)}}] \label[Thm]{thm:tensor_product_Koszul}
    Let $R, S$ be standard graded-commutative  $\mathbb{K}$-algebras.
    The graded-commutative tensor product algebra $R \otimes_{\mathbb{K}} S$ is Koszul if and only if both $R$ and $S$ are.
\end{Thm}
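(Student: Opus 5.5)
The cleanest route is through $\operatorname{Tor}$. Recall that a standard graded $\KK$-algebra $A$ is Koszul if and only if $\operatorname{Tor}^A_i(\KK,\KK)_j = 0$ for all $j \neq i$. This is because the minimal free resolution computes $\operatorname{Tor}$: indeed $\operatorname{Tor}^A_i(\KK,\KK) \cong \KK \otimes_A F_i$, and minimality makes the induced differentials zero. So I would reduce the statement to a Künneth formula
\[
\operatorname{Tor}^{R \otimes_\KK S}_i(\KK,\KK)_j \;\cong\; \bigoplus_{\substack{a+b = i\\ c+d = j}} \operatorname{Tor}^R_a(\KK,\KK)_c \otimes_\KK \operatorname{Tor}^S_b(\KK,\KK)_d ,
\]
and then read off both implications.

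The key step is to build the resolution. Take minimal graded free resolutions $F_\bullet \to \KK$ over $R$ and $G_\bullet \to \KK$ over $S$, and form the total complex $F_\bullet \otimes_\KK G_\bullet$. Its differential is $\partial(f \otimes g) = \partial f \otimes g + (-1)^{|f|} f \otimes \partial g$. The module structure over the graded-commutative tensor product is $(r \otimes s)(f \otimes g) = (-1)^{\deg s \cdot \deg f}\, rf \otimes sg$.

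I would check the following, in this order.
\begin{enumerate}[(i)]
\item This action is well defined and the differential is $R\otimes S$-linear. This is the usual Koszul sign bookkeeping, using the internal degree for the algebra elements and both homological and internal degree for the resolution elements.
\item $A(-p)\otimes_\KK B(-q) \cong (A\otimes B)(-p-q)$, so $F_a \otimes G_b$ is a free $R\otimes S$-module generated in internal degrees equal to the sums of those of $F_a$ and $G_b$.
\item Since we work over a field, the Künneth theorem gives $H_*(F\otimes G) \cong H_*(F)\otimes H_*(G) = \KK \otimes \KK = \KK$, concentrated in homological degree $0$. Hence $F\otimes G$ is a free resolution of $\KK$ over $R\otimes S$.
\item The resolution is minimal, since after applying $\KK\otimes_{R\otimes S}-$ the differential becomes the tensor of two zero differentials.
\end{enumerate}
Together these give the displayed formula.

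Now I would conclude both directions. If $R$ and $S$ are Koszul, each summand on the right vanishes unless $c=a$ and $d=b$, so the left side is concentrated in internal degree $j=i$. Conversely, suppose $R\otimes S$ is Koszul and take $a,c$ with $\operatorname{Tor}^R_a(\KK,\KK)_c \neq 0$. Tensor it with $\operatorname{Tor}^S_0(\KK,\KK)_0 = \KK$ (taking $b=d=0$). This gives a nonzero contribution to $\operatorname{Tor}^{R\otimes S}_a(\KK,\KK)_c$, which forces $c=a$. Hence $R$ is Koszul, and by symmetry so is $S$. Alternatively, the forward direction follows from \Cref{thm:Koszul_retracts}: $S \hookrightarrow R\otimes S$ via $s\mapsto 1\otimes s$, and $R\otimes S \twoheadrightarrow S$ via the augmentation of $R$, form a graded retraction.

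The main obstacle I expect is step (i): getting the signs right so that $F\otimes G$ is genuinely a complex of $R\otimes S$-modules in the graded-commutative setting. This is needed because the Orlik--Solomon algebras are exterior-type, so one cannot simply treat the tensor product as an ordinary commutative one. Everything else is formal.
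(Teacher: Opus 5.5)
Your proof is correct. The paper gives no argument of its own here; it simply cites Backelin--Fr\"oberg. Your route is the standard proof behind that citation: tensor the two minimal resolutions over $\KK$ to obtain the bigraded K\"unneth isomorphism $\operatorname{Tor}^{R\otimes_\KK S}_i(\KK,\KK)_j\cong\bigoplus_{a+b=i,\,c+d=j}\operatorname{Tor}^R_a(\KK,\KK)_c\otimes_\KK\operatorname{Tor}^S_b(\KK,\KK)_d$, which is the same as multiplicativity of double Poincar\'e series. What it adds is a self-contained proof that handles the graded-commutative signs explicitly, and that is the case the paper needs for Orlik--Solomon algebras.

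The one thing to state precisely in step (i) is which degree goes into which sign. In $(r\otimes s)(f\otimes g)=(-1)^{\deg s\cdot\deg f}\,rf\otimes sg$, the exponent must use only the \emph{internal} degree of $f$. The sign $(-1)^{|f|}$ in the differential must use only the \emph{homological} degree. With this convention, associativity and $R\otimes_\KK S$-linearity of $\partial$ follow directly from $\partial(rf)=r\,\partial f$ and the fact that $\partial$ preserves internal degree. If you instead put the total degree into the action sign, the term $\partial f\otimes g$ picks up an extra $(-1)^{\deg s}$ and linearity fails.

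Your retract shortcut for ``$R\otimes_\KK S$ Koszul $\Rightarrow$ $S$ Koszul'' is also sound. The map $r\otimes s\mapsto\varepsilon(r)s$ is multiplicative because the Koszul sign is trivial whenever $\deg r'=0$. This is the same device the paper uses with \Cref{thm:Koszul_retracts} in \Cref{lm:Koszul_inherit}.
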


\noindent
For a  standard graded $\mathbb{K}$-algebra $A$, we consider the generating function
 $P^A_{\KK}(t) = \sum_{i = 0}^\infty \beta_i^A t^i$
 of the ranks of the free modules in the minimal free resolution of $\KK$ as an $A$-module. It is the \textbf{Poincar\'e series} of $A$. 
 If $A$ is Koszul, its Poincar\'e series satisfies $P^A_{\KK}(t)\HS_A(-t) = 1$, where $\HS_A(t) = \sum_{i \ge 0} \dim_{\KK} (A_i) t^i$ is the \textbf{Hilbert series} of $A$. If $A$ is quadratic, it has a \textbf{quadratic dual} $A^!$, and $A$ is Koszul if and only if $A^!$ is Koszul.  When $A$ is Koszul, $HS_A(t) = P_{\KK}^{A^!}(t)$ and $HS_{A^!}(t) = P_{\KK}^A(t)$.  
For a quick introduction to Koszul algebras, we refer the reader to \cite{Froberg99}. A more thorough treatement can be found in \cite{QAlg05}.

\subsection{Orlik--Solomon algebras}\label[subsection]{subsec:OS_algebras}
Let $\A = \{H_1,\ldots,H_n\}$ be a central arrangement in $\CC^r$.
Let $\Lambda(\A) := \ext{e_1,\ldots,e_n}$ 
be the exterior algebra of $\QQ^n$.
We identify the standard basis of $\QQ^n$ with the hyperplanes in $\A$ and write $e_i$ and $e_{H_i}$ interchangeably.
The \textbf{Orlik--Solomon ideal} of $\A$ is the homogeneous ideal $I(\A) \subseteq \Lambda(\A)$ generated by $\{\partial e_C \mid C \text{ is a circuit of $M(\A)$}\}$. Here, if $C = \{H_{i_1}, \ldots, H_{i_t}\}$ with $i_1 < \dots < i_t$ then
\[
    e_C \deq e_{i_1} \cdots e_{i_t}, \quad \text{and} \quad \partial e_C \deq \sum_{j = 1}^t (-1)^{j-1} e_{i_1}\cdots \widehat{e_{i_j}} \cdots e_{i_t}.
\]
The \textbf{Orlik--Solomon algebra} of $\A$ is the quotient ring $\OS(\A) := \Lambda(\A)/I(\A)$. By construction it is graded-commutative. Though this definition appears to depend on a choice of total order on $\A$, there is in fact no such dependence since $\partial e_C$ depends on the total order on $C$ only up to a sign.
The work of Brieskorn \cite{briesk73} and Orlik--Solomon \cite{OrlikSolomon} shows that the singular cohomology ring $H^*(\Cc(\A), \QQ)$ is isomorphic to $\OS(\A)$.\footnote{Though Orlik and Solomon work with complex coefficients, the isomorphism holds with integral coefficients by Brieskorn's Lemma \cite{briesk73}*{Lemma 5}, hence with coefficients in any field; see \cites{ArrangementsOT, Yuzvinskii01}. For simplicity, we stick to using rational coefficients for the Orlik--Solomon algebra. Note that questions of Koszulness may depend  on the characteristic of the coefficient field (see e.g.\ \cite{Dali23}*{Proposition 5.15}). In fact, our arguments and examples work in any characteristic.}
For any central arrangement, we have $\HS_{\OS(\A)}(t) = \pi(\A,t)$.
A consequence of the above presentation is that this cohomology ring depends as a graded $\QQ$-algebra only on the intersection lattice of $\A$, not on the defining linear forms or their coefficients. 

In fact, the above definition makes sense verbatim for any geometric lattice, hence for any simple matroid. 
We write $\OS(M)$ for the Orlik--Solomon algebra of a simple matroid $M$ so defined and $e_x$ for the generator corresponding to an element $x \in E(M)$.
As a matter of fact, the definition makes sense if $M$ is merely assumed loopless. In this case, $e_x$ and $e_y$ are identified in $\OS(M)$ whenever $x$ and $y$ are parallel, inducing a graded $\QQ$-algebra isomorphism $\OS(M) \cong \OS(\si(M))$.
By construction, $\OS(\A) = \OS(M(\A))$ for any central arrangement $\A$.

When $\A$ is not (necessarily) central, we define $\OS(\A)$ similarly by instead setting the ideal $I(\A)$ to be generated by $\{\partial e_C\mid C \text{ is a circuit of $\A$}\} \cup \{e_D \mid \bigcap_{H \in D} H = \varnothing\}$.
This again gives a presentation for the singular cohomology ring $H^*(\Cc(\A), \QQ)$. 
For a hyperplane $H_0$ of a central arrangement $\A$, comparing the presentations for $\OS(\A)$ and 
$\OS(\mathbf{d}_{H_0}\A)$, one concludes that $\OS(\mathbf{d}_{H_0}\A) = \OS(\A)/(e_{H_0})$.
For a matroid $M$ and an element $x \in E(M)$,
though we do not assign meaning to the symbol $\mathbf{d}_xM$, we \emph{define} the 
$\mathcal{L}(\mathbf{d}_x M)$ to be the subposet of $\mathcal{L}(M)$ consisting of flats that do not contain $x$; we similarly define
$\OS(\mathbf{d}_x M)$ to be the algebra 
$\OS(M)/(e_x)$. An alternative presentation for $\OS(\mathbf{d}_x M)$ is as the quotient of the free exterior algebra on $E(M) - \{x\}$ by the homogeneous ideal generated by 
\[
    \{\partial e_C\mid C \text{ is a circuit of $M \del x$}\} \cup \{e_D \mid x \in \cl_M(D)\},
\]
more closely resembling the definition we first gave above in the case of arrangements.

We return to the setting of a central arrangement $\A = \{H_1, \dots, H_n\}$.
The ring $\OS(\mathbf{d}_{H_0}\A)$ is \emph{independent} of the choice of $H_0 \in \A$, up to isomorphism. This is best seen by defining the \textbf{reduced Orlik--Solomon algebra} $\rOS(\A)$ of $\A$ to be the $\QQ$-subalgebra generated by the differences 
$e_{H} - e_{H'}$ for $H, H' \in \A$. 
Starting with the observation that, for any $1 \leq i \leq n$,
\[
    \Lambda(\A) = \ext{e_1,\ldots,e_n} = \ext{e_i} \otimes_{\QQ}
    \ext{e_1-e_2, e_2 - e_3,\ldots,e_{n-1}-e_n}
\]
as a tensor product of graded-commutative $\QQ$-algebras, and that for a circuit $C = \{H_{i_1}, \ldots, H_{i_t}\}$ with $i_1 < \dots < i_t$ the element
\[
\partial e_C = (e_{i_2} - e_{i_1})(e_{i_3} - e_{i_1}) \cdots (e_{i_t} - e_{i_1})
\]
lives in the second factor, we deduce
$\OS(\A) = \ext{e_i} \otimes_{\QQ} \rOS(\A)$ and
$\rOS(\A) \cong \OS(\A)/(e_i) = \OS(\mathbf{d}_{H_i} \A)$. 
Similar observations were made in \cite{EPY03}*{Proposition 1.3}.
This logic extends to the case of a matroid $M$, for which we can similarly define $\rOS(M)$ and obtain 
$\rOS(M) \cong \OS(\mathbf{d}_{x} M)$ for all $x \in E(M)$. We note that the tensor factorization above implies $\HS_{\OS(M)}(t) = (1+t) \HS_{\rOS(M)}(t)$.

\begin{Prop}\label[Prop]{prop:OSKoszul}
    Let $M$ be a matroid and $x \in E(M)$ an element. The Koszul condition for $\OS(M)$, for $\OS(\mathbf{d}_{x}M)$, and for $\rOS(M)$ are equivalent.\footnote{This fact is certainly known to experts but we were not able to find a reference for it.}
\end{Prop}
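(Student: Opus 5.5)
The plan is to use the tensor decomposition $\OS(M) = \ext{e_x} \otimes_{\QQ} \rOS(M)$ established just above, together with the identification $\rOS(M) \cong \OS(\mathbf{d}_x M)$. Because of that isomorphism, the Koszul conditions for $\OS(\mathbf{d}_x M)$ and $\rOS(M)$ coincide trivially. It therefore suffices to show that $\OS(M)$ is Koszul if and only if $\rOS(M)$ is.

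Both algebras are standard graded and graded-commutative. For $\OS(M)$ this holds by construction. For $\rOS(M)$, it is generated in degree one by the differences $e_H - e_{H'}$, and it inherits graded-commutativity as a subalgebra of the exterior algebra. The factor $\ext{e_x}$ is an exterior algebra on one generator. It is Koszul: the resolution of $\QQ$ over $\QQ[e]/(e^2)$ is $\cdots \to A(-2) \xrightarrow{e} A(-1) \xrightarrow{e} A \to \QQ$, which is linear.

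The splitting $\Lambda(M) = \ext{e_x}\otimes \ext{e_y - e_x : y \neq x}$ identifies $\OS(M)$ with the graded-commutative tensor product $\ext{e_x}\otimes_{\QQ}\rOS(M)$. One needs that the Orlik--Solomon ideal is generated by elements lying in the second factor; the displayed formula for $\partial e_C$ as a product of differences shows exactly this. Applying \Cref{thm:tensor_product_Koszul} then gives: $\OS(M)$ is Koszul iff both $\ext{e_x}$ and $\rOS(M)$ are, iff $\rOS(M)$ is.

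As an alternative for one direction, $\rOS(M)$ is an algebra retract of $\OS(M)$: include it, and compose with the quotient by $(e_x)$. So \Cref{thm:Koszul_retracts} gives that $\OS(M)$ Koszul implies $\rOS(M)$ Koszul.

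The main point requiring care is the graded-commutative tensor product identification. One must check that the ideal of $\Lambda(M)$ generated by elements of the subalgebra $B = \ext{e_y - e_x}$ equals $\ext{e_x}\otimes (I\cap B)$. This follows because $\Lambda(M)$ is free over $B$ with basis $\{1, e_x\}$. Consequently the quotient is $\ext{e_x}\otimes B/(I\cap B)$, and $B/(I \cap B)$ is precisely the image $\rOS(M)$. The paper asserts this already, so only a brief justification is needed.
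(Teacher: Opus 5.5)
Your proposal is correct and is essentially the paper's proof. Both use the isomorphism $\rOS(M)\cong\OS(\mathbf{d}_xM)$ to reduce to comparing $\OS(M)$ with $\rOS(M)$, then apply \Cref{thm:tensor_product_Koszul} to $\OS(M)\cong\ext{e_x}\otimes_{\QQ}\rOS(M)$, together with the Koszulness of the exterior algebra on one generator. Your extra check of the tensor splitting and the retract argument are sound but not needed; strictly, $\rOS(M)$ is a subalgebra of $\OS(M)$ rather than of the exterior algebra, though graded-commutativity is inherited either way.
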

\begin{proof}
The latter two are isomorphic, so it suffices to show equivalence for the first and third algebras.
The isomorphism $\OS(M) \cong \ext{e_{H_0}} \otimes_{\QQ} \rOS(M)$ shows by \Cref{thm:tensor_product_Koszul} that 
$\OS(M)$ is Koszul if and only if 
$\ext{e_{H_0}}$ and $\rOS(\A)$ both are. The conclusion now follows from \Cref{thm:tensor_product_Koszul} and the well-known fact that free exterior algebras are Koszul.
\end{proof}
\noindent
A matroid is \textbf{OS-Koszul} if it satisfies the equivalent conditions in \Cref{prop:OSKoszul}.
An arrangement $\A$ is \textbf{OS-Koszul} if $M(\A)$ is OS-Koszul. Setting $A = \Lambda(\A)/(I(\A)_2)$ to be the quadratic Orlik--Solomon algebra 
 of $\A$, the quadratic dual $A^!$ is the universal enveloping algebra $U(\mathfrak{h}_{\A})$  of the holonomy Lie algebra of $\A$ \cite{Schenck12}*{p.\ 342}.  If $\OS(\A)$ is Koszul, so that 
$A = \OS(\A)$, one has 
\[\pi(\A,-t) = \HS_{\OS(\A)}(-t) = \frac{1}{P^{\OS(\A)}_{\QQ}(t)} = \frac{1}{\HS_{U(\mathfrak{h}_{\A})}(t)} = \prod_{i = 1}^\infty (1-t^i)^{\phi_i},
 \] where $\phi_i$ are the LCS ranks of $\pi_1(\mathcal{C}(\A))$; this is
 the Lower Central Series (LCS) formula in \cite{Yuzvinskii01}*{Corollary 6.34}.
 
 It was shown by Terao \cite{Terao92} for central arrangements $\A$ that $\mathcal{L}(\A)$ is supersolvable if and only if $\A$ is fiber type, a property generalizing the fiber bundle factorization of braid arrangements. This can be used to show that all such arrangements have rational $K(\pi,1)$ complements. Papadima and Yuzvinsky \cite{PY99} showed that the latter property is \emph{equivalent} to the Koszulness of $\OS(\A)$. 
 
\section{A first counterexample}\label{sec:first_counterexample}
In this section, we prove our main result (\Cref{thm:main}) and use it to construct our first example of a non-supersolvable OS-Koszul arrangement.
This section is tailored to reach this first example as rapidly as possible. Later we will come back to the main theorem and explore its further consequences.
The following purely algebraic statement may be of independent interest.
\begin{Thm}\label{lm:Koszul_lemma}
        Let $A, B$ be  standard graded $\KK$-algebras, with $B$ a graded subalgebra of $A$. Suppose we can find homogeneous elements $a_1, \dots, a_m \in A$ with 
        $a_i B = B a_i$ for each $i$ which induce a direct sum decomposition 
        $A = \bigoplus_i a_i B$.
        If $B$ is Koszul, then $A$ is Koszul if and only if the quotient 
        $C = A/(B_+)$ is Koszul.
        Here $(B_+) = AB_+ = B_+A = AB_+A$.
\end{Thm}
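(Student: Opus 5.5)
The plan is to reduce the statement to \Cref{thm:regularity_Koszul}, applied to the surjection $A \twoheadrightarrow C = A/(B_+)$. Since $B$ is Koszul, that theorem says $A$ is Koszul if and only if $C$ is Koszul, provided $C$ has a linear free resolution as a left $A$-module. So the whole proof is to build that resolution by base change from the linear resolution of $\KK$ over $B$.

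First I will dispose of the preliminaries.
\begin{enumerate}[(i)]
\item The hypothesis $a_iB = Ba_i$ together with $A = \sum_i a_i B$ gives $AB_+ = \sum_i a_i B B_+ \subseteq \sum_i a_i B_+$. Symmetrically, $B_+A = \sum_i B_+ B a_i = \sum_i B_+ a_i$. For each $i$ we have $a_i B_+ \subseteq B_+ a_i$, since an element $a_i b$ with $b \in B_+$ can be rewritten as $b' a_i$ with $b'$ of positive degree by homogeneity. The reverse inclusion holds by the same argument.
\item It follows that $AB_+ = B_+A = AB_+A$ is a two-sided homogeneous ideal. Hence $C$ is a standard graded algebra, being a graded quotient of $A$.
\item I read the decomposition $A = \bigoplus_i a_iB$ as saying that $A$ is a free right $B$-module with basis $a_1,\dots,a_m$. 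Concretely, $b \mapsto a_i b$ gives $a_iB \cong B(-\deg a_i)$, so $A \cong \bigoplus_i B(-\deg a_i)$ as graded right $B$-modules.
\item In particular $A$ is flat as a right $B$-module, and $A \otimes_B \KK = A/AB_+ = C$ as graded left $A$-modules.
\end{enumerate}

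Next I take the minimal free resolution $F_\bullet \to \KK$ of $\KK$ as a graded left $B$-module. Because $B$ is Koszul, it is linear: $F_j = B(-j)^{\oplus \beta_j}$. I then apply the functor $A \otimes_B -$.
\begin{enumerate}[(i)]
\item Flatness of $A_B$ makes the complex $A \otimes_B F_\bullet$ exact, so it resolves $A \otimes_B \KK = C$.
\item Its terms are $A \otimes_B B(-j)^{\oplus\beta_j} = A(-j)^{\oplus \beta_j}$, which are free graded left $A$-modules generated in degree $j$.
\item The differentials are induced from those of $F_\bullet$, which are given by matrices of linear forms in $B_1 \subseteq A_1$.
\end{enumerate}
So this is a linear free resolution of $C$ over $A$. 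It is automatically minimal, since the differentials have entries in $A_+$.

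Finally, \Cref{thm:regularity_Koszul} applied to $R = A$ and $S = C$ gives that $A$ is Koszul if and only if $C$ is. I do not use \Cref{thm:Koszul_retracts} or \Cref{thm:tensor_product_Koszul} here.

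The main point needing care is the freeness of $A$ over $B$, namely the injectivity of $b \mapsto a_ib$, which the direct sum hypothesis is meant to encode. A secondary point is the left/right bookkeeping: $A$ must be used as a right $B$-module and $\KK$ as a left $B$-module, so that the tensor product is a left $A$-module. Should the hypothesis only be read as an internal direct sum with possibly non-free summands, I would first check that each annihilator $\{b : a_ib = 0\}$ vanishes. I would argue this by degree considerations on Hilbert series, although the intended reading is the free one.
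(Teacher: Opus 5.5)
Your proposal is correct and is essentially the paper's proof. The paper applies $U\otimes_{\KK}-$, with $U$ the span of the $a_i$, to the linear resolution of $\KK$ over $B$, using that multiplication $U\otimes_{\KK}B\to A$ is an isomorphism of right $B$-modules; this is exactly your base change $A\otimes_B-$ along the free module $A_B$, followed by \Cref{thm:regularity_Koszul}.

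Drop your fallback for the weaker reading, because without freeness the statement is false. Take $B=\KK[x]$, $A=\KK[x,y]/(y^2,x^2y)$, $a_1=1$, $a_2=y$. Then $A=B\oplus yB$ and $C=\KK[y]/(y^2)$ is Koszul, while $A$ is not even quadratic. So no Hilbert-series argument can force the annihilators to vanish, and the free reading you (and the paper) adopt is genuinely needed.
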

\begin{proof}
The fact that the left, right, and two-sided ideals of $A$ generated by $B_+$ coincide is an immediate consequence of the existence of the basis described in the theorem statement.
To prove the claim, we will show that $C$ has a linear free resolution over $A$; 
this will suffice by \Cref{thm:regularity_Koszul}. 
Let $U$ be the $\KK$-span of $a_1, \dots, a_m$.
Then the multiplication map 
$U \otimes_{\KK} B \to A$ (given on pure tensors by
$w \otimes b \mapsto wb$) is an isomorphism of right $B$-modules.
Since $B$ is Koszul, we can find a linear free resolution (of left $B$-modules)
        \[
            \begin{tikzcd}
            0 & B/B_+\ar[l]
            & B \ar[l]  
            & B(-1)^{\beta_1} 
            \ar[l, "\partial_1" above]
            & B(-2)^{\beta_2}
            \ar[l, "\partial_2" above]
            & \cdots. \ar[l, "\partial_3" above]
            \end{tikzcd}
        \]
        Thinking of elements of $B(-j)^{\beta_j}$ as \emph{row vectors}, each map $\partial_i$ is  right multiplication by a $\beta_{i} \times \beta_{i-1}$ matrix with entries in $B$.
        Applying the (exact) functor $U \otimes_{\KK} -$, we get the long exact sequence
        \[
            \begin{tikzcd}
            0 & U \otimes_{\KK} B/B_+
            \ar[l]
            & A \ar[l]  
            & A(-1)^{\beta_1} 
            \ar[l, "\partial_1" above]
            & A(-2)^{\beta_2}
            \ar[l, "\partial_2" above]
            & \cdots, \ar[l, "\partial_3" above]
            \end{tikzcd}
        \]
        where we now view the $\partial_i$ as matrices with entries in $A$.  In particular, they are $A$-linear maps between graded free left $A$-modules. The last map $A \to U \otimes_{\mathbb{K}} B/B_+$ coincides with the quotient map $A \to A/(B_+) = C$. This therefore gives a linear free resolution of $C$ as a left $A$-module.  
\end{proof}
\noindent
We now show how to apply \Cref{lm:Koszul_lemma} to the problem at hand.
As described in \cite{Yuzvinskii01}*{\S~2.3}, the Orlik--Solomon algebra $\OS(M)$ of a matroid $M$ is naturally graded by $\mathcal{L}(M)$. The element $e_I$ for an independent set $I \subseteq E(M)$ lives in the graded piece of $\OS(M)$ indexed by the flat $\cl_M I$; 
note that $e_D = 0$ in $\OS(M)$ for any dependent set $D \subseteq E(M)$. We write $\OS(M)_F$ for the graded piece indexed by a flat $F$ of $M$.
Multiplication in $\OS(M)$ satisfies $\OS(M)_F \cdot \OS(M)_G \subseteq \OS(M)_{F \vee G}$ for any flats $F,G \in \mathcal{L}(M)$.
For any $F \in \mathcal{L}(M)$, the direct sum $\bigoplus_{G \subseteq F} \OS(M)_G$ is a subalgebra of $\OS(M)$ isomorphic to $\OS(M|_F)$ via the natural inclusion map \cite{Yuzvinskii01}*{Proposition 2.5}. We identify it with $\OS(M|_F)$. 

A similar story can be told for the algebra 
$\OS(\mathbf{d}_x M)$, given an element $x \in E(M)$.
The algebra is now graded by $\mathcal{L}(\mathbf{d}_x M)$; see \Cref{subsec:OS_algebras} for definitions.
Here too, for any $F \in \mathcal{L}(\mathbf{d}_x M)$, the direct sum $\bigoplus_{G \subseteq F} \OS(\mathbf{d}_x M)_G$ is a subalgebra of $\OS(\mathbf{d}_x M)$ isomorphic to $\OS(M|_F)$ and naturally identified with it.

\begin{Prop}\label{lm:Koszul_inherit}
    Let $M$ be a matroid and $F$ a flat in $M$. If $M$ is OS-Koszul then so is $M|_F$.
\end{Prop}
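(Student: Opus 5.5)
The plan is to exhibit $\OS(M|_F)$ as a graded algebra retract of $\OS(M)$ and then apply \Cref{thm:Koszul_retracts}. The inclusion is already available. As recalled just before the statement, the subspace $\bigoplus_{G \subseteq F} \OS(M)_G$ is a subalgebra of $\OS(M)$, identified with $\OS(M|_F)$ via the natural inclusion $\iota\colon \OS(M|_F) \hookrightarrow \OS(M)$. This map is a graded algebra homomorphism.

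For the retraction $p\colon \OS(M) \twoheadrightarrow \OS(M|_F)$, I would use the projection onto the $\mathcal{L}(M)$-graded pieces indexed by flats $G \subseteq F$, killing every $\OS(M)_G$ with $G \not\subseteq F$. By construction $p \circ \iota = \mathrm{id}$, and $p$ preserves the standard grading, because the $\mathcal{L}(M)$-grading refines the degree grading.

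The main point is to check that $p$ is multiplicative. Take homogeneous pieces $u \in \OS(M)_G$ and $v \in \OS(M)_H$, so that $uv \in \OS(M)_{G \vee H}$. We have $G \vee H \subseteq F$ if and only if both $G \subseteq F$ and $H \subseteq F$, since $F$ is a flat. So in both cases $p(uv) = p(u)p(v)$:
\begin{itemize}
\item if $G, H \subseteq F$, then $p$ fixes $u$, $v$ and $uv$;
\item otherwise, both sides are $0$.
\end{itemize}
Equivalently, $\ker p$ is the two-sided ideal generated by $\{e_x : x \notin F\}$. One can verify this directly: $p(e_x) = 0$ for $x \notin F$, and $p$ fixes $e_x$ for $x \in F$. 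The map $e_x \mapsto 0$ for $x \notin F$ respects the defining relations, because $\partial e_C$ for a circuit $C \not\subseteq F$ maps to $0$, or to $\pm e_{C - x}$ when $C$ has a single element $x$ outside $F$. That case is impossible, since $C - x \subseteq F$ would force $x \in \cl(C - x) \subseteq F$.

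The only real obstacle is this well-definedness and multiplicativity check. It relies on the grading by flats being a genuine algebra grading, i.e.\ on the fact quoted from \cite{Yuzvinskii01} that $\OS(M)_G \cdot \OS(M)_H \subseteq \OS(M)_{G \vee H}$. Granting that, $\OS(M|_F)$ is a graded algebra retract of $\OS(M)$. Since $M$ is OS-Koszul, \Cref{thm:Koszul_retracts} shows that $\OS(M|_F)$ is Koszul, i.e.\ $M|_F$ is OS-Koszul.
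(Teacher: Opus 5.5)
Your proposal is correct and follows the paper's proof. The paper likewise uses the $\mathcal{L}(M)$-grading to see that the map $e_x \mapsto 0$ for $x \notin F$ is a graded algebra retraction of the inclusion $\OS(M|_F) \hookrightarrow \OS(M)$, then invokes \Cref{thm:Koszul_retracts}; your checks of multiplicativity and of compatibility with the circuit relations just fill in details the paper leaves implicit.
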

\begin{proof}
    Because of the $\mathcal{L}(M)$-grading, the map $\OS(M) \to \OS(M|_F)$ sending $e_x \mapsto 0$ for all $x \notin F$ is a graded algebra retraction of the inclusion map $\OS(M|_F) \hookrightarrow \OS(M)$.  The conclusion now follows from  \Cref{thm:Koszul_retracts}.
\end{proof}
\noindent
The following result of Terao was mentioned in the introduction.
\begin{Thm}[{\cite{Terao86}*{Theorem 3.8}}]
\label{thm:terao}
    Let $M$ be a matroid and $F$ a modular flat of $M$. Then the multiplication map 
    \[
        \OS(M|_F) \otimes_{\mathbb{Q}} \bigoplus_{G \cap F = \varnothing} \OS(M)_G \to \OS(M).
    \]
    (in which $G$ ranges over the flats of $M$ disjoint from $F$) is an isomorphism of $\mathbb{Q}$-vector spaces.
\end{Thm}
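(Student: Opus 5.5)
The plan is to compare explicit bases on both sides using the broken-circuit (nbc) basis of Orlik--Solomon algebras, after choosing a total order on $E(M)$ adapted to $F$.

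\emph{Choice of order.} Fix a total order on $E(M)$ in which every element of $F$ precedes every element of $E(M)-F$. Recall that a broken circuit is $C-\min C$ for a circuit $C$, and that an nbc set is one containing no broken circuit. The standard fact (Björner; see \cite{Yuzvinskii01}) is that the monomials $e_S$, for $S$ ranging over nbc sets, form a $\QQ$-basis of $\OS(M)$. This basis is compatible with the $\mathcal{L}(M)$-grading: $\OS(M)_G$ has basis $\{e_S : S \text{ nbc},\ \cl_M S = G\}$. Since circuits of $M|_F$ are exactly the circuits of $M$ contained in $F$, and the order restricts, the nbc sets of $M|_F$ are the nbc sets of $M$ lying in $F$. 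Hence the source of the multiplication map has basis $e_{S_1}\otimes e_T$, where $S_1\subseteq F$ is nbc and $T$ is nbc with $\cl_M T\cap F=\varnothing$. The image of such a basis element is $e_{S_1}e_T=\pm e_{S_1\cup T}$. It therefore suffices to show that $(S_1,T)\mapsto S_1\cup T$ is a bijection onto the nbc sets of $M$, with inverse $S\mapsto (S\cap F, S-F)$.

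\emph{Forward direction.} Let $S$ be nbc. Then $S\cap F$ is nbc in $M|_F$, and $S-F$ is nbc. Suppose some $x\in F$ lies in $\cl_M(S-F)$. Then $(S-F)\cup\{x\}$ contains a circuit through $x$, and $x$ is its minimum because elements of $F$ come first. This produces a broken circuit inside $S$, which is impossible. So $\cl_M(S-F)\cap F=\varnothing$.

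\emph{Backward direction (the main step).} This is where modularity of $F$ enters. The key lemma is: if $B$ is independent with $\cl_M B\cap F=\varnothing$, then $\rk_M(X\cup B)=\rk_M X+|B|$ for every $X\subseteq F$. To prove it, modularity (\Cref{df:modular}) gives $\rk(F\vee\cl B)=\rk F+|B|$. Submodularity then gives $\rk(X\cup B)\ge \rk(F\cup B)-(\rk F-\rk X)=\rk X+|B|$, and the reverse inequality is trivial.

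\emph{Consequences of the lemma.} First, $S_1\cup T$ is independent, by the lemma with $X=S_1$ and $B=T$. Second, let $C$ be a circuit with $C-c\subseteq S_1\cup T$ and $c=\min C$. There are two cases.
\begin{enumerate}[(i)]
\item If $c\notin F$, then $C\cap F\subseteq S_1$ consists of elements smaller than $c$, so $C\cap F=\varnothing$. Then $C-c\subseteq T$ is a broken circuit in $T$, a contradiction.
\item If $c\in F$, put $A=C\cap F$ and $B=C-F\subseteq T$. The lemma gives $|C|-1=\rk C=\rk A+|B|$, so $\rk A=|A|-1$ and $A$ is dependent. Minimality of the circuit $C$ then forces $B=\varnothing$, so $C\subseteq F$ and $C-c\subseteq S_1$, contradicting that $S_1$ is nbc in $M|_F$.
\end{enumerate}
Hence $S_1\cup T$ is nbc. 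The two assignments are clearly mutually inverse, so the multiplication map sends a basis bijectively (up to signs) to a basis, and is an isomorphism.
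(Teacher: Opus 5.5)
Your proof is correct. The paper does not prove this statement; it quotes it from Terao \cite{Terao86} and uses it as a black box twice. First, it supplies the homogeneous basis $a_1,\dots,a_m$ fed into Theorem~\ref{lm:Koszul_lemma} in the proof of Theorem~\ref{thm:main}. Second, it gives the dimension count in Lemma~\ref{lm:quotient_compute}.

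\textbf{Your route.} You give the broken-circuit (nbc) basis argument. Ordering $E(M)$ with $F$ first makes the nbc sets split as $S\mapsto (S\cap F,\,S-F)$. Modularity enters only through your rank-additivity lemma: $\rk_M(X\cup B)=\rk_M X+\abs{B}$ for $X\subseteq F$ and $B$ independent with $\cl_M B\cap F=\varnothing$. You get this from condition (i) of Definition~\ref{df:modular} applied to $\cl_M B$, plus submodularity. Each step checks out:
\begin{itemize}
\item In the forward direction, an element of $F$ is automatically the minimum of any circuit it forms with elements outside $F$.
\item In the backward direction, your dichotomy on $\min C$ is exhaustive, and the lemma forces $C\subseteq F$ in case (ii).
\item The degenerate case $C-c=\varnothing$ is excluded by looplessness.
\end{itemize}

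\textbf{A small point to tighten.} You justify in one line that the nbc sets of $M|_F$ are exactly the nbc sets of $M$ inside $F$. As written, that gives only one inclusion. The other needs $\min C\in\cl_M(C-\min C)\subseteq F$, using that $F$ is a flat. Your backward argument with $T=\varnothing$ already supplies it, so nothing breaks.

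\textbf{What your version buys over the citation.}
\begin{itemize}
\item The paper becomes self-contained at the step on which Theorem~\ref{thm:main} rests.
\item You get the free basis explicitly: $\{e_T : T \text{ nbc},\ \cl_M T\cap F=\varnothing\}$. Moreover $e_{S_1}e_T=e_{S_1\cup T}$ with no sign, because $F$ precedes its complement in your order.
\item Nbc monomials form a basis over any coefficient ring, so your argument proves the decomposition over every field. This makes transparent, for this step, the paper's footnote claim that its arguments work in any characteristic.
\end{itemize}
The citation, for its part, buys brevity.
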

\noindent
To apply \Cref{lm:Koszul_lemma}, we will identify $\OS(M)/(\OS(M|_F)_+)$ with $\rOS(\overline{T}_F(M))$.  
\Cref{thm:terao} shows that $\OS(M)$ is free as a graded $\OS(M|_F)$-module.
Thus the following result lifts the equation of characteristic polynomials in \cite{constructions}*{Corollary 7.4} to a statement about $\QQ$-algebras.
\begin{Lm}\label{lm:quotient_compute}
    Let $M$ be a matroid and $F$ a modular flat of $M$ of positive rank. Then the
    $\mathbb{Q}$-algebras
    $\OS(M)/(\OS(M|_F)_+)$ and $\rOS(\overline{T}_F(M))$ are isomorphic.
\end{Lm}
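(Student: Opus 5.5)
The plan is to write down mutually inverse algebra maps between the two presentations. On the right-hand side I use \Cref{prop:OSKoszul}: for any $x \in F$ we have $\rOS(\overline{T}_F(M)) \cong \OS(\mathbf{d}_x \overline{T}_F(M))$. By \Cref{subsec:OS_algebras}, the latter is presented as the exterior algebra on $E(M)-\{x\}$ modulo two kinds of relations: $\partial e_C$ for circuits $C$ of $\overline{T}_F(M)\del x$, and $e_D$ for subsets $D$ with $x \in \cl_{\overline{T}_F(M)}(D)$. Since $F$ is a flat of $\overline{T}_F(M)$ of rank $1$ (\Cref{lm:rank_function}), all elements of $F$ are parallel to $x$ there. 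Hence each $e_y$ with $y \in F-\{x\}$ is itself a relation (take $D=\{y\}$), and it is convenient to regard the right side as $\Lambda_{\QQ}\langle e_y : y \in E(M)\rangle$ modulo $(e_y : y\in F)$ together with the relations above. On the left side, $\OS(M|_F)$ is generated by the $e_y$ with $y \in F$, so $(\OS(M|_F)_+) = (e_y : y \in F)$. Thus both algebras are quotients of the same exterior algebra modulo $(e_y: y\in F)$, and it suffices to show that each family of relations lies in the ideal generated by the other.

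Two elementary facts drive the comparison. First, a flat of $M$ disjoint from $F$ is a flat of $\overline{T}_F(M)$ of the same rank. Consequently, for any $S \subseteq E(M)$, either $\cl_M(S)\cap F=\varnothing$, in which case $\cl_{\overline{T}_F(M)}(S)=\cl_M(S)$ with equal rank; or $\cl_M(S)$ meets $F$, in which case $\cl_{\overline{T}_F(M)}(S)\supseteq F$. Second, in $\OS(M)$, if $D$ is independent and some $y\in F$ lies in $\cl_M(D)$, then $e_D \in (e_y)$. To see this, take the circuit $C\subseteq D\cup\{y\}$ through $y$. Multiplying $\partial e_C=0$ by $e_y$ gives $e_{C-y}\in(e_y)$ up to sign, and $e_D$ is a multiple of $e_{C-y}$.

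For the map from $\OS(M)/(e_y:y\in F)$ to the truncation side, let $C$ be a circuit of $M$. If $\cl_M(C)\cap F=\varnothing$, then by the first fact all subsets of $C$ have the same rank in both matroids, so $C$ is a circuit of $\overline{T}_F(M)$ and $\partial e_C$ is a relation there. Otherwise $\cl_M(C-c)=\cl_M(C)$ meets $F$ for every $c$, so $x\in\cl_{\overline{T}_F(M)}(C-c)$ and every term of $\partial e_C$ is a relation $e_D$. For the reverse map, consider a relation $e_D$ with $x\in\cl_{\overline{T}_F(M)}(D)$. Then $\cl_M(D)$ meets $F$, so $e_D$ lies in $(e_y : y \in F)$ by the second fact (if $D$ is dependent in $M$, then $e_D=0$ already). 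Next consider a circuit $C$ of $\overline{T}_F(M)$. If its closure avoids $F$, then $C$ is a circuit of $M$. Otherwise $\cl_{\overline{T}_F(M)}(C-c)=\cl_{\overline{T}_F(M)}(C)\supseteq F$, which forces each $\cl_M(C-c)$ to meet $F$, since an $M$-closure disjoint from $F$ would itself be a smaller $\overline{T}_F(M)$-flat. So every term of $\partial e_C$ lies in the ideal, again by the second fact.

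The main obstacle is the bookkeeping of closures in the two matroids. In particular, one has to check that $\overline{T}_F(M)$-dependence of a set whose closure avoids $F$ coincides with $M$-dependence, which follows from \Cref{lm:rank_function} applied to closures. I do not expect the isomorphism itself to need modularity beyond its role in defining $\overline{T}_F(M)$ via \Cref{df:complete_trunc}. As a sanity check, comparing Hilbert series via \Cref{thm:terao} against Brylawski's characteristic polynomial identity would independently confirm the result.
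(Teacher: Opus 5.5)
Your argument is correct, but it takes a different route from the paper. The paper proves only one containment directly. Comparing kernels, it sees that $\OS(\mathbf{d}_{x_1}\overline{T}_F(M))$ is a quotient of $\OS(M)/(e_1,\dots,e_m)$, which is essentially your first direction. It then closes the gap with a dimension count. The $\mathcal{L}(\mathbf{d}_{x_1}\overline{T}_F(M))$-grading gives $\dim \OS(\mathbf{d}_{x_1}\overline{T}_F(M))=\sum_{G\cap F=\varnothing}\dim\OS(M)_G$. By Terao's decomposition (\Cref{thm:terao}) this equals $\dim \OS(M)/(\OS(M|_F)_+)$, so the surjection must be an isomorphism.

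You instead prove the reverse containment directly, using the dichotomy that $\cl_M(S)$ avoids $F$ if and only if $\cl_{\overline{T}_F(M)}(S)$ does, in which case the two closures coincide. This shows the two ideals of $\Lambda(E)$ are literally equal. It also makes the lemma independent of Terao's theorem: modularity enters only through the description of the flats of $\overline{T}_F(M)$. The paper's route is shorter because \Cref{thm:terao} is needed anyway for the free-basis hypothesis in the proof of \Cref{thm:main}. It also makes transparent that the lemma lifts Brylawski's characteristic-polynomial identity. Yours is elementary and self-contained, and combined with Terao's theorem it would recover that identity.

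There is one slip to fix, in your ``second fact''. Multiplying $\partial e_C$ by $e_y$ only gives $e_y e_{C-y}=\pm e_C\in I(M)$, which says nothing about $e_{C-y}$. The correct step is simpler: expand without multiplying. Ordering $C$ with $y$ first, $\partial e_C = e_{C-y}-e_y\,\partial e_{C-y}$. Hence $e_{C-y}\equiv e_y\,\partial e_{C-y}$ modulo $I(M)$, which gives $e_{C-y}\in I(M)+(e_y)$, as you need.

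A minor citation point: the isomorphism $\rOS(\overline{T}_F(M))\cong\OS(\mathbf{d}_x\overline{T}_F(M))$ comes from the discussion in \Cref{subsec:OS_algebras}, not from \Cref{prop:OSKoszul}.
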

    \begin{proof}
        Write $E(M) = \{x_1, \dots, x_n\}$ such that $F = \{x_{1}, \dots, x_m\}$ for some $m \leq n$.
        The discussion in \Cref{subsec:OS_algebras} gives the isomorphism $\rOS(\overline{T}_F(M))
        \cong \OS(\mathbf{d}_{x_1}\overline{T}_F(M))$.
        We will prove the isomorphism $\OS(M)/(\OS(M|_F)_+) \cong \OS(\mathbf{d}_{x_1}\overline{T}_F(M))$.
        Both $\OS(M)$ and $\OS(\mathbf{d}_{x_1}\overline{T}_F(M))$ are canonically quotients of the exterior algebra 
        $\Lambda(E) = \ext{e_{1}, \dots, e_n}$ in which the generator $e_i$ corresponds to the element $x_i \in E$. By comparing the kernel ideals (as in \Cref{subsec:OS_algebras}), one sees readily that $\OS(\mathbf{d}_{x_1}\overline{T}_F(M))$
        is a quotient of $\OS(\overline{T}_F(M))$, which is a quotient of $\OS(M)$. For $1 \leq i \leq m$, the pair $\{x_1, x_i\}$ is a circuit of $\overline{T}_F(M)$, so $e_1 - e_i = 0$ in $\OS(\mathbf{d}_{x_1}\overline{T}_F(M))$.
        We also have $e_1 = 0$ in this ring. It follows that $\OS(\mathbf{d}_{x_1}\overline{T}_F(M))$ is a quotient of $\OS(M)/(e_1, \dots, e_m) = \OS(M)/(\OS(M|_F)_+)$. We compare their dimensions as $\mathbb{Q}$-vector spaces. The $\mathcal{L}(\mathbf{d}_{x_1}\overline{T}_F(M))$-grading gives
        \[
            \dim_{\mathbb{Q}} \OS(\mathbf{d}_{x_1}\overline{T}_F(M)) = \sum_{G \in \mathcal{L}(\mathbf{d}_{x_1}\overline{T}_F(M))} \dim_{\mathbb{Q}} \OS(\mathbf{d}_{x_1}\overline{T}_F(M))_G
            = \sum_{\substack{G \in \mathcal{L}(M)\\G \cap F = \varnothing}}
            \dim_{\KK} \OS(M)_G,
        \]
        using the isomorphisms 
        $\OS(\mathbf{d}_{x_1}\overline{T}_F(M))_G \cong 
        \OS(\overline{T}_F(M)|_G)_G 
        = \OS(M|_G)_G \cong \OS(M)_G$. Comparing the last sum with \Cref{thm:terao} shows that 
        $\dim_{\mathbb{Q}} \OS(M)/(\OS(M|_F)_+) = \dim_{\mathbb{Q}} \OS(\mathbf{d}_{x_1}\overline{T}_F(M))$.
        Since one is a quotient of the other, they must be isomorphic.
    \end{proof}

We can now prove that $M$ is OS-Koszul if and only if the same is true of $M|_F$ and $\overline{T}_F(M)$.
\begin{proof}[{Proof of \Cref{thm:main}}]
      By \Cref{lm:Koszul_inherit}, we may assume that $M|_F$ is OS-Koszul. The result then follows from  \Cref{lm:Koszul_lemma} by substituting $\OS(M)$ for $A$ and $\OS(M|_F)$ for $B$.
      The existence of the prescribed homogeneous basis is ensured by \Cref{thm:terao}: we take the union of $\mathbb{Q}$-bases in each graded piece $\OS(M)_G$.
      That $a_i B = B a_i$ for all $i$ is automatic in graded-commutative rings. By \Cref{lm:quotient_compute}, $A/(B_+) \cong \rOS(\overline{T}_F(M))$, which is Koszul if and only if $\overline{T}_F(M))$ is OS-Koszul.  This completes the proof.
    \end{proof}
\noindent
We put \Cref{thm:main} to work by giving a first example of a non-supersolvable OS-Koszul matroid.
\begin{figure}[htb!]
        \centering
        \tikzset{
  LabelStyle/.style = { rectangle, rounded corners, draw,
                        minimum width = 2em},
  VertexStyle/.append style = { inner sep=5pt,
                                font = \Large\bfseries},
  EdgeStyle/.append style = {bend left} }
        \begin{tikzpicture}
        \tikzset{vertex/.style={circle,fill=blue!25,minimum size=12pt,inner sep=2pt}}
  \tikzset{every loop/.style={}}
    \node[vertex] (A) at (-4,0)  [shape=circle,draw=black,fill=gray] {};
        \node[vertex] (B) at (0,2)  [shape=circle,draw=black,fill=gray] {};
    \node[vertex] (C) at (0,-2)  [shape=circle,draw=black,fill=gray] {};
    \node[vertex] (D) at (4,0)  [shape=circle,draw=black,fill=gray] {};
    \path[loop/.style={looseness=30}]   (A) edge [loop left, thick] node {$a$} (A);
    \path[loop/.style={looseness=30}]   (B) edge [loop above, thick] node {$d$} (B);
    \path[loop/.style={looseness=30}]   (C) edge [loop below, ultra thick] node {$k$} (C);
    \path[loop/.style={looseness=30}]   (D) edge [loop right, ultra thick] node {$n$} (D);
    \path (B) edge [thick] node[below]{$-$} node[above]{$c$} (A);
    \path (A) edge [bend left, thick] node[above]{$+$} node[below]{$b$} (B);
     \path (C) edge [thick] node[above]{$-$} node[below]{$g$} (A);
    \path (A) edge [bend right, thick] node[below]{$+$} node[above]{$f$} (C);
    \path (B) edge [bend left, thick] node[right]{$-$} node[left]{$i$} (C);
    \path (C) edge [bend left, thick] node[left]{$+$} node[right]{$h$} (B);
   \path (C) edge [ultra thick] node[above]{$-$} node[below]{$l$} (D); 
   \path (D) edge [bend left, ultra thick] node[below]{$+$} node[above]{$m$} (C);
   \path (D) edge [thick] node[above]{$j$} node[below]{$+$} (B);
  \draw [rounded corners, thick] (D)  arc(0:180:4) (A);
  \node[below] at (90:4) {$e$};
  \node[above] at (90:4) {$+$};
    \node[vertex] (A) at (-4,0)  [shape=circle,draw=black,fill=gray] {};
    \node[vertex] (D) at (4,0)  [shape=circle,draw=black,fill=gray] {};
\end{tikzpicture}
        \caption{A supersolvable signed-graphic matroid of rank 4 containing a modular flat (in bold) which does not lie any full modular flag.}
        \label{fig:signed_graphic_matroid}
\end{figure}
    
\begin{Ex}\label{ex:rank_3_Koszul_nonSS}
    As a consequence of \Cref{thm:main}, if a matroid $M$ is supersolvable then the complete principal truncation of $M$ along any modular flat is OS-Koszul. Let $M$ be the signed-graphic matroid of rank 4 whose signed graph is depicted in \Cref{fig:signed_graphic_matroid}.
    It is readily verified (for instance, using the \emph{Macaulay2} program \cite{M2} and criterion \ref{it:complement} in \Cref{df:modular}) that
    \[
        \varnothing \subsetneq \{a\} \subsetneq \{a,b,c,d\} \subsetneq 
        \{a,b,c,d,f,g,h,i,k\} \subsetneq 
        \{a,b,c,d,e,f,g,h,i,j,k,l,m,n\}
    \]
    represents a full modular flag in $M$.
    By a similar verification, the rank 2 flat $F = \{k,l,m,n\}$ is modular.
    The simplified matroid $\si(\overline{T}_F(M))$ is depicted in \Cref{fig:rank_3_non_ss} as a point-line configuration (see \cite{Oxley}*{\textsection~1.5}).
    The figure reveals that $\si(\overline{T}_F(M))$ has no modular coatoms (i.e.\ no line intersects all others). 
    It is realized over $\QQ$ by the arrangement with defining linear forms:\\
    \noindent 
    \begingroup
    \setlength{\tabcolsep}{4pt} 
    \begin{gather*}
    \ell_a = x, \hspace{1.5em} 
    \ell_b = x-y, \hspace{1.5em}
    \ell_c = x+y, \hspace{1.5em}
    \ell_d = y,  \hspace{1.5em}
    \ell_e = x-2z, \hspace{1.5em}
    \ell_f = x-z, \hspace{1.5em}
    \ell_g = x+z, \\
    \ell_h = y-z, \hspace{1.5em}
    \ell_i = y+z, \hspace{1.5em}
    \ell_j = y-2z, \hspace{1.5em}
    \ell_k = z.\\
    \end{gather*}
    \endgroup
    
    \begin{figure}[hbt!]
        \centering
        \newcommand{\edgelabeledgraph}[3][scale = 1.2]{
	\begin{tikzpicture}[#1]          
	\newcommand*\points{#2}     
	\newcommand*\edges{#3}          
	\newcommand*\scale{0.75}          
	\foreach \x/\y/\z/\w/\a in \points {
		\draw[fill = black!50] (\scale*\x,\scale*\y) circle [radius = 0.1] node[label = {[label distance = 0.05 cm]\a: $\w$}] (\z) {}; 
	}
	\foreach \x/\y/\p/\a/\l in \edges { \draw (\x) -- (\y) node [pos = \p, sloped, \a] {{\small $\l$}}; }      
	\end{tikzpicture}
}
\edgelabeledgraph{
    -4/0/b/b/270,
    0/0/e/e/270,
    4/0/j/j/270,
    .4/1.2/g/g/300,
    1/3/f/f/120,
    2/6/k/k/90,
    2.67/4/h/h/30,
    3.33/2/i/i/30,
    .77/2.31/a/a/180,
    2.89/3.33/d/d/30,
    1.64/2.73/c/c/270
    }{b/e/.5/below/,
    e/j/.5/below/,
    j/i/.5/below/,
    i/d/.5/below/,
    d/h/.5/below/,
    h/k/.5/below/,
    e/g/.5/below/,
    g/a/.5/below/,
    a/f/.5/below/,
    f/k/.5/below/,
    b/a/.5/below/,
    a/c/.5/below/,
    d/c/.5/below/,
    b/f/.5/below/,
    f/h/.5/below/,
    f/c/.5/below/,
    c/i/.5/below/,
    g/c/.5/below/,
    c/h/.5/below/,
    b/g/.5/below/,
    g/i/.5/below/
}
        \caption{A rank 3 OS-Koszul matroid that is not supersolvable}
        \label{fig:rank_3_non_ss}
    \end{figure}
    \noindent
    Thus we have an irreducible, OS-Koszul, rank 3 arrangement defined over $\QQ$ with non-supersolvable intersection lattice.
    We remark that it is also free by a simple Macaulay2 \cite{M2} computation; see \Cref{sec:questions} for a definition.
    In the sections to follow, we explain how this example was obtained and how to systematically use \Cref{thm:main} to construct non-supersolvable OS-Koszul matroids.
\end{Ex}

\section{A construction using Dowling Geometries}\label{sec:modular_no_flag}
In this section, we describe a general procedure to produce a supersolvable matroid possessing a modular flat through which passes no full modular flag.
As we shall see, such matroids are vital to all of our constructions.
A binary example is given in \cite{Z91}*{Example 4.3}, but this example is not realizable over $\CC$.
Here, we construct $\CC$-realizable examples using Dowling geometries as a starting point. A general definition of Dowling geometries is given in \cite{Oxley}*{\S~6.10}. We will not need the general framework and instead define explicitly the case we need: Dowling geometries over cyclic groups. These are precisely the $\CC$-realizable ones.
The fact that the following definition is a special case of Dowling's follows from \cite{dowling}*{Corollary 10.1}. 
\begin{Df}\label{df:dowling}
    Let $m, r$ be positive integers. Let $\eta$ be a primitive $m$-th root of unity. The \textbf{cyclic Dowling arrangement} of order $m$ and rank $r$, denoted $Q_r(m)$, is the arrangement in $\mathbb{C}^r$
    \[
        Q_r(m) = \{ x_i \mid 1 \leq i \leq r\} \cup \{x_i - \eta^k x_j \mid 1 \leq i < j \leq r, \; 0 \leq k < m\}.
    \]
    The \textbf{cyclic Dowling geometry} of order $m$ and rank $r$ is the matroid $MQ_r(m) \deq M(Q_r(m))$.
    For a subset $Z$ of $\{1,\dots,m\}$, the \textbf{Dowling subarrangement} induced by $Z$ is the subarrangement 
    \[
        Q_Z(m) \deq \{x_i \mid i \in Z\} \cup \{x_i - \eta^k x_j \mid i, j \in Z,\; i < j,\; 0 \leq k < m\}.
    \]
    The corresponding submatroid $MQ_Z(m) \deq M(Q_Z(m))$ of $MQ_r(m)$ is the \textbf{Dowling subgeometry} induced by $Z$. It is isomorphic to 
    $Q_{\abs{Z}}(m)$ and its ground set is a flat of $Q_r(m)$. We also call ``Dowling subgeometry'' the flat $E(M(Q_Z(m )))$ of any submatroid of $MQ_r(m)$ containing it.
\end{Df}
\noindent
Note that for $m \leq 2$, this arrangement is realized over $\mathbb{Q}$. 
For $m = 0$, it is the Boolean arrangement. For $m = 1$, it is the braid arrangement.
For $m = 2$, it is the complete signed-graphic arrangement.
The following proposition provides us with an ample number of modular flats in $MQ_r(m)$.
\begin{Prop}[{\cite{dowling}*{Theorem 4}}]\label{prop:modular_in_Dow}
The ground set of any Dowling subgeometry of $MQ_r(m)$ is a modular flat in $MQ_r(m)$.
\end{Prop}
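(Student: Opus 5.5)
Let $Z \subseteq \{1,\dots,r\}$ and write $D = E(MQ_Z(m))$, viewed as the set of hyperplanes of $Q_Z(m)$. The plan is to check criterion \eqref{it:complement} of \Cref{df:modular} directly, using the coordinates of the arrangement. This is cleaner than working with ranks of joins. If $Z$ is empty or all of $\{1,\dots,r\}$, the flat is $\varnothing$ or $E$, and both are trivially modular, so I assume $0<\abs{Z}=s<r$.

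First I will check that $D$ is a flat of rank $s$. The hyperplanes $x_i$ for $i\in Z$ span the coordinate subspace $\langle x_i : i\in Z\rangle$ of linear forms. A form of $Q_r(m)$ lies in this span exactly when every coordinate it involves has index in $Z$, which happens precisely for the forms of $Q_Z(m)$. So $D$ is the closure of $\{x_i\}_{i\in Z}$, and its rank is $s$.

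Next, let $G$ be a flat with $\rk G + s > r$. I must produce a form of $Q_Z(m)$ lying in $G$. Pass to the linear span $V_G$ of the forms in $G$, which has dimension $\rk G$, and let $W=\langle x_i: i\in Z\rangle$, of dimension $s$. Since $\dim V_G + \dim W > r$, there is a nonzero form $\ell \in V_G\cap W$. The difficulty is that $\ell$ need not be a form of the arrangement, so I have to use the combinatorial structure of $G$. I will describe $G$ as in Dowling's gain-graph picture. Its forms determine a set $U$ of "zero" coordinates, namely the closure of the $x_i\in G$. The remaining coordinates split into blocks $B_1,\dots,B_t$. Inside each block, the flat identifies coordinates up to roots of unity: $x_j=\eta^{k_j}x_{j_0}$. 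This gives $\rk G = \abs{U} + \sum_p (\abs{B_p}-1)$.

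Now suppose $G$ contains no form of $Q_Z(m)$. Then $U\cap Z=\varnothing$, since otherwise some $x_i\in G$ with $i\in Z$. Also each block $B_p$ meets $Z$ in at most one index, since two indices $i,j\in Z\cap B_p$ would give the form $x_i-\eta^k x_j\in G$. Hence
\[
\abs{Z} \le (r-\abs{U}) - \sum_p(\abs{B_p}-1) = r-\rk G,
\]
where the bound on $\abs{Z}$ counts at most one index per block among the coordinates outside $U$. This contradicts $\rk G+s>r$. With this counting argument the linear-algebra step above becomes unnecessary.

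The main obstacle is justifying the block description of an arbitrary flat $G$ of $MQ_r(m)$. To do this, I will take a basis of $G$ and note that the non-coordinate forms in it define a gain graph on the coordinates outside $U$. I then close up under the circuits of $MQ_r(m)$, which are triangles $\{x_i-\eta^a x_j,\ x_j-\eta^b x_k,\ x_i-\eta^{a+b}x_k\}$, pairs $\{x_i-\eta^a x_j, x_i-\eta^b x_j\}$ together with the coordinate hyperplanes, and balanced cycles. The key point is that a connected component with an unbalanced cycle forces its coordinates into $U$. The rank formula then follows by counting a spanning forest per block.
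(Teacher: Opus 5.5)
Your proof is correct. The paper gives no argument of its own for this proposition; it cites Dowling's Theorem 4, where these geometries are described via partial $G$-partitions. So your write-up is a self-contained replacement for that citation, not a competitor to a proof in the paper. It is also the natural argument in Dowling's model. Once a flat $G$ is encoded by its zero set $U$ and its nonzero blocks $B_1,\dots,B_t$, criterion~(ii) of the definition of modularity reduces to the count $\abs{Z}\le t=r-\rk G$ whenever $G\cap D=\varnothing$. The citation buys the paper brevity. Your version shows that the only input needed is the block description of flats, and that the opening step producing $\ell\in V_G\cap W$ can be deleted.

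The block description is the one place to tighten, and it is easier than your plan suggests. The list of circuits you give is not complete. For $m\ge 2$ it omits handcuffs such as $\{x_1-x_2,\,x_1-\eta x_2,\,x_2-x_3,\,x_2-\eta x_3\}$, and it also omits paths joining two coordinate hyperplanes, such as $\{x_1,\,x_1-x_2,\,x_2-x_3,\,x_3\}$. Iterated closure under $3$-point lines would in fact suffice in $MQ_r(m)$, but you would still have to check that the resulting set is closed.

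It is cleaner to use that a flat of a represented matroid is $G=\{H : H\supseteq X\}$ with $X=\bigcap_{H\in G}H$. Set $U=\{i : x_i|_X=0\}$. For $i,j\notin U$, declare $i\sim j$ when $x_i=\eta^k x_j$ on $X$ for some $k$. Then:
\begin{itemize}
\item $\sim$ is an equivalence relation, because every form $x_i-\eta^k x_j$ lies in $Q_r(m)$ up to a scalar, and $\eta^a\eta^b=\eta^{a+b}$.
\item Any form of $G$ involving an index of $U$ has both of its indices in $U$.
\item Fix a representative $j_0(p)$ of each block. The span of $G$ is $\langle x_i \ (i\in U),\ x_j-\eta^{k_j}x_{j_0(p)}\ (j\in B_p\setminus\{j_0(p)\})\rangle$, and these generators are independent.
\end{itemize}
This gives your rank formula. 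The fact that components carrying an unbalanced cycle fall into $U$ then comes for free.
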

\noindent
By \Cref{prop:modular_in_Dow}, the nested ground sets of the Dowling subgeometries 
$MQ_{\{1,\dots,r'\}}(m)$ as $r'$ varies between $0$ and $r$ form a full modular flag in $MQ_r(m)$.
Consequently:
\begin{Cor}[Dowling]\label{cor:Dow_ss}
    For any $m, r \geq 1$, the matroid $MQ_r(m)$ is supersolvable.
\end{Cor}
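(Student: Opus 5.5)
The plan is to exhibit an explicit full modular flag in $MQ_r(m)$ and check each member is modular by invoking \Cref{prop:modular_in_Dow}. For $0 \le r' \le r$, let $F_{r'}$ be the ground set of the Dowling subgeometry $MQ_{\{1,\dots,r'\}}(m)$. Concretely, $F_{r'}$ consists of the hyperplanes $x_i$ with $i \le r'$ together with the hyperplanes $x_i - \eta^k x_j$ with $i<j\le r'$. For $r'=0$ this is empty.

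The first step is to check that these sets form a strictly increasing chain of flats of the correct ranks. They are nested by construction. By \Cref{df:dowling} each $F_{r'}$ is a flat of $MQ_r(m)$, and $F_r = E(MQ_r(m))$.

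To compute ranks, observe that every linear form in $F_{r'}$ involves only $x_1,\dots,x_{r'}$. On the other hand, $F_{r'}$ contains $x_1,\dots,x_{r'}$ themselves, which are linearly independent. Hence $\rk F_{r'} = r'$. This also gives strictness of the inclusions, since the ranks increase by exactly one at each step.

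The second step is to apply \Cref{prop:modular_in_Dow}: each $F_{r'}$ is the ground set of a Dowling subgeometry, so it is modular in $MQ_r(m)$. Thus $F_0 \subsetneq F_1 \subsetneq \cdots \subsetneq F_r$ is a full modular flag, and $MQ_r(m)$ is supersolvable by definition.

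There is essentially no obstacle here. The only point requiring care is the rank computation, and that is immediate from the coordinate description of the hyperplanes. All the real content, namely the modularity of Dowling subgeometries, is carried by Dowling's theorem quoted as \Cref{prop:modular_in_Dow}.
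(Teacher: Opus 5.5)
Your proposal is correct and matches the paper's argument: the paper also takes the nested ground sets of the Dowling subgeometries $MQ_{\{1,\dots,r'\}}(m)$ for $0 \le r' \le r$ and uses \Cref{prop:modular_in_Dow} to conclude they form a full modular flag. Your explicit rank computation fills in a detail the paper leaves implicit.
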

\noindent
Dowling geometries themselves do not have the
property we desire. However, we only need a small modification to get there.
We will construct for ranks $r, k$ a supersolvable rank $r$ matroid with a rank-$k$ modular flat not contained in any full modular flag. We cannot have $k = 0$, $k=r-1$, or $k = r$. All intermediate values are possible.
\begin{Lm}\label{lm:Dow_construction}
    If $m \geq 2$ and 
    $1 \leq k \leq r - 2$, then there exists a round, supersolvable spanning submatroid $M$ of 
    $MQ_r(m)$ containing as a modular flat a rank-$k$ Dowling subgeometry which is not contained in any rank-$(k+1)$ modular flat of $M$.
\end{Lm}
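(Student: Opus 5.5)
The plan is to build $M$ explicitly by deleting a few elements of $MQ_r(m)$ that involve the last coordinate. This copies the signed-graphic example in \Cref{fig:signed_graphic_matroid}, which is the case $m=2$, $r=4$, $k=2$.

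\textbf{Construction.} Put $Z=\{r-k+1,\dots,r\}$ and $H = E(MQ_{\{1,\dots,r-1\}}(m))$. Let $M$ be the restriction of $MQ_r(m)$ to the following elements:
\begin{itemize}
\item all of $H$;
\item the point $x_r$;
\item all points $x_j-\eta^a x_r$ with $j\in Z\setminus\{r\}$;
\item for each $i\notin Z$, only the single point $x_i - x_r$; the points $x_i-\eta^a x_r$ with $a\neq 0$ are deleted.
\end{itemize}
The restriction to $F=E(MQ_Z(m))$ is all of $MQ_Z(m)$. Hence $F$ is modular in $MQ_r(m)$ by \Cref{prop:modular_in_Dow}, and it is modular in $M$ by \Cref{prop:basic}\eqref{it:deletion}. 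The points $x_1,\dots,x_r$ form a basis of $M$, so $M$ is spanning. For any $i<j$ the three points $x_i,x_j,x_i-x_j$ all lie in $M$ and form a $3$-circuit. Therefore $M$ is round by \Cref{prop:round_criterion}.

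\textbf{Supersolvability.} I will check that $H$ is a modular coatom of $M$ using criterion \ref{it:complement} of \Cref{df:modular}: every line of $M$ not contained in $H$ should meet $H$. Any two points outside $H$ involve the coordinate $x_r$, and a suitable combination of them eliminates $x_r$. From $x_r$ and $x_i-\eta^a x_r$ we obtain $x_i\in H$. From $x_i-\eta^a x_r$ and $x_j-\eta^b x_r$ we obtain $x_i-\eta^{a-b}x_j\in H$. Both results lie in $M$ because $H$ is the full Dowling geometry on $\{1,\dots,r-1\}$. The Dowling flag shows that $M|_H = MQ_{r-1}(m)$ is supersolvable (\Cref{cor:Dow_ss}). \Cref{lm:coatom_supersolvable} then gives that $M$ is supersolvable.

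\textbf{No modular flat of rank $k+1$ over $F$ (main step).} First compute $M/F$, which amounts to setting $x_j=0$ for $j\in Z$. Every edge from a vertex $i\notin Z$ to $Z$ becomes parallel to the loop $x_i$. Hence $\si(M/F)\cong MQ_{r-k}(m)$ on the vertex set $\{1,\dots,r-k\}$, which has at least $2$ vertices. So a rank-$(k+1)$ flat $G\supseteq F$ is the preimage of an atom of $M/F$, and there are two types.
\begin{itemize}
\item Type 1, coming from a loop $x_i$ with $i\notin Z$: then $G$ is $F$ together with $x_i$, all $x_i-\eta^a x_j$ for $j\in Z\setminus\{r\}$, and $x_i-x_r$.
\item Type 2, coming from an edge $x_i-\zeta x_{i'}$ with $i,i'\notin Z$.
\end{itemize}
For each $G$ I aim to exhibit a flat $G'$ of $M$ with $\rk G' = r-k$ and $G\cap G'=\varnothing$. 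By criterion \ref{it:complement} of \Cref{df:modular} this shows $G$ is not modular. The source of $G'$ is the deleted elements $x_i-\eta^a x_r$, $a\ne0$: in $MQ_r(m)$ such an element would have been the common point of $G$ and $G'$.

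For Type 1, take $G'$ to be the span of a ``twisted'' spanning tree through vertex $i$. Concretely, pick $j_0=r-1\in Z$ (which lies in $Z$ since $k\ge 1$) and use the points $x_i-\eta x_{j_0}$, $x_{j_0}-x_r$, and $x_{i''}-\eta^{c} x_i$ for the remaining $i''\notin Z$, with exponents $c$ chosen generically. The first two points span only a line in $M$, because their third point $x_i-\eta x_r$ has been deleted. Type 2 is handled the same way, rooting the tree at $i$ and twisting the edge so that it avoids $x_i-\zeta x_{i'}$.

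I expect the main obstacle to be verifying that this closure in $M$ has rank exactly $r-k$ and misses $G$. The plan for that is to describe flats of Dowling geometries by partial $\eta$-labelled partitions: I would compute the closure in $MQ_r(m)$ and check that every element it shares with $G$ is one of the deleted points. The base case to check first is $k=1$ (where $Z=\{r\}$) with $m=2$, as a sanity test against \Cref{fig:signed_graphic_matroid}-type examples. If the choice of exponents fails, I would instead try choosing $G'$ inside $H$, twisted against vertex $r-1$.
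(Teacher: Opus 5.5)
\textbf{Gap in the main step (Type~1).} Your construction is a legitimate variant of the paper's. The paper deletes only $x_1-x_i$ for $i>k$; you instead keep only $x_i-x_r$ among the edges from each $i\notin Z$ to $r$. Your arguments for the following are all fine: roundness, supersolvability via the coatom $H$, modularity of $F$, and the classification of rank-$(k+1)$ flats over $F$ through $\si(M/F)\cong MQ_{r-k}(m)$.

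The problem is your witness $G'$. By your own description, $G$ contains all of $F$ and every $x_i-\eta^a x_j$ with $j\in Z\setminus\{r\}$. So your two key generators $x_i-\eta x_{j_0}$ and $x_{j_0}-x_r$ both lie in $G$, and $G'\cap G\neq\varnothing$ however the other exponents are chosen. The fact that their third point $x_i-\eta x_r$ is deleted does not help, because the whole line sits inside $G$. Two further problems:
\begin{itemize}
\item Your tree has $r-k+1$ edges on $r-k+2$ vertices, so it spans rank $r-k+1$, not $r-k$.
\item For $k=1$ the vertex $j_0=r-1$ is not in $Z$ at all; your choice needs $k\geq 2$.
\end{itemize}
Your fallback cannot work either. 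The set $G\cap H=E(MQ_{(Z\setminus\{r\})\cup\{i\}}(m))$ is a rank-$k$ modular flat of $M|_H\cong MQ_{r-1}(m)$. Since $k+(r-k)>r-1$, every flat of $M$ inside $H$ of rank $r-k$ must meet it.

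\textbf{How to repair it.} Route the witness through a \emph{second vertex outside $Z$} instead of through $Z\setminus\{r\}$. Since $r-k\geq 2$, you can pick $i'\notin Z$ with $i'\neq i$.
\begin{itemize}
\item In $MQ_r(m)$, the line spanned by $x_i-\eta x_{i'}$ and $x_{i'}-x_r$ has third point $x_i-\eta x_r$, which you deleted. So in $M$ this line is $\{x_i-\eta x_{i'},\,x_{i'}-x_r\}$, and it misses $G$.
\item Restrict to the rank-3 flat $K=E(M)\cap E(MQ_{\{i,i',r\}}(m))$. Then $K\cap G=\{x_i,x_r,x_i-x_r\}$ is a rank-2 flat of $M|_K$ that misses a line, so it is not modular in $M|_K$.
\item This contradicts \Cref{prop:basic}\eqref{it:restriction}.
\end{itemize}
Equivalently, you can add $x_{i''}-x_r$ for the remaining $i''\notin Z\cup\{i,i'\}$; this gives a rank-$(r-k)$ flat disjoint from $G$. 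This is essentially the paper's argument. The paper localizes to the rank-3 flat on vertices $\{1,k+1,k+2\}$ and uses a line whose third point is the deleted element $x_1-x_{k+1}$.

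\textbf{Type~2.} Here you need no witness at all. The element $x_i-\zeta x_{i'}$ is a coloop of $M|_G$, so $M|_G$ is disconnected. Since $M$ is round, hence connected, this contradicts \Cref{prop:basic}\eqref{it:connected}. This is how the paper handles that case.
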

\begin{proof}
    We identify the elements of $MQ_r(m)$ with the corresponding linear forms as given in \Cref{df:dowling}.
    Let $M$ be the submatroid of $MQ_r(m)$ obtained by deleting the elements $x_1 - x_i$ for $k < i \leq r$. 
    Let $F$ be the ground set of the Dowling subgeometry $MQ_{\{1,\dots,k\}}(m)$. By \Cref{prop:modular_in_Dow}, $F$ is a modular flat of rank $k$ in $MQ_r(m)$ and therefore also in $M$, by \Cref{prop:basic}\eqref{it:deletion}.
    Similarly, the ground set $H$ of 
    $MQ_{\{2,\dots,r\}}(m)$ is a modular flat in $M$. The restriction $M|_H$ is supersolvable by \Cref{cor:Dow_ss} since it is isomorphic to 
    $MQ_{r-1}(m)$. Since $H$ is a coatom in $M$, it follows by \Cref{lm:coatom_supersolvable} that $M$ is supersolvable. To see that $M$ is round of rank $r$, we observe that 
    $x_1, \dots, x_r$ is a basis of $M$ and that for all $1 \leq i < j \leq n$, $M$ contains the 3-element circuit 
    $\{x_i, x_j, x_i - \eta x_j\}$. This suffices by \Cref{prop:round_criterion}.

    It remains to show that $F$ is not contained in any modular flat of $M$ of rank $k+1$. Suppose $G$ were some such modular flat. Let $g \in G - F$ be any element. If $g = x_\ell - \eta^j x_m$ for $\ell \leq k < m$ and some $j$, then the element 
    $g' = x_m$ also lies in $G$. 
    Thus we may assume that either 
    $g = x_{\ell}$ for some $\ell > k$ or else
    $g = x_{\ell} - \eta^j x_{m}$ for some 
    $\ell, m > k$. We deal with the second case first.
    In this case,
    $F \cup \{g\}$ is a flat of $MQ_r(m)$, hence of $M$, whence $G = F \cup \{g\}$ and $g$ is a coloop in $M|_F$. This means that $M|_F$ is disconnected. Since $M$ is connected and $F$ is modular in $M$, this contradicts \Cref{prop:basic}\eqref{it:connected}.
    Now we return to the first case. 
    That is, $g = x_{\ell}$. By symmetry, we may assume $\ell = {k+1}$. Then $G = E(MQ_{\{1,\dots,k+1\}}) \cap E(M)$.
    The intersection 
    $K = E(M) \cap E(MQ_{\{1,k+1,k+2\}}(m))$ is a flat of $M$ of rank 3. 
    The intersection $K \cap G$ is a modular flat in $M|_K$ by \Cref{prop:basic}\eqref{it:restriction}.
    It is
    \[
        K \cap G =
        E(MQ_{\{1,k+1\}}) \cap E(M) =
        \{x_1, x_{k+1}\} \cup \{x_1 - \eta^j x_{k+1} \mid 1 \leq j < m\},
    \]
    which has rank 2.
    We derive a contradiction by showing that this flat is not modular in $M|_K$. For this, consider the line in $M|_K$ spanned by $x_1 - \eta x_{k+1}$ and 
    $x_{k+1} - \eta x_{k+2}$. In $MQ_{r}(m)$, this line intersects $K \cap G$ in the unique point $x_1 -  x_{k+1}$, which is missing from $M$. Since $M|_K$ has rank 3, this shows that $K \cap G$ is not modular in $M$.
\end{proof}
\noindent
We now investigate modularity and supersolvability of complete principal truncations.
\begin{Lm}\label{lm:modular_in_truncation}
    Let $M$ be a round matroid and $F \in \mathcal{L}(M)$ a modular flat of rank $\geq 2$.
    The modular flats of rank $\geq 2$ in $\overline{T}_F(M)$ are precisely the modular 
    flats of $M$ containing $F$.
\end{Lm}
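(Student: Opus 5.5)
The plan is to sort the flats of $T \deq \overline{T}_F(M)$ into the two kinds given by \Cref{df:complete_trunc}, namely flats of $M$ containing $F$ and flats of $M$ disjoint from $F$, and to test modularity with criterion \ref{it:eq_modular} of \Cref{df:modular}, using the rank formula of \Cref{lm:rank_function}. Write $s = \rk_M F \geq 2$. Since $F$ itself has $T$-rank $1$, a flat of $T$-rank $\geq 2$ is either a flat $G \supsetneq F$ of $M$ or a flat $G$ of $M$ of rank $\geq 2$ disjoint from $F$.

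\emph{Flats containing $F$.} Let $G \supseteq F$ be a flat of $M$. I will compare the modular equation for $G$ in $M$ and in $T$ against an arbitrary flat $H$ of $T$, in two cases.
\begin{itemize}
\item If $H \supseteq F$, then $G \vee H$ and $G \cap H$ also contain $F$. So all four $T$-ranks are the $M$-ranks shifted by $-(s-1)$, and the equation in $T$ is equivalent to the one in $M$.
\item If $H \cap F = \varnothing$, then $G \cap H$ is disjoint from $F$ and $G \vee H \supseteq F$. Exactly two of the four terms (those of $G$ and of $G \vee H$, one on each side) are shifted by $-(s-1)$, so again the equations agree.
\end{itemize}
Joins in $T$ and in $M$ coincide here, because a join containing $F$ is already a flat of $T$. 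Hence $G$ is modular in $T$ if and only if it satisfies the modular equation in $M$ against every flat of $T$. This gives the forward direction: if $G$ is modular in $M$, it is modular in $T$. For the converse, I need the equation in $M$ against \emph{all} flats $H$ of $M$, including those that meet $F$ without containing it. For such $H$, I plan to replace $H$ by $H \vee F$ or to use criterion \ref{it:complement}: a flat $H$ with $\rk_M G + \rk_M H > \rk M$ still has $\rk_T(H \vee F) + \rk_T G > \rk T$ after shifting. Then $G \cap (H \vee F) \supsetneq F$, and modularity of $F$ in $M$ should let me pull a nonempty intersection back to $G \cap H$. This step is routine bookkeeping but needs care.

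\emph{Flats disjoint from $F$.} This is where roundness and $s \geq 2$ must enter, and I expect it to be the main obstacle. Let $G$ be disjoint from $F$ with $\rk_M G \geq 2$; I want to show that $G$ is not modular in $T$. The mechanism I will exploit is the following. If $H$ is a flat of $M$ disjoint from both $F$ and $G$ but with $G \vee H \supseteq F$, then in $T$ the left side of the modular equation is $\rk_M G + \rk_M H$, while the right side is at most $\rk_M(G \vee H) - s + 1 + 0 < \rk_M G + \rk_M H$. This violates modularity.

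A single point $H$ cannot work, since modularity of $F$ forces $\rk_M(G \vee F) = \rk_M G + s$. So $H$ must be larger. I will try to build $H$ by negation: suppose no such $H$ exists. Then I would like to show that $E(M)$ is the union of the two proper flats $G \vee F$ and a suitable complement of $F$, which roundness forbids. Failing that, I will apply criterion \ref{it:complement} directly in $T$. By \Cref{prop:truncation_round}, $T$ is round, and I will look for a $T$-coatom disjoint from $F$ (a flat of $M$ complementary to $F$) which misses $G$. Its $T$-rank $\rk T - 1$, together with $\rk_T G \geq 2$, exceeds $\rk T$, so the existence of such a coatom would contradict modularity of $G$. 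The crux is producing this coatom. The supply of complements to $F$ is large because $F$ is modular, and roundness of $M$ prevents all of them from meeting $G$.
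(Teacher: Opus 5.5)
\textbf{There is a genuine gap in the case where $G$ is disjoint from $F$, which is the heart of the lemma.} You correctly identify the two kinds of witness that would break modularity of $G$ in $T$. One is a flat $H$ of $M$ disjoint from $F$ and $G$ with $G \vee H \supseteq F$. The other is a complement of $F$, i.e.\ a $T$-coatom disjoint from $F$, that misses $G$. But you never show that either one exists, and that existence is the whole content of the lemma.

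The missing idea is to take $H$ \emph{maximal} among flats of $M$ disjoint from both $F$ and $G$. Suppose some $x$ lay outside $(F \vee H) \cup (G \vee H)$. Then $\cl_M(H \cup \{x\})$ would still be disjoint from $F$: if $y \in F$ lay in it, then $y \notin H$, and exchange gives $x \in \cl_M(H \cup \{y\}) \subseteq F \vee H$. The same holds for $G$. This contradicts maximality, so $E(M) = (F \vee H) \cup (G \vee H)$. Roundness is applied to \emph{these} two flats, and gives a dichotomy.
\begin{enumerate}[(a)]
\item If $G \vee H = E(M) \supseteq F$, then $H$ is your first kind of witness. Here $\rk_M H + \rk_M G \geq \rk M = \rk T + s - 1 \geq \rk T + 1$, using $s \geq 2$.
\item If $F \vee H = E(M)$, then modularity of $F$ gives $\rk_M H = \rk M - s$, so $H$ is a $T$-coatom missing $G$. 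Here $\rk_M H + \rk_M G \geq \rk M - s + 2 = \rk T + 1$, using $\rk_M G \geq 2$.
\end{enumerate}
In either case criterion (ii) of the definition of modularity fails for $G$ in $T$.

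Your proposed substitutes do not do this job. The negation hypothesis gives you no reason to expect a cover of the form $(G \vee F) \cup K$ with $K$ a complement of $F$. Moreover, the heuristic that roundness keeps some complement of $F$ off $G$ is false. In the projective geometry $\mathrm{PG}(4,q)$, which is round and has every flat modular, take $F$ a line and $G$ a plane skew to it. Every complement of $F$ is a plane, and any two planes of $\mathrm{PG}(4,q)$ meet. So the coatom route alone cannot work, and the dichotomy above is genuinely needed.

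A smaller point concerns $G \supseteq F$. There the converse direction is immediate and needs none of the $H \vee F$ bookkeeping. That bookkeeping in fact does not preserve the inequality you claim when $\rk_M(H \cap F) \geq 2$. Instead, argue as follows. If $G$ is not modular in $M$, criterion (ii) yields a flat $H$ with $H \cap G = \varnothing$ and $\rk_M G + \rk_M H > \rk M$. Since $F \subseteq G$, this $H$ is disjoint from $F$, hence a flat of $T$. Your own case computation then shows that the modular equation fails for $H$ in $T$. Flats meeting $F$ automatically meet $G$, so they never need to be considered.
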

\begin{proof}
    Let $G \in \mathcal{L}(\overline{T}_F(M))$ be a modular flat of rank at least 2.
    Suppose first that $G$ contains $F$. We show that $G$ is modular in $\overline{T}_F(M)$ if and only if it is modular in $M$. Let $H$ be any flat of $\overline{T}_F(M)$. We consider the equality
        \[\tag{$\star$}\label{eq:mod}
            \rk_{\overline{T}_F(M)} H + \rk_{\overline{T}_F(M)} G
            = 
            \rk_{\overline{T}_F(M)} (H \cap G) + \rk_{\overline{T}_F(M)} (H \vee G).
        \]
        If $H$ contains $F$, then by \Cref{lm:rank_function} this reads
        \[
            (\rk_{M} H - \rk_M F + 1) + (\rk_{M} G- \rk_M F + 1)
            = 
            (\rk_{M} (H \cap G)- \rk_M F + 1) + (\rk_{M} (H \vee G)- \rk_M F + 1).
        \]
        If on the other hand, $H \cap F = \varnothing$, then \eqref{eq:mod} reads
        \[
            \rk_M H + (\rk_{M} G- \rk_M F + 1)
            = 
            \rk_M (H \cap G) + (\rk_{M} (H \vee G)- \rk_M F + 1).
        \]
        In either case, \eqref{eq:mod} is equivalent to
        \[
            \rk_M H + \rk_M G = \rk_M (H \cap G) + \rk_M(H \vee G).
        \]
        If $G$ is modular in $M$, then the latter equality holds for all $H$, showing that $G$ is modular in $\overline{T}_F(M)$. 
        If $G$ is \emph{not} modular in $M$, then by \Cref{df:modular}\ref{it:complement}, equality fails for some flat $H$ of $M$ disjoint from $G$. Then $H$ is disjoint from $F$, meaning it is a flat of $\overline{T}_F(M)$. Since \eqref{eq:mod} fails to hold for this choice of $H$, we conclude that $G$ is \emph{not} modular in $\overline{T}_F(M)$.

        Next suppose that $G$ is disjoint from $F$. 
        Let $H$ be a flat of $M$ maximal among flats disjoint from both $F$ and $G$.
        Let $F' = F \vee H$ and $G' = G \vee H$.
        Then $F'- H$ and $G' - H$ are flats of the matroid $M/H$.
        If some element $x \in E(M/H)$ lay outside both of these flats, then
        $H' = \cl_M(H \cup \{x\})$ would be a strictly bigger flat of $M$ still disjoint from both $F$ and $G$. It must therefore be the case that $E(M/H) = (F' - H) \cup (G' - H)$, i.e.\ $E(M) = F' \cup G'$. Roundness of $M$ forces one of $F' = E(M)$ or $G' = E(M)$ to hold.
        We must then have $\rk_M H \geq \rk M - \rk_M F$ in the first case and $\rk_M H \geq \rk M - \rk_M G$ in the second.
        Since $H$ is disjoint from $F$, it is a flat of $\overline{T}_F(M)$. We
        estimate the sum $\rk_{\overline{T}_F(M)} H + \rk_{\overline{T}_F(M)} G = \rk_M H + \rk_M G$.
        In the first case, we have
        \[
            \rk_M H + \rk_M G
            \geq \rk M - \rk_M F + 2
            = \rk\overline{T}_F(M) + 1.
        \]
        In the second case, we have
        \[
            \rk_M G + \rk_M H
            \geq
            \rk M = \rk \overline{T}_F(M) + \rk_M F - 1
            \geq \rk \overline{T}_F(M) + 1.
        \]
        In either case, we get $\rk_{\overline{T}_F(M)} H + \rk_{\overline{T}_F(M)} G > \rk \overline{T}_F(M)$.
        Since $F$ and $G$ are disjoint, this contradicts the supposed modularity of $G$ in $\overline{T}_F(M)$, by \Cref{df:modular}\ref{it:complement}.
\end{proof}
    
\begin{Cor}\label{cor:truncation_not_ss}
    Let $M$ be a round matroid of rank $r$ and $F$ a flat of rank $s \geq 2$. Then $\overline{T}_F(M)$ is supersolvable if and only if $M$ contains modular flats 
    $F = F_s \subsetneq F_{s+1} \subsetneq \dots \subsetneq F_r$ with $\rk_M F_i = i$.
    In particular, if $M|_F$ is supersolvable, then $\overline{T}_F(M)$ is supersolvable if and only if $M$ has a full modular flag passing through $F$.
\end{Cor}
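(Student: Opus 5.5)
The plan is to translate a full modular flag of $\overline{T}_F(M)$ into a chain of modular flats of $M$ above $F$, and conversely, using \Cref{lm:modular_in_truncation} together with the rank formula of \Cref{lm:rank_function}. (We assume throughout, as the definition of $\overline{T}_F(M)$ requires, that $F$ is modular in $M$.) Write $T = \overline{T}_F(M)$. By \Cref{lm:rank_function}, $T$ has rank $r - s + 1$. A flat $G \supseteq F$ of $M$ with $\rk_M G = i$ has $\rk_T G = i - s + 1$. In particular $F$ itself has rank $1$ in $T$, i.e.\ it is an atom (a parallel class) of $T$.

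For the forward direction, suppose $T$ has a full modular flag $\varnothing = G_0 \subsetneq G_1 \subsetneq \dots \subsetneq G_{r-s+1} = E(M)$. For $j \geq 2$, \Cref{lm:modular_in_truncation} (which uses roundness and $s \geq 2$) says $G_j$ is a modular flat of $M$ containing $F$. Its $M$-rank is $j + s - 1$ by \Cref{lm:rank_function}. We then set $F_i = G_{i-s+1}$ for $i \geq s+1$ and $F_s = F$. Each $F_i$ is modular in $M$, the ranks are $\rk_M F_i = i$, and the chain is strictly increasing since $F \subseteq G_2$ with $\rk_M F = s < s+1$. The chain automatically starts at $F$, so the rank-one member $G_1$ of the flag plays no role. (If $r = s$, the statement is trivial: $T$ has rank $1$ and is supersolvable.)

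For the converse, given modular flats $F = F_s \subsetneq \dots \subsetneq F_r$ of $M$, set $G_0 = \varnothing$, $G_1 = F$ and $G_j = F_{j+s-1}$ for $j \geq 2$. Each $G_j$ is a flat of $T$ since it contains $F$, and it has $T$-rank $j$. For $j \geq 2$, $G_j$ is modular in $T$ by \Cref{lm:modular_in_truncation}. Rank $0$, rank $1$ and the top flat are always modular. So this is a full modular flag and $T$ is supersolvable.

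For the ``in particular'' clause, suppose $M|_F$ is supersolvable. A full modular flag of $M|_F$ consists of flats $F'_0 \subsetneq \dots \subsetneq F'_s = F$ modular in $M|_F$. Since $F$ is modular in $M$, these flats are modular in $M$ by transitivity (\Cref{prop:basic}\eqref{it:transitivity}). Concatenating with a chain $F_s \subsetneq \dots \subsetneq F_r$ gives a full modular flag through $F$. Conversely, any full modular flag through $F$ supplies such a chain above $F$.

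No step is genuinely hard once \Cref{lm:modular_in_truncation} is available. The main point of care is bookkeeping: checking that the rank-one level of $T$, where the lemma does not apply, is harmless because atoms are always modular.
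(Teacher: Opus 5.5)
Your proof is correct and follows the route the paper intends. The paper states this corollary without a separate proof, as an immediate consequence of \Cref{lm:modular_in_truncation} together with the rank shift in \Cref{lm:rank_function}: the rank-one level of $\overline{T}_F(M)$ is harmless because atoms are always modular, and transitivity of modularity (\Cref{prop:basic}\eqref{it:transitivity}) handles the ``in particular'' clause, exactly as you do.
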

\noindent
The final ingredient we need is the following, which we use to ensure realizability.
\begin{Prop}
\label{prop:truncation_realizable}
    Let $M$ be a matroid and $F$ a modular flat of $M$. If $M$ is $\CC$-realizable (resp.\ $\RR$-realizable, $\QQ$-realizable) then so is $\overline{T}_F(M)$.
\end{Prop}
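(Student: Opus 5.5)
The plan is to realize $\overline{T}_F(M)$ geometrically, by projecting a realization of $M$ generically, in the spirit of the Falk--Proudfoot description of $\A_\pi$.

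\textbf{Setup.} Fix a realization of $M$ by vectors $v_x \in V = \KK^r$ for $x \in E(M)$, where $\KK \in \{\CC,\RR,\QQ\}$. Let $W = \operatorname{span}\{v_x : x \in F\}$, so that $\dim W = s = \rk_M F$. Choose a subspace $U \subseteq W$ of codimension $s-1$ in $W$, i.e.\ $\dim U = 1$, that is generic. Set $\pi : V \to V/U'$, where $U'$ is a complement of a generic line inside $W$; equivalently, quotient by a generic $(s-1)$-dimensional subspace $K \subseteq W$. The images $\pi(v_x)$ should realize $\overline{T}_F(M)$. Since $\KK$ is infinite, a generic $K$ can be chosen with entries in $\KK$: it only has to avoid finitely many proper Zariski-closed conditions. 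This handles all three fields at once.

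\textbf{Key step.} We must check that for every $S \subseteq E(M)$,
\[
\dim \pi(\operatorname{span} v_S) = \rk_{\overline{T}_F(M)} \cl_M(S).
\]
Write $G = \cl_M S$ and $V_G = \operatorname{span} v_G$. Then $\dim \pi(V_G) = \dim V_G - \dim(V_G \cap K)$. There are two cases.

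\begin{enumerate}[(i)]
\item If $F \subseteq G$, then $K \subseteq W \subseteq V_G$, so the rank drops by exactly $s-1$. This matches \Cref{lm:rank_function}.
\item If $F \cap G = \varnothing$, modularity of $F$ gives
\[
\dim(V_G \cap W) = \rk F + \rk G - \rk(F \vee G) = \rk_M(F\cap G) = 0.
\]
This uses the fact that the realized subspace lattice meets $W$ according to the matroid rank, which holds because in a realization the modular equation for flats forces $V_F \cap V_G = V_{F\cap G}$. Hence $V_G \cap K = 0$ and the rank is unchanged.
\end{enumerate}

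\textbf{Remaining case and main obstacle.} The difficulty is a flat $G$ meeting $F$ partially, with $F \not\subseteq G$ and $F \cap G \neq \varnothing$. Such a $G$ is not a flat of $\overline{T}_F(M)$, and its closure in $\overline{T}_F(M)$ is $G \vee F$. We need
\[
\dim \pi(V_G) = \rk_M(G\vee F) - s + 1,
\]
which is equivalent to
\[
\dim(V_G \cap K) = \rk_M G + s - 1 - \rk_M(G\vee F).
\]
By modularity, $V_G \cap W = V_{G\cap F}$ has dimension $t = \rk(G\cap F) \geq 1$. A generic $(s-1)$-dimensional $K \subseteq W$ then meets this $t$-dimensional subspace in dimension $t-1$, which equals $\rk G + s - \rk(G\vee F) - 1$ by the modular equation. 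Genericity here means that $K$ is transverse to each of the finitely many subspaces $V_{G \cap F}$ inside $W$.

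Finally, the images are nonzero, because a generic $K$ contains no $v_x$. Parallel classes collapse exactly as prescribed: for $x \in F$, each $\pi(v_x)$ lies on the same line $\pi(W)$, matching the fact that $F$ becomes a rank-$1$ flat. This shows that $\{\pi(v_x)\}$ realizes $\overline{T}_F(M)$ over $\KK$.
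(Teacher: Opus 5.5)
Your argument is correct, and it is the paper's construction in dual form. Quotienting the space of linear forms by a generic $(s-1)$-dimensional subspace $K \subseteq W$ is the same as restricting the arrangement to the generic codimension-$(s-1)$ subspace $Y = K^{\perp}$ containing $X = W^{\perp}$, which is what the paper does.

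The only difference is in the verification. The paper justifies that this realizes $\overline{T}_F(M)$ by citing Brylawski's description of the complete principal truncation as an iterated principal truncation (and Falk--Proudfoot, Denham et al.). You instead check the rank function directly against \Cref{lm:rank_function}, in three cases: modularity gives $V_G \cap W = V_{G\cap F}$, and genericity of $K$ is needed only when $G$ meets $F$ partially. This makes your proof self-contained. The stray $U$ and $U'$ in your setup are unnecessary and can be deleted.
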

\begin{proof}
    Let $\A$ be a central hyperplane arrangement in $\CC^r$ with $M(\A) \cong M$. We identify hyperplanes in $\A$ with the corresponding elements of $M$. 
    Let $X = \bigcap_{H \in F} H \in \mathcal{L}(\A)$ be the ``geometric'' flat corresponding to $F$, and let 
    $s = \codim X = \rk_M F$.
    Let $Y$ be a generic subspace of $\mathbb{C}^r$ of codimension $s-1$ \emph{containing} $X$.  We identify $Y$ with $\CC^{r-s+1}$ via an affine isomorphism
    and consider on it the arrangement $\mathcal{B} = \{Y \cap H \mid H \in \A\}$.
    It follows from the usual description of $\overline{T}_F(M)$ as an iterated principal truncation (see e.g.\ \cite{Brylawski86}*{pp.\ 148--9}) that $\mathcal{B}$ realizes $\overline{T}_F(M)$; c.f.\ \ \cite{falk_proudfoot}*{Theorems 2.1 and 2.4} and \cite{Denham14}*{Proposition 2.3}.

    If $M$ is $\RR$-realizable (resp.\ $\QQ$-realizable), we choose the hyperplanes in $\A$ to be defined over $\RR$ (resp.\ $\QQ$) and $Y$ generic containing $X$ and defined over $\RR$ (resp.\ $\QQ$).
\end{proof}
\noindent
We are now in a position to explain how we found \Cref{ex:rank_3_Koszul_nonSS}: we constructed the arrangement in \Cref{fig:signed_graphic_matroid} in accordance with \Cref{lm:Dow_construction}. The complete principal truncation of the flat in bold was guaranteed to be non-supersolvable by 
\Cref{cor:truncation_not_ss} and $\QQ$-realizable by \Cref{prop:truncation_realizable}.
We now generalize this argument to prove \Cref{thm:full_counterexample}.

\begin{proof}[{Proof of \Cref{thm:full_counterexample}}]
Let $M$ be a matroid satisfying the conditions in \Cref{lm:Dow_construction} with parameter values $m = 2$, $k = 2$, $r \geq 4$, and let $F$ be the rank 2 flat ensured by the lemma. 
Let $N = \overline{T}_F(M)$.
By \Cref{lm:modular_in_truncation}, the only modular flats of rank $2$ in $N$ are the ones which contain $F$ and are modular (of rank 3) in $M$. Since there are no such flats, $N$ has no modular flats of rank $2$.
Thus $N$ is not supersolvable. 
By \Cref{thm:main}, $N$ is OS-Koszul. 
Since $m = 2$, the Dowling geometry 
$Q_r(m)$ is realizable over $\mathbb{Q}$, hence so is the submatroid $M$, and so also is the complete principal truncation $N$ by \Cref{prop:truncation_realizable}.
Thus the arrangement realizing $N$ fulfills the requirements in the statement of \Cref{thm:full_counterexample}.
Since $r \geq 4$ was chosen arbitrarily and $N$ has rank $r-1$, this proves the claim.
The matroids thus constructed are round by \Cref{prop:truncation_round}.
\end{proof}

\section{Supersolvability of parallel connections}\label{sec:modular_joins}

In this section we discuss the question of when the parallel connection of two matroids is supersolvable. This question has already been answered in the literature, in a paper by Ziegler \cite{Z91}. Unfortunately, the answer provided there is incorrect. The ideas of Ziegler's proof can nevertheless be rescued to arrive at the correct answer. This is our task in this section.
See \Cref{rmk:error} for a discussion of the error in Ziegler's statement.

Following Ziegler, we will prove a result in the more general setting of {modular joins}, which were introduced in \cite{Z91}.
These are the most well-behaved among {generalized parallel connections}, which were first defined in \cite{constructions}.

\begin{Df}\label{df:GPC}
    Let $E$ be a finite set.
    Let $E = E_0 \sqcup E_1 \sqcup E_2$ be a tripartition of $E$.
    Write $E_{01} = E_0 \cup E_1$ and $E_{12} = E_1 \cup E_2$.
    Suppose $M$ is a matroid on $E_{01}$ and $N$ is a matroid on $E_{12}$ such that
    $E_1$ is a modular flat of $M$ and
    $M|_{E_1} = N|_{E_1}$.
    
    One defines the \textbf{generalized parallel connection} of $M$ and $N$ along $E_1$---denoted $P_{E_1}(M, N)$---to be the unique matroid on $E$ whose flats are the subsets $F \subseteq E$ such that 
    $F \cap E_{01}$ is a flat of $M$ and 
    $F \cap E_{12}$ is a flat of $N$.
    The rank of this matroid is $\rk P_{E_1}(M, N) = \rk M + \rk N - \rk_M E_1$.
    When $E_1$ is modular in $N$ also, we say (following Ziegler) that $M$ is the \textbf{modular join} of $M$ and $N$ along $E_1$.
    In this case, $P_{E_1}(N, M)$ also exists and is equal to $P_{E_1}(M, N)$.
\end{Df}
\noindent
The fact that $P_{E_1}(M, N)$ defines a matroid follows from \cite{constructions}*{Theorem 5.3} and the discussion preceding that theorem
(see also \cite{Z91}*{Definition 3.1}).
Some special cases of the construction are worth highlighting.
When $E_0 = \varnothing$, the connection is equal to $N$. Similarly, it is equal to $M$ if $E_2 = \varnothing$. These two cases are \emph{trivial} joins. The connection is \emph{non-trivial} if $E_0, E_2$ are both non-empty.
When $E_1 = \varnothing$, this connection is simply the direct sum of $M$ and $N$.
When $M$ and $N$ are simple and $E_1 = \{e\}$ is a singleton,\footnote{There are simplifying assumptions; parallel connections can be defined without these constraints.} we write $P_{e}(M, N)$ for the connection (which is a modular join!) and call it the \emph{parallel connection} of $M$ and $N$ along $e$. 

The construction can be performed on matroids on unrelated ground sets as well. Suppose $M$ and $N$ are matroids on disjoint ground sets, $F$ is a modular flat of $M$, and $Y \subseteq E(N)$ is a subset such that $M|_F$ and $N|_{Y}$ are isomorphic. Choosing a bijection $\psi: F \to Y$ inducing such an isomorphism, we can identify $F$ with $Y$ and construct a generalized parallel connection along the tripartition $E(M) - F, F, E(N) - Y$ of the fused set $E(M) \cup_{\psi} E(N)$. We write $P_{F, Y}(M, N)$ for the resulting matroid. The dependence on $\psi$ is left implicit.
When $M$ and $N$ are simple and $F = \{f\}$, $Y = \{y\}$ are singletons, we use the notation $P_{f, y}(M, N)$.
Of course, the bijection $\psi$ is uniquely determined in this case.

\begin{Prop}[{\cite{constructions}*{Proposition 5.10(5)}}]\label{prop:gpc_connected}
    Let $M = P_F(M', M'')$ be a generalized parallel connection. If $M'$ is connected, then $M$ is connected if and only if $M''$ is connected.
\end{Prop}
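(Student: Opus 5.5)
The plan is to handle the two implications separately, working throughout with the tripartition $E = E_0 \sqcup E_1 \sqcup E_2$ of \Cref{df:GPC}, where $M' = M|_{E_{01}}$ and $M'' = M|_{E_{12}}$.

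\emph{Preliminary facts.} First I will record three facts that follow directly from the description of flats in \Cref{df:GPC}.
\begin{enumerate}[(a)]
\item Both $E_{01}$ and $E_{12}$ are flats of $M$, and $M|_{E_{01}} = M'$ and $M|_{E_{12}} = M''$. Indeed, a subset of $E_{01}$ is a flat of $M$ exactly when it is a flat of $M'$ whose trace on $E_1$ is a flat of $M''|_{E_1} = M'|_{E_1}$, which holds automatically. The same argument applies to $E_{12}$.
\item For any flat $A$ of $M''$ containing $E_1$, the restriction $M|_{E_0 \cup A}$ equals $P_{E_1}(M', M''|_A)$. Again one compares flats. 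The rank formula in \Cref{df:GPC} then gives $\rk_M(E_0 \cup A) = \rk M' + \rk_{M''} A - \rk E_1$.
\item The edge case $E_1 = \varnothing$ reduces to the direct sum $M' \oplus M''$. There the statement is read with the usual conventions about trivial matroids, so I will assume $E_1 \neq \varnothing$ from now on.
\end{enumerate}

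\emph{$M''$ connected implies $M$ connected.} Here I will use the standard characterization that a loopless matroid is connected iff any two elements lie in a common circuit. By (a), circuits of $M'$ and of $M''$ are circuits of $M$. Fix $e \in E_1$. Every element of $E_{01}$ shares a circuit with $e$, because $M'$ is connected. Every element of $E_{12}$ shares a circuit with $e$, because $M''$ is connected. Since the relation ``lie in a common circuit'' is an equivalence relation, $M$ is connected.

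\emph{$M''$ disconnected implies $M$ disconnected.} Write $E_{12} = A \sqcup B$ with $A$, $B$ proper flats of $M''$ satisfying $\rk A + \rk B = \rk M''$.
\begin{enumerate}[(1)]
\item Since $M'$ is connected and $E_1$ is modular in $M'$, \Cref{prop:basic}\eqref{it:connected} shows that $M'|_{E_1}$ is connected.
\item Hence the nonempty set $E_1$ lies in a single connected component of $M''$. After swapping, assume $E_1 \subseteq A$, so $B \cap E_{01} = \varnothing$.
\item Both $E_0 \cup A$ and $B$ are flats of $M$. For $E_0 \cup A$, its traces are $E_{01}$ and $A$. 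For $B$, its traces are $\varnothing$ and $B$.
\item By (b), $\rk_M(E_0 \cup A) + \rk_M B = \rk M' + \rk A - \rk E_1 + \rk B = \rk M' + \rk M'' - \rk E_1 = \rk M$.
\item Both flats are proper and nonempty, so $M$ is disconnected.
\end{enumerate}

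The main obstacle I expect is the bookkeeping in (a) and (b). That is, checking carefully that restricting a generalized parallel connection to a flat containing $E_{01}$ is again a generalized parallel connection, so that its rank can be computed from the formula in \Cref{df:GPC}. The step in (2), that $E_1$ lies inside one component, is exactly where the hypothesis that $M'$ is connected, together with the modularity of $E_1$, is used.
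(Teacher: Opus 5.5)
The paper gives no proof of this statement; it is quoted from Brylawski \cite{constructions}. Your argument is therefore a self-contained replacement for the citation, and it is correct in substance.

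The common-circuit equivalence relation settles the direction $M''$ connected $\Rightarrow$ $M$ connected. For the converse you isolate the key point correctly. By \Cref{prop:basic}\eqref{it:connected}, $M''|_{E_1}=M'|_{E_1}$ is connected, so a separation $E_{12}=A\sqcup B$ of $M''$ can be taken with $E_1\subseteq A$. This separation lifts to the separation $E=(E_0\cup A)\sqcup B$ of $M$ via the rank formula of \Cref{df:GPC}. Compared with the bare citation, your route uses only the flat description and rank formula in \Cref{df:GPC}, plus one fact already recorded in the paper. It also makes visible exactly where connectivity of $M'$ enters, and that $E_1\neq\varnothing$ is needed.

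Two points need tightening.

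First, the case $E_1=\varnothing$ is not a matter of conventions. If $M'$ and $M''$ are non-trivial and connected, then $M=M'\oplus M''$ is disconnected even though $M''$ is connected. So the statement is false there, and one must add the hypothesis $F\neq\varnothing$. This hypothesis holds in every use in the paper, e.g.\ \Cref{lm:minimality_suff}.

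Second, the criterion in (a) is misstated. A subset $X\subseteq E_{01}$ is a flat of $M$ iff $X$ is a flat of $M'$ and $X\cap E_1$ is a flat of $M''$, not merely of $M''|_{E_1}$. \Cref{df:GPC} only requires $E_1$ to be a flat of $M'$; the fused version even allows an arbitrary subset $Y\subseteq E(N)$. When $E_1$ is not closed in $M''$, the set $E_{01}$ is not a flat of $M$.

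You never actually use that flatness, only $M|_{E_{01}}=M'$, and that holds in general. The flats of $M|_{E_{01}}$ are exactly the traces on $E_{01}$ of flats of $M$, and these traces are flats of $M'$ by definition. Conversely, for a flat $X$ of $M'$, the set $X\cup\cl_{M''}(X\cap E_1)$ is a flat of $M$ whose trace on $E_{01}$ is $X$, because $\cl_{M''}(X\cap E_1)\cap E_1=\cl_{M'}(X\cap E_1)\cap E_1\subseteq X$.

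The rest stands as written: your treatment of $E_{12}$, fact (b), and steps (3)--(5). These only use that $E_1$ is closed in $M'$ and that $A$ is a flat of $M''$ containing $E_1$.
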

\begin{Lm}\label{lm:modular_in_gpc}
    Let $M = P_{F}(M', M'')$ be a generalized parallel connection. If $G$ is a modular flat of $M'$  containing $F$ then $G \cup E(M'')$ is a modular flat of $M$.
\end{Lm}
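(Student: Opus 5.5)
Set $E(M') = E_{01}$, $E(M'') = E_{12}$ with $E_1 = F$, so $M = P_F(M', M'')$ lives on $E = E_0 \sqcup E_1 \sqcup E_2$. Put $\widetilde G = G \cup E(M'') = G \cup E_2$; since $G \supseteq F$ we have $G = \widetilde G \cap E_{01}$. I will first check that $\widetilde G$ is a flat of $M$. By \Cref{df:GPC}, this means checking that $\widetilde G \cap E_{01} = G$ is a flat of $M'$ and that $\widetilde G \cap E_{12} = E_{12}$ is a flat of $M''$. Both hold.

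To prove modularity I will use criterion \ref{it:complement} of \Cref{df:modular}. So let $H$ be a flat of $M$ with $\rk_M H + \rk_M \widetilde G > \rk M$, and suppose for contradiction that $H \cap \widetilde G = \varnothing$. Then $H \subseteq E_0 - G$. In particular $H \cap E_{12} = \varnothing$, so $H$ is a flat of $M'$ disjoint from $G$. The key step is to compare ranks in $M$ with ranks in $M'$ and $M''$. Two facts are needed:
\[
\rk_M \widetilde G = \rk_{M'} G + \rk M'' - \rk F, \qquad \rk_M H = \rk_{M'} H,
\]
along with $\rk M = \rk M' + \rk M'' - \rk F$.

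The first fact holds because $\widetilde G$ is the generalized parallel connection $P_F(M'|_G, M'')$. This requires $F$ to be modular in $M'|_G$, which follows from \Cref{prop:basic}\eqref{it:deletion}. The rank formula of \Cref{df:GPC} then gives the stated rank. I expect this restriction identity, $M|_{\widetilde G} = P_F(M'|_G, M'')$, to be the main technical point. To verify it, I would compare flats: a subset of $\widetilde G$ is a flat of the restriction if and only if it meets $E_{01}$ in a flat of $M'|_G$ and meets $E_{12}$ in a flat of $M''$. I would take this from \cite{constructions} if needed. The second fact holds because $M|_{E_{01}} = M'$, which is a standard property of generalized parallel connections and is consistent with the flat description.

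Substituting these ranks into the inequality gives
\[
\rk_{M'} H + \rk_{M'} G + \rk M'' - \rk F > \rk M' + \rk M'' - \rk F,
\]
that is, $\rk_{M'} H + \rk_{M'} G > \rk M'$. By the modularity of $G$ in $M'$ and criterion \ref{it:complement}, $H \cap G \ne \varnothing$. This contradicts $H \subseteq E_0 - G$. Hence $H$ meets $\widetilde G$, and $\widetilde G$ is modular in $M$.
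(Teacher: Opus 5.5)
Your proof is correct and follows the paper's route: apply criterion~(ii) of the definition of modularity, note that a flat $H$ of $M$ disjoint from $G \cup E(M'')$ is a flat of $M'$ lying in $E(M') - G$, and then use the modularity of $G$ in $M'$. The only difference is that the paper just bounds $\rk_M(G \cup E(M''))$ from above by submodularity, $\rk_M(G\cup E(M'')) \le \rk_{M'} G + \rk M'' - \rk_M F$, which is all the argument needs; so your restriction identity $M|_{\widetilde G} = P_F(M'|_G, M'')$ can be bypassed, although it is true and follows immediately from the flat description because $\widetilde G$ is a flat of $M$.
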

\begin{proof}
    By \Cref{df:modular}\ref{it:complement}, it suffices to show that 
    $\rk_{M}(G \cup E(M'')) + \rk_M H \leq \rk M$ for any flat $H$ of $M$ disjoint from $G \cup E(M'')$.
    We must then have $H \subseteq E(M')$. We compute
    \begin{align*}
        \rk_M(G \cup E(M'')) + \rk_M H &\leq
        \rk_{M'} G + \rk M'' - \rk_M F + \rk_{M'} H\\
        &\leq \rk M' + \rk M'' - \rk_M F = \rk M.
    \end{align*}
    We have used the submodularity of $\rk_{M}$ in the first line and the modularity of $G$ in the guise of \Cref{df:modular}\ref{it:complement} in the second.
\end{proof}
\begin{Cor}[{\cite{constructions}*{Proposition 5.10(1)}}]\label[Cor]{prop:side_modular}
    $E(M'')$ is a modular flat of $P_F(M', M'')$.
\end{Cor}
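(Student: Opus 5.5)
This is the special case of \Cref{lm:modular_in_gpc} with $G = F$. Set $M' = M|_{E_{01}}$ and $M'' = M|_{E_{12}}$ in the notation of \Cref{df:GPC}, so $F = E_1$. The plan is to apply that lemma and then check that the resulting set $G \cup E(M'')$ is exactly $E(M'')$.

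First I check the hypotheses of \Cref{lm:modular_in_gpc}. By \Cref{df:GPC}, $F$ is a modular flat of $M'$, and it trivially contains itself. So we may take $G = F$, and the lemma gives that $F \cup E(M'')$ is a modular flat of $P_F(M',M'')$. Since $F = E_1 \subseteq E_{12} = E(M'')$, we have $F \cup E(M'') = E(M'')$. This is the claim.

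The only point needing care is that $E(M'')$ is really a flat of $P_F(M',M'')$. This is already part of the lemma's conclusion, but it can also be checked directly from the description of flats in \Cref{df:GPC}:
\begin{itemize}
\item $E_{12} \cap E_{12} = E_{12}$ is a flat of $M''$;
\item $E_{12} \cap E_{01} = E_1$ is a flat of $M'$.
\end{itemize}

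I do not expect any obstacle. All of the substance (the submodularity estimate together with criterion \ref{it:complement} of \Cref{df:modular}) has already been carried out in the proof of \Cref{lm:modular_in_gpc}.
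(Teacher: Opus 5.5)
Your proposal is correct and matches the paper's approach. The corollary is exactly the case $G = F$ of \Cref{lm:modular_in_gpc}: $F$ is modular in $M'$ by the definition of a generalized parallel connection, and $F \cup E(M'') = E(M'')$. Your direct verification that $E(M'')$ is a flat is a harmless extra check.
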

\noindent
In the remainder of this section, we restrict ourselves to modular joins. 
In this setting, \Cref{prop:side_modular}, combined with \Cref{prop:basic}\eqref{it:intersection}, implies:
\begin{Cor}\label{cor:modular_parts}
    Let $M = P_F(M', M'')$ be a modular join. Then $E(M')$, $E(M'')$, and $F$ are all modular flats of $M$.
\end{Cor}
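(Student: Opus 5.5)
The plan is to get all three modularity statements from \Cref{prop:side_modular}, using the symmetry that defines a modular join and then \Cref{prop:basic}\eqref{it:intersection}. Write the tripartition of the ground set as $E(M) = E_0 \sqcup F \sqcup E_2$, with $E(M') = E_0 \cup F$ and $E(M'') = F \cup E_2$.

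First I will check that $E(M')$, $E(M'')$ and $F$ are flats of $M$, using the description of flats in \Cref{df:GPC}. For $E(M')$, its intersection with $E(M')$ is the whole ground set of $M'$, hence a flat of $M'$. Its intersection with $E(M'')$ is $F$, which is a flat of $M''$ because $F$ is a flat of $M'$ and $M'|_F = M''|_F$. So $E(M')$ is a flat of $M$, and the same argument applies to $E(M'')$. Then $F = E(M') \cap E(M'')$ is an intersection of flats, hence a flat of $M$.

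Next I will apply \Cref{prop:side_modular} to $M = P_F(M', M'')$ to conclude that $E(M'')$ is modular in $M$. Because $M$ is a modular join, $F$ is also modular in $M''$. By \Cref{df:GPC}, the connection $P_F(M'', M')$ therefore exists and equals $P_F(M', M'')$. Applying \Cref{prop:side_modular} to this reversed connection shows that $E(M')$ is modular in $M$.

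Finally, $F = E(M') \cap E(M'')$ is the intersection of two modular flats of $M$, so it is modular by \Cref{prop:basic}\eqref{it:intersection}. No step is a genuine obstacle. The only point needing care is the symmetry $P_F(M', M'') = P_F(M'', M')$, which is stated in \Cref{df:GPC} and follows directly from the symmetric description of the flats.
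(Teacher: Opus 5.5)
Your argument is correct and is exactly the paper's: \Cref{prop:side_modular} applied to $P_F(M',M'')$ and to the equal connection $P_F(M'',M')$ gives the modularity of $E(M'')$ and $E(M')$, and \Cref{prop:basic}\eqref{it:intersection} then handles $F = E(M')\cap E(M'')$. One small slip is in your separate flatness check, which is redundant anyway since \Cref{prop:side_modular} already asserts these are modular \emph{flats}: $F$ being a flat of $M'$ with $M'|_F = M''|_F$ does not by itself make $F$ a flat of $M''$ (an element of $E(M'')-F$ could lie in $\cl_{M''}(F)$), and the claim holds here only because the modular-join hypothesis makes $F$ a modular flat of $M''$.
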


\begin{Lm}\label{lm:gpc_behaviour}
    With the notation of \Cref{df:GPC}, but assuming a modular join, we have
    \begin{enumerate}[(1)]
        \item $\si(P_{E_1}(M, N) = P_{\overline{E_1}}(\si(M), \si(N))$ where $\overline{E_1}$ is the ground set of $\si(M|_{E_1}) = \si(N|_{E_1})$.
        \label{it:simplify}
        \item If $S \subseteq E$ is any subset containing $E_1$, then $P_{E_1}(M,N)|_S = P_{E_1}(M|_{S \cap E_{01}}, N|_{S \cap E_{12}})$.\label{it:restrictify}
        \item If $F \subseteq E_1$ is a flat of $M|_{E_1} = N|_{E_1}$, then 
        $P_{E_1}(M, N)/F = P_{E_1 - F}(M/F, N/F)$.\label{it:contractify}
    \end{enumerate}
\end{Lm}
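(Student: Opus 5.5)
Each of the three items will be proved by comparing lattices of flats: two matroids on the same ground set coincide once they have the same flats. The definition of $P_{E_1}(M,N)$ gives its flats directly: $F \subseteq E$ is a flat exactly when $F\cap E_{01}$ is a flat of $M$ and $F \cap E_{12}$ is a flat of $N$. So in each case I first check that the right-hand construction is legitimate, i.e.\ that the relevant set is modular on both sides and the two restrictions to the common part agree. Then I compare the flats.

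For \eqref{it:simplify}, parallel classes of $P_{E_1}(M,N)$ are atoms. Every atom meets $E_{01}$ or $E_{12}$ in an atom of $M$ or $N$, or lies inside $E_1$. So simplification amounts to choosing one representative per atom, consistently on $E_1$, where the parallel classes of $M|_{E_1}=N|_{E_1}$ agree. Simplification preserves the lattice of flats, and hence modularity. Therefore $\overline{E_1}$ is modular in $\si(M)$ and $\si(N)$, and the flat description transfers verbatim. The one point to check is that no element of $E_0$ is parallel to an element of $E_2$. If $\{x,y\}$ with $x\in E_0$, $y\in E_2$ were dependent, its closure would meet $E_{01}$ in a flat of $M$ containing $x$. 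Using the flat description, the closure of $\{x\}$ is $\cl_M(x)\cup \cl_N(\cl_M(x)\cap E_1)$, which already is a flat, and this excludes $y$ unless $\cl_M(x)$ meets $E_1$, in which case $x$ is parallel to something in $E_1$. Hence any parallel class crossing the tripartition meets $E_1$, and the consistent choice handles it.

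For \eqref{it:restrictify}, I use Proposition~\ref{prop:basic}\eqref{it:deletion}: $E_1 \subseteq S$ remains modular in $M|_{S\cap E_{01}}$ and in $N|_{S\cap E_{12}}$. The flats of a restriction $P|_S$ are the sets $F\cap S$ with $F$ a flat of $P$, equivalently the sets $G\subseteq S$ with $\cl_P(G)\cap S=G$. For $G \subseteq S$, I will show that $G$ is a flat of $P|_S$ iff $G\cap E_{01}$ is closed in $M|_{S\cap E_{01}}$ and $G\cap E_{12}$ is closed in $N|_{S \cap E_{12}}$. The forward direction is immediate, since the restrictions of $P$ to $E_{01}$ and $E_{12}$ are $M$ and $N$. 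For the reverse, I would form the candidate flat $F=\cl_M(G\cap E_{01})\cup \cl_N(G\cap E_{12})$ of $P$ and check that $F\cap S=G$. This needs the two closures to agree on $E_1$. That is exactly where modularity of $E_1$ on both sides enters: for a modular flat $E_1$, the closure restricted to $E_1$ is governed by the projection formula $\rk(A\vee E_1)$. I expect this consistency verification to be the main obstacle, and I would handle it via Definition~\ref{df:modular}\ref{it:eq_modular}, iterating the closures alternately until they stabilize.

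For \eqref{it:contractify}, $F \subseteq E_1$ is a flat of $M$ and of $N$. By Proposition~\ref{prop:basic}\eqref{it:quotient}, $E_1-F$ is modular in $M/F$ and in $N/F$, and $(M/F)|_{E_1-F}=(N/F)|_{E_1-F}$. The flats of $P/F$ are the sets $G-F$ with $G\supseteq F$ a flat of $P$, which means $G\cap E_{01}\supseteq F$ is a flat of $M$ and $G\cap E_{12}\supseteq F$ is a flat of $N$. This is precisely the flat condition for $P_{E_1-F}(M/F,N/F)$, so the two matroids coincide.
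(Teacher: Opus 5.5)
Your approach is the paper's. Each item is a direct comparison of flats. The modularity needed to make each right-hand side a legitimate connection comes, in turn, from invariance of modularity under simplification in (1), the deletion part of Proposition~\ref{prop:basic} in (2), and its quotient part in (3). Items (1) and (3) are fine as you have them.

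In (2), however, you leave open the step you call the main obstacle. You propose to settle it through modularity of $E_1$ and alternating closures, so as written the item is not finished. That diagnosis is wrong: the agreement on $E_1$ follows at once from $E_1\subseteq S$.

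Closedness of $G\cap E_{01}$ in $M|_{S\cap E_{01}}$ means $\cl_M(G\cap E_{01})\cap S=G\cap E_{01}$. Intersecting with $E_1$ gives $\cl_M(G\cap E_{01})\cap E_1=G\cap E_1$, and likewise $\cl_N(G\cap E_{12})\cap E_1=G\cap E_1$.

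Hence your candidate $F=\cl_M(G\cap E_{01})\cup\cl_N(G\cap E_{12})$ satisfies $F\cap E_{01}=\cl_M(G\cap E_{01})$ and $F\cap E_{12}=\cl_N(G\cap E_{12})$. So $F$ is a flat of $P_{E_1}(M,N)$, and $F\cap S=(G\cap E_{01})\cup(G\cap E_{12})=G$.

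No iteration is needed, and modularity plays no role in this step. In (2), modularity is used only to make $P_{E_1}(M|_{S\cap E_{01}},N|_{S\cap E_{12}})$ well defined, which you already handled.
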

\begin{proof}
    Each case follows by a straightforward comparison of the flats on either side of the equality. The necessary modularities follow in (1) from the fact that modularity is a property in the lattice of flats of a matroid. In (2) they follow from \Cref{prop:basic}\eqref{it:deletion}.
    In (3) they follow from \Cref{prop:basic}\eqref{it:quotient}.
\end{proof}
\noindent
Suppose $M$ is a matroid with modular flats $F, F'$ such that $M = P_{F\cap F'}(M|_F, M|_{F'})$. 
In such a case, we will say that $M$ is the modular join of $F$ and $F'$.
We also say that $M$ is a modular join \emph{over} $F \cap F'$.
The following lemmas allow us to recognize such cases.
\begin{Prop}[{\cite{Z91}*{Proposition 3.3}}]\label{prop:ziegler}
    Let $M$ be a matroid on a ground set $E$. Let $F, F'$ be modular flats of $M$ such that $E = F \cup F'$. Then $M$ is the modular join of $F$ and $F'$.
\end{Prop}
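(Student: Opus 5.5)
To prove \Cref{prop:ziegler}, write $G = F \cap F'$; it is a flat, and it is modular in $M$ by \Cref{prop:basic}\eqref{it:intersection}. By \Cref{prop:basic}\eqref{it:restriction}, $G = G \cap F$ is modular in $M|_F$ and likewise in $M|_{F'}$. Also $(M|_F)|_G = M|_G = (M|_{F'})|_G$. So the modular join $P_G(M|_F, M|_{F'})$ exists on the tripartition $E = (F - G) \sqcup G \sqcup (F' - G)$. The matroid $P_G(M|_F, M|_{F'})$ is determined by its flats, and $M$ is determined by its lattice of flats on the same ground set. It therefore suffices to show that a subset $S \subseteq E$ is a flat of $M$ if and only if $S \cap F$ is a flat of $M|_F$ and $S \cap F'$ is a flat of $M|_{F'}$.

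The forward direction is immediate: if $S$ is a flat of $M$, then $S \cap F$ is an intersection of flats, hence a flat of $M$ contained in $F$, hence a flat of $M|_F$; the same holds for $F'$.

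For the converse, suppose $A = S \cap F$ and $B = S \cap F'$ are flats of $M$ (flats of $M|_F$ are flats of $M$ because $F$ is a flat). Since $E = F \cup F'$, we have $S = A \cup B$, and we need $\cl_M(A \cup B) = A \cup B$. Here $A \cap G = S \cap G = B \cap G$. My approach is a rank computation. Put $H = \cl_M(A \cup B)$. Then $H \cap F$ is a flat containing $A$ and $H \cap F'$ is a flat containing $B$, and I want to show both are equalities. This is the key step, where modularity of $F$ and $F'$ must enter. I plan to use modularity of $F'$ in $M$, in the form \Cref{df:modular}\eqref{it:eq_modular}, together with modularity of $G$ in $M|_F$ (obtained from \Cref{prop:basic}\eqref{it:restriction}).

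First, $A \vee G \subseteq F$. Since $F'$ is modular,
\[
\rk(A \vee F') = \rk A + \rk F' - \rk(A \cap F') = \rk A + \rk F' - \rk(A \cap G).
\]
Next apply modularity of $G$ in $M|_F$ to the flat $H \cap F$: the intersection $(H \cap F) \cap G = H \cap G$. I then want to show $H \cap G = A \cap G$ and conclude $\rk(H \cap F) = \rk A$, which forces $H \cap F = A$ since $A \subseteq H \cap F$ are nested flats. The intended ordering is: use the modular-pair identity for $(A \vee B, F')$ to compute $\rk(H \cap F')$, and the one for $(A \vee B, F)$ to compute $\rk(H \cap F)$. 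Combining these with submodularity $\rk H \le \rk A + \rk B - \rk(A \cap B)$, where $A \cap B = A \cap G$, should squeeze both $\rk(H \cap F) = \rk A$ and $\rk(H \cap F') = \rk B$.

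The main obstacle is extracting these rank identities cleanly; the danger is circularity in relating $\rk(H \cap F)$ to $\rk A$. What I would try first is a modular-pair computation. The flats $A \vee G$ and $F'$ satisfy $(A \vee G) \cap F' = G$ (using that $G$ is modular in $M|_F$ and that $(A \vee G) \cap F'$ is a flat inside $F$). Hence $\rk\big((A\vee G)\vee F'\big) = \rk(A \vee G) + \rk F' - \rk G$. Inducting on ranks along $B \subseteq F'$ then gives $\rk(A \vee B) = \rk A + \rk B - \rk(A \cap G)$. This additivity, plugged into the modular identity for $F$ applied to $H$, yields $\rk(H \cap F) = \rk A$ as desired, and symmetrically for $F'$. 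Therefore $H = A \cup B = S$.
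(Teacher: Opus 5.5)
Your proposal is correct. Since the paper gives no proof of its own and simply imports the statement from Ziegler's Proposition~3.3, the comparison is with a bare citation.

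\textbf{What your route does.} You check directly that the flats of $M$ are exactly the sets $S$ with $S\cap F$ a flat of $M|_F$ and $S\cap F'$ a flat of $M|_{F'}$. That is the defining property of $P_{F\cap F'}(M|_F,M|_{F'})$ in \Cref{df:GPC}, so this gives a self-contained proof. It also shows that only modularity of $F$ and $F'$ in $M$ plus submodularity are needed. Modularity of $G$ in $M|_F$ and in $M|_{F'}$ is used only to know that the modular join is defined.

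\textbf{The squeeze already finishes it.} The squeeze you outline in your third paragraph completes the proof, and there is no circularity. Put $C=S\cap G=A\cap B=A\cap F'$ and $H=\cl_M(A\cup B)$. Since $B\subseteq F'$, we have $H\vee F'=A\vee F'$. Modularity of $F'$ applied to $H$ and to $A$, followed by submodularity $\rk H+\rk C\le \rk A+\rk B$, gives
\[
\rk(H\cap F')=\rk H+\rk F'-\rk(A\vee F')=\rk H-\rk A+\rk C\le \rk B .
\]
As $B\subseteq H\cap F'$ are flats, this forces $H\cap F'=B$. Symmetrically, using modularity of $F$, we get $H\cap F=A$. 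Hence $H=(H\cap F)\cup(H\cap F')=S$. The additivity $\rk(A\vee B)=\rk A+\rk B-\rk C$ comes out as a consequence, not something you must establish first.

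\textbf{Replace your final paragraph with this.} In that paragraph, $(A\vee G)\cap F'=G$ holds simply because $A\vee G\subseteq F$; no modularity of $G$ in $M|_F$ is involved, and the identity you derive from it is never used. Also, ``inducting on ranks along $B\subseteq F'$'' is left unspecified. It can be made to work: along a chain of flats from $C$ up to $B$, additivity for $B'$ plus modularity of $F'$ gives $(A\vee B')\cap F'=B'$. So a new element of $B-B'$ avoids $A\vee B'$ and raises the rank by one. But this is a longer path to the same conclusion.
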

\begin{Prop}\label{prop:magic}
    Let $M$ be a matroid. Let $F \subsetneq E(M)$ be a modular flat. Then $M$ is a non-trivial modular join over $F$ if and only if $M/F$ is disconnected.
\end{Prop}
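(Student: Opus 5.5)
We prove the two directions separately, using \Cref{prop:ziegler} for the backward direction and the flat description of modular joins for the forward direction.

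\emph{Backward direction.} Suppose $M/F$ is disconnected. Then $E(M/F) = E(M) - F$ is the union of two proper flats $G' - F$ and $G'' - F$ of $M/F$ of complementary rank. Here $G', G''$ are flats of $M$ containing $F$ with $G' \cap G'' = F$, $G' \cup G'' = E(M)$, and $\rk G' + \rk G'' = \rk M + \rk F$.

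To invoke \Cref{prop:ziegler}, we need $G'$ and $G''$ to be modular in $M$. By \Cref{prop:modular_descent} applied with $G = G'$ and $H = G''$, the flat $G'$ is modular in $M$ if and only if $F$ is modular in $M|_{G''}$. The latter holds by \Cref{prop:basic}\eqref{it:deletion}, since $F$ is modular in $M$ and $G'' \supseteq F$. Symmetrically, $G''$ is modular in $M$.

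\Cref{prop:ziegler} then gives $M = P_{F}(M|_{G'}, M|_{G''})$, using $G' \cap G'' = F$. The join is non-trivial because $G' - F$ and $G'' - F$ are nonempty, which holds since $G'', G'$ are proper flats.

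\emph{Forward direction.} Suppose $M = P_F(M|_{F'}, M|_{F''})$ is a non-trivial modular join, with $F' \cap F'' = F$, $F' \cup F'' = E(M)$, and $F' - F$, $F'' - F$ nonempty. We show that $M/F$ is the direct sum of $M|_{F'}/F$ and $M|_{F''}/F$. By \Cref{lm:gpc_behaviour}\eqref{it:contractify} with $F$ itself as the flat of $M|_F$, we get $M/F = P_{\varnothing}(M|_{F'}/F, M|_{F''}/F)$. A generalized parallel connection along the empty set is the direct sum, by the definition of flats: $S$ is a flat exactly when $S \cap E_{01}$ and $S \cap E_{12}$ are both flats.

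Both summands are non-trivial because $F' - F$ and $F'' - F$ are nonempty. Hence $M/F$ is disconnected.

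\emph{Main obstacle.} The only delicate point is the modularity of $G'$ and $G''$ in the backward direction. \Cref{prop:modular_descent} is stated exactly for this purpose, so the argument reduces to checking its hypotheses, $G \cap H = F$ and $G \cup H = E(M)$. These follow directly from the description of a disconnected matroid as a union of two proper flats of complementary rank, pulled back along contraction by $F$.
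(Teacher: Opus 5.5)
Your proof is correct and follows the same route as the paper. Necessity comes from \Cref{lm:gpc_behaviour}\ref{it:contractify}, which exhibits $M/F$ as the direct sum $M|_{F'}/F \oplus M|_{F''}/F$; sufficiency comes from \Cref{prop:modular_descent} combined with \Cref{prop:ziegler}. You also make explicit a step the paper leaves implicit: $F$ is modular in $M|_{G''}$ by \Cref{prop:basic}\eqref{it:deletion}.
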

\begin{proof}
    Necessity follows from \Cref{lm:gpc_behaviour}\ref{it:contractify}, which implies that 
    $P_{F}(M_1, M_2)/F = M_1/F \oplus M_2/F$. Sufficiency follows from Lemmas \ref{prop:modular_descent} and \ref{prop:ziegler}.
\end{proof}
\begin{Lm}\label{lm:sufficiency}
    Let $M$ be a matroid which is the modular join of flats $F_1, F_2$. 
    Suppose $M_1 \deq M|_{F_1}$ and $M_2 \deq M|_{F_2}$ are supersolvable and further that some full modular flag of $M_1$ passes through $F \deq F_1 \cap F_2$. Then $M$ is supersolvable.
\end{Lm}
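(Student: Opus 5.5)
Let $r_1 = \rk F_1$ and $s = \rk F$. Fix a full modular flag $\varnothing = G_0 \subsetneq G_1 \subsetneq \dots \subsetneq G_{r_1} = F_1$ of $M_1$ with $G_s = F$, and a full modular flag $H_0 \subsetneq \dots \subsetneq H_{r_2} = F_2$ of $M_2$, where $r_2 = \rk F_2$. The aim is to build a full modular flag of $M$ which first climbs through the $H_j$ inside $F_2$, and then adds the part of $M_1$ lying outside $F$, using the flats $G_i \cup F_2$ for $i \ge s$.

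\textbf{Upper half.} For $s \le i \le r_1$, the flat $G_i$ is modular in $M_1$ and contains $F$. By \Cref{lm:modular_in_gpc}, applied to $M = P_F(M_1, M_2)$, the set $G_i \cup F_2$ is then a modular flat of $M$. The rank formula for generalized parallel connections, applied to the restriction $M|_{G_i \cup F_2} = P_F(M_1|_{G_i}, M_2)$ from \Cref{lm:gpc_behaviour}\ref{it:restrictify}, gives
\[
\rk_M(G_i \cup F_2) = i + r_2 - s.
\]
So as $i$ runs from $s$ to $r_1$, these flats occupy ranks $r_2$ through $r_1 + r_2 - s = \rk M$, one flat in each rank. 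The bottom one is $G_s \cup F_2 = F_2$ and the top one is $E(M)$.

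\textbf{Lower half.} By \Cref{cor:modular_parts}, $F_2$ is modular in $M$. Each $H_j$ is modular in $M|_{F_2} = M_2$, so by transitivity (\Cref{prop:basic}\eqref{it:transitivity}) each $H_j$ is modular in $M$. These flats fill ranks $0$ through $r_2$.

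\textbf{Conclusion and main check.} Concatenating the two halves at $F_2$ gives a full modular flag of $M$, so $M$ is supersolvable. The step that needs care is the rank computation above, together with the fact that \Cref{lm:modular_in_gpc} requires $F$ to be modular only in $M_1$. That requirement holds here, since the connection is a modular join. Note that only $M_1$'s flag needs to pass through $F$; supersolvability of $M_2$ enters only through an arbitrary full flag of $M_2$. This asymmetry is exactly the correction to Ziegler's statement.
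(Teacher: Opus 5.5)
Your proof is correct and is essentially the paper's argument. You take a full modular flag of $M_2$, made modular in $M$ through the modularity of $F_2$ and transitivity, and splice it at $F_2$ with the flats $G_i \cup F_2$ ($i \ge s$) coming from \Cref{lm:modular_in_gpc}. Your explicit rank computation via the restriction formula spells out a step the paper leaves implicit.
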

\begin{proof}
    Let $\varnothing = G_0 \subsetneq G_1 \subsetneq \dots \subsetneq G_p = F_2$ be a full modular flag in $M_{2}$.
    Similarly, let 
    $\varnothing = H_0 \subsetneq H_1 \subsetneq \dots \subsetneq H_q = F_1$ be a full modular flag in $M_1$ in which $H_s = F$ for some $0 \leq s \leq q$.
    We note that $G_i$ is a modular flat of $M$ for each $i$ by the combination of \Cref{prop:side_modular} and \Cref{prop:basic}\eqref{it:transitivity}.
    By \Cref{lm:modular_in_gpc}, $H_i \cup F_2$ is modular in $M$ for all $i \geq s$. Hence the spliced sequence of flats
    \[
        \varnothing = G_0 \subsetneq G_1 \subsetneq \dots \subsetneq G_p = F_2 = H_s \cup F_2 \subsetneq H_{s+1} \cup F_2 \subsetneq \dots \subsetneq H_q \cup F_2 = F_1 \cup F_2 = E(M)
    \]
    is a full modular flag in $M$.
\end{proof}
The above proposition has a converse, provided the modular join is \emph{minimal} in a suitable sense.
\begin{Thm}\label{thm:supersolvable_GPC}
Let $M$ be a matroid. Suppose $F$ is a flat of $M$ minimal for the property that $M$ is a non-trivial modular join over $F$. Choose flats $F_0, F_1$ with $F = F_0 \cap F_1$ such that $M = P_F(F_0, F_1)$. 
Then $M$ is supersolvable if and only if $M_0 \deq M|_{F_0}$ and $M_1 \deq M|_{F_1}$ are both supersolvable and at least one of them has a full modular flag passing through $F$.
\end{Thm}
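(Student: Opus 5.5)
The ``if'' direction is exactly \Cref{lm:sufficiency}, since a modular join is symmetric and we may label the two sides so that $M_0$ (or $M_1$) carries the flag. For ``only if'', supersolvability of $M_0$ and $M_1$ follows from \Cref{prop:inherit_ss}, because $F_0$ and $F_1$ are flats of $M$. The real content is to produce a full modular flag through $F$ in one of the two sides. I plan to do this by induction on $\rk M$, using \Cref{lm:coatom_supersolvable} to choose a modular coatom $H$ of $M$ with $M|_H$ supersolvable.

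The case analysis is on how $H$ meets $F_0$ and $F_1$. By \Cref{prop:basic}\eqref{it:restriction}, $H\cap F_i$ is modular in $M_i$. Modularity of $H$ gives $\rk(H\cap F_i)=\rk F_i-1$ whenever $F_i\not\subseteq H$. Since $H$ is proper and $E=F_0\cup F_1$, it cannot contain both $F_i$.

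\textbf{Case 1: $F_1\subseteq H$.} Then $H\cap F_0$ is a modular coatom of $M_0$ containing $F$. By \Cref{lm:gpc_behaviour}\eqref{it:restrictify}, $M|_H=P_F(M|_{H\cap F_0},M_1)$.
\begin{itemize}
\item If $H\cap F_0=F$, then $F$ is a modular coatom of $M_0$. Any full flag of the supersolvable $M|_F$ (again \Cref{prop:inherit_ss}), extended by $F_0$, is a full modular flag of $M_0$ through $F$, by \Cref{prop:basic}\eqref{it:transitivity}.
\item Otherwise $M|_H$ is a non-trivial modular join over $F$. We apply induction to $M|_H$. If the resulting flag lives in $M_1$ we are done. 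If it lives in $M|_{H\cap F_0}$, appending $F_0$ gives a flag in $M_0$ through $F$, by transitivity.
\end{itemize}
The case $F_0\subseteq H$ is symmetric.

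\textbf{Case 2: $H$ contains neither $F_i$.} If $F\subseteq H$, then $M|_H=P_F(M|_{H\cap F_0},M|_{H\cap F_1})$ is again non-trivial over $F$. Induction gives a flag through $F$ in some $M|_{H\cap F_i}$, and appending the modular coatom $F_i$ extends it. If $F\not\subseteq H$, I want a contradiction with minimality of $F$. The flat $F':=H\cap F$ is modular in $M$ by \Cref{prop:basic}\eqref{it:intersection} and has rank $\rk F-1$. I would try to show that $M/F'$ is disconnected, splitting along the images of $H$ and of $F'\vee(\text{the complement pieces})$, and then invoke \Cref{prop:magic}. Concretely, I would check via the rank count that $(H-F')$ and the image of $F_0\vee F_1$-type flats not in $H$ give flats of complementary rank covering $M/F'$.

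The main obstacle is the inductive hypothesis. Applying induction to $M|_H$ requires that $F$ still be \emph{minimal} for giving a non-trivial modular join of $M|_H$. Minimality need not obviously restrict, so the inductive statement may have to be weakened to ``$M$ is a non-trivial modular join over $F$, and no modular flat strictly inside $F$ gives one.'' One then checks that a smaller join flat $F''\subsetneq F$ for $M|_H$ lifts to one for $M$. For the lifting I would take $F''$, note it is modular in $M$ by transitivity (since $H$ is modular), and show that disconnectedness of $(M|_H)/F''$ propagates to $M/F''$ by adjoining the elements outside $H$ to the component containing the appropriate $F_i$. This is where the argument genuinely uses minimality, and where Ziegler's original statement went astray. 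I expect the same mechanism to handle the subcase $F\not\subseteq H$ of Case 2.
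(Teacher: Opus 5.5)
Your ``if'' direction, your Case 1, and your plan to lift minimality from $M$ to $M|_H$ all match the paper's argument. The paper does the lifting with \Cref{prop:basic}\eqref{it:quotient} and \eqref{it:connected}: $H-G$ is modular in $M/G$, and a modular flat of a connected matroid has connected restriction. So disconnectedness of $(M|_H)/G=(M/G)|_{H-G}$ forces $M/G$ to be disconnected.

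\textbf{The gap.} It is in Case 2, subcase $F\not\subseteq H$. Your proposed separation of $M/F'$, with $H-F'$ as one side and $E-H$ as the other, cannot come out of a rank count. For these two sets to be complementary separators, $(E-H)\cup F'$ would have to be a flat of rank $\rk F$. Since it contains $F$, it would equal $F$, which means $E-H\subseteq F$. Nothing in your setup gives you this. For instance, if $F_0-F\subseteq H$ but $F_1-F\not\subseteq H$, the separation that actually exists (before minimality is used) is $(F_0\cap H)-F'$ versus $F_1-F'$. This is exactly the step where minimality has to be contradicted, so as written it is unsupported.

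\textbf{Fix via the paper's route.} Pass to the simplification. If there were $x_0\in F_0-(H\cup F)$ and $x_1\in F_1-(H\cup F)$, then $\{x_0,x_1\}$ would be a rank-2 flat of the modular join disjoint from the modular coatom $H$, which is impossible. Hence $F_i-F\subseteq H$ for some $i$. Then $M$ is the non-trivial modular join of $F_i\cap H$ and $F_{1-i}$ over $H\cap F$, by \Cref{prop:ziegler}. Minimality forces $F\subseteq H$, and hence $F_i\subseteq H$. So Case 2 never occurs, and your subcase $F\subseteq H$ is vacuous.

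\textbf{Fix via your own ``same mechanism''.} You need one unstated observation.
\begin{enumerate}
\item $H\cap F_0$ and $H\cap F_1$ are modular in $M$ by \Cref{prop:basic}\eqref{it:intersection}, hence modular in $M|_H$. They cover $H$, so $M|_H$ is their modular join over $F'=H\cap F$.
\item This join is non-trivial, because $\rk(H\cap F_i)=\rk F_i-1\ge\rk F>\rk F'$.
\item By \Cref{prop:magic}, $(M|_H)/F'$ is disconnected.
\item Your lifting then makes $M/F'$ disconnected.
\item \Cref{prop:magic} now says $M$ is a non-trivial modular join over $F'\subsetneq F$, contradicting minimality.
\end{enumerate}
This second route avoids the rank-2 flat argument entirely, at the cost of running the induction in the vacuous subcase. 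Either fix completes your proof.
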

\begin{proof}
    Sufficiency is given by \Cref{lm:sufficiency}. Now suppose $M$ is supersolvable. Then so are $M_0$ and $M_1$ by \Cref{prop:inherit_ss}. It remains to prove the claim about $F$. There is no loss of generality in assuming that $M$ is simple because the relevant construction and properties operate on the level of lattices of flats.

    Let $H$ be a modular coatom of $M$. We start by showing that $H$ contains one of $F_0$ or $F_1$.
    Suppose we could choose elements 
    $x_0 \in F_0 - (H \cup F)$ and 
    $x_1 \in F_1 - (H \cup F)$. Then $\{x_0, x_1\}$ would be a rank 2 flat disjoint from $H$, contradicting \Cref{df:modular}\ref{it:complement}. 
    We deduce that $F_i - F \subseteq H$ for $i = 0$ or $i = 1$. It holds for $i = 0$ without loss of generality. Now $F_0' \deq F_0 \cap H$ is modular in $M$ by \Cref{cor:modular_parts} and \Cref{prop:basic}\eqref{it:intersection}. Hence $M$ is the (non-trivial) modular join of $F_0'$ and $F_1$ by \Cref{prop:ziegler}. 
    Since $F_0' \cap F_1 = H \cap F \subseteq F$,
    the minimality of $F$ forces $F = H \cap F$. So $F_0 \subseteq H$.

    Let $F_1' \deq F_1 \cap H$.
    Suppose $F_1' = F$. By \Cref{prop:basic}\eqref{it:restriction}, $F$ is a modular coatom in the supersolvable matroid $M_1$. So $M_1|_F$ is supersolvable by \Cref{prop:inherit_ss}. Any full modular flag in $M_1|_F$ extends to a full modular flag in $M_1$ passing through $F$.
    Suppose instead that $F_1' \supsetneq F$. Let $M' \deq M|_H$. By \Cref{cor:modular_parts} and \Cref{prop:basic}\eqref{it:intersection}, $F_1'$ is modular in $M$; so is $F_0$ by \Cref{prop:basic}\eqref{it:restriction}. Thus $M'$ is the modular join of $F_0$ and $F_1'$, by \Cref{prop:ziegler}. If this non-trivial join is minimal, then by the inductive hypothesis either $M'|_{F_0} = M_0$ or $M|_{F_1'}$ has a full modular flag passing through $F$. In the first case, we win immediately. In the second case, the full modular flag in $M'|_{F_1'} = M|_{F_1'}$ extends to one in $M_0$ since the coatom $F_1'$ in $M_1$ is modular by \Cref{prop:basic}\eqref{it:restriction}. We now show that minimality must in fact hold. If not, then by \Cref{prop:magic} we can find a modular flat $G \subsetneq F$ of $M'$ such that $M'/G = M/G|_{H-G}$ is disconnected. By \Cref{prop:basic}\eqref{it:quotient}, $H-G$ is modular in $M/G$.
    Hence \Cref{prop:basic}\eqref{it:connected} forces $M/G$ to be disconnected. Using \Cref{prop:magic} once more, we see that $M$ is a non-trivial modular join over $G$. This contradicts the minimality of $F$.
\end{proof}
In the case of a parallel connection, the minimality condition can be dispensed with.
\begin{Cor}\label{cor:pc_ss}
    A \emph{parallel connection} $P_{f}(M', M'')$ is supersolvable if and only if both $M'$ and $M''$ are supersolvable and at least one of them has a full modular flag passing through $\{f\}$.
\end{Cor}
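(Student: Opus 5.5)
The plan is to deduce \Cref{cor:pc_ss} from \Cref{thm:supersolvable_GPC} by checking that, for a parallel connection, the flat $\{f\}$ is automatically minimal among flats over which $M$ is a non-trivial modular join. Throughout, write $M = P_f(M', M'')$ with $M', M''$ simple. First I dispose of the trivial cases. If $E(M') = \{f\}$, then $M = M''$. Any full modular flag of $M''$ passes through $\{f\}$, because its rank-1 member may be replaced by $\{f\}$: rank-1 flats are always modular, and $\{f\}$ lies in every positive-rank flat containing the original rank-1 member. So the statement reduces to a tautology, and the case $E(M'') = \{f\}$ is symmetric. I may therefore assume the join is non-trivial.

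The candidate flats strictly contained in $\{f\}$ consist only of $\varnothing$. I now split according to whether $M$ is a non-trivial modular join over $\varnothing$. By \Cref{prop:magic}, this happens exactly when $M/\varnothing = M$ is disconnected.

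\emph{Case 1: $M$ is not a non-trivial modular join over $\varnothing$.} Then $\{f\}$ is minimal for the property of being a non-trivial modular join. \Cref{thm:supersolvable_GPC} applies with $F_0 = E(M')$ and $F_1 = E(M'')$, and gives the corollary verbatim, since $M|_{E(M')} = M'$ and $M|_{E(M'')} = M''$.

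\emph{Case 2: $M$ is disconnected.} By \Cref{prop:gpc_connected}, if $M'$ were connected then $M''$ would be disconnected. In any event, $M$ decomposes as a direct sum, and I handle this case directly rather than through the theorem.

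I expect this second case to be the main obstacle. The first thing I would try is to observe that the connected component of $f$ in $M$ is the parallel connection of the components of $f$ in $M'$ and in $M''$, and that the remaining components are components of $M'$ or $M''$ not containing $f$. Supersolvability of a direct sum is equivalent to supersolvability of each summand, since a full modular flag of a direct sum can be assembled from flags of the summands. Moreover, a full modular flag through $\{f\}$ exists in $M'$ if and only if one exists in the component of $f$, because the other summands can be appended after it. This reduces the claim to the connected parallel connection of the two $f$-components, which falls under Case 1.

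The routine facts to verify are that modular flats of a direct sum are exactly the unions of modular flats of the summands, and that the component decomposition is compatible with $P_f$ via \Cref{lm:gpc_behaviour}\eqref{it:restrictify}.
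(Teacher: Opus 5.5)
Your strategy is the paper's. The paper only sketches this corollary: it notes that $\{f\}$ can fail to be minimal only when $P_f(M',M'')$ is disconnected, and then says to argue component by component. Your Cases 1 and 2 carry this out. The reduction via \Cref{prop:magic} with $F=\varnothing$ is correct, and so is the decomposition $P_f(M',M'') = P_f(C',C'')\oplus R'\oplus R''$ for $M'=C'\oplus R'$ and $M''=C''\oplus R''$, where $C'$ and $C''$ are the components of $f$.

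One step is false as written, namely the justification in your trivial case. It is not true that every full modular flag of $M''$ can be made to pass through $\{f\}$. Once the rank-1 member $\{a\}$ is replaced by $\{f\}$, the rank-2 member need not contain $f$. In fact \Cref{rmk:error} exhibits a supersolvable matroid, $\mathrm{O}_7$, with no full modular flag through the atom $1$. If your claim held, the flag condition in the corollary would be vacuous. The conclusion of that step still holds, for a different reason. When $E(M')=\{f\}$, the matroid $M'$ itself has the full modular flag $\varnothing\subsetneq\{f\}$. So the right-hand side reduces to ``$M''$ is supersolvable'', and the statement is a tautology.

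The same repair is needed in Case 2 when $f$ is a coloop of $M'$ or $M''$. Then $P_f(C',C'')$ is a trivial join, so it lands in this case rather than in Case 1.

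You also justify only one direction of ``$M'$ has a full modular flag through $\{f\}$ iff $C'$ does''; appending flags of $R'$ gives only the implication from $C'$ to $M'$. For the converse, intersect a flag of $M'$ through $\{f\}$ with $E(C')$ and discard repetitions. The pieces are modular in $C'$ by \Cref{prop:basic}\eqref{it:restriction}. Ranks rise by at most one at each step because rank is additive over the direct sum. The result is a full modular flag of $C'$ through $\{f\}$.
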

To prove this corollary, one observes that a parallel connection can only fail to be minimal in the above sense in a disconnected matroid, in which case we can argue component by component. We omit the details. In general, the minimality condition can be something of a nuisance. Luckily, the following gives a sufficient condition.
\begin{Lm}\label{lm:minimality_suff}
    Let $M = P_F(M_1, M_2)$ be a non-trivial modular join with with $F \neq \varnothing$. If $M_1$ and $M_2$ are both round, then the join is minimal in the sense of \Cref{thm:supersolvable_GPC}.
\end{Lm}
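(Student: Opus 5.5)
The plan is to argue by contradiction directly from the definitions; roundness of the two pieces should leave no room for a smaller join. Recall that minimality in the sense of \Cref{thm:supersolvable_GPC} means there is no flat $G \subsetneq F$ such that $M$ is a non-trivial modular join over $G$. So suppose such a $G$ exists. By definition we can then write $M = P_G(M|_{G'}, M|_{G''})$ for modular flats $G', G''$ of $M$ with $G' \cap G'' = G$. Since $M$ is a generalized parallel connection over $G$, its ground set is $E(M) = G' \cup G''$. Non-triviality of this join means $G' - G$ and $G'' - G$ are both non-empty, so $G'$ and $G''$ are both proper flats of $M$.

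Write $F_1 = E(M_1)$ and $F_2 = E(M_2)$. Then $F_1 \cap F_2 = F$ and $F_1 \cup F_2 = E(M)$. Intersecting the cover $E(M) = G' \cup G''$ with $F_1$ gives
\[
F_1 = (G' \cap F_1) \cup (G'' \cap F_1).
\]
Each piece is a flat of $M|_{F_1} = M_1$, since the intersection of a flat with a flat is a flat and flats of a restriction to a flat are exactly the flats of $M$ contained in it. Because $M_1$ is round, one of these pieces must equal $F_1$. Relabelling if necessary, $F_1 \subseteq G'$. Applying the same argument to $F_2$ and the round matroid $M_2$ shows that $F_2 \subseteq G'$ or $F_2 \subseteq G''$.

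If $F_2 \subseteq G'$, then $E(M) = F_1 \cup F_2 \subseteq G'$, contradicting that $G'$ is proper. Hence $F_2 \subseteq G''$. It follows that
\[
F = F_1 \cap F_2 \subseteq G' \cap G'' = G \subsetneq F,
\]
which is absurd. Therefore no such $G$ exists and the join is minimal.

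The only step needing care is the bookkeeping of the definitions: that a modular join over $G$ gives a cover of $E(M)$ by two proper flats meeting exactly in $G$, and that non-triviality of the join is precisely properness of $G'$ and $G''$. The hypothesis $F \neq \varnothing$ is used only to ensure the statement is not vacuous; the non-triviality of the original join ensures $M_1$ and $M_2$ have non-empty ground sets, so roundness applies to them meaningfully.
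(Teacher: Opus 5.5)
Your argument is correct, and it takes a genuinely different, more elementary route than the paper.

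The paper argues via contraction. If $M$ were a non-trivial modular join over a flat $G \subsetneq F$, then $M/G$ would be disconnected by \Cref{prop:magic}. On the other hand, \Cref{lm:gpc_behaviour}\ref{it:contractify} gives $M/G = P_{F-G}(M_1/G, M_2/G)$, and both pieces are round, hence connected, by \Cref{prop:round_contract}. So $M/G$ is connected by \Cref{prop:gpc_connected}, with $G \subsetneq F$ guaranteeing $F - G \neq \varnothing$.

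You instead apply roundness of $M_1$ and $M_2$ directly to the traces $G' \cap E(M_i)$ and $G'' \cap E(M_i)$ of the cover $E(M) = G' \cup G''$. This uses nothing beyond the definitions, and it buys a stronger conclusion. Your argument never uses that $G'$ and $G''$ are modular, that $G \subseteq F$, or that $F \neq \varnothing$. It shows that every flat over which $M$ is a non-trivial modular join contains $F$, so $F$ is the least such flat, not merely a minimal one.

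The paper's route, for its part, uses roundness only through the connectivity of the contractions $M_i/G$ for flats $G \subsetneq F$, so it still works under that weaker hypothesis. For instance, when $F$ is a single point it only needs $M_1$ and $M_2$ connected, which is the situation invoked for parallel connections in \Cref{rmk:error}. It also runs through the same contraction criterion, \Cref{prop:magic}, that drives the proof of \Cref{thm:supersolvable_GPC}.
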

\begin{proof}
    Suppose instead that $M$ is a modular join over some (modular) flat $G \subsetneq F$. Then
    $M/G$ is disconnected by \Cref{prop:magic}. On the other hand, $M/G = P_{F - G}(M_1/G, M_2/G)$ by \Cref{lm:gpc_behaviour}\ref{it:contractify}.
    Since $M_1$ and $M_2$ are round, so are $M_1/G$ and $M_2/G$ by \Cref{prop:round_contract}. In particular, they are both connected. By \Cref{prop:gpc_connected}, $M/G$ must be connected; a contradiction.
\end{proof}
\begin{Ex}\label{rmk:error}
    In \cite{Z91}*{Theorem 3.4}, \Cref{thm:supersolvable_GPC} is stated incorrectly with the condition that \emph{both} $M_0$ and $M_1$ contain a full modular flag passing through $F$. 
    Given \Cref{thm:supersolvable_GPC}, to see that this condition is not necessary, it suffices to show that it is possible to construct a modular join as above in which only in $M_0$ is there a full modular flag passing through $F$. For this we can take a parallel connection of two connected matroids, which is automatically a minimal non-trivial modular join. For $M_0$ there are many choices, e.g.\ any simple matroid of rank 2 on $\geq 3$ elements. For $M_1$ it suffices to find a supersolvable matroid which does not admit full modular flags passing through all of its atoms.
    The signed-graphic matroid depicted in \Cref{fig:bad_matroid} is the smallest matroid with this property.  It is the matroid $\mathrm{O}_7$ in Oxley's catalogue \cite{Oxley}*{p.\ 644}.
    Its unique modular coatom is $\{4,5,6,7\}$.
    There are therefore no full modular flags passing through any of $1$, $2$ or $3$.
      \begin{figure}[htb!]
        \centering
    \begin{minipage}{0.33\textwidth}
        \centering
            \tikzset{
  LabelStyle/.style = { rectangle, rounded corners, draw,
                        minimum width = 2em},
  VertexStyle/.append style = { inner sep=5pt,
                                font = \Large\bfseries},
  EdgeStyle/.append style = {bend left} }
        \begin{tikzpicture}
        \tikzset{vertex/.style={circle,fill=blue!25,minimum size=12pt,inner sep=2pt}}
  \tikzset{every loop/.style={}}
        \node[vertex] (B) at (0,2)  [shape=circle,draw=black,fill=gray] {};
    \node[vertex] (C) at (0,-2)  [shape=circle,draw=black,fill=gray] {};
    \node[vertex] (D) at (4,0)  [shape=circle,draw=black,fill=gray] {};
    \path[loop/.style={looseness=30}]   (B) edge [loop above, thick] node {$5$} (B);
    \path[loop/.style={looseness=30}]   (C) edge [loop below,  thick] node {$4$} (C);
    \path[loop/.style={looseness=30}]   (D) edge [loop right,  thick] node {$1$} (D);
    \path (B) edge [bend left, thick] node[right]{$-$} node[left]{$7$} (C);
    \path (C) edge [bend left, thick] node[left]{$+$} node[right]{$6$} (B);
   \path (C) edge [ thick] node[below]{$+$} node[above]{$2$} (D); 
   \path (D) edge [thick] node[below]{$3$} node[above]{$+$} (B);
    \node[vertex] (D) at (4,0)  [shape=circle,draw=black,fill=gray] {};
\end{tikzpicture}
    \end{minipage}
    \begin{minipage}{0.6\textwidth}
        \centering
        \includestandalone[width=\linewidth]{lattice_of_flats}
    \end{minipage}
        \caption{A rank 3 supersolvable signed-graphic matroid and its lattice of flats.}
        \label{fig:bad_matroid}
    \end{figure}
\end{Ex}

\begin{Ex}\label{ex:pcnonss} We can use the same building block to produce a matroid which is not supersolvable but is the parallel connection of two supersolvable ones. We let $M'$ and $M''$ be copies of the matroid in \Cref{fig:bad_matroid} on disjoint ground sets and define
$M \deq P_{1',1''}(M', M'')$, where 
$1', 1''$ are the atoms in each matroid corresponding to the element labeled $1$ in the diagram.
The matroid $M$ is a minimal modular join over the fused element $1' = 1''$, so fails to be supersolvable by \Cref{thm:supersolvable_GPC}. 
$M$ is again a signed-graphic matroid, defined by the signed graph in \Cref{fig:OSKoszul_nonSS_signed_graphic_matroid}; thus $M$ is realizable over $\mathbb{C}$ (and even $\mathbb{Q}$).  It corresponds to the complex hyperplane arrangement with hyperplanes defined by the following linear forms in $\mathbb{C}[x_1,x_2,y_1,y_2,z]$:
\[
\{z,x_1,x_2,x_1+x_2,x_1-x_2,x_1-z,x_2-z,y_1,y_2,y_1+y_2,y_1-y_2,y_1-z,y_2-z\}.
\]
\noindent
This arrangement was previously constructed by Tsujie \cite{Tsujie20}*{Example 4.12} as an example of a free arrangement whose intersection lattice is not supersolvable.
We show in \Cref{cor:OSparallel} that the Orlik--Solomon algebra of $M$ is Koszul.
\begin{figure}[ht!]
    \centering
    \tikzset{
  LabelStyle/.style = { rectangle, rounded corners, draw,
                        minimum width = 2em},
  VertexStyle/.append style = { inner sep=5pt,
                                font = \Large\bfseries},
  EdgeStyle/.append style = {bend left} }
        \begin{tikzpicture}
        \tikzset{vertex/.style={circle,fill=blue!25,minimum size=12pt,inner sep=2pt}}
  \tikzset{every loop/.style={}}
        \node[vertex] (B) at (0,2)  [shape=circle,draw=black,fill=gray] {};
    \node[vertex] (C) at (0,-2)  [shape=circle,draw=black,fill=gray] {};
    \node[vertex] (D) at (4,0)  [shape=circle,draw=black,fill=gray] {};
       \node[vertex] (E) at (8,2)  [shape=circle,draw=black,fill=gray] {};
    \node[vertex] (F) at (8,-2)  [shape=circle,draw=black,fill=gray] {};
    \path[loop/.style={looseness=30}]   (B) edge [loop above, thick] node {} (B);
    \path[loop/.style={looseness=30}]   (C) edge [loop below,  thick] node {} (C);
    \path[loop/.style={looseness=30}]   (D) edge [loop above,  thick] node {} (D);
      \path[loop/.style={looseness=30}]   (E) edge [loop above, thick] node {} (E);
    \path[loop/.style={looseness=30}]   (F) edge [loop below,  thick] node {} (F);
    \path (B) edge [bend left, thick] node[right]{$-$} node[left]{} (C);
    \path (C) edge [bend left, thick] node[left]{$+$} node[right]{} (B);
   \path (C) edge [ thick] node[below]{$+$} node[below]{} (D); 
   \path (D) edge [thick] node[above]{} node[above]{$+$} (B);
    \path (E) edge [bend left, thick] node[right]{$+$} node[left]{} (F);
    \path (F) edge [bend left, thick] node[left]{$-$} node[right]{} (E);
   \path (F) edge [ thick] node[below]{$+$} node[below]{} (D); 
   \path (D) edge [thick] node[above]{} node[above]{$+$} (E);
    \node[vertex] (D) at (4,0)  [shape=circle,draw=black,fill=gray] {};
\end{tikzpicture}
        \caption{A rank 5 OS-Koszul signed-graphic matroid which is not supersolvable.}
\label{fig:OSKoszul_nonSS_signed_graphic_matroid}
\end{figure}
\end{Ex}

\section{Orlik--Solomon Algebras of Parallel Connections}\label{sec:parallel_connections}

In this section we prove that parallel connections of OS-Koszul matroids are OS-Koszul. In particular, this will show that the non-supersolvable matroid constructed in \Cref{ex:pcnonss} is OS-Koszul.
To motivate things, we start with a topological proof in the case of realizable arrangements.
We first explain how to realize parallel connections of realizable arrangements.

Suppose $\A$ and $\mathcal{B}$ are central hyperplane arrangements in $\CC^{r}$ and $\CC^{s}$ respectively.
 Write $x_1, \dots, x_r$ for the coordinates of $\CC^r$ and $y_1, \dots, y_s$ for the coordinates of $\CC^s$.
Choose hyperplanes $X = V(f) \in \A$ and $Y = V(g) \in \mathcal{B}$. Let $\mathcal{C} = \A \times \mathcal{B}$ be the product arrangement in $\CC^{r+s}$ and let $W$ be the hyperplane $V(f - g)$ in $\CC^{r+s}$ (using the coordinates $x_1, \dots, x_r, y_1, \dots, y_s$ on $\mathbb{C}^{r+s} = \mathbb{C}^r \times \mathbb{C}^s$).
Choosing a linear isomorphism $W \cong \CC^{r+s-1}$, the arrangement $P_{X, Y}(\A, \mathcal{B}) \deq \{W \cap H \mid H \in \A \times \mathcal{B}\}$ realizes the parallel connection $P_{X, Y}(M(\A), M(\mathcal{B}))$.
To see why, first note that by a change of coordinates we may assume that $f = x_r$ and $g = y_1$. In this case, the above description is a straightforward translation into the language of hyperplane arrangements of the construction in the proof of \cite{Oxley}*{Proposition 7.1.24}.

Since we have a diffeomorphism $\Cc(\A) \simeq \Cc(\mathbf{d}_H(\A)) \times \mathbb{C}^{\times}$ for any central arrangement $\A$ and hyperplane $H \in \A$, the following is a more precise form of \cite{falk_proudfoot}*{Corollary 4.3}, which, in the setting above, proves the diffeomorphism $\mathcal{C}(P_{X, Y}(\A, \mathcal{B})) \simeq \mathcal{C}(\mathbf{d}_{X}\mathcal{A}) \times \mathcal{C}(\mathcal{B})$.
\begin{Prop}\label[Prop]{prop:pc_complement}
    Let $\mathcal{A}_1, \mathcal{A}_2$ be central arrangements. Identify the symbol $H$ with a hyperplane in each of these arrangements. Let $\mathcal{A} = P_{H}(\mathcal{A}_1, \mathcal{A}_2)$ be the parallel connection arrangement as defined above. Then there exists a diffeomorphism
    $
        \mathcal{C}(\mathbf{d}_{H}\mathcal{A}) \simeq \mathcal{C}(\mathbf{d}_{H}\mathcal{A}_1) \times \mathcal{C}(\mathbf{d}_{H}\mathcal{A}_2)
    $.
\end{Prop}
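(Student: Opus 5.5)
Our approach is to use the explicit coordinates for the parallel connection and decone along the common hyperplane. After a change of coordinates in each factor, as in the discussion preceding the statement, we may assume $\mathcal{A}_1$ lives in $\CC^r$ with coordinates $x_1,\dots,x_r$ and $H = V(x_r)$, and $\mathcal{A}_2$ lives in $\CC^s$ with coordinates $y_1,\dots,y_s$ and $H = V(y_1)$. Then $\mathcal{A}$ is the restriction of $\mathcal{A}_1 \times \mathcal{A}_2$ to $W = V(x_r - y_1)$. Identify $W \cong \CC^{r+s-1}$ via the coordinates $(x_1,\dots,x_{r-1}, t, y_2,\dots,y_s)$, where $t = x_r = y_1$. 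In these coordinates, each hyperplane of $\mathcal{A}$ is defined by a linear form $\ell(x_1,\dots,x_{r-1},t)$ coming from $\mathcal{A}_1$ or a form $m(t,y_2,\dots,y_s)$ coming from $\mathcal{A}_2$. The two copies of $H$ become the single hyperplane $V(t)$.

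Next, decone $\mathcal{A}$ along $H = V(t)$ by intersecting with the affine hyperplane $\{t = 1\}$, which we identify with $\CC^{r-1} \times \CC^{s-1}$ via $(x_1,\dots,x_{r-1},y_2,\dots,y_s)$. The hyperplanes of $\mathbf{d}_H\mathcal{A}$ are then the zero sets of $\ell(x_1,\dots,x_{r-1},1)$ for $\ell \in \mathcal{A}_1 - \{H\}$ and of $m(1,y_2,\dots,y_s)$ for $m \in \mathcal{A}_2 - \{H\}$. The first family involves only the $x$-variables, and these affine forms are exactly those defining $\mathbf{d}_H\mathcal{A}_1$ in the chart $\{x_r = 1\} \cong \CC^{r-1}$. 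Likewise, the second family involves only the $y$-variables and defines $\mathbf{d}_H\mathcal{A}_2$ in the chart $\{y_1 = 1\} \cong \CC^{s-1}$. Hence $\mathbf{d}_H\mathcal{A} = \mathbf{d}_H\mathcal{A}_1 \times \mathbf{d}_H\mathcal{A}_2$ as affine arrangements in $\CC^{r-1}\times\CC^{s-1}$. The complement of a product arrangement is the product of the complements, which gives the desired diffeomorphism; in fact it is a linear isomorphism of ambient spaces restricted to the complements.

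The main point needing care is bookkeeping rather than a real obstacle. We must check that the change of coordinates making $H$ a coordinate hyperplane is compatible with the decone construction, which is defined using $\{\ell_0 = 1\}$; it is, because the decone depends only on the chosen linear form up to the identification, and rescaling $\ell_0$ gives an isomorphic affine arrangement. We must also confirm that no hyperplane of $\mathcal{A}$ other than $V(t)$ becomes empty or collapses on $\{t=1\}$. A hyperplane $V(\ell)$ of $\mathcal{A}_1$ distinct from $H$ is not parallel to $\{x_r = 1\}$, since otherwise $\ell$ would be a multiple of $x_r$; the same holds for $\mathcal{A}_2$. Finally, if $\mathcal{A}_1$ or $\mathcal{A}_2$ contains hyperplanes defining the same form after restriction, they were already equal, so the correspondence of hyperplanes is bijective.
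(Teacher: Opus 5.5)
Your proof is correct and is essentially the paper's argument. The paper also normalizes $H$ to a coordinate hyperplane in each factor, then notes that the coordinate rings of $\mathcal{C}(\mathbf{d}_H\mathcal{A})$ and $\mathcal{C}(\mathbf{d}_H\mathcal{A}_1)\times\mathcal{C}(\mathbf{d}_H\mathcal{A}_2)$ are the same localization modulo the equal ideals $(x_0-y_0,\,x_0-1)=(x_0-1,\,y_0-1)$. That is the algebraic form of your observation that on $\{t=1\}$ the deconed parallel connection is literally the product arrangement $\mathbf{d}_H\mathcal{A}_1\times\mathbf{d}_H\mathcal{A}_2$; your phrasing even gives an affine isomorphism of arrangements, not just of complements.
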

\begin{proof}
Write $x_0, x_1, \dots, x_r$ for the coordinates of $\mathcal{A}_1$ and $y_0, \dots, y_s$ for the coordinates of $\mathcal{A}_2$. 
We may assume that $V(x_0)$ is the hyperplane $H$ in $\mathcal{A}_1$ and that $V(f_1), \dots, V(f_n)$ are its remaining hyperplanes.
Similarly, we may assume that $V(y_0)$ is the hyperplane $H$ in $\mathcal{A}_1$ and that $V(g_1), \dots, V(g_m)$ are its remaining hyperplanes.
The arrangement complements in question are affine varieties over $\mathbb{C}$. We compute their coordinate rings:
\begin{itemize}[leftmargin=0.75\leftmargini]
    \item For $\mathcal{C}(\mathcal{A}_1)$, it is the localization 
$\mathbb{C}[x_0, x_1, \dots, x_r]_{x_0f_1\cdots f_n}$.\\
    \hspace{1em}$\drsh$ For $\mathcal{C}(\mathbf{d}_H \mathcal{A}_1)$, it is  
$\mathbb{C}[x_0, \dots, x_r]_{x_0 f_1\cdots f_n}/(x_0-1)$.

    \item For $\mathcal{C}(\mathcal{A}_2)$, it is
$\CC[y_0, \dots, y_s]_{y_0 g_1\cdots g_m}$.\\
    \hspace{1em}$\drsh$ For $\Cc(\mathbf{d}_H \mathcal{A}_2)$, it is  
$\mathbb{C}[y_0, \dots, y_s]_{y_0 g_1\cdots g_m}/(y_0-1)$.

    \item For $\Cc(\A)$, it is  
$\mathbb{C}[x_0, \dots, x_r, y_0, \dots, y_s]_{x_0y_0f_1\cdots f_n g_1\cdots g_m}/(x_0 - y_0)$.\\
    \hspace{1em}$\drsh$ For $\Cc(\mathbf{d}_H\A)$, it is  
$\mathbb{C}[x_0, \dots, x_r, y_0, \dots, y_s]_{x_0y_0f_1\cdots f_n g_1\cdots g_m}/(x_0 - y_0, x_0 - 1)$.

    \item For $\Cc(\mathbf{d}_H\A_1) \times \Cc(\mathbf{d}_H\A_2)$, it is $\mathbb{C}[x_0, \dots, x_r]_{x_0 f_1 \cdots f_n}/(x_0-1) \otimes_{\mathbb{C}} \mathbb{C}[y_0,\dots, y_s]_{y_0 g_1 \cdots g_m}/(y_0-1)$. This ring is isomorphic to
    $\mathbb{C}[x_0, x_1, \dots, x_r, y_0, y_1, \dots, y_s]_{x_0 y_0 f_1\cdots f_r g_1\cdots g_s}/(x_0 - 1, y_0 - 1)$.
\end{itemize}
Since the ideals $(x_0 - y_0, x_0 - 1) = (y_0-1, x_0 - 1)$ coincide, $\Cc(\mathbf{d}_H \A)$ and $\Cc(\mathbf{d}_H\A_1) \times \Cc(\mathbf{d}_H\A_2)$ have isomorphic coordinate rings. They are thus isomorphic as varieties, hence diffeomorphic.
\end{proof}
\noindent
We get the following immediate consequence.
\begin{Cor}[{c.f.\ \cite{falk_proudfoot}*{Corollary 3.6}}]\label{cor:OSparallel}  Let $\A_1, \A_2$ and $H$ be as in \Cref{prop:pc_complement}.
The parallel connection $\A = P_H(\A_1, \A_2)$
is OS-Koszul if and only if both $\A_1$ and $\A_2$ are.
\end{Cor}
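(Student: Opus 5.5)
The plan is to pass from the diffeomorphism of \Cref{prop:pc_complement} to an isomorphism of cohomology rings, and then apply the Koszul criteria from the background section. Since the complements are diffeomorphic, the Künneth theorem with $\QQ$ coefficients gives an isomorphism of graded-commutative $\QQ$-algebras
\[
H^*(\Cc(\mathbf{d}_H\A),\QQ) \cong H^*(\Cc(\mathbf{d}_H\A_1),\QQ) \otimes_{\QQ} H^*(\Cc(\mathbf{d}_H\A_2),\QQ).
\]
By the Brieskorn--Orlik--Solomon theorem for (affine, non-central) arrangements recalled in \Cref{subsec:OS_algebras}, this reads
\[
\OS(\mathbf{d}_H\A) \cong \OS(\mathbf{d}_H\A_1)\otimes_\QQ \OS(\mathbf{d}_H\A_2),
\]
with the graded-commutative tensor product on the right.

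Next I would check that the isomorphism is one of \emph{standard graded} algebras, so that \Cref{thm:tensor_product_Koszul} applies. Each factor is generated in degree $1$ by the classes $e_x$ with $x\neq H$. Künneth is compatible with cup products and preserves cohomological degree, so both sides are standard graded and the isomorphism respects the grading. \Cref{thm:tensor_product_Koszul} then says that $\OS(\mathbf{d}_H\A)$ is Koszul if and only if both $\OS(\mathbf{d}_H\A_1)$ and $\OS(\mathbf{d}_H\A_2)$ are.

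Finally, \Cref{prop:OSKoszul} (taking $x=H$ in each matroid) shows that Koszulness of $\OS(\mathbf{d}_H\mathcal{B})$ is equivalent to Koszulness of $\OS(\mathcal{B})$ for $\mathcal{B}\in\{\A,\A_1,\A_2\}$. Here $\OS(\mathbf{d}_H\mathcal{B})=\OS(\mathcal{B})/(e_H)$ agrees with the algebra $\OS(\mathbf{d}_H M(\mathcal{B}))$ defined there, by the comparison of presentations made in \Cref{subsec:OS_algebras}. This yields the claimed equivalence.

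The argument has no genuine obstacle. The only point needing care is identifying the topologically defined cohomology of the decone with the combinatorial $\OS(\mathbf{d}_H M)$ used in \Cref{prop:OSKoszul}; the paper already records this via $\OS(\mathbf{d}_{H_0}\A)=\OS(\A)/(e_{H_0})$.
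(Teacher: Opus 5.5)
Your proof is correct, but it is not the argument the paper gives as its proof of this corollary. The paper's proof is topological. After reducing to the decones via \Cref{prop:OSKoszul}, it invokes the Papadima--Yuzvinsky theorem \cite{PY99} that OS-Koszulness is equivalent to the complement being a rational $K(\pi,1)$. It then uses the fact that Bousfield--Kan $\QQ$-completion commutes with products. Hence the homotopy groups of the completion of $\Cc(\mathbf{d}_H\A)$ are products of those of the two factors, and they vanish in degrees $\geq 2$ exactly when both factors' do.

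Your argument is instead the ``second proof'' that the paper sketches immediately afterwards. K\"unneth together with Brieskorn--Orlik--Solomon turns the diffeomorphism into $\OS(\mathbf{d}_H\A)\cong\OS(\mathbf{d}_H\A_1)\otimes_\QQ\OS(\mathbf{d}_H\A_2)$. The Backelin--Fr\"oberg criterion (\Cref{thm:tensor_product_Koszul}) together with \Cref{prop:OSKoszul} then finishes.

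Your route is more economical. It avoids both the deep equivalence of \cite{PY99} and the completion machinery, using only a graded ring isomorphism. It also makes visible that only the cohomology ring matters. That is exactly what lets the paper drop realizability altogether, via Falk's purely combinatorial isomorphism $\rOS(P_f(M,N))\cong\rOS(M)\otimes_\QQ\rOS(N)$ (\Cref{prop:tensorreducedos}, \Cref{cor:pc_Koszul}).

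The topological argument gives something stronger: a product splitting of the rational homotopy types, not just of cohomology. It also ties the corollary to the Falk--Proudfoot rational $K(\pi,1)$ framework that motivates the paper. For the Koszul property alone, however, it is more than is needed.
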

\begin{proof} 
By \Cref{prop:OSKoszul}, the OS-Koszul property for each of $\A_1, \A_2$, and $\A$ is equivalent to the same property for $\textbf{d}_H \A_1$, $\textbf{d}_H \A_2$, and $\textbf{d}_H \A$.
By \cite{PY99}*{Theorem 5.1}, the OS-Koszul property for each of these arrangements is equivalent to the corresponding arrangement complement being a rational $K(\pi, 1)$ space.
By \Cref{prop:pc_complement}, $\mathcal{C}(\mathbf{d}_{H} \mathcal{A})$ and $\mathcal{C}(\mathbf{d}_{H}\mathcal{A}_1) \times \mathcal{C}(\mathbf{d}_{H}\mathcal{A}_2)$ are homeomorphic. Taking $\QQ$-completions, we see from \cite{bousfield_kan}*{Lemma 7.2}
that $\QQ_{\infty}\mathcal{C}(\mathbf{d}_{H} \mathcal{A})$ is \emph{homotopic} to
$\QQ_{\infty}\mathcal{C}(\mathbf{d}_{H}\mathcal{A}_1) \times \QQ_{\infty}\mathcal{C}(\mathbf{d}_{H}\mathcal{A}_2)$. Taking homotopy groups, we get 
\[
    \pi_k(\QQ_{\infty}\mathcal{C}(\mathbf{d}_{H} \mathcal{A})) = \pi_k(\QQ_{\infty}\mathcal{C}(\mathbf{d}_{H}\mathcal{A}_1)) \times \pi_k(\QQ_{\infty}\mathcal{C}(\mathbf{d}_{H}\mathcal{A}_2))
\]
The left-hand side vanishes for all $k \geq 2$ if and only if $\A$ is a rational $K(\pi,1)$ arrangement. The right-hand side vanishes for all $k \geq 2$ if and only if both $\A_1$ and $\A_2$ are.
\end{proof}
\noindent
Applying K\"unneth's theorem \cite{Hatcher02}*{Theorem 3.15} in
\Cref{prop:pc_complement} gives $H^*(\mathcal{C}(\mathbf{d}_{H} \mathcal{A}), \QQ) = H^*(\mathcal{C}(\mathbf{d}_{H}\mathcal{A}_1), \QQ) \otimes_{\QQ} H^*(\mathcal{C}(\mathbf{d}_{H}\mathcal{A}_2), \QQ)$.
This translates to $\OS(\mathbf{d}_{H}\A) = \OS(\mathbf{d}_{H}\mathcal{A}_1) \otimes_{\QQ} \OS(\mathbf{d}_{H}\mathcal{A}_2)$. This gives a second proof of \Cref{cor:OSparallel}, by appealing to \Cref{prop:OSKoszul}.
By proving this statement about Orlik--Solomon algebras directly, one generalizes \Cref{cor:OSparallel} to arbitrary matroids.
\begin{Thm}[\cite{Falk01}*{Theorem 3.7}]\label[Thm]{prop:tensorreducedos}
    Let $M$ and $N$ be matroids and let us identify the symbol $f$ with an element in each.
    Then we have the graded $\QQ$-algebra isomorphism
    \[
        \rOS(P_{f}(M, N)) \cong \rOS(M)\otimes_{\QQ} \rOS(N).
    \]
    \begin{proof}
        Let $E_0 = E(M) - \{f\}$ and $E_1 = E(N) - \{f\}$.
        We will prove the isomorphism 
        $$\OS(\mathbf{d}_f P_{f}(M, N)) \cong \OS(\mathbf{d}_f M)\otimes_{\QQ} \OS(\mathbf{d}_f N).$$
        Both sides are canonically quotients of the free exterior algebra 
        $A = \bigwedge(E_0 \sqcup E_1)$.
        We will show that the kernel ideals are equal.
        As discussed in \Cref{subsec:OS_algebras},
        $\OS(\mathbf{d}_f P_{f}(M, N))$ is the quotient of $A$ by 
        (i) the monomials 
        $e_D$ for subets $D \subseteq E_0 \sqcup E_1$ containing $f$ in their span
        and (ii) the elements $\partial e_C$ for all circuits of $P_f(M, N)\del f$.
        Since a subset of $E_0$ (or $E_1$) spanning $f$ still spans it in $P_f(M, N)$ and since every circuit of $M \del f$ (or $N \del f$) is a circuit of $P_f(M, N) \del f$, every relation in the defining ideal of 
        $\rOS(M)\otimes_{\QQ} \rOS(N)$ lies in the defining ideal of
        $\OS(\mathbf{d}_f P_{f}(M, N))$. The converse follows from the following two observations:
        \begin{itemize}
            \item If $D \subseteq E_0 \sqcup E_1$ contains $f$ in its span in $P_f(M, N)$, then so does at least one of $D \cap E_0$ or $D \cap E_1$.
            \item If $C \subseteq E_0 \sqcup E_1$ is a circuit of $P_f(M, N)$ then either $C \subseteq E_0$, or 
            $C \subseteq E_1$, or else both 
            $C \cap E_0$ and $C \cap E_1$ contain $f$ in their span.
        \end{itemize}
        The first observation shows that every element of type (i) in the defining ideal of $\OS(\mathbf{d}_f P_{f}(M, N))$ is a multiple of an element of type (i) in the defining ideal of 
        $\rOS(M)\otimes_{\QQ} \rOS(N)$. 
        The second observation shows that every element of type (ii) in the defining ideal of $\OS(\mathbf{d}_f P_{f}(M, N))$ is either equal, up to sign, to a type (ii) element for $\rOS(M)\otimes_{\QQ} \rOS(N)$ or else every term in its expansion into monomials is a multiple of an element of type (i) in the defining ideal of $\rOS(M)\otimes_{\QQ} \rOS(N)$.
    \end{proof}
\end{Thm}
As promised, we now apply \Cref{prop:OSKoszul}
and \Cref{prop:tensorreducedos} to get the following as an immediate corollary, generalizing \Cref{cor:OSparallel}.
\begin{Cor}\label{cor:pc_Koszul}
    Let $M = P_f(M', M'')$ be a parallel connection. Then $M$ is OS-Koszul if and only if both $M'$ and $M''$ are OS-Koszul.
\end{Cor}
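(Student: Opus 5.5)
The plan is to combine \Cref{prop:tensorreducedos} with \Cref{prop:OSKoszul} and \Cref{thm:tensor_product_Koszul}, after reducing to the simple case.

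First, since the Orlik--Solomon algebra of a loopless matroid depends only on its simplification, and parallel connections commute with simplification by \Cref{lm:gpc_behaviour}\eqref{it:simplify}, we may assume $M'$ and $M''$ are simple. Here $f$ is an atom of each, the connection is a modular join, and the notation $P_f(M',M'')$ of \Cref{prop:tensorreducedos} applies directly.

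Next, by \Cref{prop:OSKoszul}, the matroid $M$ is OS-Koszul if and only if $\rOS(M)$ is Koszul, and likewise for $M'$ and $M''$. \Cref{prop:tensorreducedos} gives a graded $\QQ$-algebra isomorphism
\[
\rOS(M) \cong \rOS(M') \otimes_{\QQ} \rOS(M'').
\]
Both factors are standard graded and graded-commutative: they are quotients of exterior algebras by homogeneous ideals, and they are generated in degree one. We therefore read the right-hand side as the graded-commutative tensor product. The ring $\rOS$ is identified with $\OS(\mathbf{d}_f\,\cdot\,)$, which is a quotient of an exterior algebra on the remaining generators. In the proof of \Cref{prop:tensorreducedos}, the two sets of exterior generators anticommute, so the isomorphism is indeed onto the graded-commutative tensor product, as \Cref{thm:tensor_product_Koszul} requires.

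Then \Cref{thm:tensor_product_Koszul} says this tensor product is Koszul if and only if both factors are. Chaining these equivalences gives the corollary.

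The only step needing care is the one just flagged: checking that the tensor product in \Cref{prop:tensorreducedos} matches the graded-commutative convention of \Cref{thm:tensor_product_Koszul}, and that no degenerate case breaks the standard-graded hypothesis. For example, if $M'$ consists of $f$ alone, then $\rOS(M')=\QQ$. This case is harmless, since $\QQ$ is trivially Koszul and the tensor product is then just $\rOS(M'')$.
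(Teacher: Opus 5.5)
Your proposal is correct and is essentially the paper's argument. The paper deduces the corollary immediately from \Cref{prop:OSKoszul} and \Cref{prop:tensorreducedos}, with \Cref{thm:tensor_product_Koszul} implicitly supplying the Koszul equivalence for the graded-commutative tensor product. Your preliminary reduction to simple matroids is harmless but not needed, since the paper's notation $P_f(M',M'')$ already presupposes that $M'$ and $M''$ are simple.
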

\noindent
Recall that a graded $\QQ$-algebra is \textbf{G-quadratic} if it has a quadratic Gr\"obner basis under \emph{some presentation}.
In the case of Orlik--Solomon algebras, this means that the Orlik--Solomon ideal has a quadratic Gr\"obner basis after applying a linear change of basis to the presenting exterior algebra. Note that the isomorphism $\OS(M) \cong \bigwedge {\mathbb{Q}^1} \otimes_\QQ\rOS(M)$ implies that G-quadraticity for the reduced and nonreduced Orlik--Solomon algebras are equivalent.
Since the tensor product of G-quadratic algebras is G-quadratic, \Cref{prop:tensorreducedos} implies:
\begin{Cor}\label[Cor]{prop:gquadratic}
    Let $M = P_f(M', M'')$ be a parallel connection.
    If $M'$ and $M''$ have G-quadratic (reduced) 
    Orlik--Solomon algebras then so does $M$.
    In particular, if $M$ and $N$ are supersolvable matroids, then for any $f\in M$, $g\in N$, both $\rOS(P_{f,g}(M, N))$ and $\OS(P_{f,g}(M, N))$ are G-quadratic.
\end{Cor}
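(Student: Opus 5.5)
The plan is to reduce everything to the tensor decomposition of \Cref{prop:tensorreducedos}, with the G-quadratic property of a tensor product used as a black box. First I note that G-quadraticity passes between $\OS(M)$ and $\rOS(M)$, which the paper records via $\OS(M) \cong \bigwedge \QQ^1 \otimes_\QQ \rOS(M)$. So it suffices to show that $\rOS(P_f(M',M''))$ is G-quadratic. By \Cref{prop:tensorreducedos}, this algebra is isomorphic as a graded $\QQ$-algebra to $\rOS(M') \otimes_\QQ \rOS(M'')$.

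Next I check that a graded-commutative tensor product of two G-quadratic exterior-algebra quotients is G-quadratic. Suppose $\rOS(M') = \bigwedge V'/I'$ and $\rOS(M'') = \bigwedge V''/I''$ after suitable linear changes of coordinates, where $I'$ and $I''$ have quadratic Gröbner bases $G'$ and $G''$ for monomial orders $<'$ and $<''$. Then the tensor product is $\bigwedge(V' \oplus V'')/(I' + I'')$, with $I'$ and $I''$ extended to the larger exterior algebra. I order monomials by block (product) order: compare the $V'$-part by $<'$ first, then the $V''$-part by $<''$. Two Gröbner basis elements in disjoint variable sets whose leading terms are coprime produce S-pairs that reduce to zero, by Buchberger's criterion in the exterior setting. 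Alternatively, one can count standard monomials: they are products of standard monomials on each side, so their number matches the dimension of the tensor product. Either way, $G' \cup G''$ is a quadratic Gröbner basis.

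For the ``in particular'' clause, I start from a supersolvable matroid $M$. By Björner--Ziegler and Peeva (cited in the introduction), $\OS(M)$ has a quadratic Gröbner basis in the standard presentation, so it is G-quadratic, and hence $\rOS(M)$ is G-quadratic as well. I also need to confirm this for nonsimple $M$, which can arise via simplification since $\OS(M) \cong \OS(\si(M))$. The same applies to $N$. Applying the first part with $f$ and $g$ identified then gives the claim for both $\rOS(P_{f,g}(M,N))$ and $\OS(P_{f,g}(M,N))$.

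The main obstacle is the passage from $\OS$ to $\rOS$. This step is not quite formal: G-quadraticity of $\bigwedge\QQ^1 \otimes R$ implying that of $R$ needs an argument. One route is to choose coordinates in which $e_x$ is a variable and use that $\rOS \cong \OS/(e_x)$. A Gröbner basis with respect to an order making $e_x$ smallest should then specialize correctly, but the change of coordinates might mix in $e_x$. For the direction actually needed (from $\OS$ to $\rOS$ for supersolvable matroids), I would avoid the issue. Instead I would use the explicit Björner--Ziegler basis of $\OS(\mathbf{d}_x M)$, taking $x$ to be the atom of a full modular flag. That basis is also quadratic, since supersolvable decones are fiber-type.
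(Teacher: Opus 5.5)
Your proposal is correct and is essentially the paper's argument. Like the paper, you combine \Cref{prop:tensorreducedos} with the fact that graded-commutative tensor products of G-quadratic algebras are G-quadratic and with $\OS(M)\cong\bigwedge\QQ^1\otimes_\QQ\rOS(M)$, and you get the supersolvable case from the Bj\"orner--Ziegler/Peeva quadratic Gr\"obner basis. The paper simply asserts that G-quadraticity of $\OS$ and $\rOS$ are equivalent, so your extra care over the direction $\OS\Rightarrow\rOS$ fills in a step the paper leaves implicit. The cleanest way to justify your workaround is this: in the Bj\"orner--Ziegler order the flag atom $x$ is the minimal element, so no leading term $e_{C-\min C}$ involves $e_x$, and a count of no-broken-circuit sets avoiding $x$ shows that setting $e_x=0$ gives a quadratic Gr\"obner basis for $\rOS(M)\cong\OS(M)/(e_x)$.
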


\begin{Rmk}
  The special case of \Cref{prop:gquadratic} when $M$ and $N$ are supersolvable  can be seen even more directly. \Cref{prop:tensorreducedos} shows that the Orlik--Solomon algebra of a parallel connection is independent (as an abstract ring) of the choice of basepoints. Two supersolvable matroids always have \emph{some} supersolvable parallel connection: by \Cref{cor:pc_ss}
    we need only choose full modular flags in each one and parallel connect along the rank 1 flats in each.
In the case of a parallel connection of arrangements, this independence of the rings on the choice of basepoints reflects an even starker fact about the hyperplane arrangement complements. Namely, \Cref{prop:pc_complement} shows that the complement of the deconed parallel connection arrangement is independent up to diffeomorphism on the choice of basepoints!
\end{Rmk}
\noindent
It was for some time an open question whether the homotopy type of an arrangement $\A$ is determined by its intersection lattice, until Rybnikov \cite{rybnikov11} showed that 
$\pi_1(\Cc(\A))$ is not determined by $\mathcal{L}(\A)$.
Also considered in the literature is the converse question: to what extent can $\mathcal{L}(\A)$ be recovered from the homotopy type of $\Cc(\A)$?
Parallel connections give an easy way to produce arrangements with homotopic (in fact, diffeomorphic) complements but distinct underlying matroids.
\begin{Ex}\label{ex:not_iso}
    Let $\A'$ and $\A''$ be copies of the signed-graphic arrangement corresponding to the signed graph in \Cref{fig:bad_matroid}. For $1 \leq i \leq 7$, write $H_i'$ (resp.\ $H_i''$) for the hyperplane 
    in $\A'$ (resp.\ $\A''$) corresponding to the element labeled $i$ in the graph.  Let us consider three different parallel connection arrangements: $\A_{1,1} = P_{H_1',H_1''}(\A',\A'')$, $\A_{1,4} = P_{H_1',H_4''}(\A',\A'')$, and $\A_{4,4} = P_{H_4',H_4''}(\A',\A'')$.
    By \Cref{prop:pc_complement}, the complements 
    $\Cc(\A_{1,1})$, $\Cc(\A_{1,4})$, $\Cc(\A_{4,4})$ are diffeomorphic. (This is immediate for the deconed arrangements; for the central arrangements, it follows from the diffeomorphism $\Cc(\A) \simeq \Cc(\mathbf{d}_H \A) \times \mathbb{C}^{\times}$.)
    On the other hand,  it follows from \Cref{thm:supersolvable_GPC} that $\A_{1,4}$ and $\A_{4,4}$ are supersolvable while $\A_{1,1}$ is not. This implies in particular that $\mathcal{L}(\A_{1,1}) \ncong \mathcal{L}(\A_{1,4}),\, \mathcal{L}(\A_{4,4})$.
    The latter two are also distinct; see \Cref{rmk:not_erectible}.
\end{Ex}
\noindent
A variant of the construction in \Cref{ex:not_iso} is employed in \cite{EF99} to give \emph{reducible} arrangements with diffeomorphic complements; the authors go on to conjecture that the intersection lattices of \emph{irreducible arrangements} are determined by their homotopy type.
By \Cref{prop:gpc_connected}, \Cref{ex:not_iso} resolves this conjecture negatively.
Similar constructions produce examples of all ranks $\geq 5$. As a matter of fact, the earliest construction of this kind was given by Falk \cite{falk93}, who exhibits an example of homotopic complements in rank 3. 

Recall that the \textbf{truncation} of a matroid $M$ is the matroid $\tr M$ with the same ground set whose flats are those flats of $M$ which are not coatoms.
The truncation $\tr \A$ of a central arrangement $\A$ is obtained by first adding a generic new hyperplane $H$ to get a central arrangement $\A' = \A \cup \{H\}$ and then taking $\tr \A \deq (\A')^H$. Naturally, we have $M(\tr \A) = \tr M(\A)$.
Falk's arrangements may be reinterpreted as the truncations of two inequivalent parallel connections of two arrangements of ranks $2$ and $3$. The proof of homotopy equivalence here is much deeper, as evidenced by the fact that his arrangements have \emph{non}-homeomorphic complements by \cite{TanYau93}.

It is not clear that truncation will always preserve homotopy equivalence between two arrangement complements. However, it is promising that it \emph{does} preserve isomorphism between the corresponding Orlik--Solomon algebras, as shown by the following two lemmas.
\begin{Lm}\label{lm:equiv_iso}
    Let $\A$ be a central hyperplane arrangement.
    The isomorphism type of $\OS(\A)$ as a graded $\QQ$-algebra determines the isomorphism type of $\rOS(\A)$ and vice versa.
\end{Lm}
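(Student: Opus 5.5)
One direction is immediate from the tensor factorization in \Cref{subsec:OS_algebras}. There we have an isomorphism of graded $\QQ$-algebras $\OS(\A) \cong \ext{e} \otimes_{\QQ} \rOS(\A)$, with $\ext{e}$ the free exterior algebra on one generator of degree $1$. So the isomorphism type of $\rOS(\A)$ determines that of $\OS(\A)$: if $\rOS(\A) \cong \rOS(\mathcal{B})$, tensoring with $\ext{e}$ gives $\OS(\A) \cong \OS(\mathcal{B})$.

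For the converse I would recover $\rOS(\A)$ intrinsically from the graded algebra $A = \OS(\A)$. The plan is to show that for \emph{every} nonzero $a \in A_1$ with $a^2 = 0$ (automatic by graded-commutativity over $\QQ$) whose annihilator is as small as possible, the quotient $A/(a)$ is isomorphic to $\rOS(\A)$. An isomorphism $A \cong A'$ carries such elements to such elements, so this would give the claim.

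The key steps, in order, are as follows.
\begin{enumerate}[(i)]
\item Make the criterion explicit: for $a \in A_1$, the sequence $\cdots \xrightarrow{a} A \xrightarrow{a} A \xrightarrow{a} \cdots$ is exact precisely when $\ann(a) = aA$.
\item Observe that $\Sigma = \sum_{H} e_H$ is such an element. Under the splitting $A = \ext{e_{H_0}} \otimes \rOS(\A)$ one can write $\Sigma = n e_{H_0} + r$ with $r \in \rOS(\A)_1$. Since $n \neq 0$, there is an automorphism of $A$ fixing $\rOS(\A)$ and sending $e_{H_0} \mapsto n e_{H_0} + r$; this holds because $(ne_{H_0}+r)^2=0$ and it is a unit multiple of $e_{H_0}$ modulo $\rOS(\A)$. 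Hence $A/(\Sigma) \cong \rOS(\A)$.
\item Show that every degree-one $a$ with $\ann(a) = aA$ can be moved to $e_{H_0}$ by an automorphism of $A$. Decompose $a = \lambda e_{H_0} + r$. If $\lambda \neq 0$, the substitution in (ii) works verbatim, so it remains to rule out $\lambda = 0$.
\item Handle $\lambda = 0$, that is, $a = r \in \rOS(\A)_1$. Then $\ann_A(r) = \ext{e_{H_0}} \otimes \ann_{\rOS}(r)$. If $\ann_A(r) = rA$, then in particular $e_{H_0} \in rA$. But every element of $rA$ lies in $\rOS(\A) \oplus e_{H_0}\rOS(\A)_{+}$, so its degree-one part lies in $\rOS(\A)_1$, and $e_{H_0} \notin \rOS(\A)_1$. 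This contradiction shows such $r$ never have exact annihilator.
\end{enumerate}

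Putting these together: the isomorphism class of $A/(a)$, for any $a \in A_1$ with $\ann(a) = aA$, is an invariant of $A$ and equals $\rOS(\A)$.

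The main obstacle is step (iii): checking that the substitution $e_{H_0} \mapsto \lambda e_{H_0} + r$ extends to an algebra automorphism of the \emph{quotient} $\OS(\A)$, and not merely of the exterior algebra. This is where the tensor product structure does the work. Because $A$ is a graded-commutative tensor product of $\ext{e_{H_0}}$ and $\rOS(\A)$, an algebra map out of it is determined by a map on $\rOS(\A)$ together with any square-zero degree-one element that graded-commutes with the image. Invertibility then follows by writing down the inverse substitution $e_{H_0} \mapsto \lambda^{-1}(e_{H_0} - r)$. One could also sidestep (iv) by taking the simpler invariant ``$A/(a)$ for $a \in A_1$ with $A \cong \ext{a}\otimes A/(a)$'', but the annihilator formulation makes the invariance manifest.
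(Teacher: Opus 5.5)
Your steps (i)--(iii) are sound, and (iii) is exactly the fact the paper relies on: if $a = \lambda e_{H_0} + r$ with $\lambda \neq 0$, then the substitution $e_{H_0} \mapsto a$ is an automorphism and $\OS(\A)/(a) \cong \rOS(\A)$.

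The gap is step (iv), which is both mis-argued and false. From $\ann_A(r) = rA$ you cannot conclude $e_{H_0} \in rA$. The element $e_{H_0}$ does not lie in $\ann_A(r) = \ext{e_{H_0}} \otimes \ann_{\rOS(\A)}(r)$ unless $1 \in \ann_{\rOS(\A)}(r)$, i.e.\ unless $r = 0$. Since also $rA = \ext{e_{H_0}} \otimes r\,\rOS(\A)$, the condition $\ann_A(r) = rA$ is equivalent to $\ann_{\rOS(\A)}(r) = r\,\rOS(\A)$, and this does occur. For the rank-$2$ Boolean arrangement, take $A = \ext{e_1, e_2}$ and $r = e_1 - e_2$: then $\ann_A(r)$ and $rA$ are both the span of $r$ and $e_1e_2$. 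More generally, $r = e_{H'} - e_{H''}$ works whenever $\A = \A_1 \times \A_2$ with $H' \in \A_1$ and $H'' \in \A_2$.

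Such $r$ can be excluded only for irreducible arrangements, and then by a different argument: exactness of multiplication by $r$ on $\rOS(\A)$ would force $\HS_{\rOS(\A)}(-1) = 0$, whereas this number is $\pm\beta(M(\A)) \neq 0$. So you have not shown that your invariant ``$A/(a)$ for any $a$ with $\ann(a) = aA$'' is well defined. For the elements with $\lambda = 0$ you would still have to prove $A/(r) \cong \rOS(\A)$, which requires a decomposition into irreducible factors. Your proposed sidestep has the same problem. The element $r = e_1 - e_2$ above also satisfies $A \cong \ext{r} \otimes A/(r)$, so you would need uniqueness of the tensor cofactor, a cancellation statement you have not proved.

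The paper avoids any intrinsic characterization, and you can simply drop (iv). By (iii), $\OS(\A)/(a) \cong \rOS(\A)$ for every $a$ off the hyperplane $\{\lambda = 0\}$ of $\OS(\A)_1$; here $\lambda$ is just the sum of the coefficients of $a$. Given a graded isomorphism $\phi\colon \OS(\A) \to \OS(\mathcal{B})$, choose $a \in \OS(\A)_1$ off this hyperplane and also off the $\phi$-preimage of the corresponding hyperplane for $\mathcal{B}$. This is possible because a $\QQ$-vector space is not the union of two proper subspaces. Then $\rOS(\A) \cong \OS(\A)/(a) \cong \OS(\mathcal{B})/(\phi(a)) \cong \rOS(\mathcal{B})$. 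This is precisely the paper's ``generic linear form'' argument.
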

\begin{proof}
    The second claim follows from the graded isomorphism 
    $\OS(\A) \cong \rOS(\A) \otimes_{\QQ} \bigwedge \mathbb{Q}^1$.
    This isomorphism in turn shows that $\OS(\A)/(\ell) \cong \rOS(\A)$ for a \emph{generic} linear form $\ell \in \OS(\A)_1$.
\end{proof}
\begin{Lm}\label{lm:tr_iso}
    Let $\A$ be a central hyperplane arrangement. 
    The isomorphism type of $\rOS(\A)$ as a graded $\QQ$-algebra determines the isomorphism type of $\rOS(\tr \A)$.
\end{Lm}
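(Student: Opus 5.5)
The plan is to show that $\rOS(\tr \A)$ is obtained from $\rOS(\A)$ by a construction that uses only its structure as a graded $\QQ$-algebra. Concretely, if $M = M(\A)$ has rank $r$, I will show
\[
    \rOS(\tr M) \cong \rOS(M)\big/\rOS(M)_{\geq r-1}.
\]
Here $\rOS(M)_{\geq r-1}$ is the ideal of elements of degree $\geq r-1$. Since the integer $r-1$ is intrinsic to the graded algebra $\rOS(M)$ (it is its top nonzero degree, because $\HS_{\OS(M)}(t) = (1+t)\HS_{\rOS(M)}(t)$ and $\OS(M)$ has top degree $r$), the right-hand side is determined by the isomorphism type of $\rOS(\A)$. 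Any graded isomorphism $\rOS(\A)\cong\rOS(\mathcal{B})$ forces equal top degrees, hence equal ranks, and it descends to the quotients. This proves the lemma.

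First I compare $\OS(M)$ and $\OS(\tr M)$ in low degrees. The flats of $\tr M$ are the flats of $M$ other than the coatoms, so $\rk \tr M = r-1$. Its circuits are of two kinds:
\begin{itemize}
    \item the circuits of $M$ of size at most $r-1$;
    \item the $r$-element independent sets of $M$, i.e.\ the bases of $M$.
\end{itemize}
The circuits of $M$ of size $r$ or $r+1$ are dependent but no longer minimal in $\tr M$. Hence the generators $\partial e_C$ of $I(\tr M)$ of degree $\leq r-2$ are exactly those of $I(M)$ of degree $\leq r-2$. Both ideals are generated by such elements together with elements of degree $\geq r-1$, so
\[
    I(\tr M)_k = I(M)_k \quad\text{and}\quad \OS(\tr M)_k = \OS(M)_k \qquad \text{for } k \leq r-2,
\]
compatibly with multiplication landing in degrees $\leq r-2$.

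Next I pass to the reduced algebras through a decone. Fix $x \in E(M)$ and use $\rOS(N) \cong \OS(\mathbf{d}_x N) = \OS(N)/(e_x)$ for $N = M$ and $N = \tr M$. In degree $k \leq r-2$, the ideal $(e_x)$ is $e_x\cdot\OS(N)_{k-1}$. This depends only on the multiplication $\OS(N)_1\times\OS(N)_{k-1}\to\OS(N)_k$, which agrees for $M$ and $\tr M$ by the previous step. Thus the natural surjection
\[
    \OS(\mathbf{d}_x M) \twoheadrightarrow \OS(\mathbf{d}_x \tr M),
\]
which exists because $I(M)\subseteq I(\tr M)$, is an isomorphism in degrees $\leq r-2$. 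In degree $r-1$ the target vanishes, since $\rOS(\tr M)$ has top degree $\rk \tr M - 1 = r-2$. The kernel of the surjection is therefore exactly $\rOS(M)_{\geq r-1}$, which gives the displayed isomorphism.

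The main point requiring care is the description of the circuits of $\tr M$. In particular I must check that no new relation of degree $\leq r-2$ appears, i.e.\ that every new circuit has size $r$. This follows from the fact that $\tr M$ and $M$ have the same independent sets of size $\leq r-1$. Everything else is bookkeeping with the gradings.
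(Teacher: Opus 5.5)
Your proposal is correct and follows the paper's proof, which simply asserts $\rOS(\tr\A) = \rOS(\A)/(\rOS(\A)_{r-1})$ with $r-1$ the top nonzero degree; your comparison of $I(M)$ and $I(\tr M)$ in degrees $\leq r-2$ supplies the verification the paper leaves implicit. One harmless slip: $r$-element circuits of $M$ do remain circuits of $\tr M$, since all their proper subsets are independent of size $\leq r-1$, but they only contribute generators of degree $r-1$, so your argument is unaffected.
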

\begin{proof}
    Suppose $M(\A)$ has rank $r$. Then the highest degree in which $\rOS(\A)$ does not vanish is $r-1$ and we have 
    $\rOS(\tr A) = \rOS(\A)/(\rOS(\A)_{r-1})$.
\end{proof}
\begin{Rmk}
    \Cref{lm:tr_iso} appears as \cite{Falk01}*{Theorem 3.11} but the proof there uses the unjustified assumption that isomorphic Orlik--Solomon algebras are isomorphic as \emph{differential-graded} algebras.
    An earlier claimed proof in \cite{Pendergrass98} suffers from the same flaw.
\end{Rmk}
\noindent
A matroid $M$ is \textbf{inerectible} if it is not the truncation of another matroid on the same ground set; an \textbf{erection} of $M$ is a matroid $N$ with $\tr N = M$.
Falk and Randell asked in \cite{FR00}*{Problem 1.5} for irreducible arrangements with non-isomorphic \emph{inerectible} underlying matroids and homotopic complements.
This question is motivated by the above observation about Orlik--Solomon algebras and deliberately excludes Falk's construction as well as the reducible arrangements mentioned earlier.
The following lemma will show that \Cref{ex:not_iso} fulfills the Falk--Randell criteria.
\begin{Lm}\label{lm:pcinerect}
    A non-trivial parallel connection of connected simple matroids is connected and inerectible.
\end{Lm}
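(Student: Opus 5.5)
The plan is to handle connectedness by citing an earlier result, and inerectibility by a rank-counting (submodularity) contradiction in a hypothetical erection. Write $P = P_f(M, N)$ with $E(M) \cap E(N) = \{f\}$, and set $r = \rk P = \rk M + \rk N - 1$. Non-triviality means both $E(M) - \{f\}$ and $E(N) - \{f\}$ are non-empty.

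\emph{Connectedness.} This is immediate from \Cref{prop:gpc_connected}: $M$ is connected, and so is $N$, hence $P$ is connected.

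\emph{Inerectibility: preliminary rank bounds.} Since $N$ is connected and has an element $g \neq f$, the pair $\{f, g\}$ lies in a common circuit. Because $N$ is simple, the parallel pair $\{f,g\}$ is not a circuit, so $\rk N \geq 2$. The same argument gives $\rk M \geq 2$. Consequently
\[
\rk M = r + 1 - \rk N \leq r - 1 \quad\text{and}\quad \rk N \leq r - 1 .
\]

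\emph{Inerectibility: the contradiction.} Suppose, for contradiction, that $Q$ is an erection of $P$, so $\tr Q = P$ and $\rk Q = r + 1$. By the definition of truncation, the flats of $P$ are exactly the flats of $Q$ of rank at most $r - 1$, and ranks agree on them. (Each saturated chain of flats of $P$ below a non-coatom is also a saturated chain in $Q$.) We apply this to three flats of $P$:
\begin{itemize}
\item $E(M)$, a flat of $P$ by \Cref{cor:modular_parts}, of rank $\rk M \leq r - 1$;
\item $E(N)$, likewise a flat, of rank $\rk N \leq r - 1$;
\item $\{f\} = E(M) \cap E(N)$, a flat of rank $1$.
\end{itemize}
All three are therefore flats of $Q$ with the same ranks as in $P$. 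Submodularity of $\rk_Q$ then gives
\[
r + 1 = \rk_Q E(P) = \rk_Q\big(E(M) \cup E(N)\big) \leq \rk_Q E(M) + \rk_Q E(N) - \rk_Q\{f\} = \rk M + \rk N - 1 = r,
\]
which is a contradiction. Hence $P$ is inerectible.

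\emph{Main obstacle.} The delicate point is the claim that rank-$\leq r-1$ flats of $P$ keep their ranks in $Q$. This is where the hypotheses that $M$ and $N$ are simple and connected enter, via the bound $\rk M, \rk N \leq r - 1$. I would justify the rank claim carefully from the description of the flats of $\tr Q$ as the non-coatom flats of $Q$.
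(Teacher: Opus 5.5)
Your proof is correct, and for inerectibility it takes a genuinely different route from the paper's; connectedness is handled the same way, via \Cref{prop:gpc_connected}.

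\textbf{The paper's route.} It appeals to Crapo's characterization of erections (\Cref{thm:crapoerect}). It picks bases $B_1, B_2$ of the two pieces containing $f$ and notes that $B = B_1 \cup B_2$ is a basis of the connection. Since $\abs{B_1}, \abs{B_2} \le r-1$, the $(r-1)$-closedness of the block containing $B$ forces that block to contain $\cl(B_1) = E(M)$ and $\cl(B_2) = E(N)$. So the block is the whole ground set, and the erection is trivial.

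\textbf{Your route.} You avoid Crapo's theorem entirely. You use only two facts. First, the flats of $\tr Q$ of rank at most $r-1$ are exactly the flats of $Q$ of rank at most $r-1$, with the same ranks. Second, submodularity of $\rk_Q$ applied to $E(M)$ and $E(N)$ yields $r+1 \le r$.

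\textbf{What each buys.} Your argument is more elementary and self-contained, and it proves more. The same computation shows that any matroid whose ground set is the union of two proper flats $A, B$ with $\rk A + \rk B = \rk E + \rk (A \cap B)$ is inerectible. This covers every non-trivial modular join, including non-trivial direct sums. It also makes clear that connectedness plays no role in the inerectibility half. The paper's proof rests on the same underlying fact (two proper flats spanned by parts of a common basis), but packages it through the block structure of erections.

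\textbf{Minor points of precision.}
\begin{itemize}
\item The flats of $P$ are the flats of $Q$ of rank at most $r-1$ \emph{together with} $E(P)$, which has rank $r$ in $P$ but $r+1$ in $Q$. Your sentence should exclude the top flat; you only apply it below the top, so nothing breaks.
\item What you call the main obstacle is already settled by your parenthetical saturated-chain remark. Every flat of $Q$ lying below a flat of rank at most $r-1$ is itself not a coatom, so the relevant intervals in $\mathcal{L}(Q)$ and $\mathcal{L}(\tr Q)$ coincide.
\item The bounds $\rk M, \rk N \ge 2$ follow from simplicity and non-triviality alone, since any two distinct elements of a simple matroid span a set of rank $2$. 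The detour through a common circuit, and hence through connectedness, is unnecessary.
\end{itemize}
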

We will need the following theorem of Crapo. To state it, we shall convene that any matroid $M$ is a \emph{trivial} erection of 
itself. Inerectible matroids are those with only the trivial erection.
\begin{Thm}[{\cite{C70}*{Theorem 2}}]\label{thm:crapoerect}
    A collection $\Fc$ of subsets (called ``blocks'') of the ground set of a matroid $M$ of rank $r$ is the set of rank $r$ flats of an erection of $M$ if and only if 
    \begin{enumerate}
        \item each block spans $M,$
        \item each block is $(r-1)$-closed,
        \item each basis of $M$ is contained in a unique block.
    \end{enumerate}
Here, a subset $S\subseteq M$ is said to be $k$-closed if the closure of any $k$-element subset of $S$ lies in $S$.
\end{Thm}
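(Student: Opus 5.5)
We prove Crapo's criterion by translating between the flats of an erection and the blocks. Let $N$ be an erection of $M$, with $\rk N\in\{r,r+1\}$. The flats of $\tr N=M$ are the flats of $N$ that are not coatoms. Consequently the flats of $N$ of rank at most $r-1$ are exactly the flats of $M$ of rank at most $r-1$, with the same ranks, and the rank-$r$ flats of $N$ play the role of blocks. Throughout, I use the flat axioms in Oxley's form: $E$ is a flat; flats are closed under intersection; and for every flat $X$, the flats covering $X$ partition $E-X$.

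\emph{Necessity.} Let $\Fc$ be the set of rank-$r$ flats of $N$, and let $K\in\Fc$.
\begin{enumerate}
\item Every basis of $K$ in $N$ is an $r$-element independent set of $\tr N=M$, hence a basis of $M$. So $K$ spans $M$.
\item Let $S\subseteq K$ have $\abs{S}\le r-1$. Then $\rk_N S\le r-1$, so $\cl_N S$ is a flat of rank at most $r-1$. By the opening remark, $\cl_N S$ is also a flat of $M$, and it is the smallest flat of $M$ containing $S$; thus $\cl_M S=\cl_N S\subseteq K$. Hence $K$ is $(r-1)$-closed.
\item A basis $B$ of $M$ is independent in $N$, since independence in $\tr N$ implies independence in $N$. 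So $\cl_N B$ is a rank-$r$ flat containing $B$. Any rank-$r$ flat containing $B$ contains $\cl_N B$, so by equality of ranks it equals $\cl_N B$. Thus $B$ lies in a unique block.
\end{enumerate}

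\emph{Sufficiency.} Given $\Fc$ satisfying (1)--(3), let $\mathcal{L}$ consist of the flats of $M$ of rank at most $r-1$, the blocks, and $E$. We check that $\mathcal{L}$ is the lattice of flats of a matroid $N$. Once this holds, the rank-$r$ flats of $N$ are the blocks, the lower flats are those of $M$, and so $\tr N=M$.

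\emph{Key lemma.} Let $K$ be a block and $X$ any member of $\mathcal{L}$ with $X\ne K$ and $X\neq E$. We claim $K\cap X$ is a flat of $M$ of rank at most $r-1$.
\begin{itemize}
\item First, $K\cap X$ contains no basis of $M$. If $X$ is a flat of $M$ of rank at most $r-1$, this is clear. If $X$ is a block containing a basis $B$ of $M$ inside $K\cap X$, then (3) forces $K=X$, a contradiction. Hence $\rk_M(K\cap X)\le r-1$.
\item Next, $K\cap X$ is closed in $M$. Choose a maximal independent subset $S\subseteq K\cap X$; it has at most $r-1$ elements. Both $K$ and $X$ are $(r-1)$-closed: for $K$ this is (2), and for an $M$-flat $X$ it is automatic. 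Therefore $\cl_M(K\cap X)=\cl_M S\subseteq K\cap X$.
\end{itemize}
This is the step I expect to need the most care, since the uniqueness in (3) must be used exactly here. The same lemma also shows that blocks are not nested: if $K\subsetneq K'$, then $K$ would be a flat of $M$ of rank at most $r-1$, contradicting (1).

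\emph{Intersections.} Closure of $\mathcal{L}$ under intersection follows from the lemma together with closure of the flats of $M$ under intersection.

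\emph{Partition property.} Let $X\in\mathcal{L}$. Consider three cases.
\begin{itemize}
\item If $\rk_M X\le r-2$, the covers of $X$ in $\mathcal{L}$ are the sets $\cl_M(X\cup e)$. These partition $E-X$, as in $M$.
\item If $\rk_M X=r-1$, then for each $e\notin X$ the set $X\cup\{e\}$ contains a basis of $M$. By (3) it lies in exactly one block. Every block containing $X$ covers $X$: a member of $\mathcal{L}$ strictly between them would be an $M$-flat of rank $r-1$ containing $X$, hence equal to $X$, or a block, which is excluded by non-nesting. So the covers of $X$ are the blocks containing $X$, and they partition $E-X$. When $\Fc=\{E\}$ this recovers the trivial erection.
\item If $X$ is a block, its only cover is $E$.
\end{itemize}
Finally, $E\in\mathcal{L}$ by construction. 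Hence $\mathcal{L}$ is the lattice of flats of a matroid $N$ with $\tr N=M$, whose rank-$r$ flats are exactly the blocks.
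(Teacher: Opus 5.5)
The paper gives no proof of this statement. It quotes the result from Crapo and uses it only in the proof of \Cref{lm:pcinerect}, and only in the ``only if'' direction: a basis lies in some block, and blocks are $(r-1)$-closed. So there is no argument in the paper to compare against. Your proof is a correct, self-contained one. Necessity is read off directly from $\tr N = M$. Sufficiency checks Oxley's flat axioms for $\mathcal{L}$, and the lemma that $K\cap X$ is a flat of $M$ of rank at most $r-1$ yields both the intersection axiom and the non-nesting of blocks. Your necessity half is exactly what the paper relies on.

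A few steps should be made explicit.

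In the case $\rk_M X=r-1$, condition (3) only says that the basis $S\cup\{e\}$ (with $S$ a basis of $X$) lies in a unique block $K$. That $K$ contains all of $X=\cl_M S$ is where (2) enters; equivalently, apply your key lemma to $K\cap X$. This matters. Take $M$ of rank $2$ on $\{a,a',b\}$ with $a,a'$ parallel. The sets $\{a,b\}$ and $\{a',b\}$ satisfy (1) and (3), yet no block contains $X\cup\{b\}$ for $X=\{a,a'\}$, so the partition axiom fails.

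Similarly, in the case $\rk_M X\le r-2$ you must exclude a block $Y\supsetneq X$ covering $X$. For $e\in Y-X$, your lemma shows that $Y\cap\cl_M(X\cup e)$ is an $M$-flat containing $X\cup e$. Hence $\cl_M(X\cup e)\subseteq Y$, and it lies strictly between $X$ and $Y$ because $Y$ spans. The same reasoning shows every block contains, and covers, some rank-$(r-1)$ flat of $M$: take the closure of $r-1$ elements of a basis inside it, which stays inside by (2). This is what justifies your claim that the rank-$r$ flats of $N$ are the blocks.

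Finally, the trivial erection $N=M$ does not satisfy $\tr N=M$. It should be treated separately in the necessity part, where $\Fc=\{E\}$ and (1)--(3) are immediate. Likewise, at the end of sufficiency, $\Fc=\{E\}$ yields $N=M$ rather than a matroid with $\tr N=M$.
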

\begin{proof}[Proof of \Cref{lm:pcinerect}]
    Let $M_1, M_2$ be two connected matroids of rank $\geq 2$ and let $M = P_f(M_1, M_2)$.
    The connectedness of $M$ follows from \Cref{prop:gpc_connected}. We will show inerectibility.
    Suppose $N$ is an erection of $M$.
    Let $B_1, B_2$ be bases of $M_1, M_2$, respectively, each containing $f$. Then $B = B_1\cup B_2$ is a basis of $M.$ Let $F$ be a block in $N$ containing $B$. 
    Since the connection is non-trivial, $B_1$ is a proper subset of $B$.
    So by condition (2) in \Cref{thm:crapoerect}, $F$ contains the span of $B_1$ which is $E(M_1)$.
    Similarly, $F$ must contain $E(M_2)$, so $F = E(M)$. This erection is trivial.
\end{proof}

\begin{Rmk}\label{rmk:not_erectible}
    We describe another way to see that the matroids in \Cref{ex:not_iso} are inerectible, which incidentally shows that they are pairwise non-isomorphic.  Let $W_k(M)$ denote the number of rank $k$ flats of a matroid $M$; these are the \textbf{Whitney numbers of $M$ of the second kind}.  By a simple computation, the sequences of Whitney numbers for the three matroids above are as follows:\\
    \begin{center}
    \begin{tabular}{c|cccccc}
    Matroid & $W_0$ & $W_1$ & $W_2$ & $W_3$ & $W_4$ & $W_5$\\
    \hline
    $M_{1,1}$ & $1$&  $13$& $56$& $90$& $44$& $1$\\
    $M_{1,4}$ & $1$&  $13$& $56$& $92$& $49$& $1$\\
    $M_{4,4}$ & $1$&  $13$& $56$& $95$& $55$& $1$\\
    \end{tabular}
    \end{center}

    \vspace{5mm}
    \noindent If one of these matroids were erectible, it would be the truncation of a matroid $\widetilde{M}$ of rank $6$. However, the data would then imply that $W_2(\widetilde{M}) > W_4(\widetilde{M})$, contradicting the Top Heavy Theorem \cite{singular:Hodge:theory:for:combinatorial:geometries}*{Theorem 1.1}.  Therefore, none of these matroids are erectible.
    The above discussion has an interesting corollary: while all three Orlik--Solomon algebras are isomorphic as graded algebras, each is naturally graded by a different lattice.
\end{Rmk}
\begin{Rmk}
Falk--Proudfoot's result stated here as \Cref{thm:FP}
is the ``if'' direction of our \Cref{thm:main} in the special case where $M$ is realizable over $\CC$. 
Under this assumption, the argument in \cite{falk_proudfoot} adapts easily to show that if both $\A$ and $\A_X$ have Koszul Orlik--Solomon algebras then the same holds for $\A_{\pi}$, though the authors did not point this out. These implications suffice for our three mechanisms of producing Koszul Orlik--Solomon algebras from non-supersolvable arrangements. 
This fact contrasts with the pessimism of Falk and Proudfoot, who express the view that their result cannot be used to produce a negative answer to \Cref{qst:mainquestion}.
This appears to have been partly due to their (incorrect) assumption that ``if $\A_X$ and $\A_{\pi}$ are supersolvable, then $\A$ is also supersolvable.'' In fact, this assumption was \emph{almost} correct, as shown by \Cref{cor:truncation_not_ss}.
\end{Rmk}
\begin{Rmk}
    It is worth noting that the
    analogue of \Cref{cor:pc_Koszul}
    does not hold for the related graded M\"obius algebra.  The graded M\"obius algebra $\GMA{M}$ of a simple matroid $M$ is a commutative algebra with defining equations determined by the circuits of the lattice of flats of $M$, just as with the Orlik--Solomon algebra.  In the realizable case, $\GMA{M}$ defines the cohomology ring of the matroid Schubert variety of $M$ studied by Ardila and Boocher \cite{Ardila-Boocher:Schubert:varieties}; see \cite{HW17}*{Theorem 14} and \cite{singular:Hodge:theory:for:combinatorial:geometries}*{Section 1.3} for precise statements.  These were key components in the proof of the Dowling--Wilson 
    conjectures
    by Braden, Huh, Matherne, Proudfoot, and Wang \cite{singular:Hodge:theory:for:combinatorial:geometries}; it had previously been proved in the realizable case by Huh and Wang \cite{HW17}.

    The algebraic properties of graded M\"obius algebras were studied in \cite{LMMP24}.  In particular, Theorem A of that paper shows that if $M(G)$ is the graphic matroid associated to a simple graph $G$, then its graded M\"obius algebra $\GMA{M(G)}$ is Koszul if and only if $G$ is strongly chordal.  The simplest chordal graph which is not strongly chordal is the tent graph $T$ depicted in Figure~\ref{fig:tent} (also known as the $3$-sun or $3$-trampoline).  It is the parallel connection of a $3$-cycle $C$ and a graph $B$ termed the broken $3$-trampoline in \cite{LMMP24}. 
    The connection takes place along the edges labeled $e_1$ and $e_2$ in the figure.
    Both $\GMA{M(B)}$ and $\GMA{M(C)}$ are Koszul, since $M(B)$ and $M(C)$ are both strongly chordal. Thus the parallel connection of matroids with Koszul Graded M\"obius algebras need not be Koszul.
    
    On the other hand, the parallel connection of $C$ and $B$ along the edges labeled $e_1$ and $e_3$ produces a strongly chordal graph $G$ (not pictured) so that $\GMA{M(G)}$ is Koszul; in particular, this means that $\GMA{M(G)} \not\cong \GMA{M(T)}$.  By \Cref{prop:tensorreducedos}, $\OS(M(G)) \cong \OS(M(T))$.
    Since $G$ and $T$ are both chordal graphs, $M(G)$ and 
    $M(T)$ are supersolvable by \cite{supersolvable}*{Proposition 2.8}. So \emph{both} their Orlik--Solomon algebras are Koszul.

    \begin{figure}[hbt!]
        \centering
\newcommand{\edgelabeledgraph}[3][scale = 1.2]{
	\begin{tikzpicture}[#1]          
	\newcommand*\points{#2}     
	\newcommand*\edges{#3}          
	\newcommand*\scale{0.75}          
	\foreach \x/\y/\z/\w/\a in \points {
		\draw[fill = black!50] (\scale*\x,\scale*\y) circle [radius = 0.1] node[label = {[label distance = 0.05 cm]\a: $\w$}] (\z) {}; 
	}
	\foreach \x/\y/\p/\a/\l in \edges { \draw (\x) -- (\y) node [pos = \p, sloped, \a] {{\small $\l$}}; }      
	\end{tikzpicture}
}

\edgelabeledgraph{
    -2/0/1//270,
    0/0/2//270,
    2/0/3//270,
    -1/1.73/4//270,
    1/1.73/5//120,
    -1/2.5/6//90,
    1/2.5/7//45,
    0/4.23/8//90,
    5/0/9//0,
    7/0/10//0,
    9/0/11//0,
    6/1.73/12//0,
    8/1.73/13//0,
    7/3.46/14//0
    }{
    1/2/0.5/above/, 
    2/3/0.5/below/, 
    1/4/0.5/below/e_3,
    4/2/0.5/below/,
    4/5/0.5/below/e_2,
    2/5/0.5/above/,
    3/5/0.5/below/,
    6/7/.5/above/e_1,
    6/8/.5/below/,
    7/8/.5/below/,
    9/10/.5/below/,
    10/11/.5/below/,
    9/12/.5/below/,
    10/12/.5/below/,
    12/13/.5/below/e,
    10/13/.5/below/,
    11/13/.5/below/,
    12/14/.5/below/,
    13/14/.5/below/}
        \caption{The tent graph as a parallel connection of two strongly chordal graphs}
        \label{fig:tent}
    \end{figure}
\end{Rmk}

\section{Koszul Orlik--Terao Algebras}\label{sec:koszul:OT}

Let $\A = \{H_1,\ldots,H_n\}$ be a complex hyperplane arrangement in $\CC^d$.  Write $S = \CC[x_1,\ldots,x_d]$ and $H_i = V(\ell_i)$ for $1 \le i \le d$, where each $\ell_i \in S$ is a homogeneous linear form.  Its Orlik--Terao algebras, introduced by Orlik and Terao \cite{OrlikTeraoAlgebras}, \cite{TeraoReciprocals},  are commutative counterparts to the Orlik--Solomon algebra.  Unlike the Orlik--Solomon algebra, the defining equations of the Orlik--Terao algebra do not depend solely on the intersection lattice of the arrangement.

The $\CC$-algebra $\OT(\A) := \CC\left[\ell_1^{-1},\ell_2^{-1},\ldots,\ell_n^{-1}\right] \subseteq 
\CC(x_1,\ldots,x_d)$ is the \textbf{Orlik--Terao algebra of $\A$}.  Equivalently, one can define $\OT(\A) = \CC[f_1,\ldots,f_n]$, where $f_i = \ell_1\cdots\widehat{\ell_i} \cdots \ell_n$.  Clearly $\OT(\A)$ has the structure of a standard graded integral domain over $\CC$.  
    Let $T = \CC[y_1,\ldots,y_n]$.
Define the surjective $\CC$-algebra homomorphism
    \[\Phi:T \to \CC\left[\frac{1}{\ell_1},\frac{1}{\ell_2},\ldots,\frac{1}{\ell_n}\right]\]
    by setting $\Phi(y_i) = \ell_i^{-1}$ for $1 \le i \le n$.  To make this a graded map, we set $\deg(x_i) = -1$ for $1 \le i \le d$.   Then $I_{\OT(\mathcal{A})} := \ker(\Phi)$ is  a graded prime ideal of $T$, called the \textbf{Orlik--Terao ideal} of $\A$. Terao \cite{TeraoReciprocals}*{Theorem 1.1} showed that 
    \[I_{\OT(\mathcal{A})} = \left( \sum_{i = 1}^t c_i y_{j_1}\cdots\widehat{y_{j_i}}\cdots y_{j_t} \;\bigg|\; \sum_{i = 1}^t c_i \ell_{j_i} = 0\right)\]
so that $\OT(\mathcal{A}) \cong T/I_{\OT(\mathcal{A})}$; see \cite{Berget}*{Theorem 4.3} for a characteristic-free proof.  Moreover, he showed that $\HS_{\OT(\mathcal{A})}(t) = \pi(\mathcal{A},t(1-t)^{-1})$ \cite{TeraoReciprocals}*{Theorem 1.2}.  In fact, Proudfoot and Speyer \cite{ProudfootSpeyerBrokenCircuitRing} later showed that the terms coming from circuits form a universal Gr\"obner basis of $I_{\OT(\mathcal{A})}$ and that $\OT(\mathcal{A})$ is Cohen-Macaulay.

The \emph{Artinian Orlik--Terao algebra of $\mathcal{A}$}, denoted $\AOT(\mathcal{A})$, is the following quotient of $\OT(\mathcal{A})$:
\[ \AOT(\mathcal{A}) := \OT(\mathcal{A})/(y_1^2,y_2^2,\ldots,y_n^2).\]
We set $I_{\AOT(\mathcal{A})} = I_{\OT(\mathcal{A})} + (y_1^2,\ldots,y_n^2) \subseteq T$, so that $\AOT(\mathcal{A}) \cong T/I_{\AOT(\mathcal{A})}$.
Orlik and Terao \cite{OrlikTeraoAlgebras} used the Artinian Orlik--Terao algebra to resolve a conjecture of Aomoto \cite{Aomoto96}. 
In the same paper, they show that, just as for the Orlik--Solomon algebra, we have $\HS_{\AOT(\mathcal{A})}(t) = \pi(\mathcal{A},t)$.   

We have the following implications for Orlik--Terao algebras:
\begin{align*}
    \mathcal{L}_\mathcal{A} \text{ is supersolvable } & \Longleftrightarrow I_{\OT(\mathcal{A})} \text{ has a quadratic Gr\"obner basis}\\
    & \Longrightarrow \OT(\mathcal{A}) \text{ is Koszul } \\
    & \Longrightarrow \OT(\mathcal{A}) \text{ is quadratic.}
\end{align*}
That supersolvable arrangements have Orlik--Terao ideals with quadratic Gr\"obner bases was shown in \cite{Denham14}.  
The converse follows along the same lines as \cite{Peeva}*{Theorem 4.3}
using \cite{ProudfootSpeyerBrokenCircuitRing}*{Theorem 4}.
The last implication in the sequence is strict, and the authors of \cite{Denham14} ask about the converse to the remaining implication.  We show below that it is also strict. 

Just as with Orlik--Solomon algebras, the Koszul property of Orlik--Terao algebras passes to parallel connections of arrangements.

\begin{Thm}\label{thm:pc_OT_Koszul}
    Suppose $\mathcal{A}$ and $\mathcal{A}'$ are central complex hyperplane arrangements.  Let $\mathcal{B}$ denote the parallel connection of $\A$ and $\A'$ obtained by identifying a hyperplane in one with a hyperplane in the other.
If $\OT(\A)$ and $\OT(\A')$ are Koszul, then $\OT(\mathcal{B})$ is Koszul.
\end{Thm}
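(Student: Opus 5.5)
The plan is to show that $\OT(\mathcal{B})$ is, up to a regular-sequence-type modification, a tensor product of $\OT(\A)$ and $\OT(\A')$ over the one variable corresponding to the shared hyperplane. Then Koszulness follows from \Cref{thm:regularity_Koszul} together with the commutative analogue of \Cref{thm:tensor_product_Koszul}.

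Concretely, choose coordinates so that the shared hyperplane is $V(x_0)$ in $\A$ (coordinates $x_0,\dots,x_r$) and $V(y_0)$ in $\A'$ (coordinates $y_0,\dots,y_s$). Then $\mathcal{B}$ lives in $V(x_0-y_0)$, with hyperplanes $x_0,f_1,\dots,f_n,g_1,\dots,g_m$ after setting $y_0=x_0$.

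\textbf{Step 1: the OT ideal of $\mathcal{B}$.} Let $T=\CC[u,v_1,\dots,v_n,w_1,\dots,w_m]$, where $u\mapsto x_0^{-1}$, $v_i\mapsto f_i^{-1}$ and $w_j\mapsto g_j^{-1}$. I will show that
\[
I_{\OT(\mathcal{B})}=I_{\OT(\A)}T+I_{\OT(\A')}T,
\]
where $I_{\OT(\A)}$ is generated in $u,v$ and $I_{\OT(\A')}$ in $u,w$. The containment $\supseteq$ is clear. For $\subseteq$, I use Terao's description: the ideal is generated by the relations coming from circuits, by Proudfoot--Speyer. The circuits of a parallel connection are the circuits of each side, together with the sets $(C_1\cup C_2)-\{f\}$ for circuits $C_1\ni f$ of $\A$ and $C_2\ni f$ of $\A'$. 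If $C_1$ gives the dependency $x_0=\sum a_i f_i$ and $C_2$ gives $x_0=\sum b_j g_j$, then the relation attached to the mixed circuit comes from $\sum a_if_i-\sum b_jg_j=0$. I will check by direct algebra that this relation is a $T$-combination of the two circuit relations $r_1$ (from $C_1$) and $r_2$ (from $C_2$). The idea is to multiply each by a monomial of the other side and subtract, so that the terms divisible by $u$ cancel. Thus
\[
\OT(\mathcal{B})\cong \OT(\A)\otimes_{\CC[u]}\OT(\A').
\]
A cleaner alternative is to compare Hilbert series: $\OT(\A)\otimes_{\CC[u]}\OT(\A')$ surjects onto $\OT(\mathcal{B})$. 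The characteristic polynomial of a parallel connection satisfies $\chi_{\mathcal{B}}=\chi_{\A}\chi_{\A'}/(t-1)$, and $\HS_{\OT}(t)=\pi(t/(1-t))$. Also, $u$ is a nonzerodivisor on the Cohen--Macaulay domain $\OT(\A')$, so $\OT(\A')$ is flat over $\CC[u]$. Together these give matching Hilbert series, which forces the surjection to be an isomorphism. I would use this Hilbert series argument, since it avoids the explicit syzygy computation.

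\textbf{Step 2: Koszulness.} Set $R=\OT(\A)\otimes_{\CC}\OT(\A')$, which is Koszul by \cite{BF85} in its commutative form. Then $\OT(\mathcal{B})=R/(u\otimes1-1\otimes u)$. The element $\ell=u\otimes 1-1\otimes u$ is a linear nonzerodivisor on $R$. To see this, note that $R$ is a free $\OT(\A)$-module via flatness, so $R\cong \OT(\A)[u']$ as a module, and $u-u'$ is a nonzerodivisor. Alternatively, the Hilbert series computation gives $\HS_{R/\ell}=(1-t)\HS_R$. Hence $R/(\ell)$ has the linear resolution $0\to R(-1)\to R$ over $R$, and \Cref{thm:regularity_Koszul} shows that $\OT(\mathcal{B})$ is Koszul.

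\textbf{Main obstacle.} The main obstacle is Step 1, namely proving the isomorphism. The key subtleties are that the Hilbert series formula for characteristic polynomials of parallel connections applies in the central arrangement setting, and that flatness over $\CC[u]$ holds. I expect these to be routine, using Brylawski's product formula for modular-flat connections.
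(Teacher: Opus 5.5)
Your proof is correct, and its skeleton matches the paper's. Both arguments realize $\OT(\mathcal{B})$ as $\OT(\A)\otimes_{\CC}\OT(\A')=\OT(\A\times\A')$ modulo the linear form $u\otimes 1-1\otimes u$ (the paper's $y_1-z_1$). Both then check that this form is a nonzerodivisor and transfer Koszulness from the tensor product. For the last step the paper quotes Backelin--Fr\"oberg, which is the same fact you extract from \Cref{thm:regularity_Koszul} applied to the resolution $0\to R(-1)\to R$.

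The genuine difference is how you establish $I_{\OT(\mathcal{B})}=I_{\OT(\A)}T+I_{\OT(\A')}T$.
\begin{itemize}
\item The paper argues on generators. By Schenck--Tohaneanu, the Orlik--Terao ideal is generated by the relations of chordless circuits. A mixed circuit $(C_1\cup C_2)-\{f\}=C_1\,\Delta\, C_2$ has the chord $f$, so only circuits of $\A$ and of $\A'$ are needed.
\item You count dimensions instead. You combine the surjection $\OT(\A)\otimes_{\CC[u]}\OT(\A')\twoheadrightarrow\OT(\mathcal{B})$, flatness of the domain $\OT(\A')$ over $\CC[u]$, Terao's formula $\HS_{\OT}(t)=\pi(t/(1-t))$, and $\pi(\mathcal{B},t)=\pi(\A,t)\pi(\A',t)/(1+t)$. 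These give equal Hilbert series, hence an isomorphism.
\end{itemize}
Your route avoids both the chordless-circuit lemma and the circuit combinatorics of parallel connections. The same count also yields the nonzerodivisor property, which the paper gets more cheaply from $\OT(\A\times\A')$ being a domain. It is essentially the Hilbert-series technique the paper itself uses for $\AOT$ in \Cref{thm:pc_AOT_Koszul}. The paper's route, in exchange, produces an explicit generating set and needs no input about characteristic polynomials.

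Three small repairs:
\begin{itemize}
\item Monomial multipliers do not work in the direct-algebra variant you sketched. Write $r_1=uA-V$ and $r_2=uB-W$, where $V,W$ are the products of the non-$u$ variables of $C_1,C_2$. Multiplying by monomials only gives $Wr_1-Vr_2=u\,r_C$. The identity you want is $r_C=Br_1-Ar_2$, whose cofactors are not monomials.
\item In Step~2, $R$ is not isomorphic to $\OT(\A)[u']$. Rather, it is a free $\OT(\A)[u']$-module, because $\OT(\A')$ is graded-free over $\CC[u']$. That fact, or simply the domain property of $R$, is what makes $u-u'$ a nonzerodivisor.
\item Cohen--Macaulayness plays no role in the flatness claim; torsion-freeness over $\CC[u]$ suffices.
\end{itemize}
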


\begin{proof}[Proof of \Cref{thm:pc_OT_Koszul}]
Since $\OT(\A)$ and $\OT(\A')$ are both Koszul,
\Cref{thm:tensor_product_Koszul} implies the same for their tensor product,
which coincides with $\OT(\A \times \A')$.
Write $\OT(\A) = \CC[y_1,\ldots,y_n]/I_{\OT(\A)}$ and $\OT(\A') = \CC[z_1,\ldots,z_m]/I_{\OT(\A')}$.  Set $T = \CC[y_1,\ldots,y_n,z_1,\ldots,z_m]$.  Let $y_1, z_1$ be the variables corresponding to the identified hyperplanes from $\A$ and $\A'$, respectively.  
By \cite{SchenckTohaneanuOTAlgebras}*{Lemma 3.2},  the defining ideals of the Orlik--Terao algebras are generated by the elements corresponding to chordless circuits\footnote{A circuit $C$ in a matroid $M$ is \emph{chordless} if it cannot be written as the symmetric difference $C_1 \Delta C_2$ where $C_1, C_2$ are circuits of $M$ with 
$\abs{C_1 \cap C_2} = 1$.}. Since the chordless 
circuits of a parallel connection of two matroids are just the chordless circuits of one or the other matroid, it follows that
\[
    \OT(\mathcal{B}) \cong T/(I_{\OT(\A)}T + I_{\OT(\A')}T + (y_1-z_1)) \cong \OT(\A \times \A')/(y_1-z_1).
\]
Since $\OT(\A \times \A')$ is an integral domain, $y_1 - z_1$ is a nonzerodivisor on $\OT(\A \times \A')$.  By \cite{BF85}*{Theorem 4(e)(iv)}, this implies that 
$\OT(\mathcal{B})$ is Koszul.
\end{proof}

To prove the analogous result for the Artinian Orlik--Terao algebra, we recall the following notion from \cite{HS11}.  Let $R$ be a standard graded $\CC$-algebra.  Two homogeneous elements $a,b \in R$ are said to be an \textbf{exact pair of zerodivisors} if 
$\ann_R a = b$ and $\ann_R b = a$, where $\ann_R$ denotes the annihilator of an element.

\begin{Thm}\label{thm:pc_AOT_Koszul}
    Suppose $\mathcal{A}$ and $\mathcal{A}'$ are central  complex hyperplane arrangements.  Let $\mathcal{B}$ denote the parallel connection of $\A$ and $\A'$ obtained by identifying a hyperplane in one with a hyperplane in the other.
If $\AOT(\A)$ and $\AOT(\A')$ are Koszul, then $\AOT(\mathcal{B})$ is Koszul.
\end{Thm}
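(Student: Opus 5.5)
We follow the proof of \Cref{thm:pc_OT_Koszul}, with the exact pair of zerodivisors taking the place of the nonzerodivisor. Write $\AOT(\A) = \CC[y_1,\ldots,y_n]/I_{\AOT(\A)}$ and $\AOT(\A') = \CC[z_1,\ldots,z_m]/I_{\AOT(\A')}$, where $y_1$ and $z_1$ correspond to the identified hyperplanes. Set $T = \CC[y_1,\ldots,y_n,z_1,\ldots,z_m]$ and $R = \AOT(\A)\otimes_{\CC}\AOT(\A') \cong \AOT(\A\times\A')$. By \Cref{thm:tensor_product_Koszul}, $R$ is Koszul.

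First we identify $\AOT(\mathcal{B})$ as a quotient of $R$. The Orlik--Terao ideal of $\mathcal{B}$ is generated by the relations coming from chordless circuits \cite{SchenckTohaneanuOTAlgebras}*{Lemma 3.2}. These are exactly the chordless circuits of $\A$ or of $\A'$, so
\[
\AOT(\mathcal{B}) \cong T/\bigl(I_{\AOT(\A)}T + I_{\AOT(\A')}T + (y_1 - z_1)\bigr) \cong R/(y_1 - z_1).
\]
The squares $y_i^2$ and $z_j^2$ are already present. The one point to check is that $y_1^2 = z_1^2 = 0$ in $R$ forces the square of the fused variable to vanish in the quotient, which is immediate.

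Next we show that $a = y_1 - z_1$ and $b = y_1 + z_1$ form an exact pair of zerodivisors in $R$. Since $y_1^2 = z_1^2 = 0$, we have $ab = 0$. For the reverse inclusions we would use the structure of $R$ as a module over the subalgebra $\CC[y_1,z_1]/(y_1^2,z_1^2) = \Lambda$. Each factor $\AOT(\A)$ is free over $\CC[y_1]/(y_1^2)$. This should follow from the $\mathcal{L}$-grading and the broken-circuit basis \cite{ProudfootSpeyerBrokenCircuitRing}: order the variables so that $y_1$ is smallest, and then $\mathrm{nbc}$-monomials split according to whether they contain $y_1$, giving $\AOT(\A) = V\otimes \CC[y_1]/(y_1^2)$. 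Hence $R$ is free over $\Lambda$, and computing annihilators reduces to $\Lambda$. In $\Lambda$ one checks directly that $\ann(a) = (b)$ and $\ann(b) = (a)$: since $\operatorname{char}\CC \neq 2$, the elements $a, b$ span $\Lambda_1$ and $ab = 2y_1z_1 \cdot(\pm1)$ spans $\Lambda_2$. Freeness then gives $\ann_R(a) = bR$ and $\ann_R(b) = aR$.

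Finally we invoke the theorem of Hasegawa--Sega style from \cite{HS11}. For a Koszul algebra $R$ and an exact pair of linear zerodivisors $a, b$, the module $R/(a)$ has the linear periodic resolution
\[
\cdots \xrightarrow{\;a\;} R(-2) \xrightarrow{\;b\;} R(-1) \xrightarrow{\;a\;} R.
\]
By \Cref{thm:regularity_Koszul}, $R/(a) = \AOT(\mathcal{B})$ is therefore Koszul.

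The main obstacle is the freeness of $\AOT(\A)$ over $\CC[y_1]/(y_1^2)$, equivalently the claim that $\ann(y_1) = (y_1)$ in $\AOT(\A)$. If the nbc-basis argument falls short, we would instead prove the Artinian analogue of \Cref{thm:terao} directly for the rank-one modular flat $\{H_1\}$, via a dimension count against $\pi(\A,t) = (1+t)\cdot(\text{reduced polynomial})$.
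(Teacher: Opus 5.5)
Your argument is correct. It shares the paper's skeleton: Koszulness of $R=\AOT(\A\times\A')$, the identification $\AOT(\mathcal{B})\cong R/(y_1-z_1)$, the exact pair $y_1\pm z_1$, and \cite{HS11}. The difference is how you prove that $a=y_1-z_1$ and $b=y_1+z_1$ form an exact pair.

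The paper argues numerically. \Cref{prop:tensorreducedos} gives $\pi(\mathcal{B},t)=(1+t)g(t)h(t)$, hence $\HS_{R/(a)}=(1+t)gh$ through the identification with $\AOT(\mathcal{B})$. The sequence $0\to (R/\ann(a))(-1)\to R\to R/(a)\to 0$ then gives $\HS_{R/\ann(a)}=(1+t)gh$. The automorphism $z_i\mapsto -z_i$ gives $\HS_{R/(b)}=\HS_{R/(a)}$, so the inclusion $(b)\subseteq\ann(a)$ is an equality.

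You argue structurally, via freeness of $R$ over $\Lambda=\CC[y_1,z_1]/(y_1^2,z_1^2)$, and this goes through. Take Proudfoot--Speyer's universal Gr\"obner basis with a weight order in which $y_1$ is lightest, together with $\dim_{\CC}\AOT(\A)=\pi(\A,1)$; then the nbc monomials form a basis of $\AOT(\A)$. No broken circuit contains the minimal element, so $S\mapsto S\cup\{1\}$ is a bijection from nbc sets avoiding $1$ to those containing it. Moreover, $y_1\cdot y_S$ is literally the basis monomial $y_{S\cup\{1\}}$.

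Your route is intrinsic to $R$. It needs neither $\pi(\mathcal{B},t)$ nor the sign automorphism, it shows more generally that $\ann(y_H)=(y_H)$ in $\AOT(\A)$ for every $H$, and it recovers $(1+t)gh$ as a byproduct. The paper's route avoids choosing any basis of $\AOT$ and uses only Hilbert series data it already has. Your last step, writing out the periodic linear resolution of $R/(a)$ and applying \Cref{thm:regularity_Koszul}, is exactly what the cited corollary of \cite{HS11} packages.

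One slip: in $\Lambda$ you correctly note $ab=y_1^2-z_1^2=0$, so $ab$ cannot span $\Lambda_2$; it is $b^2=2y_1z_1=-a^2$ that does. The computation you need is that multiplication by $a$ sends $y_1\mapsto -y_1z_1$ and $z_1\mapsto y_1z_1$. Hence $\ann_\Lambda(a)=\CC b\oplus\Lambda_2=b\Lambda$, and symmetrically $\ann_\Lambda(b)=a\Lambda$, so your conclusion stands.
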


\begin{proof}
    By \Cref{thm:tensor_product_Koszul},
    $\AOT(\A \times \A') = \AOT(\A) \otimes_{\CC} \AOT(\A')$ is Koszul,
    since both $\AOT(\A)$ and $\AOT(\A')$ are. Write $\AOT(\A) = \CC[y_1,\ldots,y_n]/I_{\AOT(\A)}$ and $\AOT(\A') = \CC[z_1,\ldots,z_m]/I_{\AOT(A')}$, set $T = \CC[y_1,\ldots,y_n,z_1,\ldots,z_m]$, and let $y_1,z_1$ be the variables corresponding to the identified hyperplanes.  Then 
    $$\AOT(\mathcal{B}) \cong T/(I_{\AOT(\A)}T + I_{\AOT(\A')}T + (y_1-z_1)) \cong \AOT(\A \times \A')/(y_1-z_1).$$

    It is easy to see that
    \[\AOT(\A \times \A')/(y_1-z_1) \cong \AOT(\A \times \A')/(y_1+z_1)  \]
    via the linear isomorphism preserving the $y_i$ and mapping $z_i \mapsto -z_i$ for all $i$.  Therefore, their Hilbert series agree.
    Write $\pi(\A, t) = (1 + t)g(t)$ and 
    $\pi(\A', t) = (1 + t)h(t)$.
    Then $\pi(\A\times \A', t) = (1 + t)^2 g(t)h(t)$.
    It follows from \Cref{prop:tensorreducedos}
    that $\pi(\mathcal{B}, t) = (1 + t)g(t)h(t)$. Thus
    \[
        \HS_{\AOT(\A \times \A')/(y_1+z_1)}(t) = \HS_{\AOT(\A \times \A')/(y_1-z_1)}(t) = 
        \HS_{\AOT(\mathcal{B})}(t) = (1 + t)g(t)h(t).
    \]
    From the graded short exact sequence
    \[
        0 \to \AOT(\A \times \A')/\ann(y_1-z_1)(-1) \to \AOT(\A \times \A') \to \AOT(\A \times \A')/(y_1-z_1) \to 0
    \]
    we get 
    \begin{align*}
    t\HS_{\AOT(\A \times \A')/\ann(y_1-z_1))}(t)
    &=
    \HS_{\AOT(\A \times \A')}(t) - \HS_{\AOT(\A \times \A')/(y_1-z_1))}(t)\\
    &= (1+t)^2g(t)h(t) - (1+t)g(t)h(t)\\
    &= t(1+t)g(t)h(t).
    \end{align*}
    Hence
    $\HS_{\AOT(\A \times \A')/\ann(y_1-z_1)}(t) = (1+t)g(t)h(t) = \HS_{\AOT(\A \times \A')/(y_1+z_1)}(t)$.  Because $(y_1+z_1) \subseteq \ann(y_1-z_1)$ and both quotients have the same Hilbert series, these ideals are equal.  An identical argument shows that 
    $(y_1-z_1) = \ann (y_1 + z_1)$.
    Thus $y_1-z_1$, $y_1+z_1$ are an exact pair of zero divisors on $\AOT(\A \times \A')$.
    The Koszulness of $\AOT(\mathcal{B})$ now follows from \cite{HS11}*{Corollary 1.12}, using the fact that $\AOT(\A \times \A')$ is Koszul.
\end{proof}

\begin{Cor}
    For any rank $r \ge 5$, there exist central, irreducible,  complex hyperplane arrangements such that $\OT(\A)$ and $\AOT(\A)$ are Koszul whereas the intersection lattice $\mathcal{L}(\A)$ is not {supersolvable}; equivalently, the defining ideals of $\OT(\A)$ and $\AOT(\A)$ do not have a quadratic Gr\"obner bases with respect to any monomial order in the presenting polynomial ring.
\end{Cor}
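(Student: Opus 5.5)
The plan is to use the parallel connection from \Cref{ex:pcnonss} and then move up in rank by connecting on further supersolvable pieces. Let $M'$ and $M''$ be two copies of the rank 3 signed-graphic matroid $\mathrm{O}_7$ of \Cref{fig:bad_matroid}, and let $\A'$, $\A''$ be the corresponding signed-graphic arrangements, both defined over $\QQ$. In rank 5 I take $\A = P_{H_1',H_1''}(\A',\A'')$. This arrangement realizes $M = P_{1',1''}(M',M'')$, as explained at the start of \Cref{sec:parallel_connections}.

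\textbf{Rank 5.} For non-supersolvability, note that $M'$ and $M''$ are connected and have rank $\geq 2$, so the parallel connection is a minimal non-trivial modular join. Neither $\mathrm{O}_7$ has a full modular flag through $\{1\}$, since its unique modular coatom is $\{4,5,6,7\}$. Hence $M$ is not supersolvable by \Cref{cor:pc_ss}. Irreducibility follows from \Cref{prop:gpc_connected}. For the Koszul properties, $\mathrm{O}_7$ is supersolvable, so $\OT(\A')$, $\OT(\A'')$, $\AOT(\A')$ and $\AOT(\A'')$ all have quadratic Gr\"obner bases by \cite{Denham14} and are therefore Koszul. \Cref{thm:pc_OT_Koszul} and \Cref{thm:pc_AOT_Koszul} then give that $\OT(\A)$ and $\AOT(\A)$ are Koszul.

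\textbf{Higher rank.} For $r>5$ I iterate. Let $\mathcal{D}$ be a connected supersolvable arrangement of rank 2, for example three lines through the origin in $\CC^2$. Set $\A_5=\A$ and $\A_{k+1} = P_{H,G}(\A_k,\mathcal{D})$, where $G\in\mathcal{D}$ is arbitrary and $H$ is any hyperplane of $\A_k$. Each step raises the rank by exactly $2+\rk\A_k-1-\rk\A_k=1$. It preserves irreducibility by \Cref{prop:gpc_connected}, and it preserves Koszulness of $\OT$ and $\AOT$ by the two theorems above, since $\mathcal{D}$ is supersolvable. It also preserves definability over $\QQ$, because the parallel-connection construction only adds the linear relation $f-g$.

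The main obstacle is showing that non-supersolvability survives these steps. Since $\mathcal{D}$ has rank 2, every atom of $M(\mathcal{D})$ lies on a full modular flag. \Cref{cor:pc_ss} would therefore make $\A_{k+1}$ supersolvable as soon as $\A_k$ is supersolvable, so that route fails. Instead I use \Cref{prop:inherit_ss}. Take $\mathcal{L}(\A_k)$ as the restriction of $\mathcal{L}(\A_{k+1})$ to the modular flat $E(M(\A_k))$ given by \Cref{cor:modular_parts}. If $\A_{k+1}$ were supersolvable, this restriction would be supersolvable too. By induction $\A_k$ is not supersolvable, so $\A_{k+1}$ is not either.

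Finally, the equivalent formulation about Gr\"obner bases follows from the stated equivalence between supersolvability and the existence of quadratic Gr\"obner bases. For $\OT$ this is the equivalence displayed above. For $\AOT$ one checks that a quadratic Gr\"obner basis would again force supersolvability; I would argue this via \cite{ProudfootSpeyerBrokenCircuitRing} in the same way as in \cite{Peeva}. This last verification is the step I am least sure is already available in the paper.
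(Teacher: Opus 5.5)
Your proof is correct, but for $r>5$ it is organized differently from the paper's. The paper handles every rank in one step. It applies \Cref{lm:Dow_construction} with $m=2$, $k=1$ to get round supersolvable signed-graphic matroids $M\subseteq MQ_{r-2}(2)$ of rank $r-2$ and $N\subseteq MQ_3(2)$ of rank $3$ (the printed ``$2$'' is a slip), in which the atom $x_1$ lies on no modular line. It then uses \Cref{thm:supersolvable_GPC} to conclude that the two-piece connection $P_{x_1}(M,N)$ is not supersolvable. Your rank-$5$ example is exactly the paper's case $r=5$, since $\mathrm{O}_7$ is (up to switching signs at vertex $1$) the instance $r=3$, $k=1$ of that lemma.

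For higher rank you instead bootstrap from rank $5$ by repeatedly attaching $U_{2,3}$, so your inductive step needs only that restriction to a flat preserves supersolvability.
\begin{itemize}
\item What your route buys: it is more elementary and avoids the Dowling construction above rank $5$.
\item What the paper's route buys: in every rank it produces a connection of just two pieces, neither of which has a full modular flag through the basepoint. Non-supersolvability therefore arises at the top level, rather than being inherited from a rank-$5$ flat.
\end{itemize}
One correction to your discussion: the route through \Cref{cor:pc_ss} does not fail. Its ``only if'' direction says that a supersolvable parallel connection has supersolvable pieces, and that is precisely your inductive step.

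The paper justifies the Gr\"obner-basis reformulation only for $\OT$, in the discussion before \Cref{thm:pc_OT_Koszul}. Your sketch for $\AOT$ does go through:
\begin{itemize}
\item For any term order, the leading term of a circuit relation omits the smallest variable of the circuit. Hence $\mathrm{in}(I_{\OT})+(y_1^2,\dots,y_n^2)\subseteq \mathrm{in}(I_{\AOT})$.
\item Both quotients have Hilbert series $\pi(\A,t)$, since both count nbc sets by size. So the two ideals coincide.
\item A quadratic Gr\"obner basis for $I_{\AOT}$ therefore means that every minimal broken circuit has size $2$ for the induced order on the hyperplanes. This forces supersolvability by \cite{BZ91}*{Theorem 2.8}.
\end{itemize}
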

\begin{proof}
    By \Cref{lm:Dow_construction} we can find a connected submatroid $M$ (resp.\ $N$) of rank $r-2$ (resp.\ $2$) of
    the Dowling geometry $MQ_{r-2}(2)$ (resp.\ $MQ_3(2)$) containing the element $x_1$ such that this element does not lie in any modular line of $M$ (resp.\ $N$). 
    The parallel connection $P_{x_1}(M, N)$ has rank $r$ and is connected by \Cref{prop:gpc_connected}. It fails to be supersolvable by \Cref{thm:supersolvable_GPC}.
    The corresponding central arrangement will do by Theorems \ref{thm:pc_OT_Koszul} and \ref{thm:pc_AOT_Koszul}.
\end{proof}

\begin{Rmk} 
Suppose $\A$ is a central hyperplane arrangement in $\mathbb{R}^d$.  The \textbf{Varchenko--Gel'fand ring} of $\mathcal{\A}$, denoted $VG_{\QQ}(\A)$, is the ring of locally-constant $\QQ$-valued functions on the \emph{real complement} of $\A$, i.e.\ $\Cc(\A) \cap \mathbb{R}^d$.
Gel'fand and Varchenko equip this ring with a filtration;
the associated graded ring is the \textbf{graded Varchenko--Gel'fand ring}, denoted $\mathcal{V}_\QQ(\A)$. These were introduced in \cite{VG87}.  In \cite{Dorpalen-Barry23}*{Theorem 2}, Dorpalen-Barry showed that $\mathcal{V}_\QQ(\A)$ is Koszul if $\A$ is supersolvable and asks if the converse holds.  An argument nearly identical argument to the proof of \Cref{thm:pc_AOT_Koszul} shows that the converse fails.  We omit the details.
\end{Rmk}

\section{Generalized parallel connections}\label{sec:GPC_Koszul}
In this section, we use \Cref{thm:main} to show that generalized parallel connections preserve the OS-Koszul property and use the results of \Cref{sec:modular_joins} and \Cref{sec:parallel_connections} to show that many such examples can be constructed which are not supersolvable. 
To show that our construction produces $\mathbb{Q}$-realizable matroids, we provide a realizability result for generalized parallel connections of submatroids of Dowling geometries along Dowling subgeometries.

\begin{Lm}[{\cite{constructions}*{Lemma 7.7}}]\label{lm:truncation_computation}
    Let $M = P_{F}(M_1, M_2)$ be a generalized parallel connection.
    Then the matroids $\si(\overline{T}_{F}(M_1))$ and $\si(\overline{T}_{E(M_2)}(M))$ are isomorphic.
\end{Lm}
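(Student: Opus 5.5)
Write $M = P_F(M_1,M_2)$ with ground set $E = E_0 \sqcup F \sqcup E_2$, where $E(M_1) = E_0 \cup F$, $E(M_2) = F \cup E_2$, and $F$ is modular in $M_1$. By \Cref{prop:side_modular}, $E(M_2)$ is modular in $M$, so $\overline{T}_{E(M_2)}(M)$ is defined in the sense of \Cref{df:complete_trunc}; likewise $\overline{T}_F(M_1)$ is defined. (Degenerate case: if $F=\varnothing$ the truncations are taken along rank-$0$ flats, which the definition excludes. I will assume $\rk F>0$ and $E(M_2)\neq\varnothing$.) The plan is to compare lattices of flats. The flats of $\si(N)$ correspond to the flats of $N$, and a simple matroid is determined by its lattice, so it suffices to build an order isomorphism $\mathcal{L}(\overline{T}_F(M_1)) \cong \mathcal{L}(\overline{T}_{E(M_2)}(M))$.

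The candidate maps are $\alpha(G) = G \cap E(M_1)$ and $\beta(G') = \cl_M(G' \cup E(M_2))$ if $G' \supseteq F$, with $\beta(G')=G'$ if $G'\cap F=\varnothing$. Flats of $\overline{T}_{E(M_2)}(M)$ are flats $G$ of $M$ that either contain $E(M_2)$ or are disjoint from it.

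\emph{Flats containing $E(M_2)$.} For such $G$, the trace $G \cap E(M_1)$ is a flat of $M_1$ containing $F$. Conversely, if $G'$ is a flat of $M_1$ containing $F$, then $G' \cup E_2$ is a flat of $M$ by the definition of generalized parallel connection. Indeed, its trace on $E(M_1)$ is $G'$ and its trace on $E(M_2)$ is all of $E(M_2)$. These two assignments are mutually inverse, since every flat $G\supseteq E(M_2)$ equals $(G\cap E(M_1))\cup E_2$.

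\emph{Flats disjoint from $E(M_2)$.} A flat $G$ of $M$ disjoint from $E(M_2)$ is contained in $E_0$. By definition, $G$ is then a flat of $M_1$ (its trace on $E(M_2)$ is $\varnothing$, which is a flat of $M_2$ since $M_2$ is loopless). It is also disjoint from $F$. Conversely, a flat of $M_1$ disjoint from $F$ lies in $E_0$, and its trace on $E(M_2)$ is empty, so it is a flat of $M$.

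Both assignments preserve inclusion, within each type and also across types. A disjoint flat $G\subseteq E_0$ is contained in $H\cup E_2$ iff $G\subseteq H$. Therefore $\alpha$ and $\beta$ are inverse order isomorphisms, which yields the isomorphism of simplifications.

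The main obstacle I expect is bookkeeping: checking that the flat description of generalized parallel connections in \Cref{df:GPC} really gives that $H\cup E_2$ is closed in $M$ for every flat $H\supseteq F$ of $M_1$. This uses only that $F$ is a flat of both $M_1$ and $M_2$, together with the trace characterization, so it should be direct. I would double-check it against the rank formula. By \Cref{lm:rank_function}, $\overline{T}_{E(M_2)}(M)$ has rank $\rk M-\rk M_2+1=\rk M_1-\rk F+1$, which matches the rank of $\overline{T}_F(M_1)$.
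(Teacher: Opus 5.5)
Your argument is correct. The paper gives no proof of this lemma; it imports it from Brylawski. Your direct comparison of lattices of flats is therefore a complete, self-contained verification. It uses only the flat-based definition of $\overline{T}$ in \Cref{df:complete_trunc}, the trace description of flats in \Cref{df:GPC}, and the modularity of $E(M_2)$ from \Cref{prop:side_modular}.

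Two small remarks.

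First, your closing comment overstates what you need. You never use that $F$ is a flat of $M_2$, and \Cref{df:GPC} does not assume it. The traces of $G'\cup E_2$ are $G'$ and $E(M_2)$, and a flat contained in $E_0$ has empty trace on $E(M_2)$; both facts hold regardless, using only looplessness of $M_2$.

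Second, state explicitly where $F\neq\varnothing$ enters. It makes the two types of flats disjoint, so the two-case definition of $\beta$ is unambiguous. It also rules out a flat containing $F$ lying inside one disjoint from $F$, which is what makes $\beta$ monotone. The rank check at the end is not needed.

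Your bijection in fact proves slightly more than the lemma. The flats of $\overline{T}_{E(M_2)}(M)|_{E(M_1)}$ are exactly the traces $G\cap E(M_1)$, which your map $\alpha$ identifies with the flats of $\overline{T}_F(M_1)$. Hence $\overline{T}_{E(M_2)}(M)|_{E(M_1)} = \overline{T}_F(M_1)$ on the nose. Moreover, every element of $E_2$ lies in the rank-one flat $E(M_2)$ and so is parallel to the elements of $F$. This explains why the statement is phrased up to simplification.
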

\begin{proof}[{Proof of \Cref{thm:GPC_Koszul}}]
Let $M = P_{F}(M_1, M_2)$ be a generalized parallel connection.
Here $F$ is a modular flat of $M_1$ and $E(M_2)$ is a modular flat of $M$ by \Cref{prop:side_modular}.
We assume that $M_1$ and $M_2$ are OS-Koszul.
By \Cref{thm:main}, to check that $M$ is OS-Koszul, it suffices to check that $\overline{T}_{E(M_2)}(M)$ is Koszul.
It follows from \Cref{lm:truncation_computation} that this matroid has the same Orlik--Solomon algebra as 
$\overline{T}_{F}(M_1)$. 
By another application of \Cref{thm:main}, $\overline{T}_{F}(M_1)$ is OS-Koszul. 
\end{proof}
\noindent
    Tsujie  \cite{Tsujie20} defined the class of \emph{modularly extended} matroids to consist of matroids obtainable from supersolvable matroids by recursively taking modular join along round flats. He showed that hyperplane arrangements whose underlying matroids are modularly extended are free, but that these matroids need not be supersolvable. Since modular joins along round flats are a special case of generalized parallel connections, \Cref{thm:GPC_Koszul} has the following corollary.
\begin{Cor}\label{cor:tsujie}
    Modularly extended matroids are OS-Koszul.
\end{Cor}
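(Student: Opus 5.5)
This will follow by an induction mirroring Tsujie's recursive definition, with \Cref{thm:GPC_Koszul} supplying the inductive step. Recall that a matroid is modularly extended if it is supersolvable, or if it is obtained from two modularly extended matroids by a modular join along a round flat. I will therefore induct on the number of modular-join operations used to build the matroid, or equivalently on the size of the ground set.

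The base case is a supersolvable matroid. Its Orlik--Solomon ideal has a quadratic Gr\"obner basis by Bj\"orner--Ziegler and Peeva, so $\OS(M)$ is Koszul. This holds for arbitrary matroids, not only realizable ones; alternatively, it follows by iterating \Cref{thm:main} along a full modular flag.

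For the inductive step, let $M = P_{F}(M_1, M_2)$ be a modular join along a round flat $F$, where $M_1$ and $M_2$ are modularly extended. By the inductive hypothesis, $M_1$ and $M_2$ are OS-Koszul. A modular join is by definition a generalized parallel connection in which $F$ is modular in both factors (\Cref{df:GPC}). Hence \Cref{thm:GPC_Koszul} applies directly and shows that $M$ is OS-Koszul.

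The one point needing care is matching Tsujie's definition of modular join with \Cref{df:GPC}. Tsujie's construction may be phrased on unrelated ground sets, or only up to simplification. In either case it is covered: the fused construction $P_{F,Y}(M_1,M_2)$ handles unrelated ground sets, \Cref{lm:gpc_behaviour}\eqref{it:simplify} handles simplification, and $\OS(M)\cong\OS(\si(M))$ shows that Koszulness is insensitive to it. The roundness hypothesis is never used; the result holds for modular joins along arbitrary modular flats.
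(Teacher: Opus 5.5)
Your proposal is correct and is essentially the paper's argument. The paper deduces the corollary immediately from \Cref{thm:GPC_Koszul}, observing that modular joins along round flats are special generalized parallel connections and that supersolvable matroids are OS-Koszul; your structural induction makes this explicit, and you rightly note that roundness is never used.
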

\noindent
There is a discrepancy between \Cref{thm:GPC_Koszul} and the analogous result for supersolvability: a generalized parallel connection of supersolvable matroids need not be supersolvable. 
As noted earlier, this was already noticed in \cite{Tsujie20}.
We make this failure more salient by exhibiting a large number of non-supersolvable modular joins coming from submatroids of cyclic Dowling geometries.
Before doing so, it will be convenient to note that our construction will stay inside this class.

\begin{Prop}\label{prop:modular_join_rep}
Let $m, r_1, r_2, s$ be positive integers. Let $M_1$ (resp.\ $M_2$) be a submatroid of 
$MQ_{r_1}(m)$ (resp.\ $MQ_{r_2}(m)$) containing a rank-$s$ Dowling subgeometry as a flat $F$.
Then the modular join $P_{F}(M_1, M_2)$ is a submatroid of $MQ_{r}(m)$, where 
$r \deq r_1 + r_2 - s$.
\end{Prop}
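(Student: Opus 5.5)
We will realize the modular join concretely inside $MQ_r(m)$ and then check that the lattice of flats of this realization coincides with that of $P_F(M_1,M_2)$. After permuting coordinates, we may assume the rank-$s$ Dowling subgeometry in $M_1$ is $MQ_{\{r_1-s+1,\dots,r_1\}}(m)$. In $M_2$ we take it to be $MQ_{\{1,\dots,s\}}(m)$. We then embed $\mathbb{C}^{r_1}$ and $\mathbb{C}^{r_2}$ into $\mathbb{C}^r$, with coordinate index sets $Z_1=\{1,\dots,r_1\}$ and $Z_2=\{r_1-s+1,\dots,r\}$. Their overlap $Z_1\cap Z_2$ has size $s$ and corresponds to the shared Dowling subgeometry, identified via the obvious order-preserving bijection. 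Each element of $M_1$ (a linear form in the variables indexed by $Z_1$) and of $M_2$ (in the variables of $Z_2$) then becomes an element of $MQ_r(m)$. The elements of $F$ are matched consistently because both copies of $F$ are the full Dowling subgeometry on the overlap. Let $N$ be the submatroid of $MQ_r(m)$ on the union $E(M_1)\cup_F E(M_2)$. The goal is to show $N=P_F(M_1,M_2)$.

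First, $N|_{E(M_1)}=M_1$ and $N|_{E(M_2)}=M_2$. This holds because $MQ_{Z_i}(m)$ is a flat of $MQ_r(m)$ isomorphic to $MQ_{r_i}(m)$, and restriction to a subset depends only on the linear forms involved. Also, $\rk N=r=r_1+r_2-s$, since $M_1$ and $M_2$ are spanning in their coordinate blocks. This requires the hypothesis that each $M_i$ has full rank $r_i$; if it fails, we would first replace $r_i$ by the rank of the span. Moreover, $E(M_1)$ and $E(M_2)$ cover $E(N)$ and meet in $F$.

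We then apply \Cref{prop:ziegler}. It suffices to show that $E(M_1)$ and $E(M_2)$ are modular flats of $N$; then $N$ is the modular join of these flats over $F$, which is $P_F(M_1,M_2)$. Modularity comes from \Cref{prop:modular_in_Dow}: $E(MQ_{Z_i}(m))$ is a modular flat of $MQ_r(m)$. By \Cref{prop:basic}\eqref{it:restriction}, its intersection with $E(N)$, namely $E(M_i)$, is modular in $N$, provided $E(N)$ is itself a flat of $MQ_r(m)$. That is not generally true, so instead we will argue directly with criterion \ref{it:complement} of \Cref{df:modular}. Let $G$ be a flat of $N$ disjoint from $E(M_1)$. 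Then $\cl_{MQ_r(m)}G$ meets $E(MQ_{Z_1}(m))$ in a flat of rank $\rk G+r_1-r$ or more, by modularity in $MQ_r(m)$ and the equality $\rk_{N}=\rk_{MQ_r(m)}$ on subsets of $E(N)$.

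The main obstacle is to pass from the ambient intersection back to $N$. Here we use the fact that $F$ contains the entire Dowling subgeometry on the overlap. Since $G\subseteq E(M_2)-F$ involves only the variables in $Z_2$, its ambient span lies in $MQ_{Z_2}(m)$, so the intersection lies in $MQ_{Z_1\cap Z_2}(m)=F\subseteq E(N)$. Nonzero rank then forces a nonempty meet with $E(M_1)$, a contradiction unless $\rk G+r_1\le r$. We must also treat flats $G$ that mix elements of $E(M_1)$ and $E(M_2)$. We expect to reduce this case to the previous one via the rank equality in \Cref{df:modular}\ref{it:eq_modular}, computed inside $MQ_r(m)$, where $E(MQ_{Z_1}(m))$ is modular. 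The modular-join rank formula then shows agreement of the flats, exactly as in the definition of generalized parallel connection.
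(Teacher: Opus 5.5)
Your argument is correct in substance and takes a different route from the paper. Two of the worries in your last paragraphs are unnecessary, and one small check is missing.

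First, the ``mixed'' flats never arise. Criterion \ref{it:complement} of \Cref{df:modular} only concerns flats $G$ of $N$ disjoint from $E(M_1)$, and every such $G$ lies in $E(M_2)-F$. So your first case is the entire verification.

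Second, you do not need full rank, and the patch of replacing $r_i$ by the rank of the span is not justified as stated. Your argument actually shows $\cl_{MQ_r(m)}(G)\cap E(MQ_{Z_1}(m))=\varnothing$. This set lies in $E(MQ_{Z_1\cap Z_2}(m))=F$, and $\cl_{MQ_r(m)}(G)\cap E(N)=G$ is disjoint from $F$. Modularity of $E(MQ_{Z_1}(m))$ in $MQ_r(m)$ then makes it skew to $\cl_{MQ_r(m)}(G)$, i.e.\ the rank of their union is the sum of their ranks. By submodularity, $G$ and $E(M_1)$ are then skew as well. Hence $\rk_N G+\rk_N E(M_1)=\rk_N(G\cup E(M_1))\le \rk N$, with no comparison to $r$ needed.

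Third, you should record that $E(M_i)$ is a flat of $N$, which \Cref{prop:ziegler} requires. This follows from $E(MQ_{Z_1}(m))\cap E(N)=E(M_1)$, which holds because $E(M_2)\cap E(MQ_{Z_1}(m))\subseteq F$.

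The paper never checks modularity inside $N$ directly. It first treats the case $M_i=MQ_{r_i}(m)$. There the ground set of $M_0$ contains the two modular Dowling subgeometries, so \Cref{prop:basic}\eqref{it:deletion} and \Cref{prop:ziegler} apply at once. It then restricts this modular join to $E(N)$ using \Cref{lm:gpc_behaviour}\ref{it:restrictify}.

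That route is shorter and involves no rank bookkeeping. Yours is more self-contained. It also isolates why a modular flat survives restriction to the non-flat set $E(N)$ here: the two coordinate blocks meet exactly in $F\subseteq E(N)$. The paper uses the same fact only implicitly, when it identifies $E(N)\cap E(MQ_{Z_1}(m))$ with $E(M_1)$.
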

\begin{proof}
    We start with the case 
    $M_1 = MQ_{r_1}(m)$, $M_2 = MQ_{r_2}(m)$.
    We may embed these in $MQ_{r}(m)$ by identifying $M_1$ with the Dowling subgeometry
    $MQ_{\{1,\dots,r_1\}}(m)$, $M_2$ with 
    $MQ_{\{r_1-s+1,\dots,r\}}(m)$ and $F$ with $MQ_{\{r_1-s+1,\dots,r_1\}}(m)$.
    Let $M_0$ be the restriction of $MQ_{r}(m)$ to the union of their ground sets.
    The subsets $E(M_1)$ and $E(M_2)$ are modular in $MQ_r(m)$ by \Cref{prop:modular_in_Dow}, hence also in $M_0$ by \Cref{prop:basic}\eqref{it:deletion}.
    By \Cref{prop:ziegler}, $M_0$ is the modular join of $M_1$ and $M_2$.

    In the general case, we can embed $M_1$ as a \emph{submatroid} of $MQ_{\{1,\dots,r_1\}}(m)$
    and $M_2$ as a \emph{submatroid} of $MQ_{\{r_1-s+1,\dots,r\}}(m)$ so that $F$ is identified in both with the Dowling subgeometry $MQ_{\{r_1-s+1,\dots,r_1\}}(m)$.
    Let $M$ be the restriction of $MQ_r(m)$ to the union of their ground sets.
    Then with $M_0$ as above, we have by \Cref{lm:gpc_behaviour}\ref{it:restrictify},
    \[
        M = M_0|_{E(M)} = P_{F}(MQ_{\{1,\dots,r_1\}}(m), MQ_{\{r_1-s+1,\dots,r\}}(m))|_{E(M)} = 
        P_F(M_1, M_2). \qedhere
    \]
\end{proof}

\begin{Cor}\label{cor:realizability}
    The modular join of two submatroids of cyclic Dowling geometries (resp.\ two simple signed graphic arrangements) along a common Dowling subgeometry is 
    again a submatroid of a cyclic Dowling geometry (resp.\ a simple signed-graphic arrangement); in particular it is 
    $\mathbb{C}$-realizable (resp.\ $\mathbb{Q}$-realizable).
\end{Cor}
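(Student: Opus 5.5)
The Dowling case is a direct application of \Cref{prop:modular_join_rep}. The signed-graphic case is the special case $m=2$, once we pass between simple signed-graphic matroids and submatroids of $MQ_r(2)$.

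For the Dowling case, let $M_1 \subseteq MQ_{r_1}(m)$ and $M_2 \subseteq MQ_{r_2}(m)$ be submatroids sharing a common Dowling subgeometry $F$ of rank $s$. Concretely, $F$ is a flat of each $M_i$ equal to the ground set of some $MQ_Z(m)$. Two small points need checking.
\begin{enumerate}[(i)]
\item \emph{The join exists.} $F$ is modular in $MQ_{r_i}(m)$ by \Cref{prop:modular_in_Dow}. Hence it is modular in $M_i$ by \Cref{prop:basic}\eqref{it:deletion}, since $E(M_i) \supseteq F$.
\item \emph{Both matroids use the same $m$.} The statement should be read with a common $m$. If the orders differ, one can first embed both into $MQ(m)$ with $m$ the least common multiple, since $\eta_{m'}$ is a power of $\eta_m$. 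The only worry is whether $F$ remains a full Dowling subgeometry; to avoid this, I will simply take $m$ to be common, as in \Cref{prop:modular_join_rep}.
\end{enumerate}
Up to permuting coordinates, which is an automorphism of the Dowling arrangement, we may take $Z=\{r_1-s+1,\dots,r_1\}$ in $M_1$ and $Z=\{1,\dots,s\}$ in $M_2$. This is exactly the setup of \Cref{prop:modular_join_rep}, which shows that $P_F(M_1,M_2)$ is a submatroid of $MQ_{r_1+r_2-s}(m)$. Realizability over $\CC$ follows, since every submatroid of $M(Q_r(m))$ is realized by the corresponding subarrangement.

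For the signed-graphic case, I use the description in \Cref{subsec:graphic}. A simple signed-graphic arrangement is a subarrangement of the complete signed-graphic arrangement, which is $Q_r(2)$; a Dowling subgeometry here is the vertex-induced complete signed graph on a vertex subset $Z$. Applying the first case with $m=2$ shows that the modular join is a submatroid of $MQ_{r}(2)$, that is, a simple signed-graphic matroid, and it is $\QQ$-realizable since $\eta=-1$.

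The only step requiring care is identifying the common flat $F$ with the same index block in both ambient geometries. Specifically, we need an automorphism of $MQ_{r_i}(m)$ moving $Z$ to the chosen block, and this must be compatible with the identification $M_1|_F = M_2|_F$. Both reductions work via coordinate permutations. The residual subtlety is that the identification of $F$ between $M_1$ and $M_2$ might be a nonstandard automorphism of $MQ_s(m)$, such as scalings $x_i \mapsto \eta^{a_i}x_i$ or Galois twists. I would handle this by noting that every automorphism of $MQ_s(m)$ used, namely coordinate permutations and diagonal scalings by roots of unity, extends to an automorphism of $MQ_{r_i}(m)$. So we may adjust $M_2$ by such an extension before invoking \Cref{prop:modular_join_rep}.
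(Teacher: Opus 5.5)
Your main line is the paper's: the corollary follows directly from \Cref{prop:modular_join_rep}, with the modularity of $F$ in each $M_i$ coming from \Cref{prop:modular_in_Dow} and \Cref{prop:basic}\eqref{it:deletion}, and the signed-graphic case is $m=2$. The proof of \Cref{prop:modular_join_rep} tacitly assumes that the gluing bijection on $F$ respects the Dowling structure, sending coordinate points to coordinate points compatibly with the joints. Under that reading your argument is complete.

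\textbf{The problem is your final paragraph.} It tries to drop that assumption by extending automorphisms. This presupposes that every automorphism of $MQ_s(m)$ is built from coordinate permutations and root-of-unity scalings; Galois twists also extend, but you left them out of your list. For $s=2$ the presupposition is false: $MQ_2(m)\cong U_{2,m+2}$ has automorphism group $S_{m+2}$.

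No other argument can help either, because the statement itself fails for such gluings. Take $m=2$ and glue two copies of $MQ_3(2)$ along $\{x_2,x_3,x_2\pm x_3\}$ and $\{y_1,y_2,y_1\pm y_2\}$ via $x_2\mapsto y_1+y_2$, $x_3\mapsto y_1-y_2$, $x_2+x_3\mapsto y_1$, $x_2-x_3\mapsto y_2$. This is a legitimate identification of two copies of $U_{2,4}$.

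\begin{itemize}
\item The $4$-point lines of each $M_i$ survive in the join, since each $E(M_i)$ is a flat on which the join restricts to $M_i$.
\item Hence every point of $F$ lies on a second $4$-point line, namely $\{x_1,x_2,x_1\pm x_2\}$, $\{x_1,x_3,x_1\pm x_3\}$, $\{y_1,y_3,y_1\pm y_3\}$ or $\{y_2,y_3,y_2\pm y_3\}$.
\item In any submatroid of $MQ_r(2)$, however, the $4$-point lines are full coordinate lines $\{x_i,x_j,x_i\pm x_j\}$, and the joints $x_i\pm x_j$ lie on no other $4$-point line.
\end{itemize}

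So this join is not signed-graphic. The right fix is to build Dowling-compatibility of the identification into the phrase ``along a common Dowling subgeometry'', not to extend automorphisms.

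Two smaller points. For $s\ge 3$ and $m\ge 2$ your extension idea does work once Galois twists are included. There the long lines pin down the coordinate points, and the balanced triangles then force a switching composed with a group automorphism. Your worry in (ii) is vacuous: for $s\ge 2$ the size $s+\binom{s}{2}m$ of $F$ forces the two orders to agree, and for $s=1$ the embedding into the order given by the least common multiple is harmless.
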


\begin{Rmk}
    The care we take in \Cref{cor:realizability} to establish realizability of the generalized parallel connection is not in vain: \cite{Oxley}*{11.4.17} shows that modular joins of realizable matroids need not be realizable.
\end{Rmk}
\noindent
We now have a very general procedure to produce $\QQ$-realizable modular joins which are OS-Koszul but not supersolvable.
Let $r_1, r_2, k$ be positive integers with 
$r_1, r_2 \geq k + 2$. By \Cref{lm:Dow_construction}, we can find round, supersolvable matroids $M_1$ of rank $r_1$ and $M_2$ of rank $r_2$ as submatroids of Dowling geometries of the same rank such that each contains a modular rank-$k$ Dowling subgeometry as a modular flat $F$ not contained in any full modular flag. By \Cref{lm:minimality_suff}, we may apply \Cref{thm:supersolvable_GPC} to deduce that the modular join $P_F(M_1, M_2)$ is not supersolvable. By \Cref{thm:GPC_Koszul}, it \emph{is} OS-Koszul.
Its rank is $r_1 + r_2 - k$.
It is connected by \Cref{prop:gpc_connected}.
If we choose the order of the Dowling geometries to be $2$, then $M$ will be a signed-graphic matroid by \Cref{cor:realizability}, hence $\QQ$-realizable.

\begin{Ex}\label{ex:non_ss_modular_join}
Consider the matroid shown in \Cref{fig:another_signed_graphic_matroid}, which is obtained as the modular join of two copies of the matroid in \Cref{fig:signed_graphic_matroid} along the Dowling subgeometry in bold. By the above discussion, this is a $\QQ$-realizable, non-supersolvable, connected OS-Koszul matroid.
\begin{figure}[ht]
        \centering
                \tikzset{
  LabelStyle/.style = { rectangle, rounded corners, draw,
                        minimum width = 2em},
  VertexStyle/.append style = { inner sep=5pt,
                                font = \Large\bfseries},
  EdgeStyle/.append style = {bend left} }
        \begin{tikzpicture}
        \tikzset{vertex/.style={circle,fill=blue!25,minimum size=12pt,inner sep=2pt}}
  \tikzset{every loop/.style={}}
    \node[vertex] (A) at (-4,0)  [shape=circle,draw=black,fill=gray] {};
        \node[vertex] (B) at (0,2)  [shape=circle,draw=black,fill=gray] {};
    \node[vertex] (C) at (0,-2)  [shape=circle,draw=black,fill=gray] {};
    \node[vertex] (D) at (4,0)  [shape=circle,draw=black,fill=gray] {};
     \node[vertex] (E) at (8,-2)  [shape=circle,draw=black,fill=gray] {};
      \node[vertex] (F) at (4,-4)  [shape=circle,draw=black,fill=gray] {};
    \path[loop/.style={looseness=30}]   (A) edge [loop left,  thick] node {} (A);
    \path[loop/.style={looseness=30}]   (B) edge [loop above,  thick] node {} (B);
    \path[loop/.style={min distance=10mm,in=210,out=240,looseness=30}]   (C) edge [loop,  thick] node {} (C);
     \path[loop/.style={min distance=10mm,in=30,out=60,looseness=30}]   (D) edge [loop,  thick] node {} (D);
     \path[loop/.style={looseness=30}]   (E) edge [loop right,  thick] node {} (E);
     \path[loop/.style={looseness=30}]   (F) edge [loop below,  thick] node {} (F);
    \path (B) edge [ thick] node[below]{$-$} (A);
    \path (A) edge [bend left,  thick] node[above]{$+$} (B);
     \path (C) edge [ thick] node[above]{$-$} (A);
    \path (A) edge [bend right,  thick] node[below]{$+$} (C);
      \path (B) edge [bend left,  thick] node[right]{$+$} (C);
    \path (C) edge [bend left,  thick] node[left]{$-$} (B);
   \path (C) edge [bend left=15, thick] node[above]{$-$} (D); 
   \path (D) edge [bend left=15, thick] node[below]{$+$} (C);
   \path (D) edge [thick] node[above]{$+$} (B);
  \draw [rounded corners, thick]  (D)  arc(0:180:4) (A);
\node[above] at (90:4) {$+$};
    \node[vertex] (A) at (-4,0)  [shape=circle,draw=black,fill=gray] {};
    \node[vertex] (D) at (4,0)  [shape=circle,draw=black,fill=gray] {};
        \path (D) edge [ thick] node[below]{$-$} (E);
    \path (E) edge [bend right,  thick] node[above]{$+$} (D);
    \path (F) edge [ thick] node[above]{$-$} (E);
    \path (E) edge [bend left,  thick] node[below]{$+$} (F);
    \path (F) edge [ thick] node[below]{$+$} (C);
       \path (D) edge [bend left,  thick] node[right]{$+$} (F);
    \path (F) edge [bend left,  thick] node[left]{$-$} (D);
      \draw [rounded corners, thick]  (C)  arc(180:360:4) (E);
      \node[below] at (4,-6) {$+$};
           \node[vertex] (E) at (8,-2)  [shape=circle,draw=black,fill=gray] {};
      \node[vertex] (F) at (4,-4)  [shape=circle,draw=black,fill=gray] {};
        \node[vertex] (C) at (0,-2)  [shape=circle,draw=black,fill=gray] {};
\end{tikzpicture}
        \caption{An OS-Koszul signed-graphic modular join which is not supersolvable}
        \label{fig:another_signed_graphic_matroid}
    \end{figure}
\end{Ex}
    
\section{Questions}\label{sec:questions}

We close by describing the relationships between various properties of hyperplane arrangements and highlighting some questions that remain open.  Let $\A = \{H_1,\ldots,H_n\}$ be a central essential hyperplane arrangement in $\CC^d$.  Each hyperplane is given as $H_i = V(\ell_i)$ with $\ell_i \in S = \CC[x_1,\ldots,x_d]$.
Let $\mathrm{Der}_{\CC}(S)$ be the $S$-module of $\CC$-derivations of $S$.
 The $S$-module $D(\A)$ of $\A$-derivations of $S$ is
\[
    D(\A) := \{\theta \in \mathrm{Der}_{\CC}(S) \mid \theta(\ell_i) \in (\ell_i) \text{ for } 1 \le i \le n\}.
\]
The arrangement $\A$ is \textbf{free} if $D(\A)$ is free as a graded $S$-module.  In this case, we can write $D(\A) = \bigoplus_{i = 1}^d S(-a_i)$. Terao \cite{Terao81} showed that the Poincar\'e polynomial factors as $\pi(\A,t) = \prod_{i = 1}^d (1 + a_i t)$.  He also showed \cite{Terao80a}*{Theorem 2.21} that fiber-type, and hence supersolvable, arrangements are free.  
These relationships are among those summarized in \Cref{quadracity:implications}.

\begin{figure}[hbt!]
    \begin{center}
    \begin{tikzcd}[column sep = 2em, row sep = 3 em]
 \text{$\mathcal{L}(\A)$ supersolvable} \arrow[d,Rightarrow] \arrow[r,Leftrightarrow]& \substack{ \text{\normalsize $I(\mathcal{A})$ has a} \\ \text{\normalsize quadratic Gr\"obner basis}}\arrow[r,Rightarrow] \arrow[d,Rightarrow]   & \text{$\mathcal{C}(\A)$ is $K(\pi,1)$} \\
 \text{$\A$ is free}  \arrow[d,Rightarrow]
& \text{$\OS(\A)$ is Koszul} \arrow[d,Rightarrow]  \arrow[r,Leftrightarrow] & \text{$\mathcal{C}(\A)$ is rational $K(\pi,1)$}\arrow[d,Rightarrow] \\
 \text{$\pi(\A,t)$ factors} & \text{\normalsize $\OS(\mathcal{A})$ is quadratic}& \text{LCS formula holds}\\
    \end{tikzcd}
    \end{center}
    \caption{Some properties of complex hyperplane arrangements.}\label{fig1}
    \label{quadracity:implications}
\end{figure}

While our constructions show that the converse to the statement ``$\mathcal{L}(\A)$ supersolvable $\Rightarrow$ $\OS(\A)$ is Koszul'' is false, it leaves open many natural questions.  It is known that there are arrangements $\A$ such that $\mathcal{C}(\A)$ is $K(\pi,1)$ but not rational $K(\pi,1)$; see e.g. \cite{Falk95}*{Example 3.3}.  Whether the converse holds is an open question; cf. \cite{FR86}*{2.8}.  

\begin{Qst}
    If $\mathcal{C}(\A)$ is a rational $K(\pi,1)$ space, is $\mathcal{C}(\A)$ a $K(\pi,1)$ space?  
\end{Qst}
\noindent
In \cite{FR86}*{p.\ 113}, it was stated that OS-Koszul arrangements must be free, but this was based on incorrect assertion of Kohno.
As \Cref{qst:mainquestion} has been open since then, it is understandable that the question has not been asked again since. Now that \Cref{qst:mainquestion} has been settled in the negative, it is reasonable to reopen it. 

\begin{Qst}
    If $\mathcal{C}(\A)$ is a rational $K(\pi,1)$ space, must $\A$ be a free arrangement? 
\end{Qst}

\noindent With the notation of \Cref{thm:FP}, a related conjecture of Falk and Proudfoot \cite{falk_proudfoot}*{Conjecture 3.11} asks whether an arrangement $\A$ is free, assuming that $X$ is a modular flat and that $\A_X$ and $\A_\pi$ are free. As mentioned in the preamble to \Cref{cor:tsujie}, some special cases of our third construction yield \emph{modularly extended} arrangements, which were shown to be free by Tsujie \cite{Tsujie20}*{Theorem 1.8}.

On the other hand, \emph{all} our constructions preserve the property that the Poincar\'e polynomial factors into linear factors; this may be deduced from \cite{constructions}*{Corollary 7.4}; see also \cite{constructions}*{Corollary 7.8}.
Thus the following weaker question is also natural.

\begin{Qst}
    If $\OS(\A)$ is Koszul, must $\pi(\A,t)$ factor into linear factors?
\end{Qst}
\noindent
Parallel connections of supersolvable matroids yield $G$-quadratic Orlik--Solomon algebras by \Cref{prop:gquadratic}. 
We do not know if the same is true for our other mechanisms.
The natural next question is then the following.

\begin{Qst}
    Is there a central arrangement $\A$ whose Orlik--Solomon algebra is Koszul but not G-quadratic?
\end{Qst}
\noindent
Our methods likely do not construct all OS-Koszul matroids. For example, there is computational evidence that the Betsy Ross matroid \cite{LMMP24}*{Figure 2}, which is not supersolvable, has a Koszul Orlik--Solomon algebra. 
We were however unable to deduce this using the mechanisms described in this paper.
This leads us to ask the following.

\begin{Qst}
    Is the Betsy Ross matroid OS-Koszul?\footnote{That the Betsy Ross matroid might be OS-Koszul was first pointed out to the authors by Peeva.}
\end{Qst}

\noindent The Poincar\'e polynomial of the Betsy Ross matroid matches that of the arrangement in \Cref{fig:rank_3_non_ss}, which is OS-Koszul, so there can be no Hilbert function obstruction to Koszulness.  However, computing the syzygies of their defining ideals shows that their Orlik--Solomon algebras are not isomorphic.

\section*{Acknowledgements} This project began in December 2025 during the Arizona--New Mexico Symposium on Commutative Algebra and Its Interactions: Geometric Combinatorics organized by Michael Dipasquale, Louiza Fouli, and Jonathan Monta\~no.  The authors thank the organizers for their efforts.  This conference was supported by the National Science Foundation grants DMS–2401522 and DMS–2344588, the Department of Mathematics at New Mexico State University, and the School of Mathematics and Statistics at Arizona State University.

The authors are grateful to Matt Larson for helpful discussions and for valuable feedback on an earlier version of this manuscript.
The authors also thank G\"unter Ziegler for valuable correspondence regarding the correct statement of Theorem~\ref{thm:main}, and Vic Reiner for pointing out that graded Varchenko--Gel'fand rings behave similarly to Artinian Orlik--Terao algebras.   McCullough was partially supported by National Science Foundation grant  DMS--2401256.

Macaulay2 \cite{M2} computations were an invaluable tool for discovering and verifying the ideas that became the results of this paper.
No artificial intelligence tools or large language models were used in the creative or writing processes.

\bibstyle{amsalpha}
\bibliography{ref}
\end{document}